\renewcommand{\le}{\leqslant}
\renewcommand{\leq}{\leqslant}
\renewcommand{\ge}{\geqslant}
\newcommand{\id}{\mathbbmss 1}
\newcommand {\matl}{\left[ \begin{matrix}}
\newcommand {\matr}{\end{matrix}\right]}
\newcommand {\Exp}{ \mathbb E }
\newcommand {\EE}{ \mathrm E }
\renewcommand {\Pr}{ \mathbb P }
\newcommand{\CI}{\operatorname{CI}}
\newcommand{\e}{\mathrm e}
\newcommand{\kl}{D_{\operatorname{KL}}}
\renewcommand{\d}{\mathrm{d}}
\newcommand{\cA}{\mathcal{A}}
\newcommand{\cF}{\mathcal{F}}
\newcommand{\cG}{\mathcal{G}}
\newcommand{\cB}{\mathcal{B}}
\newcommand{\cL}{\mathcal{L}}
\newcommand{\cP}{\mathcal{P}}
\newcommand{\reals}{\mathbb{R}}
\newcommand{\Otilde}{\Tilde{\mathcal{O}}}
\newcommand{\hmu}{\widetilde{\mu}}
\newcommand{\iid}{\overset{\mathrm{iid}}{\sim}}
\newcommand{\hsig}{\widetilde{\sigma}}
\newcommand{\avgX}[1]{\widehat{\mu}_{#1}}
\newcommand{\avgXsq}[1]{\overline{X^2_{#1}}}
\newcommand{\normals}{\mathcal{N}}
\newcommand{\normal}[2]{N_{#1,#2}}
\DeclareMathAlphabet{\mathbbmsl}{U}{bbm}{m}{sl}
\DeclareMathOperator*{\Prw}{\Pr}
\DeclareMathOperator*{\polylog}{polylog}
\DeclareMathOperator*{\erf}{erf}
\newcommand{\ttfrac}[2]{{#1}/{#2}}
\newcommand{\revise}[1]{#1}
\title{Anytime-valid t-tests and confidence sequences\\ 
for Gaussian means with unknown variance\footnote{to appear in \emph{Sequential Analysis}}}
\author[1]{Hongjian Wang}
\author[2]{Aaditya Ramdas}
\affil[1,2]{Department of Statistics and Data Science, Carnegie Mellon University}
\affil[2]{Machine Learning Department, Carnegie Mellon University} 
\affil[ ]{\texttt{ \{hjnwang,aramdas\}@cmu.edu  }}
\date{\today}
\newtheorem{theorem}{Theorem}[section]
\newtheorem{definition}[theorem]{Definition}
\newtheorem{proposition}[theorem]{Proposition}
\newtheorem{corollary}[theorem]{Corollary}
\newtheorem{lemma}[theorem]{Lemma}
\newtheorem{remark}[theorem]{Remark}
\begin{document}

\maketitle

\begin{abstract}
    In 1976, Lai constructed a nontrivial confidence sequence for the mean $\mu$ of a Gaussian distribution with unknown variance $\sigma^2$. Curiously, he employed both an improper (right Haar) mixture over $\sigma$ and an improper (flat) mixture over $\mu$. Here, we elaborate carefully on the details of his construction, which use generalized nonintegrable martingales and an extended Ville's inequality. While this does yield a sequential t-test, it does not yield an ``e-process'' (due to the nonintegrability of his martingale). In this paper, we develop two new e-processes and confidence sequences for the same setting: one is a test martingale in a reduced filtration, while the other is an e-process in the canonical data filtration. These are respectively obtained by swapping Lai's flat mixture for a Gaussian mixture, and swapping the right Haar mixture over $\sigma$ with the maximum likelihood estimate under the null, as done in universal inference. We also analyze the width of resulting confidence sequences, which have a curious polynomial dependence on the error probability $\alpha$ that we prove to be not only unavoidable, but (for universal inference) even better than the classical fixed-sample t-test.
    Numerical experiments are provided along the way to compare and contrast the various approaches, including some recent suboptimal ones.
\end{abstract}

{\small  \setcounter{tocdepth}{1}
    \hypersetup{linkcolor=black} \tableofcontents}

\section{Introduction}

The classical location tests for Gaussian data fall into either of the two categories: the Z-test, where the population variance is assumed to be known \emph{a priori}, and a sample average is rescaled by the sample deviation to create a standardized test statistic; or the t-test \citep{student1908probable}, where the population variance is unknown, and a plug-in estimator of the variance is used in lieu of the variance, accounting for the heavier (``regularly varying'') tail of the t-test statistic than the Gaussian Z-test statistic.

This paper concerns sequential analogs of t-tests. Sequential Z-tests have been widely established since \cite{wald1945sequential}. One-sided or ``power-one'' variants have even been generalized nonparametrically to
 \emph{subGaussian} data \citep{robbins1970statistical,howard2021time}, and recently to any data in the square-integrable class \citep{wang2022catoni}.
 

Sequential extensions of the t-test were studied by \cite{rushton1950sequential,hoel1954property,ghosh1960some,sacks1965note}, who were interested in calculating the \emph{approximate} thresholds to control the type 1 error at a particular predefined stopping time. Inspired by the sequential Z-test work of Robbins, \cite{lai1976confidence} constructed a sequential t-test that controls a different type of error that will be of particular interest to us: nonasymptotic, conservative (i.e.\ non-approximate) time-uniform control of the type 1 error at all \emph{arbitrary} stopping times, possibly not specified in advance. We will define this formally later, but we mention at the outset that the filtration \revise{will become an interesting matter as different sequential methods are defined on different filtrations.} Studying this phenomenon in the context of t-tests will be an important part of this paper. 


Lai and more recent authors \citep{grunwald2020safe,perez2022statistics} situate the Gaussian t-test problem into the broader framework of \emph{group-invariant} tests; indeed, the t-test null is invariant under rescaling of observations by a constant.

The tests are mostly constructed using variants of the sequential likelihood ratio statistic by \cite{wald1945sequential},
 utilizing the fact that the likelihood ratios are nonnegative martingales (see \cref{sec:lrm}).
  Notably, \cite{perez2022statistics} shows the group-invariant extension of the likelihood ratio statistic to be optimal (in the ``e-power" sense to be defined in \cref{sec:testproc}) \revise{primarily} in the point-versus-point ($\mu/\sigma = \theta_0$ versus $\mu/\sigma = \theta_1$) setting. \revise{We shall closely discuss their contributions later.}
  Building upon these group invariance tools, \cite{lindon2022anytime} construct a sequential F-test for the regression set-up, which coincides with one of the t-tests that we shall present when no covariate exists. A slightly different approach is taken by \cite{jourdan2023dealing} who study the same problem in the bandit context, sequentially bounding the empirical mean and variance separately using existing Z-test-type bounds and obtaining via a union bound a sequence of confidence ``boxes" for mean and variance simultaneously. We shall return to and compare our bounds with some of these results later.


We shall focus in this paper primarily on the construction of \emph{e-processes}, which are sequences of test statistics whose realized values reflect the evidence against the null in favor of the alternative. These e-processes can  be thresholded and/or inverted to yield nonasymptotic, conservative, ``anytime-valid'' hypothesis tests and confidence intervals (called ``confidence sequences"), \revise{but we emphasize that they have clear interpretations without thresholding them to yield tests}. Indeed, these e-processes directly measure evidence against the null: values below one are expected under the null, and large(r) values indicate (more) evidence against the null.

Nonnegative supermartingales are the prototypical examples of e-processes \citep{ramdas2022testing}. We emphasize that we are testing a composite null hypothesis, and it is not a priori obvious how to construct such ``simultaneous'' or ``composite'' nonnegative supermartingales or e-processes, or to compare various options. We shall formalize these concepts in greater detail in \cref{sec:seq-stat,sec:testproc}. The main contributions of this paper are listed as follows:
\begin{enumerate}
    \item We apply the method of ``Universal Inference", a variant of the likelihood ratio test by \cite{wasserman2020universal}, to construct a t-test e-process for the point null\footnote{\revise{In this paper, a ``point null'' refers to a null hypothesis that a parameter of interest takes a specific value. Since it possibly contains multiple distributions, this is different from a ``simple null''.}} $\mu = \mu_0$, as well as for the one-sided composite null $\mu \le \mu_0$, implying respectively a two-sided and a one-sided confidence sequence for the mean $\mu$, both of which are new.

    \item We fill in the missing details of \citeauthor{lai1976confidence}'s \citeyearpar{lai1976confidence} sequential t-test, looking into the curious question of why his approach yields a confidence sequence without an e-process, using the theory of non-\revise{integrable} nonnegative supermartingales \citep{ensm}.

    \item We introduce a simple variant of  \cite{lai1976confidence}, deriving instead closed-form e-processes for the null $\mu = \mu_0$. The resulting confidence sequence enjoys improved tightness, and unlike \citeauthor{lai1976confidence}'s original, does not require numerical inversion or thresholding.

    \item We derive the information-theoretic lower bound on the width of confidence intervals, and the upper bound on the growth of e-processes for the t-test, both of which are attained, or almost attained, by our results (Contributions 1 and 3 in this list). We also show that other conservative sequential t-tests available in the literature (\citeauthor{lai1976confidence}'s original method, the plug-in method by \cite{jourdan2023dealing}, the median method by \cite{quantile}), and very surprisingly, \emph{even the fixed-time classical t-test}, fail to be optimal \revise{in the rate with respect to the sample size $n$ and/or the type 1 error level $\alpha$.} 

     {\item We conduct a detailed comparison among tests we derive and in the literature in terms of (a) which filtration they are safe for optional stopping in (the larger the better); (b) what their e-powers are against a fixed alternative (the larger the better); and (c) for their derived confidence sequences, the width or expected width, especially as a function of sample size $n$ and level $\alpha$.} 
    
\end{enumerate}



\section{Preliminaries}\label{sec:prelims}

\subsection{Notations}

A universal probability space $(\Omega, \mathcal{A}, \Pr)$ 
is used when we consider the randomness of data, \revise{where the expectation is denoted by $\Exp$}. We use the indexed letters $X_1, X_2, \dots$ to denote an infinite stream of observations, taking values in $\mathbb R$.
We denote by $\{ \cF_n \}_{n \ge 0}$ the canonical filtration generated by the data, i.e.\ $\cF_n  = \sigma(X_1, \dots, X_n)$.
Our shorthand notations on the sample include the partial sum $S_n = \sum_{i=1}^n X_i$, the sample average $\avgX{n}= S_n/n$, the partial sum of squares $V_n = \sum_{i=1}^n X_i^2$, the average square $\avgXsq{n} = V_n/n$, the sample variance without Bessel's correction $s_{n}^2 = V_n/n - S_n^2/n^2$, and the t-statistic $T_{n-1}=\sqrt{n-1} \frac{S_n - n\mu_0}{ \sqrt {n V_n - S_n^2} }$ where $\mu_0$ is the actual mean of $X_1, X_2,\dots$.

Usually, we consider i.i.d.\ observations $X_1, X_2, \dots$; and we use italic $P$ etc.\ to denote distributions over $\mathbb R$, while calligraphic $\mathcal{P}$ etc.\ to denote classes of distributions. 
\revise{For a sample-dependent event $A$, we use the notation $P(A)$ for the probability $\Pr[A]$ when $X_1, X_2, \dots \iid P$; and for a random variable realized as a function of the sample $Z = Z(X_1,\dots, X_n)$, we write $\EE_P(Z)$ for the expectation $\Exp[Z]$ when $X_1, X_2, \dots \iid P$.}
The conditional expectation $\EE_P( \, \cdot \, | \cF)$ given $\cF \subseteq \cA$ is defined similarly. 
Univariate normal distribution with mean $\mu$ and variance $\sigma^2$ will be referred to
as $\normal{\mu}{\sigma^2}$ -- so ``$\normal{0}{1}(X_1 > 0) = 0.5$'' means ``$\Pr[X_1 > 0] = 0.5$ when $X_1, X_2 , \dots \iid$ standard normal''; its density is denoted by $p_{\mu, \sigma^2}$. The class of all normal distributions on $\mathbb R$  (excluding the degenerate $\sigma = 0$ case) is denoted by $\normals$. Classes of normal distributions with specified mean (or range of means) but any positive variance are denoted by, e.g., $\normals_{\mu = \mu_0}$, $\normals_{\mu \le \mu_0}$; those with specified standardized mean by, e.g., $\normals_{\mu/\sigma = \theta_0}$.

Occasionally, we allow the observations $X_1, X_2, \dots$ to be non-i.i.d., in which case we use blackboard bold italic letters like $\mathbbmsl P$ to denote distributions on $\mathbb R \times \mathbb R \times \dots$, i.e.\ the distributions of the entire stochastic processes. The notations $\mathbbmsl P(\, \cdot \, )$, $\EE_{\mathbbmsl P} (\, \cdot \, )$, and $\EE_{\mathbbmsl P} (\, \cdot \, | \cF)$ are used for the probabilities and expectations when $X_1, X_2,\dots \sim \mathbbmsl P$.

\subsection{Sequential Statistics}
\label{sec:seq-stat}

\revise{The theory of sequential hypothesis testing} is formulated as testing the null $H_0 : P \in \mathcal{P}_0$ against the alternative $H_1: P \in \mathcal{P}_1$, \revise{where the statistician} makes a decision (to reject, \revise{accept, or sample further}) each time a new data point $X_n \sim P$ is seen until rejection \revise{or acceptance}. \revise{In the formulation of \cite{wald1945sequential,wald1947sequential}, the decision scheme is required to control
the type 1 error rate $P(H_0 \text{ is ever rejected})$ for $P \in \mathcal{P}_0$ within a prescribed level $\alpha$, and the type 2 error rate $P(H_0 \text{ is ever accepted})$ for $P \in \mathcal{P}_1$ within $\beta$. A slightly different formulation, proposed by \cite{darling1968some}, only requires the type 1 error rate to be controlled within $\alpha$, but the decision schemes under this constraint are often proved to have ``power one'', i.e.,\ $P(H_0 \text{ is ever rejected}) = 1$ for $P \in \mathcal{P}_1$. We shall follow the latter formulation. We derive sequential tests of type 1 error rate strictly bounded by $\alpha$ (``level-$\alpha$ sequential tests'') and prove their power guarantees. These power guarantees are stated in terms of ``e-power'', to be formally introduced in \cref{sec:epwr}, which is a finer-grained quantification of power than the rejection rate $P(H_0 \text{ is ever rejected})$ under $P \in \cP_1$.}


The classical duality between tests and interval estimates also manifests in the sequential setting. Let $\mathcal{P}$ be a family of distributions and let $\theta : \mathcal{P} \to \mathbb R$ be the parameter of interest. A $(1-\alpha)$-\emph{confidence sequence} (CS) \citep{darling1967confidence} over $\mathcal{P}$ for $\theta$ is a sequence of confidence intervals $\{ \CI_n \}_{n \ge 1}$ such that, for any $P \in \mathcal{P}$, $P( \forall n \ge 1, \; \theta(P) \in \CI_n )\ge 1-\alpha$; or equivalently, $P( \theta(P) \in \CI_\tau ) \ge 1-\alpha$ for any stopping time $\tau$ \revise{(see Lemma 3 of  \cite{howard2021time})}. If the null $\mathcal{P}_0 \subseteq \mathcal{P}$ is defined as a preimage $\theta^{-1}(\theta_0)$, a sequential test can be constructed via rejection $H_0$ whenever $\theta_0 \notin \CI_n$, and vice versa. In this paper, the asymptotic tightness of a 
confidence sequence is measured in terms of (1) its \emph{growth rate}, the rate at which its width increases when $n$ and the sample are fixed while $\alpha \to 0$ (or while $\alpha \downarrow \alpha_{\star}(n)$ if $\alpha$ is not allowed to be below $\alpha_{\star}(n)$); and (2) its \emph{shrinkage rate}, the rate at which its width decreases when $\alpha$ is fixed while $n \to \infty$). \revise{For example, it is known that the minimax rates of growth $\sqrt{\log(1/\alpha)}$ and shrinkage $\sqrt{\log \log(n)/n}$ can be obtained in CSs for the mean over all 1-subGaussian random variables \citep{howard2021time,waudby2020estimating}, and also all random variables with variance \revise{upper bounded by some constant $\sigma^2$} \citep{wang2022catoni}. As we shall see soon, the optimal rates for the t-test are larger.}

\subsection{Test Processes and \revise{Their Maximal Inequalities}}
\label{sec:testproc}

\revise{There has been a recent revival of the idea, originally due to \cite{robbins1970statistical}, of constructing} sequential tests and confidence sequences 
via nonnegative supermartingales (NSMs) and more generally, e-processes
 \citep{howard2021time,ramdas2022testing}. Let us \revise{define them below.} 


Consider a stochastic process $\{ M_n \}_{n \ge 0}$ where each $M_n$ is defined as a $[0,\infty]$-valued function of $X_1, \dots, X_n$. We say it is
\begin{itemize}
    \item a \emph{nonnegative supermartingale} (NSM) for $P$  on a filtration $\{ \cG_n \}_{n\ge 0}$ if $\EE_P(M_0) < \infty$, $M_n$ is $\cG_n$-measurable and $\EE_P( M_{n+1} |  \cG_{n}) \le M_{n}$ for all $n\ge 0$;
    \item a \emph{nonnegative martingale} (NM) for $P$ if, further, the equality $\EE_P( M_{n+1} |  \cG_{n}) =  M_{n}$ holds. 
\end{itemize}

Recently, \cite{ensm} defined a wider class of processes by dropping the integrability assumption $\EE_P(M_0) < \infty$, as the conditional expectation $\EE_P(M_{n+1} | \cG_{n})$ can still be well-defined as long as $M_{n+1}$ is nonnegative. We say the process $\{ M_n \}$ is

\begin{itemize}
    \item an \emph{extended nonnegative supermartingale} (ENSM) for $P$ on $\{ \cG_n \}$ if $M_n$ is $\cG_n$-measurable and $\EE_P(M_{n+1} | \cG_{n}) \le M_{n}$ are satisfied, irrespective of the finiteness of $\EE_P(M_0)$.
\end{itemize}
\revise{We say the process $\{M_n\} $ is NSM (or NM, ENSM) for $\cP$ if it is an NSM (or NM, ENSM) for all $P \in \cP$. When an NSM (or NM) for $\cP$ has an initial expected value $\EE_P(M_0) = 1$ for all $P\in\cP$, we call it a test supermartingale (or test martingale) for $\cP$. We call a nonnegative random variable $M = M(X_1, \dots, X_n)$ with $\sup_{P \in \cP} \EE_P(M) \le 1$ an \emph{e-value} for $\cP$.} Further, we say the process $\{ M_n \}$ is
\begin{itemize}
    \revise{\item an \emph{e-process} for $\cP$ on $\{ \cG_n \}$ if for any $\{ \cG_n \}$-adapted stopping time $\tau$, $M_\tau$ is an e-value for $\cP$; i.e., $\sup_{P \in \cP}\EE_P( M_\tau ) \le 1$.}
\end{itemize}
  Note that the filtration $\{ \cG_n \}$ is usually the canonical one $\{ \cF_n \}$, but as we shall see later, other non-trivial choices will lead to interesting results. \revise{NMs and NSMs for $\cP$ are clearly e-processes for $\cP$, on the same filtration. More generally, the following equivalent characterization of e-processes is due to \citet[Lemma 6]{ramdas2020admissible}: 
\begin{itemize}
    \item A nonnegative process $\{M_n\}$ adapted to $\{\cG_n\}$ is an e-process for \revise{$\cP$} on $\{ \cG_n \}$ if and only if for each $P \in \cP$, there is a test supermartingale $\{ 
N_n^P \}$ for $P$ on $\{ \cG_n \}$  that upper bounds $\{M_n\}$: $P(M_n \le N_n^P) = 1$.
\end{itemize}

}

A parametrized family of NSMs or ENSMs can lead to a new NSM or ENSM by \emph{mixture}. If $\{ 
M_n(\theta) \}$ is an NSM for $\cP$ for any $\theta \in \Theta$, so is the mixture $\{ 
\int M_n(\theta) \mu(\d \theta) \}$ for any finite measure $\mu$ over $\Theta$, under mild measurability assumptions; if each is an ENSM for $\cP$, the mixture is still an ENSM for $\cP$ as long as $\mu$ is $\sigma$-finite \citep[Section 5.2]{ensm}. Similar results hold straightforwardly for the mixtures of e-processes as well.

An important property of these processes is Ville's inequality \citep{ville1939etude} and the recently discovered \emph{extended} Ville's inequality \citep[Theorem 4.1]{ensm}. These maximal inequality bounds the crossing probability of a NSM (and hence e-process) or ENSM over an unbounded time horizon.
\begin{lemma}[Ville's inequality]\label{lem:ville}
If $\{ M_n \}_{n \ge 0}$ is an NSM for $\cP$, for all $P \in \cP$, $\varepsilon \in (0, 1)$,
\begin{equation}
    P( \exists n \ge 0 , \ M_n \ge \varepsilon  ) \le \varepsilon^{-1}\EE_P(M_0).
\end{equation}
Consequently, if $\{ M_n \}_{n \ge 0}$ is an e-processes for $\cP$, for all $P \in \cP$, $\varepsilon \in (0, 1)$,
\begin{equation}
    P( \exists n \ge 0 , \ M_n \ge \varepsilon  ) \le \varepsilon^{-1}.
\end{equation}
\end{lemma}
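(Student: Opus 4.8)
The plan is to prove both displays as instances of the standard maximal-inequality argument for nonnegative supermartingales: stop the process at the first threshold crossing, truncate at a finite horizon so that optional stopping applies, sandwich the stopped expectation between an upper and a lower bound, and then let the horizon tend to infinity.

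For the NSM statement, fix $P \in \cP$ and $\varepsilon \in (0,1)$, and introduce the hitting time $\tau = \inf\{ n \ge 0 : M_n \ge \varepsilon \}$, with the convention $\inf \emptyset = \infty$. Since each $M_n$ is $\cG_n$-measurable, $\tau$ is a $\{\cG_n\}$-stopping time, and for any fixed horizon $N$ the truncated time $\tau \wedge N$ is a bounded stopping time. Because $\{M_n\}$ is a nonnegative supermartingale for $P$ with $\EE_P(M_0) < \infty$, the stopped process $\{M_{\tau \wedge n}\}$ is again a supermartingale issued at $M_0$, so optional stopping at the bounded time $\tau \wedge N$ yields the upper bound $\EE_P(M_{\tau \wedge N}) \le \EE_P(M_0)$. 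I would then bound the same quantity from below: on the event $\{\tau \le N\}$ we have $\tau \wedge N = \tau$ and, since we are in discrete time, $M_\tau \ge \varepsilon$ by definition of $\tau$; on the complement $M_{\tau \wedge N} \ge 0$. Hence
\[
    \EE_P(M_{\tau \wedge N}) \ge \EE_P\!\left( M_\tau \, \mathbbm{1}\{\tau \le N\} \right) \ge \varepsilon \, P(\tau \le N).
\]
Combining the two bounds gives $P(\tau \le N) \le \varepsilon^{-1} \EE_P(M_0)$, and letting $N \to \infty$ along the increasing events $\{\tau \le N\} \uparrow \{\tau < \infty\} = \{\exists n \ge 0, \ M_n \ge \varepsilon\}$, continuity of $P$ from below delivers the first claim.

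For the e-process statement, the word ``consequently'' suggests reducing to the NSM case via the characterization of e-processes recalled just above: for the fixed $P$ there is a test supermartingale $\{N_n^P\}$ for $P$ on $\{\cG_n\}$, with $\EE_P(N_0^P) = 1$, that dominates $\{M_n\}$ in the sense $P(M_n \le N_n^P) = 1$ for each $n$. Upgrading these per-$n$ almost-sure inequalities to the simultaneous statement $P(\forall n, \ M_n \le N_n^P) = 1$ by a countable intersection of null sets, the crossing event for $M$ is contained (up to a null set) in the crossing event for $N^P$, so applying the already-proved NSM bound to $\{N_n^P\}$ gives $P(\exists n \ge 0, \ M_n \ge \varepsilon) \le \varepsilon^{-1}\EE_P(N_0^P) = \varepsilon^{-1}$. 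Alternatively, one may argue directly without the characterization: since $\tau \wedge N$ is an adapted stopping time, the defining property of an e-process gives $\EE_P(M_{\tau \wedge N}) \le 1$ in place of the optional-stopping upper bound, after which the identical lower bound and limit argument apply.

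The main point requiring care is not any deep idea but the truncation-and-limit bookkeeping: one must stop at the \emph{bounded} time $\tau \wedge N$ so that optional stopping (respectively the e-process definition) applies to a genuine finite stopping time before sending $N \to \infty$, and one must use nonnegativity twice, once to discard the contribution on $\{\tau > N\}$ in the lower bound and once to justify the monotone passage to the unbounded horizon. In the e-process reduction, the only additional subtlety is the countable-intersection step that turns the pointwise domination $P(M_n \le N_n^P)=1$ into the simultaneous domination needed to compare crossing events.
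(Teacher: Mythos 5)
Your proof is correct, and it is the standard argument: the paper itself offers no proof of this lemma, citing Ville (1939) for the NSM case and implicitly relying, for the ``Consequently'' clause, on the quoted characterization of e-processes as processes dominated $P$-almost surely by test supermartingales for each $P \in \cP$ --- precisely the reduction you carry out. Both your truncation-and-limit bookkeeping (stopping at the bounded time $\tau \wedge N$ before passing to the monotone limit) and your countable-intersection upgrade of the per-$n$ domination to simultaneous domination are handled correctly, so there is nothing to flag.
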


\begin{lemma}[Extended Ville's inequality]\label{lem:evi}
    Let $\{ M_n \}_{n \ge 0}$ be an ENSM for $\cP$. Then for all $P \in \cP$, $\varepsilon \in (0, 1)$,
\begin{equation}\label{eqn:evi}
    P( \exists n \ge 0 , \ M_n \ge \varepsilon  ) \le \varepsilon^{-1}\EE_P( \id_{ \{ M_0 < \varepsilon \} } M_0) + P( M_0 \ge \varepsilon ).
\end{equation}
\end{lemma}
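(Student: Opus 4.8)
The plan is to reduce the extended inequality \eqref{eqn:evi} to the ordinary Ville's inequality (\cref{lem:ville}) by truncating the process at its initial value. The whole obstruction to invoking \cref{lem:ville} directly is that $\EE_P(M_0)$ may be infinite for an ENSM; so the idea is to ``kill'' the process on the event where it already starts large, leaving behind an honestly integrable NSM to which the classical bound applies, and then account separately for the killed event.

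Concretely, I would fix $P \in \cP$ and $\varepsilon \in (0,1)$ and introduce the truncated process $\tilde M_n := \id_{\{M_0 < \varepsilon\}} M_n$. Since $\{M_0 < \varepsilon\}$ is $\cG_0$-measurable, hence $\cG_n$-measurable for every $n$, pulling the indicator outside the conditional expectation is legitimate, and one gets $\EE_P(\tilde M_{n+1} \mid \cG_n) = \id_{\{M_0 < \varepsilon\}}\EE_P(M_{n+1}\mid \cG_n) \le \id_{\{M_0<\varepsilon\}} M_n = \tilde M_n$, so $\{\tilde M_n\}$ is again an ENSM for $P$. The gain is at time $0$: by construction $\tilde M_0 = M_0\,\id_{\{M_0 < \varepsilon\}} \le \varepsilon$ almost surely, whence $\EE_P(\tilde M_0) \le \varepsilon < \infty$, and integrability then propagates to all $n$ by the tower property. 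Thus $\{\tilde M_n\}$ is a genuine integrable NSM, and \cref{lem:ville} yields $P(\exists n \ge 0,\ \tilde M_n \ge \varepsilon) \le \varepsilon^{-1}\EE_P(\tilde M_0) = \varepsilon^{-1}\EE_P(\id_{\{M_0<\varepsilon\}} M_0)$.

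The second step is a purely deterministic set inclusion tying the truncated crossing event back to the original one. On $\{M_0 \ge \varepsilon\}$ the process $M$ already crosses at $n=0$; on the complement $\{M_0 < \varepsilon\}$ one has $\tilde M_n \equiv M_n$, so every crossing of $M$ is a crossing of $\tilde M$. This gives $\{\exists n,\ M_n \ge \varepsilon\} \subseteq \{M_0 \ge \varepsilon\} \cup \{\exists n,\ \tilde M_n \ge \varepsilon\}$, and a union bound combined with the previous estimate produces exactly the right-hand side of \eqref{eqn:evi}.

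I do not anticipate a genuine obstacle here: the only points requiring care are verifying that the indicator truncation preserves the supermartingale inequality, which works precisely because the indicator is $\cG_0$-measurable and nonnegative, and confirming that $\tilde M_0 \le \varepsilon$, which is what legitimizes the passage from the extended setting back to the integrable regime where \cref{lem:ville} is available.
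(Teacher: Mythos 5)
Your proof is correct. Note that the paper itself does not prove Lemma~\ref{lem:evi}: it imports the result wholesale from the cited reference (Theorem 4.1 of the ENSM paper), so there is no in-paper argument to compare against. Your truncation argument is a valid, self-contained derivation: the indicator $\id_{\{M_0 < \varepsilon\}}$ is $\cG_0$-measurable and nonnegative, so it pulls out of the (extended) conditional expectation and preserves the supermartingale inequality; the truncated process $\tilde M_n = \id_{\{M_0 < \varepsilon\}} M_n$ satisfies $\tilde M_0 \le \varepsilon$, hence is integrable at time $0$ and, by the tower property, at all times, so the classical Ville's inequality (Lemma~\ref{lem:ville}) applies and gives exactly the term $\varepsilon^{-1}\EE_P(\id_{\{M_0 < \varepsilon\}} M_0)$; and the set inclusion $\{\exists n,\ M_n \ge \varepsilon\} \subseteq \{M_0 \ge \varepsilon\} \cup \{\exists n,\ \tilde M_n \ge \varepsilon\}$ holds because $\tilde M_n \equiv M_n$ on $\{M_0 < \varepsilon\}$, after which the union bound yields \eqref{eqn:evi}. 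This matches, in essence, the reduction-to-the-integrable-case strategy used in the cited source, and it is arguably the most economical route given that Lemma~\ref{lem:ville} is already available in the paper.
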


Lemmas~\ref{lem:ville} and~\ref{lem:evi} suggest that, if $\cP_0$ is a set of null distributions, NSMs, e-processes, and ENSMs for $\cP_0$ can be seen as \emph{measurements of evidence} against the null $H_0: P \in \mathcal{P}_0$. To see that, if we have at hand an NSM or e-process $\{  M_n \}$ for $\cP_0$ such that $M_0 = 1$, we may reject the null $\cP_0$ whenever the process $M_n$ exceeds $1/\alpha$. This, due to Lemma~\ref{lem:ville}, controls the sequential type 1 error $P( H_0 \text{ is ever rejected}  )$ within $\alpha$ for any $P \in \cP_0$. Similarly, if we have an ENSM $\{M_n\}$ for $\cP_0$, we may set the right hand side of \eqref{eqn:evi} to $\alpha$ and solve the corresponding $\varepsilon$, rejecting $H_0: P \in \cP_0$ whenver $M_n$ exceeds $\varepsilon$.  We refer to test (super)martingales, e-processes, and ENSMs for $\cP_{0}$ as \emph{test processes} for the null $\cP_{0}$.

A confidence sequence for a parameter $\theta$ follows in the same manner as mentioned in \cref{sec:seq-stat} if we have an NSM or an ENSM $\{ M^{\theta_0}_n \}$ for every $\cP_{\theta_0} = \{ P \in \cP : \theta(P) = \theta_0 \}$: by solving the inequality $M_n^{\theta_0} \le 1/\alpha$ or $M_n^{\theta_0} \le \varepsilon$ in $\theta_0$, that is. An illustrative comparison between NSMs and ENSMs in the Gaussian case with known variance for both constructing tests and confidence sequences is provided recently by \citet[Section 5]{ensm}.

Of course, all above is applicable to the non-i.i.d.\ case as well. NMs  etc.\ for $\mathbbmsl P$ are defined in similar manners. NMs, NSMs, ENSMs, and e-processes for $P$ (or $\cP$, $\mathbbmsl P$) are all referred to as {test processes} for $P$ (or $\cP$, $\mathbbmsl P$).

\subsection{e-Power of Test Processes}
\label{sec:epwr}
Desirably, a powerful {test process} shrinks or stays small under the null $\cP_0$, and grows under the alternative $\cP_1$ so rejection can happen.  To measure the power of a test process under a fixed alternative distribution $Q \in \cP_1$, we define the \emph{e-power} of \revise{test processes as follows \citep{vovk2022efficiency}.

\begin{definition}[e-power]\label{def:epwr}
    For a test process $\{ M_n \}$, we define its e-power as the $Q$-almost sure limit $\liminf_{n\to\infty} \frac{\log M_n}{n}$. For a single e-value $M = M(X_1, \dots, X_n)$, we define its e-power as $\EE_Q (\log M)$.
\end{definition}
The motivation, we note, is that it quantifies the rate at which the process grows exponentially under the alternative, i.e., how fast evidence accumulates. The value of an e-process is often intuitively understood as one's total capital when betting against a casino pricing according to the null (hence the term ``game-theoretic statistics'', see e.g.\ \cite{shafer2005probability}).
The idea of quantifying and optimizing the growth of a capital process by its expected log-value can be traced back to \cite{kelly1956new,breiman1961optimal}, among other classical references in the literature. \cite{breiman1961optimal}, for example, justifies this log-optimality criterion by noting that ``under reasonable betting systems $M_n$ increases exponentially and maximizing $\EE_Q(\log M_n)$ maximizes the rate of growth.''

More concretely, many e-processes in the literature take a multiplicative form $M_n = \prod_{i=1}^n M(X_i)$ where $M(X_i)$'s are i.i.d.\ e-values. In this case, the e-power of the entire process $\{M_n\}$ and that of the individual e-values $M(X_i)$ coincide due to the strong law of large numbers, and the growth is characterized asymptotically as
$M_n = \exp(  n \EE_Q( 
\log M_1 ) + o_Q(n) )$. Alternatively, one can also informally say that the e-power $\EE_Q(\log M_1)$ is roughly inversely proportional to the expectation of the rejection time $\tau = \min \{ n : M_n \ge 1/\alpha  \}$ as
\begin{equation}
  \log (1/\alpha) \approx \EE_Q ( \log M_{\tau}  ) = \EE_Q(\log M_1) \cdot \EE_Q (\tau),
\end{equation}
where the equality is due to Wald's identity (see e.g.\ \citet[Theorem 2.1]{roters1994validity}). More formally, lower bounds are known for the expected rejection time of simple testing problems, attained exactly by the inverse of e-power of the likelihood ratio martingale, a topic we shall return to in \cref{sec:lrm}.
}



\subsection{Sequential t-Test and t-Confidence Sequences}



Different objectives are pursued for the sequential Gaussian t-test problem. Early authors have studied the problem of testing the null $\normals_{\mu = 0}$ (or more generally $\normals_{\mu/\sigma = \theta_0}$) against the alternative $\normals_{\mu/\sigma = \theta_1}$ \citep{rushton1950sequential,ghosh1960some,sacks1965note}, and its scale-invariant nature translates into the group-invariant setting as the problem of testing orbital nulls and alternatives  \citep{perez2022statistics}.

In this paper, we shall roughly follow the same routine as \cite{lai1976confidence} and aim at power against \revise{a broader range of} alternatives, chiefly focusing instead on the construction of (1) test processes for the null $\normals_{\mu=0}$ (and study their behavior under both null and non-null underlying distributions); (2) Confidence sequences for the population mean $\mu$ over the set of all Gaussian distributions $\normals$, which we shall often shorthand as ``t-confidence sequences", or ``t-CSs" for brevity.
One may note that (1) is sufficient to produce tests for arbitrary point location null $\normals_{\mu = \mu_0}$ by shifting, and consequently (2) by Lemmas~\ref{lem:ville} and~\ref{lem:evi} and inversion. 
Occasionally we are interested in test processes for the one-sided null $\normals_{\mu \le 0}$ as these give rise to one-sided tests and CSs. In fact, we shall later construct test processes for $\normals_{\mu/\sigma = \theta_0}$ which is the strongest statistical quantity in the setting, as it leads to both CSs and tests for the mean $\mu$ and the normalized mean $\mu/\sigma$.

\if
See \cref{fig:cd-various-t-test-tools} below.

\begin{figure}[!h]
    \centering
   \begin{tikzcd}[row sep=huge, column sep=8em]
\text{TP for $\normals_{\mu/\sigma = \theta_0}$} \arrow[d, "\text{take }\theta_0 = 0"] \arrow[r, "\text{VI or EVI}"] & \text{CS over $\normals$ for $\mu/\sigma$} & \\
\text{TP for $\normals_{\mu = 0}$} \arrow[d, "\text{replace each $X_i$ with $X_i - \mu_0$}"]  &  & \text{TP for $\normals_{\mu \le 0}$} \arrow[d, "\text{replace each $X_i$ with $X_i - \mu_0$}"]  \\
\text{TP for $\normals_{\mu = \mu_0}$ } \arrow[r, "\text{VI or EVI}"] & \text{CS over $\normals$ for $\mu$}  & \text{TP for $\normals_{\mu \le \mu_0}$ }  \arrow[l, "\text{VI or EVI}"] 
\end{tikzcd}
    \caption{The relationship between various t-tests.}
    \label{fig:cd-various-t-test-tools}
\end{figure}
\fi

\subsection{Likelihood Ratio Martingales}
\label{sec:lrm}
It is well known that the likelihood ratio process is a nonnegative martingale.
\begin{lemma} \label{lem:lrm} Let $Q \ll P$ be probability distributions on $\mathbb R$.
Then, the process
\begin{equation}
    L_n = \prod_{i=1}^n \frac{\d Q}{\d P}(X_i)
\end{equation}
is a nonnegative martingale for $P$ on the canonical filtration $\{\mathcal{F}_n\}_{n \ge 0}$.
\end{lemma}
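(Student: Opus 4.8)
The plan is to verify directly the three defining properties of a nonnegative martingale from \cref{sec:testproc}: that $L_n$ is nonnegative and $\cF_n$-measurable, that it is $P$-integrable, and that it satisfies the martingale identity $\EE_P(L_{n+1} \mid \cF_n) = L_n$. The single fact that powers everything is that $X_1, X_2, \dots \iid P$, so each new factor $\frac{\d Q}{\d P}(X_{n+1})$ is independent of $\cF_n = \sigma(X_1, \dots, X_n)$. First I would note that because $Q \ll P$, the Radon--Nikodym derivative $\frac{\d Q}{\d P}$ exists and is nonnegative $P$-almost everywhere; hence each factor in the product is nonnegative and $L_n$ is nonnegative. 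Adaptedness is immediate since $L_n$ is a measurable function of $X_1, \dots, X_n$ alone.

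The key computation I would isolate is the normalization identity for a single observation,
\begin{equation}
    \EE_P\!\left( \tfrac{\d Q}{\d P}(X_1) \right) = \int_{\mathbb R} \tfrac{\d Q}{\d P} \, \d P = \int_{\mathbb R} \d Q = Q(\mathbb R) = 1,
\end{equation}
where the middle equality is the defining property of the Radon--Nikodym derivative and the last uses that $Q$ is a probability measure. By independence of the $X_i$ under $P$, this immediately yields $\EE_P(L_n) = \prod_{i=1}^n \EE_P(\frac{\d Q}{\d P}(X_i)) = 1 < \infty$, settling integrability (and in fact showing $\{L_n\}$ is a test martingale with $\EE_P(L_0) = 1$).

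For the martingale identity I would factor $L_{n+1} = L_n \cdot \frac{\d Q}{\d P}(X_{n+1})$, then pull the $\cF_n$-measurable factor $L_n$ out of the conditional expectation and use that $X_{n+1}$ is independent of $\cF_n$ to drop the conditioning:
\begin{equation}
    \EE_P(L_{n+1} \mid \cF_n) = L_n \, \EE_P\!\left( \tfrac{\d Q}{\d P}(X_{n+1}) \mid \cF_n \right) = L_n \, \EE_P\!\left( \tfrac{\d Q}{\d P}(X_{n+1}) \right) = L_n,
\end{equation}
the last step invoking the normalization identity above.

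This statement is essentially classical, so I do not expect a genuine obstacle; the only points requiring care are bookkeeping ones. The mild subtlety is interpreting $\frac{\d Q}{\d P}(X_i)$ correctly as the fixed scalar density function $x \mapsto \frac{\d Q}{\d P}(x)$ evaluated at the $i$-th observation, so that the $n$ factors are i.i.d.\ copies of one $P$-integrable random variable; once that is pinned down, the ``pull out what is known, then use independence'' step is routine. I would also remark that integrability is what distinguishes this genuine NM from the merely extended (nonintegrable) martingales of \cite{ensm} that appear later in the paper, so the finiteness of $\EE_P(L_n)$ is worth stating explicitly rather than glossing over.
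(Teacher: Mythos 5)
Your proof is correct: every step (nonnegativity, adaptedness, the normalization $\int \frac{\d Q}{\d P}\,\d P = Q(\mathbb R) = 1$, and the pull-out-then-use-independence factorization) is valid, and since $L_{n+1} \ge 0$ there is no issue with taking out the $\cF_n$-measurable factor $L_n$. It is worth noting, though, that the paper never proves Lemma~\ref{lem:lrm} directly; it records it as classical and instead proves the strictly stronger Lemma~\ref{lem:lrm-general}, in which the numerator $Q$ is replaced by a predictable Markov kernel $Q_n$ depending on $X_1,\dots,X_{n-1}$. In that generality your central trick — dropping the conditioning because $\frac{\d Q}{\d P}(X_{n+1})$ is independent of $\cF_n$ — is unavailable, since the $(n+1)$-th factor $\frac{\d Q_{n+1}}{\d P}(X_{n+1})$ is \emph{not} independent of the past. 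The paper therefore verifies the identity $\EE_P\bigl( \frac{\d Q_n}{\d P}(X_n) \mid X_1,\dots,X_{n-1} \bigr) = 1$ definitionally: it uses the Doob--Dynkin factorization to write the kernel as a measurable function of the past, integrates against the indicator of an arbitrary measurable $A \subseteq \mathbb R^{n-1}$, and applies Fubini together with the same normalization identity you isolate. So the two arguments share the same kernel computation ($\int \frac{\d Q}{\d P}\,\d P = 1$), but yours buys brevity in the i.i.d.\ fixed-$Q$ case, while the paper's definitional verification buys the generality needed for the plug-in (universal inference) martingales used later, of which Corollary~\ref{cor:plugin-lr} is the key instance.
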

\revise{Recalling Definition~\ref{def:epwr}, the e-power of $\{L_n\}$ equals the Kullback-Leibler divergence $\kl(Q\| P)$. The growth of $L_n = \exp( n \kl(Q\| P) + o_Q(n) )$ is optimal in this point versus point case, matching the  ${\Theta}\left( \log(1/\alpha) \kl(Q\| P)^{-1} \right)$ lower bound in the expected rejection time with type 1 error rate $\alpha$. See e.g.\ \citet[Section 2]{garivier2019explore} and \citet[Lemma 1]{kaufmann2016complexity} for this lower bound in the equivalent bandit formulation.}

We shall use two stronger forms of Lemma~\ref{lem:lrm} stated below and proved in \cref{sec:pf-lr}. First, the distribution on the numerator, $Q$, can be varying and depend on previous observations.

\begin{lemma} \label{lem:lrm-general} Let $P$ be a probability distribution on $\mathbb R$. For each $n \ge 1$ let $Q_n: \Omega \times \mathcal{B}(
\mathbb R) \to \mathbb R$ be a Markov kernel such that: 1) $Q_n(\, \cdot \,, B)$ is $\mathcal{F}_{n-1}$-measurable for all $B \in \mathcal{B}(\mathbb R)$, and 2) $Q_n(\omega, \, \cdot \,) \ll P$ for \revise{$P$-almost} all $\omega \in \Omega$.
Then, the process
\begin{equation}
    L_n = \prod_{i=1}^n \frac{\d Q_i}{\d P}(X_i) 
\end{equation}
is a nonnegative martingale for $P$ on the canonical filtration $\{\mathcal{F}_n\}_{n \ge 0}$.
\end{lemma}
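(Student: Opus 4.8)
The plan is to verify the three defining properties of a nonnegative martingale — adaptedness, nonnegativity, and the one-step conditional expectation identity — for the telescoping product $L_n = L_{n-1}\cdot D_n$, where I abbreviate the increment $D_n := \frac{\d Q_n}{\d P}(X_n)$ and $L_0 = 1$. Nonnegativity is immediate since each Radon--Nikodym derivative $\frac{\d Q_n}{\d P}$ is a $[0,\infty)$-valued function. For adaptedness, $L_{n-1}$ is $\mathcal{F}_{n-1}$-measurable by induction, while $D_n$ is $\mathcal{F}_n$-measurable because it is built from the kernel $Q_n$ (which is $\mathcal{F}_{n-1}$-measurable in $\omega$ by hypothesis~1) evaluated at the $\mathcal{F}_n$-measurable point $X_n$; hence $L_n$ is $\mathcal{F}_n$-measurable. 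It then remains only to establish the increment identity $\EE_P(D_n \mid \mathcal{F}_{n-1}) = 1$, since pulling the $\mathcal{F}_{n-1}$-measurable factor $L_{n-1}$ out of the conditional expectation yields $\EE_P(L_n \mid \mathcal{F}_{n-1}) = L_{n-1}\,\EE_P(D_n\mid\mathcal{F}_{n-1}) = L_{n-1}$.

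The crux is this increment identity, and the key observation is that under the i.i.d.\ law $P$ the new datum $X_n$ is independent of $\mathcal{F}_{n-1} = \sigma(X_1,\dots,X_{n-1})$, while the kernel $Q_n$ is a function of $X_1,\dots,X_{n-1}$ alone. Writing the density as a jointly measurable map $g(\omega, x) = \frac{\d Q_n(\omega,\cdot)}{\d P}(x)$ whose first argument is $\mathcal{F}_{n-1}$-measurable, I would invoke the standard ``freezing'' (take-out-what-is-known-plus-independence) rule for conditional expectations to compute
\begin{equation}
  \EE_P(D_n \mid \mathcal{F}_{n-1})(\omega) = \int_{\mathbb{R}} g(\omega, x)\, P(\d x) = \int_{\mathbb{R}} \frac{\d Q_n(\omega,\cdot)}{\d P}(x)\, P(\d x) = Q_n(\omega, \mathbb{R}) = 1,
\end{equation}
where the penultimate equality is the defining property of the Radon--Nikodym derivative (valid by hypothesis~2, $Q_n(\omega,\cdot)\ll P$ for $P$-a.e.\ $\omega$) and the last uses that $Q_n(\omega,\cdot)$ is a probability measure. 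This proves the claim, with Lemma~\ref{lem:lrm} recovered as the special case of a constant kernel $Q_n \equiv Q$.

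The main technical obstacle I anticipate is purely measure-theoretic: to apply the freezing rule I need a genuinely jointly measurable version of $(\omega,x)\mapsto g(\omega,x)$, since hypothesis~1 only guarantees $\omega$-measurability of $Q_n(\cdot, B)$ for each fixed $B$ and not, a priori, joint measurability of the density in $(\omega,x)$. I would address this by appealing to the existence of a jointly measurable selection of Radon--Nikodym densities for a dominated Markov kernel (equivalently, a measurable family of conditional densities obtained via disintegration), which is standard. Once such a version is fixed, the freezing identity and the interchange of conditional expectation with integration are routine, and the $P$-null set of exceptional $\omega$ on which $Q_n(\omega,\cdot)\ll P$ may fail does not affect the conditional expectation.
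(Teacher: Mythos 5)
Your proposal is correct and takes essentially the same approach as the paper: both reduce the claim to the one-step identity $\EE_P\bigl(\tfrac{\d Q_n}{\d P}(X_n)\mid \cF_{n-1}\bigr)=1$, which the paper verifies by Doob--Dynkin factorization of the kernel followed by Fubini on the product measure $P^{\otimes n}$ --- exactly the computation underlying the independence/freezing rule you invoke. If anything, your explicit treatment of the joint measurability of $(\omega,x)\mapsto \frac{\d Q_n(\omega,\cdot)}{\d P}(x)$ addresses a technicality that the paper's proof uses implicitly without comment.
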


Second, the data does \emph{not} have to be i.i.d.\ \citep[Section 2]{lai1976confidence}. Recall that if a stochastic process $X_1,X_2,\dots$ has distribution $\mathbbmsl P$, the joint distribution of $(X_1,\dots, X_n)$ is the push-forward measure of $\mathbbmsl P$ under the coordinate map $(x_1,\dots) \mapsto (x_1, \dots, x_n)$, which we denote by $\mathbbmsl P_{(n)}$.

\begin{lemma}\label{lem:lrm-joint}
    Let $\mathbbmsl P$ and $\mathbbmsl Q$ be probability distributions on $\mathbb R \times \mathbb R \times \dots$ such that $\mathbbmsl Q_{(n)} \ll \mathbbmsl P_{(n)}$ for all $n$.
    Then, the process
    \begin{equation}
     L_0 = 1, \quad   L_n = \frac{\d \mathbbmsl Q_{(n)}}{\d \mathbbmsl P_{(n)}}(X_1, \dots, X_n) 
    \end{equation}
    is a nonnegative martingale for $\mathbbmsl P$ on the canonical filtration $\{\mathcal{F}_n\}_{n \ge 0}$.
\end{lemma}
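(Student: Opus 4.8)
The plan is to verify the four defining properties of a nonnegative martingale directly. Nonnegativity holds because a Radon--Nikodym derivative may be chosen to be $[0,\infty)$-valued; adaptedness to $\{\cF_n\}$ is immediate, since $L_n$ is by construction a measurable function of $(X_1,\dots,X_n)$; and integrability (with the correct initial value) follows from
\begin{equation}
  \EE_{\mathbbmsl P}(L_n) = \int \frac{\d \mathbbmsl Q_{(n)}}{\d \mathbbmsl P_{(n)}} \, \d \mathbbmsl P_{(n)} = \mathbbmsl Q_{(n)}(\mathbb R^n) = 1 ,
\end{equation}
using the change-of-variables (pushforward) formula together with $\mathbbmsl Q_{(n)} \ll \mathbbmsl P_{(n)}$. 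The only substantive content is the martingale identity $\EE_{\mathbbmsl P}(L_{n+1}\mid \cF_n) = L_n$, which I would establish by showing $\EE_{\mathbbmsl P}(\id_A L_{n+1}) = \EE_{\mathbbmsl P}(\id_A L_n)$ for every $A \in \cF_n$. Since $\cF_n = \sigma(X_1,\dots,X_n)$, a standard $\pi$--$\lambda$ (monotone class) argument reduces this to cylinder events $A = \{(X_1,\dots,X_n) \in B\}$ with $B \in \cB(\mathbb R^n)$.

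For the core computation, write $\pi_n \colon \mathbb R^{\mathbb N^+}\to\mathbb R^n$ for the coordinate projection, so that $\mathbbmsl P_{(n)} = (\pi_n)_*\,\mathbbmsl P$ and the cylinder $A$ equals $\pi_{n+1}^{-1}(B\times\mathbb R)$. Applying the pushforward change-of-variables formula on the sequence space and then the definition of the Radon--Nikodym derivative gives
\begin{equation}
  \EE_{\mathbbmsl P}(\id_A L_{n+1}) = \int_{B\times\mathbb R} \frac{\d \mathbbmsl Q_{(n+1)}}{\d \mathbbmsl P_{(n+1)}} \, \d \mathbbmsl P_{(n+1)} = \mathbbmsl Q_{(n+1)}(B\times\mathbb R),
\end{equation}
and likewise, treating $A$ as the cylinder $\pi_n^{-1}(B)$,
\begin{equation}
  \EE_{\mathbbmsl P}(\id_A L_{n}) = \int_{B} \frac{\d \mathbbmsl Q_{(n)}}{\d \mathbbmsl P_{(n)}} \, \d \mathbbmsl P_{(n)} = \mathbbmsl Q_{(n)}(B).
\end{equation}

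The decisive step is the consistency of the finite-dimensional marginals: because every $\mathbbmsl Q_{(n)}$ is the pushforward of the \emph{single} measure $\mathbbmsl Q$ under $\pi_n$, we have $\mathbbmsl Q_{(n+1)}(B\times\mathbb R) = \mathbbmsl Q_{(n)}(B)$, so the right-hand sides above coincide and the martingale property follows. I expect the main obstacle to be purely bookkeeping: keeping the transfer between the infinite-product space $(\mathbb R^{\mathbb N^+},\mathbbmsl P)$ and its finite-dimensional images $(\mathbb R^n,\mathbbmsl P_{(n)})$ straight under the projection maps. One subtlety worth flagging is the temptation to posit a global derivative $\d\mathbbmsl Q/\d\mathbbmsl P$ on the sequence space; this need not exist, since stagewise absolute continuity $\mathbbmsl Q_{(n)}\ll\mathbbmsl P_{(n)}$ for all $n$ does \emph{not} imply $\mathbbmsl Q\ll\mathbbmsl P$. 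The argument above uses only the finite-dimensional derivatives, which sidesteps this issue entirely.
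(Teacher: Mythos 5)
Your proposal is correct and follows essentially the same route as the paper's own proof: both reduce the martingale identity to events of the form $\{(X_1,\dots,X_n)\in B\}$, evaluate $\EE_{\mathbbmsl P}(\id_A L_{n+1})$ and $\EE_{\mathbbmsl P}(\id_A L_n)$ via the pushforward/Radon--Nikodym change of variables, and close the argument with the marginal-consistency identity $\mathbbmsl Q_{(n+1)}(B\times\mathbb R)=\mathbbmsl Q_{(n)}(B)$. Your added remarks (the explicit check that $\EE_{\mathbbmsl P}(L_n)=1$, and the caution that stagewise absolute continuity does not imply $\mathbbmsl Q\ll\mathbbmsl P$ on the sequence space) are sound and complement, rather than alter, the paper's argument.
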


\section{t-Test e-Processes via Universal Inference}

 We shall demonstrate how the method of universal inference \citep{wasserman2020universal} leads to e-processes and confidence sequences for the t-test. Throughout this section, for $n \ge 0$, we denote by $\hmu_n$ and $\hsig_n$ any point estimators for $\mu$ and $\sigma$ adapted to the canonical filtration $\mathcal{F}_n$ (i.e.\ based on $X_1,\dots, X_n$). For example, they can simply be the sample mean $\avgX{n}$ and sample standard deviation $s_n$;
 we may also use the Bayesian approach by sequentially updating them as the posterior mean or maximum a posteriori from, say, a normal-inverse-gamma prior\footnote{\revise{The normal-inverse-gamma distribution serves as the conjugate prior for the Gaussian family when both of $\mu$ and $\sigma$ are unknown. See \cref{sec:sim-ep} for more description.}} on $(\mu, \sigma^2)$. The method of sequential universal inference \citep[Section 8] {wasserman2020universal} implies the following plug-in likelihood ratio martingale for a fixed Gaussian distribution:

 \begin{corollary}[Universal inference Z-test martingale]\label{cor:plugin-lr}
     For any $\mu$, $\sigma$ and their point estimators $\{ \hmu_n \}$, $\{ \hsig_n \}$ adapted to the canonical filtration $\{\mathcal{F}_n\}_{n \ge 0}$ (such as the sample mean and standard deviation), the process
\begin{equation}\label{eqn:plugin-lr}
     \ell_n^{\mu, \sigma^2} = \frac{\prod_{i=1}^n  p_{\hmu_{i-1}, \hsig^2_{i-1}}(X_i)}{\prod_{i=1}^n  p_{\mu, \sigma^2}(X_i)} = \frac{\sigma^n}{\prod_{i=1}^n\hsig_{i-1}} \exp \left\{ \sum_{i=1}^n \left( \frac{(X_i - \mu)^2}{2\sigma^2} - \frac{(X_i - \hmu_{i-1})^2}{2\hsig^2_{i-1}} \right)  \right\}
\end{equation}
    is a martingale for $\normal{\mu}{\sigma^2}$ on $\{\mathcal{F}_n\}_{n \ge 0}$.
 \end{corollary}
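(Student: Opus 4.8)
The plan is to recognize this as a direct specialization of Lemma~\ref{lem:lrm-general}. I would take the fixed denominator distribution to be $P = \normal{\mu}{\sigma^2}$, and define the sequence of Markov kernels $Q_i(\omega, \cdot) = \normal{\hmu_{i-1}(\omega)}{\hsig^2_{i-1}(\omega)}$, whose mean and variance are the data-dependent plug-in estimates computed from $X_1, \dots, X_{i-1}$. With these choices, the ratio $\frac{\d Q_i}{\d P}(X_i)$ is exactly the per-step factor $p_{\hmu_{i-1}, \hsig^2_{i-1}}(X_i) / p_{\mu, \sigma^2}(X_i)$ appearing in $\ell_n^{\mu, \sigma^2}$, so the process $\{\ell_n^{\mu,\sigma^2}\}$ coincides with the process $\{L_n\}$ of Lemma~\ref{lem:lrm-general}.

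To invoke that lemma I would verify its two hypotheses. For the measurability condition, since $\hmu_{i-1}$ and $\hsig_{i-1}$ are by assumption $\cF_{i-1}$-measurable, and the map $(m, s) \mapsto \normal{m}{s^2}(B)$ is jointly Borel measurable for each fixed $B \in \cB(\mathbb R)$, the composition $\omega \mapsto Q_i(\omega, B)$ is $\cF_{i-1}$-measurable. For the absolute-continuity condition, I would use that any nondegenerate univariate Gaussian has a strictly positive density on all of $\mathbb R$; hence, provided $\hsig_{i-1} > 0$, the laws $Q_i(\omega, \cdot)$ and $P$ are mutually absolutely continuous, and the Radon--Nikodym derivative is simply the pointwise ratio of the two Gaussian densities.

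Applying Lemma~\ref{lem:lrm-general} then immediately yields that $\{\ell_n^{\mu, \sigma^2}\}$ is a nonnegative martingale for $P = \normal{\mu}{\sigma^2}$ on $\{\cF_n\}$. The stated closed-form expression is obtained by substituting the explicit density $p_{m, s^2}(x) = (2\pi s^2)^{-1/2} \exp(-(x-m)^2 / (2 s^2))$ into each factor and collecting the normalizing constants into the prefactor $\sigma^n / \prod_{i=1}^n \hsig_{i-1}$; this is a routine algebraic simplification that I would not belabor.

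The only genuine subtlety, and the one place I would take care, is the degenerate case $\hsig_{i-1} = 0$, for which $Q_i$ fails to be a nondegenerate law absolutely continuous with respect to $P$ and the per-step ratio is ill-defined. I would therefore assume throughout that the variance estimators are strictly positive almost surely---an assumption automatically satisfied by the Bayesian normal-inverse-gamma updates mentioned before the statement, and easily enforced for sample-based estimators by flooring $\hsig_{i-1}$ at a small positive constant or seeding with a nondegenerate prior guess at $n = 0$. Given that caveat, the result is essentially a one-line consequence of the preceding lemma, so I expect no real obstacle beyond flagging this positivity requirement cleanly.
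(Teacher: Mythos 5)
Your proposal is correct and takes exactly the paper's intended route: the authors state that Corollary~\ref{cor:plugin-lr} ``follows from Lemma~\ref{lem:lrm-general}'' and omit the details, and you have supplied precisely that specialization (predictable Gaussian kernels $Q_i(\omega,\cdot) = \normal{\hmu_{i-1}(\omega)}{\hsig^2_{i-1}(\omega)}$, verification of the measurability and absolute-continuity hypotheses, and the routine density substitution). Your flagging of the degenerate case $\hsig_{i-1}=0$ is a sensible added care that the paper glosses over, but it does not constitute a different approach.
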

 It is not hard to see that Corollary~\ref{cor:plugin-lr} follows from Lemma~\ref{lem:lrm-general}, so we omit its proof. It is so named as the process $\{ \ell_n^{\mu,\sigma} \}$ itself can be used, when $\sigma^2$ is known, as a sequential Z-test.
 From martingales for $\normal{\mu}{\sigma^2}$ \emph{for each $\mu$ and $\sigma$} to test processes for the t-test nulls, $\normals_{\mu = 0}$ and $\normals_{\mu \le 0}$, we can simply take the infima of \eqref{eqn:plugin-lr} over $\{ \mu = 0, \sigma > 0 \}$ and $\{ \mu \le 0, \sigma > 0 \}$, which leads to e-processes for these nulls. They both have a closed-form expression and are 
 stated below as Theorem~\ref{thm:ui-ttest} and Theorem~\ref{thm:ui-ttest-onesided}, both proved in \cref{sec:pf-ui}.

\begin{theorem}[Universal inference t-test e-process]\label{thm:ui-ttest} For any point estimators $\{ \hmu_n \}$ and $\{ \hsig_n \}$ adapted to the canonical filtration $\{\mathcal{F}_n\}_{n \ge 0}$, the process
\begin{equation}\label{eqn:e-proc-ui-point}
    R_n
     =\left(  \avgXsq{n} \right)^{n/2}  \e^{n/2} \prod_{i=1}^n \frac{1}{\hsig_{i-1}} \exp \left\{ -\frac{1}{2}\left( \frac{X_i - \hmu_{i-1}}{\hsig_{i-1}} \right)^2  \right\}
\end{equation}
is an e-process for $\normals_{\mu = 0}$ on $\{ \cF_n \}$. Consequently, define
\begin{equation}\label{eqn:tn}
    W_n =  \frac{1}{\alpha^{2/n} \e}  \exp \left\{ \frac{\sum_{i=1}^n \log \hsig_{i-1}^2 + \left( \frac{X_i - \hmu_{i-1}}{\hsig_{i-1}} \right)^2 }{n} \right\}.
\end{equation}
Then $  \CI_n = \left[  \avgX{n}  \pm  \sqrt{ \avgX{n}^2 - \avgXsq{n} + W_n } \right]$ forms a $(1-\alpha)$-CS for $\mu$ over $\normals$.
\end{theorem}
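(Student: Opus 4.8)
The plan is to prove the two assertions in turn: first that $R_n$ is an e-process for $\normals_{\mu=0}$, and then, by the location-shift-and-inversion recipe, that $\CI_n$ is a $(1-\alpha)$-CS. The engine throughout is Corollary~\ref{cor:plugin-lr}, which supplies, for every fixed $\sigma>0$, a test martingale $\ell_n^{0,\sigma^2}$ for $\normal{0}{\sigma^2}$ on $\{\cF_n\}$; the universal-inference idea is to collapse this family over the nuisance parameter $\sigma$ by taking a pointwise infimum.

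For the first assertion I would show that $R_n=\inf_{\sigma>0}\ell_n^{0,\sigma^2}$ and that this equals the closed form in \eqref{eqn:e-proc-ui-point}. Setting $\mu=0$ in \eqref{eqn:plugin-lr}, the factor $\prod_{i=1}^n \hsig_{i-1}^{-1}\exp\{-\tfrac12((X_i-\hmu_{i-1})/\hsig_{i-1})^2\}$ does not involve $\sigma$, so only $g(\sigma)=\sigma^n\exp\{V_n/(2\sigma^2)\}$ must be minimized. Differentiating $\log g(\sigma)=n\log\sigma+V_n/(2\sigma^2)$ gives a unique stationary point at $\sigma^2=V_n/n=\avgXsq{n}$, which is the global minimum; substituting yields the leading factor $(\avgXsq{n})^{n/2}\e^{n/2}$ and hence exactly \eqref{eqn:e-proc-ui-point}. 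Because $R_n\le \ell_n^{0,\sigma^2}$ holds pointwise for every $\sigma>0$, and for each null distribution $P=\normal{0}{\sigma^2}\in\normals_{\mu=0}$ the dominating process $\ell_n^{0,\sigma^2}$ is a test martingale for that same $P$, the e-process characterization recalled in \cref{sec:testproc} immediately identifies $R_n$ as an e-process for $\normals_{\mu=0}$.

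For the confidence sequence I would run the identical construction for an arbitrary center $\mu_0$: with the same adapted estimators, $R_n^{\mu_0}:=\inf_{\sigma>0}\ell_n^{\mu_0,\sigma^2}$ is an e-process for $\normals_{\mu=\mu_0}$, the minimizing variance now being $\tfrac1n\sum_{i=1}^n (X_i-\mu_0)^2$ while the $\sigma$-free product factor (the quantity that will become $W_n$) is untouched and does not depend on $\mu_0$. Define $\CI_n=\{\mu_0:R_n^{\mu_0}<1/\alpha\}$. Coverage is then immediate from Ville's inequality (Lemma~\ref{lem:ville}): for any $P\in\normals$ with mean $\mu(P)$, the process $R_n^{\mu(P)}$ is an e-process for $P$, so $P(\exists n:\mu(P)\notin\CI_n)=P(\exists n:R_n^{\mu(P)}\ge 1/\alpha)\le\alpha$. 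It remains to exhibit $\CI_n$ in closed form: taking logarithms in $R_n^{\mu_0}<1/\alpha$, isolating $\tfrac1n\sum_{i=1}^n(X_i-\mu_0)^2$, and absorbing the constants $\alpha^{-2/n}$ and $\e^{-1}$ shows the inequality is equivalent to $\tfrac1n\sum_{i=1}^n(X_i-\mu_0)^2<W_n$, i.e.\ to the quadratic inequality $\mu_0^2-2\avgX{n}\mu_0+\avgXsq{n}-W_n<0$, whose solution set is precisely the interval $\avgX{n}\pm\sqrt{\avgX{n}^2-\avgXsq{n}+W_n}$.

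The conceptual heart, and the step I expect to require the most care, is the domination argument: the infimum $R_n$ is in general \emph{not} a martingale or even a supermartingale (minimizing over the nuisance $\sigma$ destroys the one-step conditional identity), so its validity as a test process cannot come from applying Ville's inequality to $R_n$ itself. What rescues it is that $R_n$ is pointwise dominated, for each individual $P$ in the composite null, by a genuine test martingale for that $P$; this is exactly the mechanism behind universal inference and the characterization of \cref{sec:testproc}. The remaining points are routine but worth verifying explicitly: that the infimum is attained in closed form (guaranteeing $R_n$ is well defined and $\cF_n$-measurable), that the product factor is genuinely independent of $\mu_0$ so that $W_n$ carries no $\mu_0$-dependence, and the degenerate case $\avgX{n}^2-\avgXsq{n}+W_n<0$, where the quadratic has no real roots and $\CI_n=\emptyset$, consistent with $R_n^{\mu_0}\ge 1/\alpha$ holding for every $\mu_0$.
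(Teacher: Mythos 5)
Your proposal is correct and follows essentially the same route as the paper's own proof: both take the infimum of the plug-in likelihood ratio martingale $\ell_n^{\mu_0,\sigma^2}$ over the nuisance $\sigma$, obtain the closed form via the maximizer $\sigma^2 = \tfrac{1}{n}\sum_i (X_i-\mu_0)^2$, justify validity by domination of $R_n^{\mu_0}$ by the test martingale $\ell_n^{\mu_0,\sigma^2}$ for each fixed null, and invert the resulting quadratic inequality in $\mu_0$ to get the CS. Your explicit remarks on the $\mu_0$-independence of the product factor and the empty-interval edge case are consistent with (and slightly more careful than) the paper's treatment.
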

It is worth remarking that for the general location null $\normals_{\mu = \mu_0}$, one must resist the temptation to replace all $X_i$ in \eqref{eqn:e-proc-ui-point} above by $X_i - \mu_0$, which, while indeed produces an e-process for $\normals_{\mu = \mu_0}$, loses its power. The correct modification is to \emph{only} shift $X_i$ in the $ \avgXsq{n}$ term, i.e.\ using the e-process for $\normals_{\mu = \mu_0}$
\begin{equation}
     R_n^{\mu_0}
     =\left(\frac{\sum_{i=1}^n (X_i - \mu_0)^2 }{ n} \right)^{n/2}  \e^{n/2} \prod_{i=1}^n \frac{1}{\hsig_{i-1}} \exp \left\{ -\frac{1}{2}\left( \frac{X_i - \hmu_{i-1}}{\hsig_{i-1}} \right)^2  \right\}.
\end{equation}
This can be understood as shifting $X_i$ and also $\hmu_{i-1}$ by $\mu_0$. This is how we arrive at the confidence sequence $  \CI_n = \left[  \avgX{n}  \pm  \sqrt{ \avgX{n}^2 - \avgXsq{n} + W_n } \right]$ above.

As a test process, the e-process $\{ R_n \}$ distinguishes the null $\normals_{\mu = 0}$ and the alternative $\normals_{\mu \neq 0}$ due to the following asymptotic result on a  {limit which, we recall, is defined in \cref{sec:testproc} as the ``e-power" of the test process.}
\begin{proposition}[e-power of the universal inference t-test e-process]\label{prop:div-ui-eproc}
    \revise{Under any distribution $P$ with mean $\mu$ and variance $\sigma^2$, for example $\normal{\mu}{\sigma^2}$,}
    suppose there is a $\gamma > 0$ such that $\{ \hmu_n \}$ converges to $\mu$ in $L^3$ with rate $\EE_P (|\hmu_n  - \mu|^3) \lesssim n^{-\gamma} $, $\{ \hsig_n^{-2} \}$ converges to $\sigma^{-2}$ both in $L^2$ with rate $\EE_P (\hsig_n^{-2}  - \sigma^{-2})^2 \lesssim n^{-\gamma} $ and almost surely, and has uniformly bounded 3\textsuperscript{rd} moment $\limsup_n \EE_P (\hsig_n^{-6}) < \infty $. Then,
    \begin{equation}
        \lim_{n \to \infty} \frac{\log R_n}{n} = \frac{1}{2} \log(1 + \mu^2/\sigma^2) \quad \text{almost surely}. 
    \end{equation}
    Consequently, $\{ R_n \}$ diverges almost surely to $R_\infty = \infty$ exponentially fast under $\normals_{\mu \neq 0}$.
\end{proposition}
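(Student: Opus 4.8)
The plan is to take logarithms of the closed form \eqref{eqn:e-proc-ui-point} and show each resulting average converges $P$-almost surely. Dividing by $n$,
\[
\frac{\log R_n}{n} = \frac12\log\avgXsq{n} + \frac12 - \frac1n\sum_{i=1}^n\log\hsig_{i-1} - \frac1{2n}\sum_{i=1}^n\left(\frac{X_i-\hmu_{i-1}}{\hsig_{i-1}}\right)^2 .
\]
The first term tends to $\tfrac12\log(\sigma^2+\mu^2)$ by the strong law of large numbers applied to $\{X_i^2\}$ (which uses only the second moment), and the constant $\tfrac12$ is immediate. Since $\hsig_n^{-2}\to\sigma^{-2}$ almost surely, continuity of the logarithm at $\sigma^{-2}>0$ gives $\log\hsig_n=-\tfrac12\log\hsig_n^{-2}\to\log\sigma$ almost surely, so the third term converges to $-\tfrac12\log\sigma^2$ by Cesàro averaging. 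Combining these with the target value $-\tfrac12$ for the fourth term produces exactly $\tfrac12\log(1+\mu^2/\sigma^2)$, so the whole proof reduces to showing the fourth average converges to $1$.

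For the fourth term I would compare the plug-in quadratic to its oracle counterpart, that is, prove $\Delta_n:=\frac1n\sum_{i=1}^n\big[(X_i-\hmu_{i-1})^2/\hsig_{i-1}^2-(X_i-\mu)^2/\sigma^2\big]\to0$ almost surely, the oracle average $\frac1n\sum(X_i-\mu)^2/\sigma^2$ tending to $1$ by the strong law. Writing $a_i=X_i-\mu$, $u_i=\hmu_{i-1}-\mu$, $v_i=\hsig_{i-1}^{-2}$ (so $u_i,v_i$ are $\cF_{i-1}$-measurable while $a_i$ is independent of $\cF_{i-1}$ with mean $0$ and variance $\sigma^2$), expanding the square gives
\[
\Delta_n = \frac1n\sum_{i=1}^n a_i^2\big(v_i-\tfrac1{\sigma^2}\big)\;-\;\frac2n\sum_{i=1}^n a_i u_i v_i\;+\;\frac1n\sum_{i=1}^n u_i^2 v_i ,
\]
which I would treat termwise. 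The first sum yields to a ``split at a random index'' argument: on the event where $v_i\to\sigma^{-2}$ and $\frac1n\sum a_i^2\to\sigma^2$, for any $\varepsilon>0$ pick $N$ with $|v_i-\sigma^{-2}|<\varepsilon$ for $i>N$; the head is a fixed finite sum divided by $n$ and the tail is at most $\varepsilon\cdot\frac1n\sum a_i^2\to\varepsilon\sigma^2$, so the sum vanishes (again using only the second moment of $X$). The third sum is nonnegative and bounded by $(\sup_i v_i)\cdot W_n$ with $W_n:=\frac1n\sum_{i=1}^n(\hmu_{i-1}-\mu)^2$; since $\{v_i\}$ converges it is almost surely bounded, so it suffices that $W_n\to0$ almost surely. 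By Hölder, $\EE_P(\hmu_{i-1}-\mu)^2\le(\EE_P|\hmu_{i-1}-\mu|^3)^{2/3}\lesssim i^{-2\gamma/3}$, whence $\EE_P W_n\to0$ polynomially (up to logarithms); because $nW_n$ is nondecreasing in $n$, I would upgrade this to almost-sure convergence via Borel--Cantelli along a polynomially spaced subsequence $n_k=\lceil k^p\rceil$ (with $p$ chosen large enough that $\sum_k\EE_P W_{n_k}<\infty$) and then interpolate using $n_{k+1}/n_k\to1$.

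The main obstacle is the martingale middle term $\frac2n\sum a_i u_i v_i$, whose summands form a martingale difference sequence. A naive $L^2$ martingale strong law would demand control of $\EE_P[u_i^2 v_i^2]$, i.e.\ a twelfth inverse moment of $\hsig$ or a sixth moment of $\hmu$, neither of which is assumed; the resolution is to run the martingale strong law at exponent $p=3/2$ instead. By the von Bahr--Esseen (or Burkholder) inequality, the weighted partial sums $\sum_{i\le n} a_i u_i v_i/i$ form an $L^{3/2}$-bounded, hence almost surely convergent, martingale as soon as $\sum_i \EE_P|a_i u_i v_i|^{3/2}/i^{3/2}<\infty$, after which Kronecker's lemma delivers the Cesàro convergence $\frac1n\sum a_i u_i v_i\to0$. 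The exponent $3/2$ is exactly matched to the hypotheses: by independence and Hölder with conjugate pair $(2,2)$,
\[
\EE_P|a_i u_i v_i|^{3/2}=\EE_P|a_i|^{3/2}\,\EE_P\big[|u_i|^{3/2}v_i^{3/2}\big]\le \EE_P|a_i|^{3/2}\,(\EE_P|u_i|^{3})^{1/2}(\EE_P \hsig_{i-1}^{-6})^{1/2}\lesssim i^{-\gamma/2},
\]
using $\EE_P|a_i|^{3/2}\le\sigma^{3/2}$, $\EE_P|\hmu_{i-1}-\mu|^3\lesssim i^{-\gamma}$ and the uniform bound $\limsup_i\EE_P\hsig_{i-1}^{-6}<\infty$; then $\sum_i i^{-\gamma/2}/i^{3/2}=\sum_i i^{-(3+\gamma)/2}<\infty$ for every $\gamma>0$. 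Collecting the three pieces gives $\Delta_n\to0$, hence the stated limit, and the concluding ``diverges exponentially'' assertion is immediate because $\tfrac12\log(1+\mu^2/\sigma^2)>0$ whenever $\mu\neq0$.
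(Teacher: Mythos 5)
Your proof is correct, and its skeleton (splitting $\tfrac1n\log R_n$ into the four averages and reducing everything to $\tfrac1n\sum_{i=1}^n\bigl(\tfrac{X_i-\hmu_{i-1}}{\hsig_{i-1}}\bigr)^2\to 1$ almost surely) coincides with the paper's; from that point on, however, you take a genuinely different route. The paper swaps the parameters one at a time --- first $\sigma\to\hsig_{i-1}$, then $\mu\to\hmu_{i-1}$ via the factorization $(X_i-\hmu_{i-1})^2-(X_i-\mu)^2=(\mu-\hmu_{i-1})(2X_i-\hmu_{i-1}-\mu)$ --- and for each swap controls the weighted absolute series $\sum_i \tfrac1i|\cdot|$ by a weighted AM--GM bound on its expectation, followed by Markov's inequality, Fatou's lemma, and Kronecker's lemma. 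You instead expand the square into three pieces (in your notation $a_i=X_i-\mu$, $u_i=\hmu_{i-1}-\mu$, $v_i=\hsig_{i-1}^{-2}$): the term $a_i^2(v_i-\sigma^{-2})$ handled by an elementary pathwise splitting argument that uses only the almost sure convergence of $\hsig_n^{-2}$ and the strong law for $a_i^2$; the cross term $a_iu_iv_i$ handled by a martingale strong law at exponent $3/2$ (Chow/von Bahr--Esseen plus Kronecker), with the exponent calibrated exactly so that Hölder splits the moment into the assumed $L^3$ rate of $\hmu$ and the bounded sixth inverse moment of $\hsig$; and the term $u_i^2v_i$ handled by Borel--Cantelli along a polynomial subsequence with monotone interpolation. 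Your route buys two things: (i) it never invokes the assumed $L^2$ rate $\EE_P(\hsig_n^{-2}-\sigma^{-2})^2\lesssim n^{-\gamma}$ (only almost sure convergence), and (ii), more substantively, it requires only the second moment of the data, whereas the paper's expectation bounds use $\EE_P(X_i-\mu)^4=\mathcal{O}(1)$ and $\EE_P|2X_i-\hmu_{i-1}-\mu|^3=\mathcal{O}(1)$, i.e.\ third and fourth moments of $P$, which are not guaranteed by the proposition's hypothesis that $P$ merely has mean $\mu$ and variance $\sigma^2$ (they do hold for Gaussian $P$). So your argument actually delivers the statement in its claimed generality. What the paper's approach buys in exchange is compactness: each swap is dispatched by a single expectation estimate, and the cross and square contributions never need to be separated or given distinct treatments.
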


\revise{Note that we place no Gaussian assumption on the true alternative distribution $P$ for this e-power statement, only that the \emph{plug-in point estimators} need to satisfy some convergence assumptions. These assumptions, we note,}
are mild, as they are satisfied by the empirical mean and variance \revise{under $\normal{\mu}{\sigma^2}$} due to the moment properties of the inverse-$\chi^2$ distribution, as well as their smoothed or Bayesian extensions (posterior means or maximum a posteriori under reasonable priors). The exponential growth of $\{ R_n \}$ under the alternative $\normals_{\mu \neq 0}$ is in contrast to its restrained behavior under the null $\normals_{\mu = 0}$, characterized by Ville's inequality for e-processes (Lemma~\ref{lem:ville}).

The $\frac{1}{2} \log(1 + \mu^2/\sigma^2)$ e-power in Proposition~\ref{prop:div-ui-eproc}, we remark, is universal among test processes for t-tests. 
If one uses the plug-in likelihood ratio $\{ \ell_n^{0,\sigma^2} \}$ in Corollary~\ref{cor:plugin-lr} to conduct the Z-test for the null $\normal{0}{\sigma^2}$, then, under the actual distribution $\normal{\mu}{\sigma^2}$, it is not hard to see that a faster rate $\frac{\log  \ell_n^{0,\sigma^2}}{n} \to \mu^2/2\sigma^2$ holds under similar assumptions of the point estimators (which we show in \cref{sec:uiz}).
We shall see in the rest of the paper many more occurrences of the same $\frac{1}{2} \log(1 + \mu^2/\sigma^2)$ limit, and we shall discuss that this is indeed the optimal rate in \cref{sec:ripr}.

Apart from the asymptotics of the test process, we can also
analyze the confidence sequence it implies. Let us briefly study the asymptotics of the radius $ \sqrt{ \avgX{n}^2 - \avgXsq{n} + W_n } = \sqrt{ W_n - \left(\avgXsq{n} - \avgX{n}^2\right) }$ of the CS in Theorem~\ref{thm:ui-ttest}, as $\alpha \to 0$, $\sigma \to \infty$, and $n \to \infty$. Note that both $W_n$ and $\avgXsq{n} - \avgX{n}^2$ converge to $\sigma^2$, as long as the estimators $\tilde{\sigma}_i$ and $\tilde{\mu}_i$ are consistent. From \eqref{eqn:tn}, the deviation of $T_n$ from $\sigma^2$ is approximately $\sigma^2(\alpha^{-2/n}  \exp(1/\sqrt{n}) - 1)$; While by the $\chi^2$-tail bound and the Chernoff bound,
\begin{equation}
    \Pr \left[ \left| \frac{n}{n-1}\left(\avgXsq{n} - {\avgX{n}}^2 \right) - \sigma^2 \right|\ge \sigma^2 t \right] \lesssim \e^{-nt^2},
\end{equation}
so the deviation of $\avgXsq{n} - {\avgX{n}}^2$ from $\sigma^2$ is approximately $\sigma^2/n$.
Hence the radius of the CS scales, as a function of $\alpha$, $\sigma$ and $n$, at the rate of $\sigma \alpha^{-1/n} n^{-1/2}$. We remark that the ``growth rate'' of the CS, i.e., its dependence on $\alpha \to 0$, is in the time-dependent form of $\alpha^{-1/n}$ which is worse than the typical known-variance rate $\sqrt{\log(1/\alpha)}$. This, as we shall explain in \cref{sec:opt-CI}, is also unavoidable. 




Let us now state the e-process for the one-sided null.

\begin{theorem}[Universal inference one-sided t-test e-process]\label{thm:ui-ttest-onesided} For any point estimators $\{ \hmu_n \}$ and $\{ \hsig_n \}$ adapted to the canonical filtration $\{\mathcal{F}_n\}_{n \ge 0}$, the process
\begin{equation}
    R^-_n = 
    \left( \avgXsq{n} - (\avgX{n}\wedge 0)^2 \right)^{n/2}   \e^{n/2} \prod_{i=1}^n \frac{1}{\hsig_{i-1}} \exp \left\{ -\frac{1}{2}\left( \frac{X_i - \hmu_{i-1}}{\hsig_{i-1}} \right)^2  \right\}
    \label{eqn:rn-minus}
\end{equation}
is an e-process for $\normals_{\mu \le 0}$ on $\{ \cF_n \}$.
\end{theorem}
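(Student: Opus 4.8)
The plan is to realize $R_n^-$ as a pointwise infimum of the universal-inference Z-test martingales $\ell_n^{\mu,\sigma^2}$ of Corollary~\ref{cor:plugin-lr}, taken over exactly the null parameters $\{\mu \le 0,\ \sigma > 0\}$, and then to invoke the e-process characterization quoted after Lemma~\ref{lem:evi}. Concretely, I claim
\begin{equation*}
  R_n^- = \inf_{\mu \le 0,\ \sigma > 0} \ell_n^{\mu,\sigma^2}.
\end{equation*}
Granting this identity, the conclusion is immediate. For each null distribution $P = \normal{\mu_*}{\sigma_*^2}$ with $\mu_* \le 0$, the process $\{\ell_n^{\mu_*,\sigma_*^2}\}$ is a test martingale for $P$ (a nonnegative martingale for $P$ with $\ell_0^{\mu_*,\sigma_*^2} = 1$), and by definition of the infimum $R_n^- \le \ell_n^{\mu_*,\sigma_*^2}$ pointwise for every $n$. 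Thus every null $P$ is furnished with a dominating test supermartingale, and the characterization of \citet[Lemma 6]{ramdas2020admissible} certifies that $\{R_n^-\}$ is an e-process for $\normals_{\mu \le 0}$ on $\{\cF_n\}$. Equivalently, one can argue directly: for any $\{\cF_n\}$-stopping time $\tau$, $\EE_P(R_\tau^-) \le \EE_P(\ell_\tau^{\mu_*,\sigma_*^2}) \le 1$ by the optional-stopping inequality for nonnegative martingales, which is exactly the defining e-value property.

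The real content is verifying the infimum identity, a two-stage minimization. Writing $\ell_n^{\mu,\sigma^2} = D_n \cdot \sigma^n \exp\{\tfrac{1}{2\sigma^2}\sum_{i=1}^n (X_i - \mu)^2\}$, where $D_n = \prod_{i=1}^n \hsig_{i-1}^{-1}\exp\{-\tfrac12((X_i-\hmu_{i-1})/\hsig_{i-1})^2\}$ collects the estimator terms free of $(\mu,\sigma)$, I would first minimize over $\sigma > 0$ for fixed $\mu$. Setting $B(\mu) = \sum_{i=1}^n (X_i - \mu)^2$ and differentiating $n\log\sigma + B(\mu)/(2\sigma^2)$ yields the minimizer $\sigma^2 = B(\mu)/n$ and minimum value $(B(\mu)/n)^{n/2}\e^{n/2}$, exactly mirroring the $\sigma$-profiling behind Theorem~\ref{thm:ui-ttest}.

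The remaining—and only genuinely new—step is the outer minimization of $B(\mu)$ over the half-line $\mu \le 0$, and this is where the one-sided structure enters. Since $B$ is a parabola with vertex at the sample mean $\avgX{n}$, its constrained minimizer is $\avgX{n}$ when $\avgX{n} \le 0$ and the boundary point $0$ when $\avgX{n} > 0$; in both cases $\min_{\mu \le 0} B(\mu) = n\big(\avgXsq{n} - (\avgX{n}\wedge 0)^2\big)$, which produces precisely the $(\avgX{n}\wedge 0)^2$ correction appearing in \eqref{eqn:rn-minus}. I expect this constrained-optimization bookkeeping—correctly merging the interior and boundary cases into the single $\wedge 0$ expression—to be the main (though modest) obstacle; the rest is the calculus already used for the two-sided e-process, together with the observation that having the explicit closed form sidesteps any measurability concern about the uncountable infimum.
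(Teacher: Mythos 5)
Your proof is correct and follows essentially the same route as the paper: the paper likewise writes $R_n^-$ as $\inf_{m \le 0,\, s > 0} \ell_n^{m,s^2}$ (equivalently, universal inference with the null maximum likelihood $\sup_{m\le 0,\,s>0}\prod_{i} p_{m,s^2}(X_i)$ in the denominator), profiles out $\sigma$ to get $(B(\mu)/n)^{n/2}\e^{n/2}$, and then minimizes $B(\mu)$ over $\mu \le 0$ at $\mu = \avgX{n}\wedge 0$ to obtain the $(\avgX{n}\wedge 0)^2$ correction. Your explicit domination/optional-stopping justification of the e-process property is exactly the argument the paper leaves implicit when it asserts that the infimum of the plug-in martingales over the null parameters is an e-process.
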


There is clear similarity between the expression of the point null e-process \eqref{eqn:e-proc-ui-point}, and that of the one-sided \eqref{eqn:rn-minus}. The only difference is in that the former's $\avgXsq{n}$ term is here off by a $- (\avgX{n}\wedge 0)^2$ term in \eqref{eqn:rn-minus}. When the data are predominantly negative, this term inhibits the exponential growth of the e-process. Therefore, while we shall witness exponential growth of \eqref{eqn:e-proc-ui-point} when the actual mean is negative (significant in the scale of standard deviation), no such growth is likely to happen in \eqref{eqn:rn-minus}. We can formalize this by an asymptotic result similar to Proposition~\ref{prop:div-ui-eproc}.

\begin{proposition}[e-power of the universal inference one-sided t-test e-process]\label{prop:div-ui-eproc-1s}
    \revise{Under any distribution $P$ with mean $\mu$ and variance $\sigma^2$, for example $\normal{\mu}{\sigma^2}$,} suppose the point estimators $\{ \hmu_n \}$ and $\{ \hsig_{n}^{-2} \}$ satisfy the same assumptions as Proposition~\ref{prop:div-ui-eproc}. Then,
    \begin{equation}
        \lim_{n \to \infty} \frac{\log R_n^-}{n} = \frac{1}{2} \log(1 + (\mu \vee 0)^2/\sigma^2) \quad \text{almost surely}. 
    \end{equation}
    Consequently, $\{ R_n^- \}$ diverges almost surely to $R_\infty^- = \infty$ exponentially fast under $\normals_{\mu > 0}$.
\end{proposition}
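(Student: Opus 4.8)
The plan is to reduce the statement to the already-established two-sided result, Proposition~\ref{prop:div-ui-eproc}, by isolating the single factor in which $R_n^-$ and $R_n$ differ. Comparing \eqref{eqn:rn-minus} with \eqref{eqn:e-proc-ui-point}, the two processes share the entire product $\prod_{i=1}^n \hsig_{i-1}^{-1} \exp\{-\tfrac12((X_i - \hmu_{i-1})/\hsig_{i-1})^2\}$ and the factor $\e^{n/2}$; they differ only in the leading term, $(\avgXsq{n})^{n/2}$ for $R_n$ versus $(\avgXsq{n} - (\avgX{n}\wedge 0)^2)^{n/2}$ for $R_n^-$. Taking logarithms and dividing by $n$, I therefore obtain the exact identity
\[
\frac{\log R_n^-}{n} = \frac{\log R_n}{n} + \frac{1}{2}\log\left(1 - \frac{(\avgX{n} \wedge 0)^2}{\avgXsq{n}}\right),
\]
valid since $s_n^2 = \avgXsq{n} - \avgX{n}^2 \ge 0$ forces $\avgXsq{n} \ge \avgX{n}^2 \ge (\avgX{n}\wedge 0)^2$, so the argument of the logarithm lies in $[0,1]$.

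Next I would pass to the limit $n \to \infty$ term by term. The first summand converges almost surely to $\tfrac12 \log(1 + \mu^2/\sigma^2)$ by Proposition~\ref{prop:div-ui-eproc}, whose hypotheses on $\{\hmu_n\}$ and $\{\hsig_n^{-2}\}$ are assumed here verbatim. For the correction term, the strong law of large numbers gives $\avgX{n} \to \mu$ and $\avgXsq{n} \to \mu^2 + \sigma^2$ almost surely, the latter requiring only that $\EE_P(X_1^2) = \mu^2 + \sigma^2 < \infty$, which holds since $P$ has finite variance. Because $x \mapsto (x \wedge 0)^2$ is continuous, the continuous mapping theorem yields $(\avgX{n} \wedge 0)^2 \to (\mu \wedge 0)^2$ almost surely, so the correction term converges almost surely to $\tfrac12\log\bigl(1 - (\mu \wedge 0)^2/(\mu^2 + \sigma^2)\bigr)$; this limit is finite because $\sigma^2 > 0$ makes the limiting argument strictly positive.

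Finally I would add the two limits and simplify with the elementary identity $\mu^2 - (\mu \wedge 0)^2 = (\mu \vee 0)^2$ (checked by cases on the sign of $\mu$), giving
\[
\frac{\log R_n^-}{n} \to \frac{1}{2}\log\frac{\mu^2 + \sigma^2}{\sigma^2} + \frac{1}{2}\log\frac{\sigma^2 + (\mu \vee 0)^2}{\mu^2 + \sigma^2} = \frac{1}{2}\log\left(1 + \frac{(\mu \vee 0)^2}{\sigma^2}\right),
\]
which is exactly the claimed limit. Since the argument is an algebraic perturbation of Proposition~\ref{prop:div-ui-eproc} glued to a routine law-of-large-numbers computation, I anticipate no serious obstacle; the only point needing minor care is confirming that the correction logarithm remains well-defined and has a finite limit, which the bound $s_n^2 \ge 0$ and the strict inequality $(\mu\wedge 0)^2 \le \mu^2 < \mu^2 + \sigma^2$ settle cleanly.
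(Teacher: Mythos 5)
Your proof is correct, but it is organized differently from the paper's. The paper proves this proposition by directly expanding
\begin{equation}
    \frac{1}{n}\log R_n^- = \frac{1}{2}\log\left(\avgXsq{n} - (\avgX{n}\wedge 0)^2 \right) + \frac{1}{2} - \frac{ \sum_{i=1}^n \log \hsig_{i-1}  }{n} - \frac{\sum_{i=1}^n \left( \frac{X_i - \hmu_{i-1}}{\hsig_{i-1}} \right)^2 }{2n}
\end{equation}
and citing the almost sure limits of the four terms, i.e.\ it reruns the four-term decomposition of the two-sided proof with only the leading term changed; in doing so it tacitly reuses the \emph{internal} limits established inside the proof of Proposition~\ref{prop:div-ui-eproc} (the convergence of $\frac{1}{n}\sum \log \hsig_{i-1}$ to $\log\sigma$ and of $\frac{1}{2n}\sum ((X_i - \hmu_{i-1})/\hsig_{i-1})^2$ to $1/2$, which are statements proved there via the Markov--Fatou--Kronecker argument but not contained in the proposition's statement). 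You instead factor $R_n^- = R_n \cdot \bigl(1 - (\avgX{n}\wedge 0)^2/\avgXsq{n}\bigr)^{n/2}$, use Proposition~\ref{prop:div-ui-eproc} purely as a black box for $\frac{1}{n}\log R_n$, and dispatch the correction factor with the strong law of large numbers and continuous mapping; the algebraic simplification via $\mu^2 - (\mu\wedge 0)^2 = (\mu\vee 0)^2$ recovers exactly the paper's limit. Your route is more modular — it needs only the \emph{statement} of the two-sided result and makes transparent that the one-sided and two-sided processes differ by an elementary, estimator-free factor — while the paper's route keeps the two proofs structurally parallel at the cost of depending on the earlier proof's internals. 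The only point worth flagging is the degenerate event where $\avgXsq{n} = 0$ or $s_n^2 = 0$ with $\avgX{n}\le 0$ (making your identity involve $\log 0$ or $0/0$): since $\sigma^2 > 0$, this happens for at most finitely many $n$ almost surely (and never under a continuous $P$ such as $\normal{\mu}{\sigma^2}$), so it does not affect the limit; the paper's proof silently skips the same issue.
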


A slightly weaker similarity between the expressions of these e-processes and the plug-in martingale \eqref{eqn:plugin-lr} can also be observed under careful comparison, which we do in the ``Test Process" row of \cref{tab:big-comp-ui}, where we compare all aspects of universal inference methods for Z-test (including a one-sided test involving taking infimum over $t \le 0$ in \eqref{eqn:plugin-lr}) and t-test.



\section{Sequential t-Tests via Scale Invariance}
\subsection{\citeauthor{lai1976confidence}'s \citeyearpar{lai1976confidence} Confidence Sequence}

Let us first quote a theorem due to \cite{lai1976confidence} who presented it almost without proof.
\begin{theorem}[Lai's t-Confidence Sequence; Theorem 2 of \cite{lai1976confidence}]\label{thm:lai-cs}
Choose a starting time $m \ge 2$ and a constant $a > 0$. Recall that $s_n^2 =  \frac{1}{n} \sum_{i=1}^n (X_i - \avgX{n})^2$. Further, define
\begin{align}
    b &:= \frac{1}{m} \left(1 + \frac{a^2}{m-1} \right)^m,
    \\
    \xi_n &:=  \sqrt{s_n^2 [(bn)^{1/n} - 1]}.
\end{align}
Then, the intervals $\CI_n =  [ \avgX{n} \pm \xi_n ] $ satisfy, for any $\mu$ and $\sigma > 0$,
\begin{equation}
    \normal{\mu}{\sigma^2}\left( \exists n \ge m, \mu \notin \CI_n  \right) \le 2(1-F_{m-1}(a) + af_{m-1}(a)),
\end{equation}
where $F_{m-1},f_{m-1}$ denote the CDF, PDF of t-distribution with $m-1$ degrees of freedom.
\end{theorem}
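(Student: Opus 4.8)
The plan is to reduce the two-sided boundary-crossing event to a single maximal inequality for a \emph{nonintegrable} martingale built from the t-statistic, and then read off the closed form via the extended Ville inequality (\cref{lem:evi}). I fix the true parameters $\mu,\sigma$ and rewrite the event $\mu\notin\CI_n$. Since $T_{n-1}=\sqrt{n-1}\,(\avgX{n}-\mu)/s_n$, the condition $|\avgX{n}-\mu|>\xi_n$ is equivalent to $T_{n-1}^2/(n-1)>(bn)^{1/n}-1$, i.e.\ to
\[
M_n:=n^{-1/2}\Bigl(1+\tfrac{T_{n-1}^2}{n-1}\Bigr)^{n/2}\ \ge\ \sqrt{b}.
\]
The whole construction hinges on one arithmetic collapse at the starting index: because $bm=(1+a^2/(m-1))^m$, one has $(bm)^{1/m}-1=a^2/(m-1)$, so the time-$m$ threshold reduces to $|T_{m-1}|\ge a$, and $\sqrt b=M_m\big|_{T_{m-1}=a}$. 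I will therefore take $\varepsilon=\sqrt b$ as the crossing level. Note $M_n$ is even in $T_{n-1}$, so this single inequality already captures both tails.

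Next I would argue that $\{M_n\}$ is an extended nonnegative (super)martingale for $\normals_{\mu=0}$ (after the shift $X_i\mapsto X_i-\mu_0$, for $\normals_{\mu=\mu_0}$) on $\{\cF_n\}$, but \emph{not} an e-process. This is exactly Lai's flat-over-$\mu$, right-Haar-over-$\sigma$ device: up to its scale-invariant normalization, $M_n$ is the likelihood ratio of the maximal invariant of the scaling group, obtained by mixing the alternative likelihood against $\d\mu'\,\d\sigma'/\sigma'$ in the numerator and the null likelihood against $\d\sigma/\sigma$ in the denominator. Each likelihood ratio $\prod_{i=1}^n\frac{\d Q}{\d P}(X_i)$ is a nonnegative martingale by \cref{lem:lrm}, and a $\sigma$-finite mixture of such is an ENSM (mixture preservation, \cref{sec:testproc}); the improper flat prior on the location is what renders $\EE_{\normal{0}{\sigma^2}}(M_n)=\infty$, so ordinary Ville fails and one needs the extended form. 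Because the t-statistic is pivotal, the law of $\{M_n\}$ under $\normal{\mu}{\sigma^2}$ does not depend on $(\mu,\sigma)$, which is precisely why a single bound serves all parameters.

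With the ENSM in hand I would apply \cref{lem:evi} to the tail process $\{M_n\}_{n\ge m}$ with $\varepsilon=\sqrt b$. The event $\{M_m\ge\varepsilon\}$ equals $\{|T_{m-1}|\ge a\}$, which under the null has probability $2(1-F_{m-1}(a))$ by symmetry of the $t_{m-1}$ law. For the truncated term I exploit the central cancellation: the factor $(1+t^2/(m-1))^{m/2}$ in $M_m$ exactly annihilates the $(1+t^2/(m-1))^{-m/2}$ in the density $f_{m-1}$, leaving the \emph{flat} integrand $C_{m-1}=\Gamma(m/2)/\bigl(\sqrt{(m-1)\pi}\,\Gamma(\tfrac{m-1}{2})\bigr)$, so that
\[
\varepsilon^{-1}\EE\bigl(\id_{\{M_m<\varepsilon\}}M_m\bigr)=(bm)^{-1/2}\!\int_{-a}^{a}\!C_{m-1}\,\d t=2a\,C_{m-1}\bigl(1+\tfrac{a^2}{m-1}\bigr)^{-m/2}=2a\,f_{m-1}(a),
\]
using $(bm)^{-1/2}=(1+a^2/(m-1))^{-m/2}$. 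Here the factor $2$ enters through the two symmetric $t_{m-1}$ integrals ($\int_{-a}^{a}=2\int_0^a$ and $P(|T_{m-1}|\ge a)=2(1-F_{m-1}(a))$), not through a union bound. Summing the two contributions yields exactly $2\bigl(1-F_{m-1}(a)+af_{m-1}(a)\bigr)$.

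The main obstacle, I expect, is the second step rather than the computation: making rigorous that the clean scale-invariant process $M_n=n^{-1/2}(1+T_{n-1}^2/(n-1))^{n/2}$ is genuinely the maximal-invariant likelihood ratio, and hence an ENSM. The naive mixture of likelihood-ratio martingales produces an extra, non-scale-invariant factor (a power of $s_n$) coming from the improper location prior, and one must track the orbit/Jacobian bookkeeping of the reduction to the scaling group---passing from the marginal-likelihood ratio to the density ratio of the maximal invariant---to see it cancel and to confirm that the conditional supermartingale inequality survives the nonintegrability. I would also verify carefully that the truncated expectation above is finite (it is, despite $\EE M_m=\infty$), so that \cref{lem:evi} genuinely applies, and re-derive the identity $(bm)^{1/m}-1=a^2/(m-1)$, since that collapse is what pins the crossing level to $a$ and makes the $t$-distribution surface in the final bound.
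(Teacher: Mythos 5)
Your proposal follows essentially the same route as the paper's own proof (which is folded into the proof of \cref{thm:lai-ensm}): the same scale-invariant process (your $M_n$ is exactly $H_n/\sqrt{2\pi}$, and your threshold $\sqrt b$ is the paper's $\eta^{m/2}$ after rescaling by $\sqrt m$), the same reduction of non-coverage to a crossing event via the collapse $(bm)^{1/m}-1=a^2/(m-1)$, the same application of the extended Ville inequality (\cref{lem:evi}) to the tail process started at $m$, and the same identification of the two resulting terms as $2(1-F_{m-1}(a))$ and $2af_{m-1}(a)$. In fact your evaluation of the truncated term is cleaner than the paper's: you use directly that $(1+t^2/(m-1))^{m/2}f_{m-1}(t)$ is the constant $C_{m-1}$, so the integral is exactly $2aC_{m-1}$, whereas the paper reaches the same number through the bound $\tfrac{a}{a-a'}\Exp\bigl[\id_{\{a'\le T_{m-1}<a\}}h(T_{m-1})\bigr]$ and a limit $a'\to a$ (an argument whose validity rests on the very same constancy).

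Two caveats. First, a genuine error, though locally harmless: you assert the ENSM property on the canonical filtration $\{\cF_n\}$. It holds only on the scale-invariant filtration $\{\cF_n^*\}$ of \cref{def:si-filt}. Scale-invariant processes like $H_n$ (or $G_n^{(c)}$ of \cref{thm:lai-e}) are generally \emph{not} supermartingales conditionally on the full data, because the conditional law of $X_{n+1}/|X_1|$ given $\cF_n$ depends on the scale $\sigma/|X_1|$, which $\cF_n^*$ deliberately forgets; this reduced-filtration subtlety is one of the paper's central themes (\cref{sec:does-filtration-matter}). The proof survives because \cref{lem:evi} only requires the ENSM property on \emph{some} filtration to which the process is adapted, and the crossing event $\{\exists n\ge m:\,M_n\ge\varepsilon\}$ and its probability do not depend on which filtration one works in; but your claim as stated is false and should be repaired. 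Second, the step you flag as the main obstacle — that $M_n$ is genuinely the maximal-invariant likelihood ratio and hence an ENSM — is precisely where the paper invests its machinery: \cref{lem:jeffreys} identifies the density of the reduced data $(X_1/|X_1|,\dots,X_n/|X_1|)$ as a Jeffreys mixture, \cref{lem:si-lr} (via \cref{lem:lrm-joint}) makes each ratio $h_{\theta,n}$ a proper NM on $\{\cF_n^*\}$, \cref{cor:t-lr} computes it in closed form, and only then is the flat mixture over $\theta=\mu/\sigma$ (not over the location $\mu$ itself — this is where the Jacobian bookkeeping you anticipate resolves the stray power of $s_n$) taken to produce the nonintegrable $H_n$. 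With the filtration corrected and that construction filled in, your argument coincides with the paper's.
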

If we want $\{ \CI_n \}$ to be a $(1-\alpha)$-CS over $\normals$ for $\mu$, we need to solve $a$ from the equation $2(1-F_{m-1}(a) + af_{m-1}(a)) = \alpha$ beforehand.
To see the relationship between $a$ and $\alpha$,
note that when $a$ is large,
\begin{gather}
    f_{m-1}(a) \asymp  a^{-m},
    \\
    1-F_{m-1}(a) \asymp  a^{-(m-1)}.
\end{gather}
Hence
\begin{equation}
    \alpha \asymp a^{-(m-1)} \implies a \asymp \alpha^{- 1/(m - 1)}.
\end{equation}
The radius hence grows as (for fixed $m, n$ and $\alpha \to 0$)
\begin{equation}
    \xi_n \approx \sigma \sqrt{\left( \frac{1}{m} \left(1 + \frac{a^2}{m-1} \right)^m n \right)^{1/n} - 1  } \asymp \alpha^{-\frac{m}{n(m-1)}}.
\end{equation}
In terms of its shrinkage rate (fixed $m, \alpha$ but $n \to \infty$), we have 
\begin{align}
    \xi_n \approx \sigma \sqrt{  \frac{\log bn}{n} + \Otilde(n^{-2})}.
\end{align}

The CS is only valid from some time $m\ge2$, which we shall soon explain (and, in some sense, remedy). The proof of this theorem, as it turns out, hinges on the extended Ville's inequality (Lemma~\ref{lem:evi}) for nonintegrable nonnegative supermartingale. Equally interestingly, it mixes a parametrized family of martingales \emph{under a coarser filtration}. We shall begin our reworking of Lai's CS from this concept, and eventually state and prove Theorem~\ref{thm:lai-ensm}, a more revealing version of Theorem~\ref{thm:lai-cs}; as well as Theorem~\ref{thm:lai-e}, a variant of Theorem~\ref{thm:lai-cs} that does not involve an improper mixture and nonintegrability.

\subsection{Scale Invariant Filtration}\label{sec:si-filt}
We say that a function $f:\mathbb R^n \to \mathbb R$ is scale invariant if it is measurable and for any $x_1, \dots ,x_n \in \mathbb R$ and $\lambda > 0$, $f(x_1, \dots, x_n) = f(\lambda x_1, \dots, \lambda x_n)$.
Let us define the following sub-filtration of the canonical filtration $\{ \cF_n \}$.

\begin{definition}[Scale invariant filtration]\label{def:si-filt} For $n \ge 1$, let
    \begin{equation}\label{eqn:si-filt}
      \cF_n^* = \sigma( f(X_1, \dots, X_n) : f \text{ \emph{is scale invariant}} ).
    \end{equation}
    Then, the filtration $\{ \cF^*_n \}_{n \ge 1}$ is called the \emph{scale invariant filtration} of data $X_1, X_2, \dots$.
\end{definition}

Definition~\ref{def:si-filt} states that $\cF^*_n$ is the coarsest $\sigma$-algebra to which all scale invariant functions of $X_1, \dots, X_n$ are measurable. For example, recall that we denote by $T_{n-1}$ the t-statistic of the data $\sqrt{n-1} \frac{S_n - n \mu_0}{ \sqrt {n V_n - S_n^2} }$, which is $\cF^*_n$-measurable when $\mu_0 = 0$ and is a quantity frequently used to construct scale-invariant statistics later. To see that $\{ \cF^*_n \}$ is indeed a filtration, let $f$ be any scale invariant function $\reals^n \to \reals$, and define $g:\reals^{n+1}\to\reals$ as $g(x_1, \dots, x_n, x_{n+1}) = f(x_1, \dots, x_n)$ which is also scale invariant. So $f(X_1,\dots, X_n) = g(X_1,\dots, X_n, X_{n+1})$ is $\cF^*_{n+1}$-measurable. The arbitrariness of $f$ implies that $\cF^*_{n} \subseteq \cF^*_{n+1}$.

The filtration $\{ \cF_n^* \}$ contains all the information up to time $n$ about the \emph{relative} sizes of the observations. For example, $\{ \max\{ X_1, \dots, X_4 \} \ge 2X_3 \} \in \cF_4$ while $\{ \max\{ X_1, \dots, X_4 \} \ge 2 \} \notin \cF_4$. The reader may recall the exchangeable sub-$\sigma$-algebra in the theory of exchangeability and backwards martingales such as in \citet[Chapter 12]{klenke2013probability} for an analogy. \revise{The reader might also find the following analogy helpful in understanding the intuition: Imagine in prehistoric times when units of length were yet to be invented, our ancestors collected a sequence of leaves and fathomed their sizes. Due to the lack of units however, no absolute real-valued sizes (in inches, say) but only \emph{ratios} of sizes relative to each other (or to the first leaf) were recorded.} Actually, when $X_1$ is non-zero, $\cF_n^*$ has the following clean expression.
\begin{proposition}\label{prop:si-filt-rule}
    If $X_1 \neq 0$,
    \begin{equation}\label{eqn:si-filt-simple}
        \cF_n^* = \sigma\left( \frac{X_1}{|X_1|},  \frac{X_2}{|X_1|}, \dots, \frac{X_n}{|X_1|} \right).
    \end{equation}
\end{proposition}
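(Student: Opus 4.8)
The plan is to prove the two inclusions between $\cF_n^*$ and $\cG_n := \sigma\left( \frac{X_1}{|X_1|}, \dots, \frac{X_n}{|X_1|} \right)$, interpreting the stated identity as an equality of trace $\sigma$-algebras on the event $\{X_1 \neq 0\}$ (equivalently, an equality modulo the $\Pr$-null set $\{X_1 = 0\}$ when the data are continuous). As a preliminary, I would note that $\{X_1 = 0\}$ itself lies in both $\sigma$-algebras, since $x \mapsto \id\{x_1 = 0\}$ is scale invariant (scaling by $\lambda > 0$ preserves $x_1 = 0$), so restricting attention to $\{X_1 \neq 0\}$ is legitimate on both sides.

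For the inclusion $\cG_n \subseteq \cF_n^*$, I would observe that each generator of $\cG_n$ arises from a scale invariant function. Indeed, for $\lambda > 0$ and $x_1 \neq 0$ we have $\frac{\lambda x_i}{|\lambda x_1|} = \frac{x_i}{|x_1|}$, so the map $x \mapsto x_i/|x_1|$, extended by $0$ on $\{x_1 = 0\}$, is a measurable scale invariant function of $(x_1,\dots,x_n)$; hence $X_i/|X_1|$ is $\cF_n^*$-measurable and agrees with the $i$-th generator of $\cG_n$ on $\{X_1 \neq 0\}$. Passing to the generated $\sigma$-algebra gives $\cG_n \subseteq \cF_n^*$.

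The harder and more interesting inclusion is $\cF_n^* \subseteq \cG_n$, and here the key device is to normalize by the magnitude of the first coordinate. Given any scale invariant $f$, I would apply its defining property with $\lambda = 1/|X_1| > 0$ (legitimate precisely because $X_1 \neq 0$) to obtain the pointwise identity $f(X_1,\dots,X_n) = f\left( \frac{X_1}{|X_1|}, \dots, \frac{X_n}{|X_1|} \right)$ on $\{X_1 \neq 0\}$. The right-hand side exhibits $f(X_1,\dots,X_n)$ as the Borel function $f$ applied to the random vector $(X_1/|X_1|, \dots, X_n/|X_1|)$, whose coordinates generate $\cG_n$; therefore $f(X_1,\dots,X_n)$ is $\cG_n$-measurable. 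Since $f$ ranges over all scale invariant functions, the $\sigma$-algebra they generate satisfies $\cF_n^* \subseteq \cG_n$ on $\{X_1 \neq 0\}$, which completes the argument.

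The main obstacle, I expect, is purely measure-theoretic bookkeeping rather than any deep idea: one must ensure that every ``function defined only on $\{x_1 \neq 0\}$'' is extended to a genuinely Borel-measurable function on all of $\reals^n$ (so the generated $\sigma$-algebras are well-posed), and that the normalization $\lambda = 1/|x_1|$ is invoked only where $x_1 \neq 0$. Phrasing the whole claim via trace $\sigma$-algebras on $\{X_1 \neq 0\}$—or, if one prefers, modulo the null event $\{X_1 = 0\}$—is what makes these manipulations rigorous; the substantive content is entirely the one-line normalization identity driving the second inclusion.
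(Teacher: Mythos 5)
Your proof is correct and follows essentially the same route as the paper's: both inclusions are established exactly as in the paper, with the substantive step being the normalization $\lambda = 1/|X_1|$ giving $f(X_1,\dots,X_n) = f(X_1/|X_1|,\dots,X_n/|X_1|)$, and the reverse inclusion following because the generators $x_i/|x_1|$ are themselves scale invariant. The extra care you take with the null event $\{X_1 = 0\}$ and with extending functions measurably to all of $\reals^n$ is sound bookkeeping that the paper handles more informally (in its discussion following the proposition), but it does not change the argument.
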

 \begin{proof}
    Let $\cF_n^{**} = \sigma\left( \ttfrac{X_1}{|X_1|},  \ttfrac{X_2}{|X_1|}, \dots, \ttfrac{X_n}{|X_1|} \right)$.
    Suppose $f:\reals^n \to \reals$ is scale invariant. Then, $f(X_1, \dots, X_n) = f(X_1/|X_1|, \dots, X_n/|X_1|)$, which is clearly $\cF_n^{**}$-measurable, implying $\cF_n^{*} \subseteq \cF_n^{**}$.
    The inclusion $\cF_n^{**} \subseteq \cF_n^{*}$ is trivial: $x_1/|x_1|$, $x_2/|x_1|$, ..., $x_n/|x_1|$ are themselves scale invariant functions of $x_1, \dots, x_n$.
 \end{proof}
In our scenario of application, the t-test under Gaussian distribution, data are not non-zero but almost surely non-zero. We do not distinguish \eqref{eqn:si-filt} and \eqref{eqn:si-filt-simple} as the definition of the scale-invariant filtration because processes adapted to it are often derived by manipulations of $X_1/|X_1|, \dots, X_n/|X_1|$, which we call the \emph{scale invariant reduction} of the observations, {ignoring the $\normal{\mu}{\sigma^2}$-negligible event $\{X_1 = 0\}$}. Indeed, \cite{lai1976confidence} defines such filtration via \eqref{eqn:si-filt-simple} for the t-test case. However,
we remark that one would need the more general \eqref{eqn:si-filt} in Definition~\ref{def:si-filt} for the case when $X_1 \neq 0$ does not hold almost surely (e.g.\ when exploring a nonparametric extension of our set-up and methods).

As a counter-example, one may verify that the e-processes for t-test that we have previously introduced via universal inference, $\{ R_n \}$ in \eqref{eqn:e-proc-ui-point} and $\{ R_n^- \}$ in \eqref{eqn:rn-minus}, are \emph{not} adapted to $\{ 
\cF_n^* \}$. They would have been if the plug-in estimates $\hmu_{i-1}$ and $\hsig_{i-1}$ were allowed to be homogeneous of degree 1. However, as $\hmu_{0}$ and $\hsig_{0}$ must be constants, this cannot be the case. In fact, when $n=1$, these two e-processes are necessarily non-constant, and thus are not $\cF_1^*$-measurable. \revise{In what follows, we shall present numerous test processes on the scale invariant filtration $\{ \cF^*_n \}$, beginning with some generality. A close-up discussion on this intriguing issue follows afterward, in \cref{sec:does-filtration-matter}.}



\subsection{Scale Invariant Likelihood Ratios \revise{for Location-Scale Families}
}\label{sec:scale-lr}

Let us suppose, temporarily in this subsection for the sake of generality, that $P_{\mu, \sigma^2}$ is a distribution on $\mathbb R$ parametrized by $\mu \in \mathbb R$ and $\sigma > 0$, and $\eta$ a reference measure on $\mathbb R$, such that $ P_{\mu, \sigma^2} \ll \eta$ with density
\begin{equation}
    \frac{\d P_{\mu, \sigma^2}}{\d \eta}(x) = \sigma^{-1} g(\sigma^{-1}(x-\mu)).
\end{equation}
Further, suppose $\eta$ does not charge the singleton $\{ 0 \}$, so $P_{\mu, \sigma^2} (X_i = 0)$ is always $0$. The following was observed by \citet[Section 5]{lai1976confidence}.

\begin{lemma}[Density of the scale invariant reduction]\label{lem:jeffreys}
 Let $\varphi_n:\mathbb R^n \to \mathbb R^{n}$ be the function
 \begin{equation}
     \varphi_n(x_1, \dots, x_n) = (x_1/|x_1|, x_2/|x_1|, \dots, x_n/|x_1|).
 \end{equation}
 (It does not matter how $\varphi_n$ is defined when $x_1 = 0$.)
 
 Let $Q_{\mu ,\sigma^2}^n$ be the push-forward measure of $P_{\mu, \sigma^2}^{\otimes n}$ under the map $\varphi_n$. Let $\eta^{\langle n-1 \rangle}$ be the measure on $\{ \pm 1 \} \times \mathbb R^{n-1}$ that charges both $\{ 1 \} \times \mathbb R^{n-1}$ and $\{ - 1 \} \times \mathbb R^{n-1}$ with $\eta^{\otimes (n-1)}$. Then,
 \begin{equation}\label{eqn:max-inv-density}
     \frac{\d Q_{\mu ,\sigma^2}^n}{\d \eta^{\langle n-1 \rangle}} (x_1, \dots, x_n) =  \int_{\tau > 0}  \left\{\prod_{i=1}^n (1/\tau) g(x_i /\tau - \mu/\sigma)) \right\} \frac{   \d \tau}{\tau} = \int_{\tau > 0}  \left\{\prod_{i=1}^n \frac{\d P_{\tau\mu/\sigma, \tau^2}}{\d \eta}(x_i) \right\} \frac{   \d \tau}{\tau} .
 \end{equation}
\end{lemma}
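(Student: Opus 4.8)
The statement is a density-of-a-maximal-invariant computation, and I would prove it by an explicit change of variables that peels the scale off the observation vector. The multiplicative group $(0,\infty)$ acts on $\reals^n$ by $\lambda\cdot(x_1,\dots,x_n) = (\lambda x_1,\dots,\lambda x_n)$, and $\varphi_n$ is precisely a maximal invariant for this action; the integral on the right of \eqref{eqn:max-inv-density} is its Wijsman-type representation against the right Haar measure $\d\tau/\tau$ of the group. Concretely, I would verify the claimed Radon–Nikodym identity by testing against an arbitrary bounded measurable $h$ on $\{\pm1\}\times\reals^{n-1}$: by the definition of the push-forward,
\[
\int h \, \d Q^n_{\mu,\sigma^2} = \int_{\reals^n} h(\varphi_n(x_1,\dots,x_n)) \prod_{i=1}^n \tfrac{1}{\sigma} g\!\left(\tfrac{x_i - \mu}{\sigma}\right) \d\eta(x_1)\cdots\d\eta(x_n),
\]
and the goal is to show the inner integral equals $\int h\,\rho\,\d\eta^{\langle n-1\rangle}$ with $\rho$ the proposed density.

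Next I would perform the substitution. Since $\eta$ does not charge $\{0\}$, the set $\{x_1 = 0\}$ is null and may be discarded; on $\{x_1\neq 0\}$ I write $\epsilon = \sg(x_1)\in\{\pm1\}$, $\tau = |x_1| > 0$, and $y_i = x_i/|x_1|$ for $i\ge2$ (setting $y_1 := \epsilon$), so that $x_i = \tau y_i$ for \emph{every} $i$ and $\varphi_n(x) = (\epsilon, y_2,\dots,y_n)$. For each fixed sheet $\epsilon = \pm1$ the map $(\tau,y_2,\dots,y_n)\mapsto(x_1,\dots,x_n)$ has a block-triangular Jacobian of absolute determinant $\tau^{n-1}$, so the differential becomes $\tau^{n-1}\,\d\tau\,\d\eta(y_2)\cdots\d\eta(y_n)$, summed over the two sheets.

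The remaining work is algebraic. Using $x_i=\tau y_i$ one has $\tfrac1\sigma g(\tfrac{x_i-\mu}{\sigma}) = \tfrac1\sigma g(\tfrac{\tau y_i}{\sigma}-\tfrac{\mu}{\sigma})$, so substituting the scale variable $s = \sigma/\tau$ turns $\tau^{n-1}\,\d\tau$ into $\sigma^{n}s^{-(n+1)}\,\d s$: the factor $\sigma^{n}$ cancels the prefactor $\sigma^{-n}$ from the $n$ densities, and $s^{-(n+1)}$ splits into $n$ copies of $s^{-1}$ (completing each $\tfrac1s g(\tfrac{y_i}{s}-\tfrac{\mu}{\sigma})$) plus one surplus factor producing the Haar term $\tfrac{\d s}{s}$. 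After this, the sum over $\epsilon=\pm1$ together with $\d\eta(y_2)\cdots\d\eta(y_n)$ is exactly integration against $\eta^{\langle n-1\rangle}$, and the bracketed scale-integral $\int_{s>0}\prod_{i=1}^n \tfrac1s g(\tfrac{y_i}{s}-\tfrac{\mu}{\sigma})\tfrac{\d s}{s}$ is the claimed density, giving \eqref{eqn:max-inv-density}. The final equality in the statement is then immediate, since $\tfrac1\tau g(x_i/\tau-\mu/\sigma) = \tfrac{\d P_{\tau\mu/\sigma,\tau^2}}{\d\eta}(x_i)$ by the location–scale form.

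I expect the main obstacle to be bookkeeping around the reference measure rather than any conceptual difficulty: one must ensure the change of variables legitimately converts $\d\eta^{\otimes n}$ into $\tau^{n-1}\,\d\tau$ times $\d\eta^{\otimes(n-1)}$ on the reduced coordinates, which relies on $\eta$ transforming like Lebesgue under scaling — a property forced here because $\sigma^{-1}g(\sigma^{-1}(\,\cdot\,-\mu))$ is a probability density with respect to $\eta$ for every $\mu,\sigma$ (and is the case in the Gaussian application, where $\eta$ is Lebesgue). Secondary care is needed in tracking the two sign-sheets $\epsilon=\pm1$ so that they reassemble into $\eta^{\langle n-1\rangle}$, and in getting the $\sigma$-versus-dummy-scale exponents to cancel exactly so that the right Haar factor $\d\tau/\tau$ emerges with no stray powers of $\tau$.
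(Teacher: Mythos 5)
Your proof is correct and follows essentially the same route as the paper's: both split on the sign of $x_1$, change variables from $(x_1,\dots,x_n)$ to a scale and ratio coordinates with Jacobian $\tau^{n-1}$, and then substitute the scale variable (your $s=\sigma/\tau$, the paper's $\tau = \sigma/x_1$) so that the powers of $\sigma$ cancel and the right Haar factor $\d\tau/\tau$ emerges. The only differences are cosmetic — you verify the density by integrating against test functions and sum the two sign sheets directly, whereas the paper verifies it on sets and works with the conditional laws given $\{X_1>0\}$ and $\{X_1<0\}$, normalized by $\Pr[X_1>0]$, before recombining into $\eta^{\langle n-1\rangle}$ (and, like you, it implicitly uses that $\eta$ scales like Lebesgue measure, which is the case in the Gaussian application).
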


The lemma says, in probabilistic terms, that the ``maximal invariant" reduction of the sample, $(X_1/|X_1|, X_2/|X_1|, \dots, X_n/|X_1|)$, has a density that relates to the density of the original sample in the form of \eqref{eqn:max-inv-density} --- which depends on $\mu$ and $\sigma^2$ only through $\mu/\sigma$.

Denoting the class of distributions having the same standardized mean by $\cP = \{ P_{\mu',{\sigma'}^2} : \mu'/\sigma' = \mu/\sigma \}$, we can further write the right hand side of \eqref{eqn:max-inv-density} in the form of
\begin{equation}
    \int_{\cP}  \left\{\prod_{i=1}^n \frac{\d P}{\d \eta}(x_i) \right\} \mathrm{J}(\d P) .
\end{equation}
Where $\mathrm{J}$ is the \emph{Jeffreys prior} over $\cP$ with density $\frac{1}{\sigma(P)}$. This improper prior is known in the Bayesian literature as an uninformative prior on the scale parameter. We have shown above that taking the mixture (in some sense) with the Jeffreys prior $\int_{\tau > 0} (\dots) \frac{\d \tau}{\tau}$ of likelihood ratios is equivalent to taking the likelihood ratio of the scale invariant reduction $X_2/|X_1|, \dots, X_n/|X_1|$. 
\revise{Indeed, our treatment for the location-scale family $\{ P_{\mu, \sigma^2} \}$ so far in this subsection to compute the density of $Q^n_{\mu,\sigma^2}$ is a special case of group invariant models formulated by \citet[Section 2.2]{perez2022statistics}, where the Jeffreys prior $J(\d P)$ with density $\sigma^{-1}(P)$ arises as the Haar measure of the group $(\mathbb R^+, \times)$, and the function $V_n$ corresponds to the ``maximally invariant statistic" under the group action. The exact relation of our work to that of \cite{perez2022statistics} shall be discussed in detail in \cref{sec:comp-perez}}. We shall make further remarks on how our approaches are related to previous Bayesian work with Jeffreys prior in \cref{sec:jzs}.

Now using the fact that the general non-i.i.d.\ likelihood ratios are martingales (Lemma~\ref{lem:lrm-joint}) on $Q_{\mu ,\sigma^2}^n$, we have:

\begin{lemma}[Scale invariant likelihood ratio]\label{lem:si-lr}
   For any $\theta$ and $\theta_0$, the process
   \begin{equation}\label{eqn:si-lr}
       h_n(\theta; \theta_0) = \frac{ \int_{\tau > 0}  \left\{\prod_{i=1}^n \frac{\d P_{\tau\theta, \tau^2}}{\d \eta}(X_i) \right\} \frac{   \d \tau}{\tau}  }{ \int_{\tau > 0}  \left\{\prod_{i=1}^n \frac{\d P_{\tau\theta_0, \tau^2}}{\d \eta}(X_i) \right\} \frac{   \d \tau}{\tau}  }
   \end{equation}
   is an NM for $\{ P_{\mu_0, \sigma_0^2} : \mu_0/\sigma_0 = \theta_0 \}$ on the scale invariant filtration $\{ \mathcal{F}^*_n \}_{n \ge 1}$ with expected value 1.
\end{lemma}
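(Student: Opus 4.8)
The plan is to recognize $h_n(\theta;\theta_0)$ as an ordinary, possibly non-i.i.d., likelihood-ratio martingale for the \emph{reduced} process and then to invoke Lemma~\ref{lem:lrm-joint}. The two improper Jeffreys mixtures forming the numerator and denominator of \eqref{eqn:si-lr} are individually infinite-mass objects, but Lemma~\ref{lem:jeffreys} tells us that each is the $\eta^{\langle n-1\rangle}$-density of a genuine \emph{probability} measure, namely the law $Q^n_{\mu,\sigma^2}$ (resp.\ $Q^n_{\mu_0,\sigma_0^2}$) of the scale-invariant reduction $\varphi_n(X_1,\dots,X_n)$ under $P^{\otimes n}_{\mu,\sigma^2}$ (resp.\ $P^{\otimes n}_{\mu_0,\sigma_0^2}$), for any representatives with $\mu/\sigma=\theta$ and $\mu_0/\sigma_0=\theta_0$. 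Dividing these densities collapses the improperness and leaves a bona fide Radon--Nikodym derivative between two probability laws; this is the key structural observation that makes the whole argument work.

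First I would set up the reduced process. Writing $Z_n := X_n/|X_1|$, the vector $(Z_1,\dots,Z_n)$ is exactly $\varphi_n(X_1,\dots,X_n)$, and these reductions are consistent in $n$ (the normalization is always by $|X_1|$), so $(Z_n)_{n\ge1}$ is a genuine stochastic process whose canonical filtration is precisely $\{\cF^*_n\}$ by Proposition~\ref{prop:si-filt-rule} (up to the $P_{\mu,\sigma^2}$-null event $\{X_1=0\}$). Because the standardized noise $W_i := \sigma^{-1}(X_i-\mu)$ has density $g$ and $X_i/|X_1| = (\theta + W_i)/|\theta + W_1|$, the law of $(Z_n)_{n\ge1}$ depends on $(\mu,\sigma)$ only through $\theta=\mu/\sigma$; call it $\mathbbmsl Q^{(\theta)}$, with finite-dimensional marginals $\mathbbmsl Q^{(\theta)}_{(n)} = Q^n_{\mu,\sigma^2}$. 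Next, since the ratio of the two degree-$(-n)$-homogeneous mixtures in \eqref{eqn:si-lr} is scale invariant, I may evaluate it at the reduction itself, and Lemma~\ref{lem:jeffreys} then gives $h_n(\theta;\theta_0) = \frac{\d\mathbbmsl Q^{(\theta)}_{(n)}}{\d\mathbbmsl Q^{(\theta_0)}_{(n)}}(Z_1,\dots,Z_n)$. Applying Lemma~\ref{lem:lrm-joint} to the process $(Z_n)_{n\ge1}$ with $\mathbbmsl P = \mathbbmsl Q^{(\theta_0)}$ and $\mathbbmsl Q = \mathbbmsl Q^{(\theta)}$ immediately yields that $\{h_n(\theta;\theta_0)\}$ is a nonnegative martingale for $\mathbbmsl Q^{(\theta_0)}$ on the canonical filtration of $(Z_n)$, that is, on $\{\cF^*_n\}$; the expected value is $1$ because $\EE_{\mathbbmsl Q^{(\theta_0)}}(h_n) = \mathbbmsl Q^{(\theta)}_{(n)}$ evaluated on its whole (probability) space equals $1$.

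It remains to promote this single-distribution statement to the entire orbit $\{P_{\mu_0,\sigma_0^2}:\mu_0/\sigma_0=\theta_0\}$, and this is where I expect the only genuine subtlety to lie. The invariance does the real work: both $h_n$ and the conditioning $\sigma$-algebra $\cF^*_n$ are scale invariant, so the conditional expectation $\EE_{P_{\mu_0,\sigma_0^2}}(h_{n+1}\mid\cF^*_n)$ is a scale-invariant functional of the data whose distribution is governed entirely by the law of the reduced process; since that law is the same $\mathbbmsl Q^{(\theta_0)}$ for every $(\mu_0,\sigma_0)$ on the orbit, the martingale identity transfers verbatim to each such $P_{\mu_0,\sigma_0^2}$. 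The remaining hypotheses to verify are the absolute-continuity requirements $\mathbbmsl Q^{(\theta)}_{(n)}\ll\mathbbmsl Q^{(\theta_0)}_{(n)}$ needed both for Lemma~\ref{lem:lrm-joint} and for the chain rule identifying $h_n$ with the derivative; for the Gaussian family (and more generally for any everywhere-positive $g$) these mixture densities are strictly positive and finite, so absolute continuity holds automatically and $h_n$ is well defined.
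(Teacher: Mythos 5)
Your proposal is correct and follows essentially the same route as the paper's own proof: identify $h_n(\theta;\theta_0)$, via the homogeneity of the Jeffreys mixtures (the paper's Lemma~\ref{lem:scaling}) and Lemma~\ref{lem:jeffreys}, as the Radon--Nikodym derivative $\frac{\d Q^n_{\mu,\sigma^2}}{\d Q^n_{\mu_0,\sigma_0^2}}$ evaluated at the reduction $(X_1/|X_1|,\dots,X_n/|X_1|)$, then apply Lemma~\ref{lem:lrm-joint} to the reduced process and identify its canonical filtration with $\{\cF^*_n\}$ through Proposition~\ref{prop:si-filt-rule}. Your additional care about the transfer to every representative $(\mu_0,\sigma_0)$ on the orbit and about absolute continuity is sound, and is handled in the paper only implicitly by the phrase ``for any $\mu/\sigma=\theta$ and $\mu_0/\sigma_0=\theta_0$.''
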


\subsection{An  Extended Test Martingale for t-Test}
\label{sec:lai-mix}

It is now convenient for us to replace the general distribution $P_{\mu, \sigma^2}$ in the previous subsection with the Gaussian $\normal{\mu}{\sigma^2}$, and $\eta$ with the Lebesgue measure on $\mathbb R$. 
A direct calculation with the Gaussian density function in Lemma~\ref{lem:si-lr} gives the following,

\begin{corollary}[Scale invariant t-likelihood ratio]\label{cor:t-lr} Let $\theta$ be any real number.
The process $\{ h_{\theta,n}  \}_{n \ge 1}$, defined by
\begin{equation}\label{eqn:t-lr}
    h_{\theta,n} =  \frac{\e^{- n \theta^2 /2 }}{\Gamma(n/2)} \int_{y > 0} y^{n/2-1}  \exp\left\{ -y + \theta S_n \sqrt{\frac{2y}{V_n}} \right\}  \d y,
\end{equation}
is an NM for $\normals_{\mu = 0}$ on the scale invariant filtration $\{ \mathcal{F}^*_n \}_{n \ge 1}$.
\end{corollary}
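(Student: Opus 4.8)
The plan is to derive Corollary~\ref{cor:t-lr} as a direct specialization of Lemma~\ref{lem:si-lr}, after which the only substantive task is to evaluate the two Jeffreys-mixture integrals in closed form for the Gaussian density. The null $\normals_{\mu = 0}$ is precisely the orbit $\{ \normal{\mu_0}{\sigma_0^2} : \mu_0/\sigma_0 = 0 \}$, so taking $\theta_0 = 0$, $P_{\mu,\sigma^2} = \normal{\mu}{\sigma^2}$, and $\eta$ equal to Lebesgue measure in Lemma~\ref{lem:si-lr} immediately shows that $h_{\theta,n} := h_n(\theta; 0)$ is an NM for $\normals_{\mu=0}$ on $\{ \cF^*_n \}$ with unit expectation. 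No further martingale argument is needed: the entire remaining content of the corollary is the closed-form identity~\eqref{eqn:t-lr}, which is purely a computation.

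First I would substitute the Gaussian density $\frac{\d \normal{\tau\theta}{\tau^2}}{\d \eta}(x) = (2\pi)^{-1/2}\tau^{-1}\exp\{ -(x-\tau\theta)^2/(2\tau^2) \}$ into both integrals of~\eqref{eqn:si-lr}. Taking the product over $i = 1, \dots, n$ and expanding the quadratic collects all data dependence into $S_n$ and $V_n$: the exponent becomes $-V_n/(2\tau^2) + \theta S_n/\tau - n\theta^2/2$, while the prefactor $(2\pi)^{-n/2}$ is common to numerator and denominator and cancels. This reduces $h_{\theta,n}$ to $\e^{-n\theta^2/2}$ times the quotient of $\int_{\tau>0} \tau^{-n-1} \exp\{ -V_n/(2\tau^2) + \theta S_n/\tau \}\,\d\tau$ over $\int_{\tau>0} \tau^{-n-1}\exp\{ -V_n/(2\tau^2) \}\,\d\tau$, the denominator being the $\theta_0 = 0$ instance in which the $\theta S_n/\tau$ term is absent.

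The crux is the change of variables $y = V_n/(2\tau^2)$, under which $1/\tau = \sqrt{2y/V_n}$ and, tracking the Jacobian (the orientation reversal cancels the sign), $\tau^{-n-1}\,\d\tau = 2^{n/2-1}V_n^{-n/2}\, y^{n/2-1}\,\d y$. The numerator integral then becomes $2^{n/2-1}V_n^{-n/2}\int_0^\infty y^{n/2-1} \e^{-y}\exp\{ \theta S_n\sqrt{2y/V_n} \}\,\d y$, and the denominator becomes $2^{n/2-1}V_n^{-n/2}\,\Gamma(n/2)$, the latter recognized as a Gamma integral. The common factor $2^{n/2-1}V_n^{-n/2}$ cancels, leaving exactly~\eqref{eqn:t-lr}.

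I do not anticipate a real obstacle; the argument is routine once the substitution is chosen. The only points needing care are (i) confirming $V_n > 0$ almost surely so that both integrals converge absolutely and the change of variables is legitimate, which is immediate for Gaussian data since $\{ X_1 = 0 \}$ and $\{ V_n = 0 \}$ are null, and (ii) bookkeeping the powers of $\tau$, $2$, and $V_n$ so that the prefactors cancel cleanly between numerator and denominator. The improper $\d\tau/\tau$ weighting of the Jeffreys prior is already folded into Lemma~\ref{lem:si-lr}, contributing only the extra factor $\tau^{-1}$ that I have absorbed into $\tau^{-n-1}$ above.
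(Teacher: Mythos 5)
Your proposal is correct and follows essentially the same route as the paper: invoke Lemma~\ref{lem:si-lr} with $\theta_0 = 0$ for the Gaussian family to get the NM property on $\{\cF_n^*\}$, substitute the Gaussian density so the exponent collects into $-V_n/(2\tau^2) + \theta S_n/\tau - n\theta^2/2$, and apply the change of variables $y = V_n/(2\tau^2)$ so that the denominator reduces to a Gamma integral and the common prefactors cancel, yielding~\eqref{eqn:t-lr}. Your Jacobian bookkeeping ($\tau^{-n-1}\,\d\tau = 2^{n/2-1}V_n^{-n/2}y^{n/2-1}\,\d y$ up to orientation) checks out exactly.
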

The $\theta$ in the martingale \eqref{eqn:t-lr} parametrizes the standardized mean $\mu/\sigma$ of the \emph{alternative}. That is, when the actual distribution is in the class $\normals_{\mu/\sigma = \theta}$, the process $\{h_{\theta, n}\}$ grows the fastest. A flat integral over $\theta$ yields an ENSM that stands behind Lai's t-CS (Theorem~\ref{thm:lai-cs}), which we restate in our language as follows. 

\begin{theorem}[Scale invariant t-test extended martingale]\label{thm:lai-ensm}
    The process $\{ H_n \}_{n \ge 1}$, defined as $H_1 = \infty$ and
    \begin{equation}
        H_n = \sqrt{\frac{2 \pi }{n}} \left( \frac{n V_n }{n V_n - S_n^2}  \right)^{n/2}, \quad (\text{for }n \ge 2)
    \end{equation}
    is an ENSM for $\normals_{\mu = 0}$ on the scale invariant filtration $\{ \cF^*_n \}_{n \ge 1}$. Lai's t-CS stated in Theorem~\ref{thm:lai-cs} follows from applying the extended Ville's inequality (Lemma~\ref{lem:evi}) to this ENSM.
\end{theorem}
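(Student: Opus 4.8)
The plan is to obtain $\{H_n\}$ as the improper \emph{flat} mixture of the scale invariant t-likelihood ratios of \cref{cor:t-lr}, i.e.\ $H_n = \int_{\mathbb R} h_{\theta,n}\,\d\theta$ with Lebesgue measure over $\theta$, and to get the ENSM property essentially for free from closure under $\sigma$-finite mixtures. Each $\{h_{\theta,n}\}$ is an NM, hence trivially an ENSM, for $\normals_{\mu=0}$ on $\{\cF_n^*\}$, and $\d\theta$ is $\sigma$-finite, so the mixture result recalled in \cref{sec:testproc} \citep[Section 5.2]{ensm} gives at once that $\{H_n\}$ is an ENSM for $\normals_{\mu=0}$ on the scale invariant filtration. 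What remains for this half is purely the closed-form evaluation of the mixing integral.

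To evaluate it, I would first swap the order of integration (Tonelli applies, the integrand being nonnegative) and isolate the inner Gaussian integral over $\theta$. Completing the square in $-\tfrac{n}{2}\theta^2 + \theta S_n\sqrt{2y/V_n}$ turns it into $\sqrt{2\pi/n}\,\exp\{S_n^2 y/(nV_n)\}$, so that
\begin{equation}
  H_n = \frac{1}{\Gamma(n/2)}\sqrt{\frac{2\pi}{n}}\int_{y>0} y^{n/2-1}\exp\!\left\{-y\Big(1 - \tfrac{S_n^2}{nV_n}\Big)\right\}\d y.
\end{equation}
Writing $\beta := 1 - S_n^2/(nV_n) = (nV_n - S_n^2)/(nV_n)$, which is nonnegative by Cauchy--Schwarz and strictly positive almost surely for $n\ge2$, the last integral is the Gamma integral $\Gamma(n/2)\beta^{-n/2}$, yielding $H_n = \sqrt{2\pi/n}\,(nV_n/(nV_n-S_n^2))^{n/2}$. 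At $n=1$ one has $S_1^2 = V_1$, hence $\beta = 0$ and the integral diverges, which is exactly the origin of the value $H_1 = \infty$.

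For the confidence sequence I would rewrite $H_n = \sqrt{2\pi/n}\,(1 + T_{n-1}^2/(n-1))^{n/2}$ using $T_{n-1}^2 = (n-1)S_n^2/(nV_n - S_n^2)$, apply the whole construction to the shifted data $X_i - \mu$ to obtain an ENSM for the Gaussians with mean $\mu$, and invoke the extended Ville inequality (\cref{lem:evi}) with the process \emph{started at time $m$}, treating $H_m$ as the initial value, which is why $H_1=\infty$ forces $m\ge2$. A short algebraic identity shows that the event $\{\mu\notin\CI_n\}$ with $\xi_n = \sqrt{s_n^2[(bn)^{1/n}-1]}$ is exactly $\{H_n\ge\varepsilon\}$ for the shifted process at threshold $\varepsilon = \sqrt{2\pi b}$, and in particular $\{H_m\ge\varepsilon\} = \{|T_{m-1}|\ge a\}$, so the crossing term in \eqref{eqn:evi} is $P(|T_{m-1}|\ge a) = 2(1-F_{m-1}(a))$ by the symmetry of the t-distribution.

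The step carrying the real content is the truncated-expectation term $\varepsilon^{-1}\EE_P(\id_{\{H_m<\varepsilon\}}H_m)$ in \eqref{eqn:evi}. The key observation is that the t-density obeys $f_{m-1}(t)\,(1+t^2/(m-1))^{m/2}\equiv C_m$ for the normalizing constant $C_m$, so that integrating $(1+T_{m-1}^2/(m-1))^{m/2}$ against $f_{m-1}$ over $\{|t|<a\}$ collapses to $2aC_m$; dividing by $\varepsilon$ and using $C_m(1+a^2/(m-1))^{-m/2} = f_{m-1}(a)$ reduces this term to precisely $2a f_{m-1}(a)$. Adding the two contributions reproduces Lai's bound $2(1 - F_{m-1}(a) + af_{m-1}(a))$. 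I expect the main obstacle to be bookkeeping rather than conceptual: correctly shifting by each candidate mean $\mu$ so the scale invariant ENSM applies, starting the maximal inequality at $m$ instead of at $1$, and threading the t-density normalization cleanly through the truncated expectation.
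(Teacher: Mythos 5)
Your proposal is correct and follows essentially the same route as the paper's own proof: a flat (Lebesgue) mixture over $\theta$ of the scale invariant likelihood ratios $h_{\theta,n}$, Tonelli plus a completed-square Gaussian integral and a Gamma integral to obtain the closed form (with $n=1$ degenerating to $H_1=\infty$ since $V_1=S_1^2$), shifting by each candidate $\mu$, and the extended Ville inequality applied to the process started at time $m$ with threshold matched so that $\{\mu\notin\CI_n\}=\{H_n^\mu\ge\varepsilon\}$. The only difference is cosmetic and in your favor: for the truncated-expectation term you use the exact identity $f_{m-1}(t)\left(1+t^2/(m-1)\right)^{m/2}\equiv C_m$ to evaluate it as exactly $2af_{m-1}(a)$, whereas the paper reaches the same quantity as a bound via the limiting argument $a'\to a$ applied to $\tfrac{a}{a-a'}\Exp\left[\id_{\{a'\le T_{m-1}<a\}}h(T_{m-1})\right]$, which rests on the same underlying constancy of $h\cdot f_{m-1}$.
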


To see how $\{H_n\}$ behaves under $\normals_{\mu = 0}$, recall that $T_{n-1}=\sqrt{n-1} \frac{S_n}{ \sqrt {n V_n - S_n^2} }$ has Student's t-distribution of $n-1$ degrees of freedom.  $H_n$ can thus be re-expressed as
\begin{equation}\label{eqn:lai-ensm-T-expr}
    H_n = \sqrt{\frac{2 \pi }{n}} \left( 1 + \frac{T_{n-1}^2}{n-1}  \right)^{n/2}.
\end{equation}
$\{ H_n \}$ works favorably as a test process for the null $\normals_{\mu = 0}$ and the alternative $\normals_{\mu \neq 0}$ due to the following symptotic result on \emph{both} its e-power under the alternative and its convergence under the null.

\begin{proposition}[Asymptotic behavior of the scale invariant t-test extended martingale]\label{prop:conv-t-ensm}
    Under any \revise{distribution with mean $\mu$ and variance $\sigma^2$, for example} $\normal{\mu}{\sigma^2}$,
    \begin{equation}
        \lim_{n \to \infty}  \frac{\log H_n}{n}  = \frac{1}{2}\log( 1 + \mu^2/\sigma^2 ) \quad \text{almost surely}.
    \end{equation}
    Consequently, $\{ H_n \}$ diverges almost surely to $H_\infty = \infty$ exponentially fast under $\normals_{\mu \neq 0}$. Furthermore, $\{ H_n \}$ converges almost surely to $H_\infty = 0$ under $\normals_{\mu = 0}$.
\end{proposition}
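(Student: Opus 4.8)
The plan is to reduce everything to the sample second moment and variance via the algebraic identities $nV_n - S_n^2 = n^2 s_n^2$ and $nV_n = n^2 \avgXsq{n}$. Substituting into the definition of $H_n$ gives
\begin{equation}
H_n = \sqrt{\frac{2\pi}{n}}\left(\frac{\avgXsq{n}}{s_n^2}\right)^{n/2},
\end{equation}
so that
\begin{equation}
\frac{\log H_n}{n} = \frac{1}{2n}\log\frac{2\pi}{n} + \frac{1}{2}\log\frac{\avgXsq{n}}{s_n^2}.
\end{equation}
This isolates the quantity whose limit we need, and already separates the ``bulk'' growth rate from the logarithmic prefactor that will matter only under the null.

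For the e-power limit I would invoke only the strong law of large numbers, which is all the relaxed hypothesis (any distribution with mean $\mu$ and variance $\sigma^2$) affords: $\avgXsq{n}\to\mu^2+\sigma^2$ and $s_n^2 = \avgXsq{n} - \avgX{n}^2 \to \sigma^2$ almost surely. The deterministic prefactor contributes $\frac{1}{2n}\log(2\pi/n)\to 0$. Hence $\frac{\log H_n}{n}\to\frac{1}{2}\log\frac{\mu^2+\sigma^2}{\sigma^2} = \frac{1}{2}\log(1+\mu^2/\sigma^2)$ almost surely. When $\mu\neq 0$ this limit is strictly positive, so $\log H_n \sim cn$ with $c>0$ and $H_n\to\infty$ exponentially fast, which is exactly the divergence claim under $\normals_{\mu\neq 0}$.

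The delicate part is the null case $\mu = 0$, where the first-order limit above equals $0$ and therefore only yields $\log H_n = o(n)$ --- far too weak to conclude $H_n\to 0$. Here I would retain the logarithmic prefactor explicitly and write, with $u_n := \avgX{n}^2/s_n^2 = T_{n-1}^2/(n-1)$,
\begin{equation}
\log H_n = \tfrac{1}{2}\log(2\pi) - \tfrac{1}{2}\log n + \tfrac{n}{2}\log(1+u_n).
\end{equation}
Using $\log(1+x)\le x$ together with $u_n\ge 0$ bounds $\tfrac{n}{2}\log(1+u_n)\le \tfrac{n}{2}u_n = \frac{S_n^2}{2n s_n^2}$. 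The key input is the law of the iterated logarithm, which (combined with $s_n^2\to\sigma^2$ almost surely) gives $\limsup_n \frac{S_n^2}{2n s_n^2\log\log n} = 1$, hence $\tfrac{n}{2}u_n = O(\log\log n)$ almost surely. Since $\log\log n = o(\log n)$, the term $-\tfrac{1}{2}\log n$ dominates, so $\log H_n\to -\infty$ and therefore $H_n\to 0$ almost surely under $\normals_{\mu=0}$.

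The main obstacle, as just flagged, is precisely this null-case decay: the strong law is insufficient and the argument must instead control the second-order fluctuations of $S_n^2/n$. The law of the iterated logarithm is the natural and minimal tool, since it certifies that the $\sqrt{\log\log n}$-sized excursions of $|S_n|/\sqrt{n}$ are dwarfed by the deterministic $\sqrt{\log n}$-scale decay supplied by the $\sqrt{2\pi/n}$ prefactor; any weaker statement (e.g.\ the SLLN bound $S_n = o(n)$, or even $S_n = o(\sqrt{n}\,\omega(n))$ for slowly growing $\omega$) would fail to close the gap between the $\tfrac{n}{2}u_n$ growth and the $\tfrac{1}{2}\log n$ decay.
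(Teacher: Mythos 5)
Your proof is correct. The e-power limit is handled exactly as in the paper: the identity $nV_n/(nV_n-S_n^2)=\avgXsq{n}/s_n^2$ (equivalently, the paper's substitution of $T_{n-1}/\sqrt{n-1}+\mu/s_n$ into $H_n = \sqrt{2\pi/n}\,(1+T_{n-1}^2/(n-1))^{n/2}$ is the same algebra) plus the strong law of large numbers, so that part matches. Where you genuinely diverge from the paper is the null case. The paper first proves only convergence \emph{in probability}: under $\normals_{\mu=0}$ it uses the fact that $T_{n-1}$ has an exact Student-t distribution, notes that $H_n>\delta$ forces $T_{n-1}^2\gtrsim \log n$, bounds this by the tail of $T_1$ (tails of t-distributions decrease in the degrees of freedom), and then \emph{upgrades} to almost-sure convergence by invoking a structural result that extended nonnegative supermartingales converge almost surely (Proposition A.14 of the ENSM paper). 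You instead give a direct almost-sure argument: $\log H_n \le \tfrac12\log(2\pi)-\tfrac12\log n + S_n^2/(2ns_n^2)$ via $\log(1+x)\le x$, and the Hartman--Wintner law of the iterated logarithm shows the last term is $O(\log\log n)$ a.s., so $\log H_n\to-\infty$. Your route buys three things: it is self-contained (no appeal to the ENSM convergence theorem), it yields an explicit decay rate $\log H_n = -\tfrac12\log n + O(\log\log n)$, and it proves the stronger statement that $H_n\to 0$ a.s.\ under \emph{any} i.i.d.\ mean-zero finite-variance distribution, not just the Gaussian null (the paper's tail argument is tied to exact Gaussianity). The paper's route buys brevity given the cited machinery and illustrates how the ENSM structure itself forces a.s.\ convergence. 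One small quibble: your closing claim that the LIL is the \emph{minimal} tool is overstated --- any almost-sure bound of the form $S_n^2 = o(\sigma^2 n\log n)$ closes the gap against the $\tfrac12\log n$ decay, so e.g.\ $S_n = o(\sqrt{n\log n})$ would also suffice; the LIL is simply the standard classical result delivering such a bound. This does not affect the validity of your proof.
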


The reader may compare the ENSM $\{ H_n \}$ with the ENSM for the Z-test case by \citet[Proposition 5.7]{ensm}, obtained via a flat mixture over the standard Gaussian likelihood ratio. Both are free of any parameter. Under the null, both, as extended \emph{martingales}, start from $\infty$ and shrink to 0 almost surely. Under the alternative, both start from $\infty$ and diverge back to $\infty$.
Both can be seen as frequentist embodiments of the Bayesian idea of uninformative, improper prior.


\subsection{Classical Test Martingales for t-Test}
\label{sec:N-mix}

In some sense a classical, integrable test martingale issued at 1 is preferred. Besides a simple, universally valid rejection rule ``reject when the test process exceeds $1/\alpha$", the use of classical Ville's inequality often leads to closed-form CSs as opposed the one in Theorem~\ref{thm:lai-ensm} that involves root finding.
Further, these classical NMs often come with ``tunable hyperparameters" arising from the mixture distributions. We replace the flat mixture on $\theta$ that leads to Theorem~\ref{thm:lai-ensm} by a Gaussian one, obtaining the following classical test martingales.


\begin{theorem}[Scale invariant t-test martingale]\label{thm:lai-e} For any $c > 0$, the process $\{ G_n^{(c)} \}_{n \ge 1}$ defined by
\begin{equation}\label{eqn:gaussian-mix-eproc}
    G_n^{(c)} = \sqrt{\frac{c^2}{n+c^2}} \left( \frac{(n+c^2) V_n }{(n+c^2) V_n  - S_n^2}  \right)^{n/2}
\end{equation}
is an NM for $\normals_{\mu = 0}$ on the scale invariant filtration $\{ \mathcal{F}^*_n \}_{n \ge 1}$ with expected value $\Exp[G_n^{(c)}] = G_1^{(c)} = 1$. Consequently, let
\begin{equation}\label{eqn:gaussian-mix-radius}
    \operatorname{radius}_n = \sqrt{\frac{ (n+c^2)\left(1- \left( \frac{\alpha^2 c^2}{n+c^2} \right)^{1/n}\right) }{ \left\{ \left( \frac{\alpha^2 c^2}{n+c^2} \right)^{1/n} (n+c^2) - c^2 \right\} \vee 0   } \left(\avgXsq{n} - \avgX{n}^2 \right)},
\end{equation}
the intervals
\begin{equation}\label{eqn:gaussian-mix-cs}
   \left[  \avgX{n} \pm \operatorname{radius}_n   \right]
\end{equation}
form a $(1-\alpha)$-CS for $\mu$ over $\normals$. (When the denominator in \eqref{eqn:gaussian-mix-radius} takes 0, $\operatorname{radius}_n = \infty$, the CI is the entire $\mathbb R$ at time $n$.)
\end{theorem}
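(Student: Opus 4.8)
The plan is to obtain $\{G_n^{(c)}\}$ as a \emph{Gaussian} mixture over the alternative parameter $\theta$ of the parametrized scale-invariant likelihood-ratio martingales $\{h_{\theta,n}\}$ from Corollary~\ref{cor:t-lr}, in direct analogy to how Lai's extended martingale $\{H_n\}$ (Theorem~\ref{thm:lai-ensm}) arises from a \emph{flat} mixture. Concretely, I would integrate \eqref{eqn:t-lr} against the centered Gaussian density with variance $1/c^2$, i.e.\ set $G_n^{(c)} = \int_{\reals} h_{\theta,n}\,(c/\sqrt{2\pi})\,\e^{-c^2\theta^2/2}\,\d\theta$. Since each $\{h_{\theta,n}\}$ is an NM for $\normals_{\mu=0}$ on $\{\cF^*_n\}$ and this mixing measure is a genuine probability measure, the mixture property recorded in \cref{sec:testproc} immediately guarantees that $\{G_n^{(c)}\}$ is again an NM for the null on the same filtration. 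Crucially, because we now mix against a \emph{proper} prior, we remain inside the integrable world and avoid the nonintegrability that forces Theorem~\ref{thm:lai-ensm} into the ENSM regime.

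The next step is to carry out the integral to reach the closed form. Substituting \eqref{eqn:t-lr}, the quantity $G_n^{(c)}$ becomes a double integral over $\theta$ and the auxiliary variable $y$; since every factor is nonnegative, Tonelli's theorem permits exchanging the order of integration. The inner integral over $\theta$ is Gaussian: the coefficient of $\theta^2$ is $A=(n+c^2)/2$ and completing the square produces a factor $\sqrt{2\pi/(n+c^2)}$ and an exponential $\exp\{y S_n^2/((n+c^2)V_n)\}$. What remains is a Gamma integral $\int_0^\infty y^{n/2-1}\e^{-(1-\kappa)y}\,\d y=\Gamma(n/2)(1-\kappa)^{-n/2}$ with $\kappa=S_n^2/((n+c^2)V_n)$; the $\Gamma(n/2)$ cancels the one in \eqref{eqn:t-lr}, the constants collapse to $\sqrt{c^2/(n+c^2)}$, and $1-\kappa=((n+c^2)V_n-S_n^2)/((n+c^2)V_n)$, delivering exactly \eqref{eqn:gaussian-mix-eproc}. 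Evaluating at $n=1$, where $S_1^2=V_1=X_1^2$, the bracket equals $(1+c^2)/c^2$ and cancels the prefactor, so $G_1^{(c)}=1$; since $G_1^{(c)}$ is a deterministic constant, the martingale property gives $\Exp[G_n^{(c)}]=G_1^{(c)}=1$ for all $n$.

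For the confidence sequence, I would apply the construction to the shifted data $X_i-\mu_0$ to obtain, for each candidate $\mu_0$, a test martingale $G_n^{(c),\mu_0}$ for $\normals_{\mu=\mu_0}$, and then invert. Ville's inequality (Lemma~\ref{lem:ville}) bounds $P(\exists n:\,G_n^{(c),\mu_0}\ge 1/\alpha)\le\alpha$ uniformly over $P\in\normals_{\mu=\mu_0}$, so the CS is the set of $\mu_0$ with $G_n^{(c),\mu_0}<1/\alpha$. Raising this inequality to the power $2/n$ linearizes the bracket and reduces it to $(S_n-n\mu_0)^2<(1-\lambda)(n+c^2)\sum_i (X_i-\mu_0)^2$ with $\lambda=(\alpha^2 c^2/(n+c^2))^{1/n}$. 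Writing $\sum_i (X_i-\mu_0)^2=n[(\avgX{n}-\mu_0)^2+(\avgXsq{n}-\avgX{n}^2)]$ and collecting terms turns this into a linear inequality in $(\avgX{n}-\mu_0)^2$ whose coefficient is exactly $\lambda(n+c^2)-c^2$; solving for $(\avgX{n}-\mu_0)^2$ then reproduces the radius \eqref{eqn:gaussian-mix-radius}.

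The integral evaluation, while the technical heart, is essentially bookkeeping once Tonelli is invoked; I expect the genuinely delicate point to be the inversion's case analysis. Note first that $\lambda<1$ (as $\alpha<1$ and $c^2<n+c^2$), so the right-hand side $(1-\lambda)(n+c^2)\sum_i(X_i-\mu_0)^2$ is nonnegative. When $\lambda(n+c^2)-c^2\le 0$, the coefficient of the nonnegative quantity $(\avgX{n}-\mu_0)^2$ is nonpositive while the right-hand side is (a.s.\ strictly) positive, so the inequality holds for \emph{every} $\mu_0$ and the interval degenerates to all of $\reals$; this is precisely the meaning of the $\vee\,0$ in the denominator of \eqref{eqn:gaussian-mix-radius} and must be handled explicitly rather than by naive division. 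A secondary point requiring care is confirming that the shift $X_i\mapsto X_i-\mu_0$ genuinely yields a null martingale for $\normals_{\mu=\mu_0}$ on the correspondingly shifted scale-invariant filtration, so that Ville's inequality applies time-uniformly in $n$.
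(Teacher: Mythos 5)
Your proposal is correct and follows essentially the same route as the paper's own proof: a proper Gaussian $\normal{0}{c^{-2}}$ mixture of the scale-invariant likelihood ratios $h_{\theta,n}$ from Corollary~\ref{cor:t-lr}, evaluated via Tonelli plus the inner Gaussian integral in $\theta$ and the Gamma integral in $y$ to reach \eqref{eqn:gaussian-mix-eproc}, then the shift $X_i \mapsto X_i - \mu_0$ and Ville's inequality inverted algebraically to produce \eqref{eqn:gaussian-mix-radius}. Your explicit verification that $G_1^{(c)}=1$ and your careful handling of the degenerate case $\lambda(n+c^2)-c^2\le 0$ (the $\vee\,0$ in the denominator) are slightly more detailed than the paper's write-up, but the substance is identical.
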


\revise{The tuning parameter $c^2$ of prior precision, we note, does not affect the safety (the sequential test having type 1 error rate within $\alpha$) or coverage (the CS covering $\mu$ with at least $1-\alpha$ probability) of our method, but does affect the power of the sequential test and the tightness of the confidence sequence. First, considering the growth of the e-process under the alternative, since the mixing distribution $\normal{0}{c^{-2}}$ is put over all possibilities of the true effect size $\theta = \mu/\sigma$, it would be ideal if $\normal{0}{c^{-2}}$ ``covers'' the true underlying effect size with adequate weight. Therefore, if one has acquired prior knowledge of the rough scale of the true effect size $\theta$, matching $c^{-1}$ to the same scale can improve power, as we shall demonstrate numerically in \cref{sec:sim-ep}. 
However, as we shall see soon in Proposition~\ref{prop:conv-t-nm}, in the large sample regime different choices of $c^2$ lead to the same e-power under a fixed alternative distribution. On the other hand, the tuning parameter $c^2$ plays a distribution-free role on the tightness of the $(1-\alpha)$-CS, as the expression for the radius \eqref{eqn:gaussian-mix-radius} contains decoupled $(c, \alpha, n)$-dependent and data-dependent terms. In this case, difference choices of $c^2$ optimize the radius of the CS at different times $n$. As discussed in a similar situation of the Gaussian mixture Z-test CS by \citet[Section 3.5]{howard2021time}, there is a trade-off in ``making a bound tighter
for some range of times requires making it looser at other times.'' We shall numerically demonstrate this in our case in \cref{sec:sim-epr}. More specifically, to approximately minimize $\operatorname{radius}_n$ at some fixed $n$,}
one can take $c^2$ such that
\begin{equation}\label{eqn:optim-c}
   \frac{c^2}{n+c^2}  = \alpha^{2/(n-1)} 2^{-n/(n-1)},
\end{equation}
in which case
\begin{equation}
    \operatorname{radius}_n = \sqrt{\left(\alpha^{-2/(n-1)} 2^{n/(n-1)}  - 2\right) \left(\avgXsq{n} - \avgX{n}^2 \right) },
\end{equation}
which is always finite and has a growth rate of $\alpha^{-1/(n-1)}$ as $\alpha \to 0$ while keeping the sample fixed, a slightly worse rate compared to the $\alpha^{-1/n}$ of the universal inference CS, and at least as good as the $\alpha^{-m/n(m-1)}$ rate of Lai's original CS since $m\le n$. On the other hand, if $c$ and $n$ are both fixed, the radius grows like $(\alpha- \alpha_{\star}(n) )^{-1/n}$ as $\alpha \downarrow \alpha_{\star}(n) =  \exp ({\Theta}(-n \log n))$.

While the test martingale \eqref{eqn:gaussian-mix-eproc} works without any limitation on $n$ or $c$, the CS \eqref{eqn:gaussian-mix-cs} is non-trivial only when the  $\left( \frac{\alpha^2 c^2}{n+c^2} \right)^{1/n} (n+c^2) - c^2 $ term in the denominator is positive. 
For a fixed $\alpha$ and a fixed $c$, the CS is non-trivial on $n \ge n_0$ for some $n_0$, instead of all $n \ge 1$. For example, when $c = 0.01$ and $\alpha = 0.05$, the range of \eqref{eqn:gaussian-mix-cs} is $n \ge 3$. Thus,  the starting time $m$ in Theorem~\ref{thm:lai-cs} seems to be avoided by switching from the extended Ville's inequality to
the classical Ville's inequality, but it remains in another form.
In this case, if $n\to\infty$, $\operatorname{radius}_n$ shrinks as a function of $n$ at the rate of
\begin{equation}
    \sqrt{ 1- \left( \frac{\alpha^2 c^2}{n+c^2} \right)^{1/n} } \approx \sqrt{\frac{\log(  n / \alpha^2 c^2 )}{n}}.
\end{equation}

One may further express $G_n^{(c)}$ in terms of the t-statistic under $\normals_{\mu = 0}$ as well,
\begin{equation}\label{eqn:lai-nm-tstat}
    G_n^{(c)} = \sqrt{\frac{c^2}{n+c^2}} \left( 1 + \frac{ n}{ \frac{(n+c^2)(n-1) }{T_{n-1}^2 } + {c^2}  } \right)^{n/2}
\end{equation}
which again leads to the following asymptotic result that shows their merits for being test process candidates.

\begin{proposition}[Asymptotic behavior of the scale invariant t-test martingales]\label{prop:conv-t-nm}
    Under any \revise{distribution with mean $\mu$ and variance $\sigma^2$, for example} $\normal{\mu}{\sigma^2}$,
    \begin{equation}
        \lim_{n \to \infty}  \frac{\log G_n^{(c)} }{n}  = \frac{1}{2}\log( 1 + \mu^2/\sigma^2 ) \quad \text{almost surely}.
    \end{equation}
    Consequently, $\{ G_n^{(c)} \}$ diverges almost surely to $G^{(c)}_\infty = \infty$ exponentially fast under $\normals_{\mu \neq 0}$. Furthermore, $\{ G_n^{(c)} \}$ converges almost surely to $G^{(c)}_\infty = 0$ under $\normals_{\mu = 0}$.
\end{proposition}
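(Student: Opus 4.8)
The plan is to work directly with the closed form \eqref{eqn:gaussian-mix-eproc}. Taking logarithms and dividing by $n$,
\begin{equation*}
  \frac{\log G_n^{(c)}}{n} = \frac{1}{2n}\log\frac{c^2}{n+c^2} - \frac{1}{2}\log\left(1 - \frac{S_n^2}{(n+c^2)V_n}\right).
\end{equation*}
The first summand is $O(n^{-1}\log n)$ and so vanishes. For the second, I would rewrite $\frac{S_n^2}{(n+c^2)V_n} = \frac{\avgX{n}^2}{(1+c^2/n)\,\avgXsq{n}}$ and apply the strong law of large numbers: under any $P$ with mean $\mu$ and variance $\sigma^2$ (so that $\EE_P X_1 = \mu$ and $\EE_P X_1^2 = \mu^2+\sigma^2 < \infty$) we have $\avgX{n}\to\mu$ and $\avgXsq{n}\to\mu^2+\sigma^2$ almost surely, whence the ratio tends to $\mu^2/(\mu^2+\sigma^2)$. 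Substituting gives $-\tfrac12\log\big(\sigma^2/(\mu^2+\sigma^2)\big) = \tfrac12\log(1+\mu^2/\sigma^2)$, the claimed limit. This step is routine and settles both the main statement and, when $\mu\neq 0$, the exponential divergence $G_n^{(c)}\to\infty$, since the rate is then strictly positive.

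The delicate part is the null, where the limit above degenerates to $0$ and hence says nothing about whether $G_n^{(c)}$ converges, let alone to what value. Here I would switch to the t-statistic form \eqref{eqn:lai-nm-tstat}. Under $\normals_{\mu=0}$ the law of the iterated logarithm gives $\avgX{n}^2 = O(\log\log n / n)$ almost surely, so that $T_{n-1}^2 = O(\log\log n) = o(n)$; using this, the inner fraction in \eqref{eqn:lai-nm-tstat} satisfies $\frac{n}{(n+c^2)(n-1)/T_{n-1}^2 + c^2} = \frac{T_{n-1}^2}{n}(1+o(1))$, and a Taylor expansion of the $(1+\,\cdot\,)^{n/2}$ factor (whose error is $O((\log\log n)^2/n)\to 0$ because $T_{n-1}^2$ is only of order $\log\log n$) yields
\begin{equation*}
  \log G_n^{(c)} = \frac{T_{n-1}^2}{2} - \frac12\log n + \log c + o(1) \quad\text{almost surely}.
\end{equation*}
Now I would invoke the upper half of the law of the iterated logarithm, $\limsup_n T_{n-1}^2/(2\log\log n) = 1$ almost surely (Hartman--Wintner applied to $\sqrt{n}\,\avgX{n}/\sigma$, combined with $s_n\to\sigma$), so that $T_{n-1}^2 \le (2+\varepsilon)\log\log n$ for all large $n$. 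Substituting gives $\log G_n^{(c)} \le -\tfrac12\log n + (1+\tfrac\varepsilon2)\log\log n + O(1)\to -\infty$, i.e.\ $G_n^{(c)}\to 0$ almost surely.

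The main obstacle is precisely this null analysis: the first-order exponential-rate computation is uninformative at $\mu=0$, so one must extract the sub-exponential behaviour of $G_n^{(c)}$ and control the fluctuations of $T_{n-1}^2$. The only genuinely nontrivial input is the LIL bound; the remainder is careful bookkeeping of the $o(1)$ terms, which is legitimate because $T_{n-1}^2$ grows only like $\log\log n$. (Alternatively, one could note that $\{G_n^{(c)}\}$ is a nonnegative martingale under the null, hence converges almost surely by Doob's theorem, and use the LIL estimate only to identify the limit as $0$; but the direct bound above already produces both the convergence and its value at once.)
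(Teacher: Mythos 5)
Your proof is correct, and for the null case it takes a genuinely different route from the paper's. For the alternative-side limit the two arguments are essentially equivalent: you apply the SLLN directly to $\avgX{n}$ and $\avgXsq{n}$ in the closed form \eqref{eqn:gaussian-mix-eproc}, while the paper substitutes $\frac{T_{n-1}}{\sqrt{n-1}} \mapsto \frac{T_{n-1}}{\sqrt{n-1}} + \frac{\mu}{s_n}$ in the t-statistic form \eqref{eqn:lai-nm-tstat} and uses $\frac{T_{n-1}}{\sqrt{n-1}}\to 0$, $s_n\to\sigma$ a.s.; same underlying facts, different packaging. The real divergence is under $\normals_{\mu=0}$: the paper shows only convergence \emph{in probability} to $0$, by bounding $P(G_n^{(c)}>\delta)$ via the exact Student-t law of $T_{n-1}$ (tails decreasing in the degrees of freedom, so dominated by the Cauchy tail $P(T_1^2>\log n)$), and then upgrades to almost-sure convergence by Doob's martingale convergence theorem, which guarantees $G_n^{(c)}$ has an a.s.\ limit that must then equal $0$. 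You instead give a direct pathwise argument: the Hartman--Wintner LIL gives $T_{n-1}^2\le(2+\varepsilon)\log\log n$ eventually a.s., and your expansion $\log G_n^{(c)} = \tfrac{1}{2}T_{n-1}^2 - \tfrac{1}{2}\log n + \log c + o(1)$ (whose error control is legitimate precisely because $T_{n-1}^2 = O(\log\log n)$) forces $\log G_n^{(c)}\to-\infty$. Your route buys more: it identifies the precise sub-exponential decay, $G_n^{(c)} = n^{-1/2+o(1)}$ under the null, and it needs neither the martingale property nor Gaussianity of the null (only mean zero and finite variance, which is all the LIL requires), so it generalizes beyond $\normals_{\mu=0}$. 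The paper's route is shorter and leans on structure you deliberately avoid: the exact null distribution of $T_{n-1}$ and Doob's theorem. The alternative you mention in your closing parenthesis is, in spirit, exactly the paper's proof.
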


The NM $\{ G_n^{(c)} \}$ thus have the same asymptotic properties as the ENSM $\{ H_n \}$, but the free parameter $c$, which is absent for the ENSM, does introduce a difference that emerges only non-asymptotically. 
To wit, if one fixes $n$ and the data-dependent quantity $T^2_{n-1}$, the value of $G_n^{(c)}$ would approach 0 (hence the power vanishes) if $c$ is too large or too small.

The reader may compare Theorem~\ref{thm:lai-e} with Theorem~\ref{thm:lai-ensm}, and compare this comparison with the comparison between the Gaussian mixed NSM \citep[Proposition 5.6]{ensm} and flat mixed ENSM \citep[Proposition 5.7]{ensm} in the Z-test case. Multiple similarities manifest. A full comparison shall be presented next in \cref{tab:big-comp}.
It is unclear why \cite{lai1976confidence} skipped this more universally accepted method of a proper Gaussian mixture and used an improper flat mixture instead, which seems very ahead of its time in hindsight.

We also note that \citet[Proposition 3.7]{lindon2022anytime} concurrently derive the same t-test martingale and confidence sequence as our Theorem~\ref{thm:lai-e}, a special case of their F-test martingale.
Our paper, however, is concerned much more with comparing the Gaussian mixture with Lai's flat mixture in theory and practice, with understanding the tradeoffs between test martingales in the reduced filtration and e-processes in the original filtration for this fundamental problem, as well as deriving their dependence on $\alpha$.

In place of the full Gaussian prior, a half-Gaussian prior mixture \revise{over $\mu/\sigma > 0$} will give us a ``semi-one-sided" test process in the sense that it is valid only for the point null $\normals_{\mu = 0}$ (unlike Theorem~\ref{thm:ui-ttest-onesided}) but still leads to \revise{a test powerful against $\normals_{\mu > 0}$ and} a one-sided confidence sequence.

\begin{theorem}[Scale invariant semi-one-sided t-test e-process]\label{thm:si-onesided}
    For any $c>0$, the process $G_n^{(c-)}$ defined by
    \begin{equation}\scriptsize
        G_n^{(c-)} = 2 \sqrt{\frac{c^2}{n+c^2}} \left( \left(1- \frac{S_n^2}{(n+c^2) V_n }  \right)^{-n/2} - \left(1- \frac{(S_n \wedge0) ^2}{(n+c^2) V_n }  \right)^{-n/2} \right)= 2 G_n^{(c)} - 2 G_n^{(c)} |_{(S_n \leftarrow S_n \wedge 0)}.
    \end{equation}
    is an e-process for $\normals_{\mu = 0}$ on the scale invariant filtration $\{\cF_n^*\}_{n \ge 1}$.
\end{theorem}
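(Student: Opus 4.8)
The plan is to read $G_n^{(c-)}$ not as a martingale in its own right (it is not one, which is why the theorem claims only ``e-process'' and not ``NM''), but as a closed-form \emph{lower bound} for a genuine test martingale obtained from a half-Gaussian mixture, and then to invoke the elementary fact that a nonnegative process dominated pointwise by an e-process is itself an e-process. Since each scale-invariant likelihood ratio $\{h_{\theta,n}\}$ of Corollary~\ref{cor:t-lr} is an NM for $\normals_{\mu=0}$ on $\{\cF_n^*\}$ with unit expectation, mixing over the half-Gaussian \emph{probability} measure $2\phi_{0,c^{-2}}(\theta)\id_{\{\theta>0\}}\,\d\theta$ (total mass one, with $\phi_{0,c^{-2}}(\theta)=\tfrac{c}{\sqrt{2\pi}}\e^{-c^2\theta^2/2}$ the $\normal{0}{c^{-2}}$ density) gives, by the mixture property of NMs, a test martingale
\begin{equation}
    \tilde G_n := 2\int_0^\infty h_{\theta,n}\,\phi_{0,c^{-2}}(\theta)\,\d\theta ,
\end{equation}
which is in particular an e-process for $\normals_{\mu=0}$ on $\{\cF_n^*\}$.

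Next I would evaluate $\tilde G_n$ in closed form. Inserting the expression \eqref{eqn:t-lr} for $h_{\theta,n}$, applying Tonelli to exchange the $\theta$- and $y$-integrals, and completing the square in $\theta$ turns the inner integral into a Gaussian integral over the \emph{half}-line $\{\theta>0\}$; this is precisely what produces a standard-normal CDF factor (whereas the full-line mixture gives no such factor and reproduces $G_n^{(c)}$ as in Theorem~\ref{thm:lai-e}). The remaining $y$-integral is a Gamma integral, leaving
\begin{equation}
    \tilde G_n = \frac{2c}{\sqrt{n+c^2}\,\Gamma(n/2)}\int_0^\infty y^{n/2-1}\exp\!\Big\{-y\Big(1-\tfrac{S_n^2}{(n+c^2)V_n}\Big)\Big\}\,\Phi\!\Big(S_n\sqrt{\tfrac{2y}{(n+c^2)V_n}}\Big)\,\d y ,
\end{equation}
where the coefficient $1-S_n^2/((n+c^2)V_n)$ is positive because $S_n^2\le nV_n<(n+c^2)V_n$.

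The crux is to replace the intractable $\Phi$ factor by a closed-form lower bound that collapses the integral back to elementary powers. For $S_n\ge 0$, set $u=S_n\sqrt{2y/((n+c^2)V_n)}\ge 0$, so that $u^2/2 = yS_n^2/((n+c^2)V_n)$; the Gaussian tail inequality $\Phi(-u)\le \e^{-u^2/2}$ (i.e.\ $\e^{u^2/2}\Phi(-u)\le 1$) gives $\Phi(u)\ge 1-\e^{-u^2/2}$. Substituting this and using that the two exponential rates sum to one, i.e.\ $\e^{-y(1-S_n^2/((n+c^2)V_n))}\e^{-u^2/2}=\e^{-y}$, splits the integral into two Gamma integrals and yields exactly $\tilde G_n \ge 2G_n^{(c)} - 2G_n^{(c)}|_{S_n=0} = G_n^{(c-)}$, with $2G_n^{(c)}|_{S_n=0}=2\sqrt{c^2/(n+c^2)}$. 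For $S_n<0$ one has $S_n\wedge 0=S_n$, whence $G_n^{(c-)}=0\le\tilde G_n$ trivially, and $G_n^{(c-)}\ge 0$ in all cases since $S_n^2\ge(S_n\wedge 0)^2$. Having established $0\le G_n^{(c-)}\le\tilde G_n$, and noting that $G_n^{(c-)}$ is $\cF_n^*$-measurable (it depends on the data only through the scale-invariant quantities $S_n^2/V_n$, $(S_n\wedge 0)^2/V_n$, and $\sg(S_n)$), I conclude that for every $\{\cF_n^*\}$-stopping time $\tau$ and every $P\in\normals_{\mu=0}$, $\EE_P(G^{(c-)}_\tau)\le\EE_P(\tilde G_\tau)\le 1$, so $G_n^{(c-)}$ is an e-process; equivalently, $\tilde G_n$ serves as the dominating test martingale $N_n^P$ in the characterization of e-processes quoted in \cref{sec:testproc}.

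The main obstacle I anticipate is the half-line Gaussian integral and, more importantly, pinning down the exact tail step: the precise form of $G_n^{(c-)}$ is dictated by the inequality $\e^{u^2/2}\Phi(-u)\le 1$ together with the exact cancellation $\e^{-y(1-S_n^2/((n+c^2)V_n))}\e^{-u^2/2}=\e^{-y}$, and one must check that this makes the lower bound close to exactly the stated expression rather than to some looser surrogate. A secondary, more routine point is to justify the mixture and the Tonelli exchange (joint measurability and integrability of $\{h_{\theta,n}\}$ in $(\theta,\omega)$) and to confirm the mass-one normalization of the half-Gaussian prior, so that $\tilde G_n$ is genuinely a unit-expectation test martingale.
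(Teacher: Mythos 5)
Your proposal is correct and takes essentially the same route as the paper's proof: a half-Gaussian mixture of the scale-invariant likelihood ratios $h_{\theta,n}$ (a test martingale on $\{\cF_n^*\}$ by Corollary~\ref{cor:t-lr} and the mixture property), evaluated via the half-line Gaussian integral that produces the $\Phi$ factor, then lower-bounded using the Chernoff inequality $1-\Phi(x)\le\e^{-(x\vee 0)^2/2}$ so that the Gamma integrals collapse to exactly $G_n^{(c-)}$, and concluded by domination. The only differences are cosmetic: you normalize the half-Gaussian prior to mass one upfront where the paper keeps a mass-$1/2$ prior and multiplies by $2$ at the end, and you split the cases $S_n\ge 0$ and $S_n<0$ where the paper handles both at once through the $(x\vee 0)$ in the Chernoff bound.
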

Here, the notation $G_n^{(c)} |_{(S_n \leftarrow S_n \wedge 0)}$ refers to replacing the sample sum $S_n$ in the two-sided martingale \eqref{eqn:gaussian-mix-eproc} with its negative part $S_n \wedge 0$.
The process is an e-process instead of a martingale because a step of approximation is used to unravel a non-closed-form martingale, as seen from its proof in \cref{sec:pf-lr}. As a direct consequence of Proposition~\ref{prop:conv-t-nm}, the process $G_n^{(c-)} = 2 G_n^{(c)} - 2 G_n^{(c)} |_{(S_n \leftarrow S_n \wedge 0)}$ diverges almost surely to $G^{(c-)}_\infty = \infty$ exponentially fast under $\normals_{\mu > 0}$ and converges almost surely to $G^{(c-)}_\infty = 0$ under $\normals_{\mu = 0}$, and satisfies exactly the same asymptotic as Proposition~\ref{prop:div-ui-eproc-1s},
\begin{equation}
        \lim_{n \to \infty} \frac{\log G^{(c-)}_n}{n} = \frac{1}{2} \log(1 + (\mu \vee 0)^2/\sigma^2) \quad \text{almost surely}. 
    \end{equation}
\revise{
Further, \citet[Corollary 8]{perez2022statistics} show that at any fixed sample size $n$ the scale-invariant t-likelihood ratio \eqref{eqn:t-lr} is actually an e-\emph{value} for the larger one-sided null $\normals_{\mu \le 0}$ if $\theta \ge 0$ and vice versa. Therefore, the mixture $G_n^{(c-)}$ expressed above is also an e-value for $\normals_{\mu \le 0}$, meaning that one can non-sequentially test the null $\normals_{\mu \le 0}$ using this statistic. However, it is unclear if the t-likelihood ratio process \eqref{eqn:t-lr}, and thus $\{G_n^{(c-)}\}$, is an e-\emph{process} for $\normals_{\mu \le 0}$. We leave this question open for future investigation. Again, it is worth noting that $\{G_n^{(c-)}\}$ being an e-process for the point null $\normals_{\mu = 0}$ is sufficient for a one-sided confidence sequence for $\mu$.

\subsection{Remarks}

\subsubsection{Remark I: Symmetry and e-Hacking}

An ideal property that the test processes and confidence sequences derived in this section via scale invariant reduction enjoy, but those via universal inference do not, is that they are all symmetric with respect to the observations $X_1,\dots, X_n$. This is because they depend on the sample only through the symmetric statistics $S_n$ and $V_n$.

This makes the methods robust against the following way to ``hack'' the e-values or the confidence sequences: a practitioner may permute a batch of data, compute the e-processes or confidence sequences repeatedly via multiple orders of observation,
and announce the maximal e-process or minimal width confidence sequence. While e-processes allow optional stopping and continuation, they do not allow ``optional permutation'' like this. However symmetric methods, since their final results are path-independent, are robust against this type of hacking.

\subsubsection{Remark II: Asymptotic Coverage for Non-Gaussian Data}

Classical tests derived under Gaussian assumption (e.g.\ the classical t-, F-, and $\chi^2$-tests) are frequently applied to data that are not known to be Gaussian on grounds of the central limit theorem, leading to type 1 error or interval coverage guanratees that are of asymptotic nature. In this subsection, let us substantiate the similar claim that our confidence sequences derived in this section are all \emph{asymptotic confidence sequences} under any i.i.d.\ data with finite variance. Let us use $\cL^2$ to denote the set of all these distributions on $\mathbb R$. 

The concept of asymptotic confidence sequences is proposed recently by \citet[Section 2.1]{waudby2021time}. To wit, let $\mathcal{P}$ be a family of distributions and  $\theta : \mathcal{P} \to \mathbb R$ be the parameter of interest. A $(1-\alpha)$-{asymptotic confidence sequence} over $\mathcal{P}$ for $\theta$ is a sequence of intervals $\{ [\hat{\theta}_n \pm r_n ] \}_{n \ge 1}$ such that, the convergence
$\lim_{n \to\infty} r_n/r_n^* = 1 $ holds $P$-almost surely for all $P \in \cP$, where $\{ [\hat{\theta}_n \pm r_n^* ] \}_{n \ge 1}$ form a $(1-\alpha)$-CS over $\mathcal{P}$ for $\theta$ (non-asymptotically, in the sense that is defined in \cref{sec:seq-stat}). Appealing to a similar technique, we prove in \cref{sec:pf-si} that, for example,
\begin{theorem}\label{thm:asympcs}
    The intervals \eqref{eqn:gaussian-mix-cs} form a $(1-\alpha)$-{asymptotic confidence sequence} for $\mu$ over $\mathcal{L}^2$.
\end{theorem}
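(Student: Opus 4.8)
The plan is to verify the definition of an asymptotic confidence sequence head-on: for every $P \in \cL^2$ I must produce a genuinely non-asymptotic $(1-\alpha)$-CS whose radius is asymptotically equivalent to $\operatorname{radius}_n$ in \eqref{eqn:gaussian-mix-radius}, and then the claim follows by definition. The key structural observation that makes this tractable is that \eqref{eqn:gaussian-mix-radius} factorizes as $\operatorname{radius}_n = \sqrt{\kappa_n}\, s_n$, where
\[
\kappa_n = \frac{ (n+c^2)\left(1- \left( \tfrac{\alpha^2 c^2}{n+c^2} \right)^{1/n}\right) }{ \left\{ \left( \tfrac{\alpha^2 c^2}{n+c^2} \right)^{1/n} (n+c^2) - c^2 \right\} \vee 0 }
\]
is a purely deterministic factor depending only on $(n,c,\alpha)$, and $s_n = \sqrt{\avgXsq{n} - \avgX{n}^2}$ is the empirical standard deviation. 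Thus the sample enters the radius only through the scalar $s_n$, which decouples the probabilistic content from the deterministic boundary.

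The easy half is the radius ratio. I would take as comparison the oracle radius $\operatorname{radius}_n^\star = \sqrt{\kappa_n}\,\sigma$, obtained by replacing $s_n$ with the true standard deviation $\sigma = \sigma(P) > 0$. Since $P \in \cL^2$, the strong law of large numbers gives $\avgXsq{n} \to \EE_P(X_1^2) = \mu^2 + \sigma^2$ and $\avgX{n} \to \mu$ almost surely, hence $s_n^2 = \avgXsq{n} - \avgX{n}^2 \to \sigma^2$, so that
\[
\frac{\operatorname{radius}_n}{\operatorname{radius}_n^\star} = \frac{s_n}{\sigma} \to 1 \quad P\text{-almost surely}.
\]
This uses nothing beyond finite variance and the cancellation of $\sqrt{\kappa_n}$.

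The substantive half is to certify that the oracle intervals $[\avgX{n} \pm \operatorname{radius}_n^\star]$ are a valid non-asymptotic $(1-\alpha)$-CS in the sense demanded by the definition, which is precisely where the ``similar technique'' of \cite{waudby2021time} enters. Here I would invoke a strong Gaussian approximation (a Strassen-type invariance principle) to couple the centered partial sums with a Brownian motion, writing $S_n - n\mu = \sigma W_n + o(\sqrt{n\log\log n})$ almost surely, which is all that finite variance affords. Because $\operatorname{radius}_n^\star \asymp \sigma\sqrt{\log(n)/n}$, the induced boundary on the sum scale $|S_n - n\mu| > n\,\operatorname{radius}_n^\star \asymp \sigma\sqrt{n\log n}$ strictly dominates the coupling error $o(\sqrt{n\log\log n})$; transporting the crossing event to $W$ therefore perturbs the boundary only by a $(1+o(1))$ factor. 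On the Brownian side the boundary agrees, up to this factor, with the Gaussian (Robbins) mixture boundary, whose crossing probability against Brownian motion is controlled at exactly $\alpha$ by the continuous-time Ville inequality applied to the mixture martingale underlying $G_n^{(c)}$; pushing this bound back through the coupling yields the non-asymptotic coverage of the comparison CS.

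I expect the main obstacle to be exactly this last step: making the coupling uniform over the entire unbounded horizon and verifying the residual is $o(1)$ \emph{relative to the boundary} rather than merely $o(\sqrt n)$, while simultaneously reconciling the t-flavored coefficient $\sqrt{\kappa_n}$ (which still carries the unknown-variance correction) with the Gaussian known-variance mixture coefficient; one must check that their ratio tends to $1$, so that the t-correction washes out as the Student boundary converges to the normal one. A secondary, routine point is the handling of degenerate or small-$n$ behavior: the statement is to be read over non-degenerate $P \in \cL^2$ (so $\sigma > 0$), and the early times at which \eqref{eqn:gaussian-mix-cs} returns all of $\reals$ are irrelevant to an asymptotic equivalence of radii.
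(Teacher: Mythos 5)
Your ``easy half'' is fine, but the ``substantive half'' is attempting to prove something that is false, and this is a genuine gap. The oracle intervals $[\avgX{n} \pm \sqrt{\kappa_n}\,\sigma]$ are \emph{not} a non-asymptotic $(1-\alpha)$-CS over $\cL^2$, nor even for each fixed $P \in \cL^2$. The deterministic boundary $\sqrt{\kappa_n}\,\sigma \approx \sigma\sqrt{\log\bigl(n/(\alpha^2 c^2)\bigr)/n}$ is a sub-Gaussian--type boundary, and it is classical (Catoni-style lower bounds for the empirical mean) that the empirical mean of finite-variance data cannot honor sub-Gaussian boundaries non-asymptotically: take the two-point distribution $X_i - \mu = \sigma\sqrt{(1-p)/p}$ with probability $p$ and $X_i - \mu = -\sigma\sqrt{p/(1-p)}$ otherwise, with $p \approx 1/\bigl(2n\log(n/(\alpha^2c^2))\bigr)$ at a moderate fixed $n$; a single jump already pushes $|\avgX{n}-\mu|$ above $\sigma\sqrt{2\log(n/(\alpha^2c^2))/n} > \sqrt{\kappa_n}\,\sigma$, and the probability of at least one jump is $\approx 1/\bigl(2\log(n/(\alpha^2 c^2))\bigr)$, which exceeds $\alpha$ for reasonable choices (e.g.\ $\alpha = 0.01$, $c=1$, $n=100$). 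So no proof technique can certify your comparator. Moreover, the route you propose could not deliver a non-asymptotic guarantee even if the claim were true: Strassen-type coupling controls the error only almost surely and asymptotically (there is no finite-$n$ probability bound on the event that the coupling error is small at early times), and transporting the crossing event to a \emph{deflated} Brownian boundary gives a crossing probability strictly \emph{larger} than $\alpha$, not $\le \alpha$ -- Ville's inequality is exact only for the undeflated mixture boundary.

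The paper's proof sidesteps exactly this obstruction by a different choice of comparator: it invokes Lemma A.1 of \cite{waudby2021time}, which couples the data to a genuinely i.i.d.\ sequence $\{G_n\} \iid \normal{0}{1}$ with $\avgX{n}(X) - \mu = s_n(X)\,\avgX{n}(G) + \varepsilon_n$, $\varepsilon_n = o(\sqrt{\log\log n/n})$ a.s., applies Theorem~\ref{thm:lai-e} to $\{G_n\}$ (where it is \emph{exactly} valid, since the $G_n$ are exactly Gaussian), and then defines the comparator radius to be $r_n^* = |\varepsilon_n| + s_n(X)\,\rho(n,c,\alpha)\,s_n(G)$, i.e.\ the coupling error is \emph{absorbed additively into the comparator's radius} rather than argued away. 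This makes the comparator a valid $(1-\alpha)$-CS by construction (on the enriched space), and the ratio $r_n^*/\operatorname{radius}_n = |\varepsilon_n|/(s_n(X)\rho) + s_n(G) \to 1$ a.s.\ because the radius is of order $\sqrt{\log n / n}$ while $\varepsilon_n = o(\sqrt{\log\log n/n})$, and $s_n(G) \to 1$. If you restructure your argument so that the coupling error enters the comparator's width instead of being treated as a perturbation of the crossing event, your outline essentially becomes the paper's proof.
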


On the other hand, we note that it is \emph{impossible} to construct a non-degenerate universal \emph{non-}asymptotic confidence interval even for the set of all \emph{bounded} distributions (with unknown bounds). Thus it also excludes the existence of CIs or CSs for larger classes, e.g.\ the set of all subGaussian distributions, or $\cL^2$. This result can be traced back to \citet[Theorem 2]{bahadur1956nonexistence}. The crux of the matter, we remark, lies in the \emph{convexity} of these large nonparametric sets of distributions.

\subsubsection{Remark III: Relation to the Group Invariance Study by \cite{perez2022statistics}}
\label{sec:comp-perez}

Up to the derivation of the t-likelihood ratio between $\mu/\sigma = 0$ and $\mu/\sigma = \theta$ in Corollary~\ref{cor:t-lr}, we have largely shared the path with \cite{perez2022statistics} who are primarily interested in the optimality of such canonical group-invariant (scale-invariant in the t-test) likelihood ratios for point null (for the ``maximally invariant parameter", e.g.\ $\mu/\sigma = 0$) against the point alternative (e.g.\ $\mu/\sigma = \theta$). \cite{perez2022statistics} do discuss the extension to composite nulls and alternatives in their Section 3.3, via a seemingly similar method of mixture (that takes place on \emph{both} sides of the ratio, over the null and alternative sets alike). However, their approaches and conclusions differ substantially from ours, and let us restate their two major results in t-test terms. 

Central to the results of \citet[Section 3.3]{perez2022statistics} is the concept of Bayesian \emph{marginal} measure 
\begin{equation}
    \pi_\theta P_\theta(\cdot) :=  \int_\Theta P_{\theta} (\cdot) \pi(\d \theta)
\end{equation}
for a family of measures $\{P_\theta\}_{\theta \in \Theta}$ and a prior distribution $\pi$ on $\Theta$.

Corollary 8 by \cite{perez2022statistics} concerns testing the null $\normals_{\mu/\sigma \in \Delta_0}$ against the alternative $\normals_{\mu/\sigma \in \Delta_1}$. Suppose that among \emph{all} pairs of distributions on $\Delta_0$ and $\Delta_1$, $\pi^0$ and $\pi^1$ minimize the Kullback-Leibler divergence $\kl( \pi^1_{\theta} Q^n_{\sigma\theta, \sigma^2} \| \pi^0_\theta Q^n_{\sigma\theta, \sigma^2} )$, where we recall that $Q^n_{\mu, \sigma^2}$ is the distribution of the reduced sample, defined in Lemma~\ref{lem:jeffreys}, and depends only on $\theta = \mu/\sigma$ (hence the minimization is independent of the choice of $\sigma$). 
Then, the likelihood ratio $\frac{\d \pi^1_\theta Q^n_{\sigma \theta, \sigma^2}}{\d \pi^0_\theta Q^n_{\sigma \theta, \sigma^2}}(X_1/|X_1|, \dots, X_n/|X_1|)$ is the optimal e-value --- for the null $\normals_{\mu/\sigma \in \Delta_0}$ and in terms of the worst case e-power against the alternative $\normals_{\mu/\sigma \in \Delta_1}$. 
The minimization, however, can only be nontrivially established when $\Delta_0$ and $\Delta_1$ are closed and separated, and when this happens the minimizing measures $\pi^0$ and $\pi^1$ are singular (i.e.\ Dirac masses on the boundary) as is exemplified by ``Example 1 (continued)" after Corollary 8 in \cite{perez2022statistics}. As we mention after Theorem~\ref{thm:si-onesided}, while a mixture leads to an e-value for the null $\normals_{\mu \le 0}$ against the alternative $\normals_{\mu > 0}$ in identical form as the e-process in Theorem~\ref{thm:si-onesided}, it is unclear if it is also an e-process under this larger null.

Corollary 9 by \cite{perez2022statistics}, on the other hand, requires no such minimum divergence property on the mixing measures. Still letting $\pi_0$ and $\pi_1$ be distributions on $\Delta_1, \Delta_2 \subseteq \mathbb R$, it states that the likelihood ratio $\frac{\d \pi^1_\theta Q^n_{\sigma \theta, \sigma^2}}{\d \pi^0_\theta Q^n_{\sigma \theta, \sigma^2}}(X_1/|X_1|, \dots, X_n/|X_1|)$ is the optimal e-value for the \emph{marginal} null $\{\pi^0_\theta \normal{\sigma \theta}{\sigma^2} : \sigma > 0  \}$ against the \emph{marginal} alternative $\{ \pi^1_\theta \normal{\sigma \theta}{\sigma^2} : \sigma > 0  \}$, understood as the mixture model where a distribution is first drawn according to the prior from the null or alternative set, and then the sample is drawn from the distribution. It is in general not an e-value for the composite null $\normals_{\mu/\sigma \in \Delta_0}$; further, in the point null case $\Delta_0 = \{ 0 \}$ and $\Delta_1 = \mathbb R \setminus \{0 \}$ where the null, the mixture, and the e-value obtained via the mixture coincide with those of our Theorem~\ref{thm:lai-e} (taking $\pi^1 = \normal{0}{c^{-2}}$), their statement does not provide power optimality for every single distribution in $\normals_{\mu/\sigma \in \Delta_1}$ as our statement does.



We finally note that their ``ratio of mixtures" approach, in contrast to our ``mixture of ratios", bears a closer tie to the classical Bayesian method of \emph{Bayes factors}, a topic discussed immediately next.

}

\subsubsection{Remark IV: Bayesian t-Test with the JZS Prior, and Cauchy Mixture}
\label{sec:jzs}

In a highly influential paper, \cite{rouder2009bayesian} provide a Bayesian framework for the t-test that makes extensive use of \emph{Bayes factors}. To explain this approach, we consider as we did in \cref{sec:scale-lr} a general location-scale family $P_{\mu, \sigma^2}$ dominated by a reference measure $\eta$. The Bayes factor for the null $\cP_{\mu = 0} = \{ P_{0,\sigma^2} : \sigma>0 \}$ and the alternative $\cP_{\mu \neq 0}$ is defined as \citep[p.229]{rouder2009bayesian}
\begin{equation}
    B_n = \frac{M^{0}_n}{M^{1}_n} = \frac{\int \left\{ \prod_{i=1}^n \frac{\d P_{0, \tau^2}}{\d \eta} (X_i) \right\} \pi_0(\d \tau^2) }{\int \left\{ \prod_{i=1}^n \frac{\d P_{m, \tau^2}}{\d \eta} (X_i) \right\} \pi_1(\d m, \d \tau^2)  },
\end{equation}
where $\pi_0$ and $\pi_1$ are priors on $\mathbb R^+$ and $\mathbb R \times \mathbb R^+$, that can be chosen freely by the statistician. \citet[p.231]{rouder2009bayesian}, regarding themselves as ``objective Bayesians", recommend the following choice:
\begin{equation}
\pi_0(\d \tau^2) = \tau^{-2} \d \tau^2 = 2\tau^{-1}\d \tau,\quad \text{and} \quad\pi_1(\d m, \d \tau^2) = C_{0, 1}(\d (m/\tau)) \cdot \pi_0 (\d \tau^2),
\end{equation}
where $C_{0, 1}$ is the standard Cauchy distribution. This is dubbed the ``JZS prior", an acronym for the Jeffreys prior $\pi_0$, and the Cauchy prior on $\mu/\sigma$ due to \cite{zellner1980posterior}. We can immediately write $B_n$ as
\begin{equation}
 B_n^{\text{JZS}} =   \frac{ \int_{\tau > 0}  \left\{\prod_{i=1}^n \frac{\d P_{0, \tau^2}}{\d \eta}(X_i) \right\} \frac{   \d \tau}{\tau}  }{ \int_{\theta} \int_{\tau > 0}  \left\{\prod_{i=1}^n \frac{\d P_{\tau\theta, \tau^2}}{\d \eta}(X_i)  \right\} \frac{   \d \tau}{\tau} C_{0, 1}(\d \theta)  }.
\end{equation}

Comparing to mixing the scale-invariant likelihood ratio \eqref{eqn:si-lr} (letting $\theta_0 = 0$) with a prior $\varpi$ on the alternative $\theta$,
\begin{equation}
 M_n^\varpi =  \int  h_n(\theta; 0) \varpi(\d \theta) = \frac{ \int_{\theta} \int_{\tau > 0}  \left\{\prod_{i=1}^n \frac{\d P_{\tau\theta, \tau^2}}{\d \eta}(X_i)  \right\} \frac{   \d \tau}{\tau} \varpi(\d \theta)  }{ \int_{\tau > 0}  \left\{\prod_{i=1}^n \frac{\d P_{0, \tau^2}}{\d \eta}(X_i) \right\} \frac{   \d \tau}{\tau}  }.
\end{equation}
Clearly, $ B_n^{\text{JZS}}$ equals $1/ M_n^\varpi$ when the mixture measure $\varpi$ is taken to be $C_{0,1}$. Some remarks regarding the comparison. First, \cite{rouder2009bayesian} did not mention the sequential benefits of their approach, as $ \{ 1/B_n^{\text{JZS}} \}$ is an NM for $\cP_{\mu = 0}$ by virtue of Lemma~\ref{lem:si-lr}, or equivalently $\{ B_n^{\text{JZS}} \}$ is an anytime-valid p-value which we briefly defined in \cref{sec:seq-stat}. 
Second, while choosing $\pi_0$ to be the Jeffreys prior  seems necessary to attain a test process according to our \cref{sec:scale-lr}, different (perhaps objectivistic) methodological choices lead to different priors on $\theta = \mu/\sigma$. \citeauthor{lai1976confidence}'s choice in \cref{sec:lai-mix} is a flat $\varpi$ while in \cref{sec:N-mix} we choose $\varpi = \normal{0}{c^{-2}}$, both leading to closed-form expressions. \citeauthor{rouder2009bayesian}'s Cauchy prior, in turn, arises from a hyper-prior on the precision $c^{-2}$ of this Gaussian prior $\theta \sim \normal{0}{c^{-2}}$, which is $c^2 \sim \chi^2_1$ proposed by \cite{zellner1980posterior}, but this has led to the $ B_n^{\text{JZS}}$ that lacks a closed form.

\revise{
\subsubsection{Remark V: Does the Filtration Matter?}\label{sec:does-filtration-matter}

We finally discuss the issue of the reduced, scale invariant filtration $\{ 
\cF_n^* \}$, on which various test processes in this section are defined. First, we note the perhaps surprising fact that stopping, say, the test martingale $\{G_n^{(c)}\}$ in Theorem~\ref{thm:lai-e}, at a stopping time $\tau$ on the larger, canonical filtration $\{\cF_n \}$, and rejecting the null $\normals_{\mu = 0}$ if $G_\tau^{(c)} \ge 1/\alpha$ does \emph{not} inflate the type 1 error rate. This is formally due to the following statement.

\begin{proposition}
Let $\{ M_n \}$ be a nonnegative supermartingale with $M_0 = 1$ on the filtration $\{ \cG_n \}$ in the probability space $( \Omega, \cA, \Pr )$. In particular, $\cG_{\infty}$ can be strictly smaller than $\cA$.
For any $\cA$-measurable random variable $T$ taking values in $\{ 0,1,2,\dots ,\infty \}$ and $\alpha \in (0,1)$, $\Pr[  M_T \ge 1/\alpha  ] \le \alpha$.
\end{proposition}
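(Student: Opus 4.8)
The plan is to reduce the statement to the ordinary Ville's inequality (Lemma~\ref{lem:ville}) by showing that $M_T$ is pointwise dominated by the running supremum of the supermartingale, \emph{regardless} of whether $T$ is a stopping time for $\{\cG_n\}$. The crucial observation is that the conclusion of Ville's inequality controls the entire sample path uniformly, so any pointwise selection $M_T$ automatically inherits the same tail bound; the mere $\cA$-measurability of $T$ (as opposed to $\{\cG_n\}$-adaptedness) plays no role whatsoever. This is precisely why optional stopping on the coarser filtration is unnecessary.

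First I would record that $M_T$ is a well-defined $\cA$-measurable random variable, including on the event $\{T = \infty\}$. Since $\{M_n\}$ is a nonnegative supermartingale with $\Exp(M_n) \le \Exp(M_0) = 1$, it is bounded in $L^1$, so Doob's martingale convergence theorem furnishes an almost-surely finite limit $M_\infty := \lim_n M_n$; fixing $M_\infty$ arbitrarily on the exceptional null set, the random variable $M_T = \sum_{n=0}^\infty M_n \id_{\{T = n\}} + M_\infty \id_{\{T = \infty\}}$ is well-defined and $\cA$-measurable.

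Next I would establish the pointwise domination $M_T \le M^* := \sup_{0 \le n < \infty} M_n$. On $\{T = n\}$ with $n$ finite this is immediate, and on $\{T = \infty\}$ it follows because $M_\infty = \lim_n M_n \le \sup_n M_n = M^*$. Consequently $\{M_T \ge 1/\alpha\} \subseteq \{M^* \ge 1/\alpha\}$, and monotonicity of $\Pr$ gives $\Pr(M_T \ge 1/\alpha) \le \Pr(M^* \ge 1/\alpha)$. Finally, applying Ville's inequality to $\{M_n\}$ on its own filtration $\{\cG_n\}$, with $M_0 = 1$, yields $\Pr(M^* \ge 1/\alpha) = \Pr(\exists n \ge 0,\ M_n \ge 1/\alpha) \le \alpha$, which closes the argument.

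I do not anticipate a genuine technical obstacle; the content of the proposition is conceptual rather than computational. The only point demanding care is the treatment of $\{T = \infty\}$, where one must invoke the convergence theorem to make sense of $M_T$ and verify that the limit remains dominated by the running supremum $M^*$. Everything else is event-inclusion together with the already-established Lemma~\ref{lem:ville}; in particular, the apparent surprise that $T$ need not be a $\{\cG_n\}$-stopping time dissolves once one notices that Ville's inequality bounds $M^*$ pathwise, uniformly over all of $\Omega$.
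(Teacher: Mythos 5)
Your proposal is correct and follows essentially the same route as the paper's own proof: invoke almost-sure convergence of the nonnegative supermartingale to make $M_T$ well-defined on $\{T=\infty\}$, observe the pathwise domination $M_T \le \sup_n M_n$ (equivalently, the event inclusion $\{M_T \ge 1/\alpha\} \subseteq \{\sup_n M_n \ge 1/\alpha\}$), and finish with Ville's inequality. Your write-up is somewhat more explicit about the measurability and the treatment of $\{T=\infty\}$, but the argument is the same.
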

\begin{proof} As an NSM, $M_n$ converges to some random variable $M_\infty$ almost surely, so we have 
\begin{align}
    \{  M_T \ge 1/\alpha \} = \bigcup_{n=0,1,\dots,\infty}\left(\{ M_n \ge 1/\alpha   \} \cap \{ T = n \} \right) \subseteq \left\{ \sup_{n} M_n \ge 1/\alpha  \right\}
\end{align} 
   since each $\{ M_n \ge 1/\alpha   \}$ is a subset of $\left\{ \sup_{n} M_n \ge 1/\alpha  \right\}$. This concludes the proof because the event $\left\{ \sup_{n} M_n \ge 1/\alpha  \right\}$ has probability at most $\alpha$ due to Ville's inequality.
\end{proof}

In particular, the above is true when $T$ is a stopping time on a filtration finer than $\{ \cG_n \}$.
It is easy to see that this argument extends easily to the corresponding stopped tail bound for an e-process for a family of data-generating distributions $\cP$, and for the coverage probability of a stopped confidence sequence at $T$.
While the \emph{tail probability} $\Pr[ M_T \ge 1/\alpha ]$ is always bounded by $\alpha$ under arbitrary random time $T$, it is worth noting
that there is no guarantee on the \emph{expected value} $\Exp[M_T]$. That is, $M_T$ is not necessarily an e-value.

Back to the test martingale $\{ G_n^{(c)} \}$ on $\{\cF_n^*\}$, if one stops the test at some stopping criterion $\tau$ that comes from the ``sizes'' of the observations, or even some external source of randomness coupled to the experiment, while it is safe to reject if $G^{(c)}_\tau \ge 1/\alpha$ or report the confidence sequence $[\avgX{\tau} \pm \operatorname{radius}_\tau]$ at this time, the stopped value $G^{(c)}_\tau$ might not be an e-value and thus it is unsafe to use $G^{(c)}_\tau$ in various downstream tasks that require e-value inputs. These include:
\begin{itemize}
    \item Multiple hypothesis testing with the e-BH procedure \citep{xu2021unified} under arbitrary dependence.
    \item Combining independent e-values via multiplication \citep[Proposition 2]{grunwald2020safe} to obtain a supermartingale.
    \item Taking a weighted average over dependent e-values to obtain a new e-value. 
\end{itemize}
On the other hand, it is possible to sightly \emph{decrease} the random variable $M_T$, making use of the tail bound $\Pr[M_T\ge 1/\alpha] \le \alpha$, to construct an e-value $f(M_T)$, i.e.\ $\Exp[f(M_T)] \le 1$, so that all of the e-value-based downstream tasks above can take $f(G_\tau^{(c)})$ as input. For example, $f(x) =  (x\vee 1 - 1 - \log(x\vee 1))/\log^2 (x\vee 1)$ works. This is called an ``adjuster'' in the e-value literature. See e.g.\ \cite{shafer2011test,KOOLEN2014144,calibration} for the topic in various contexts, and the recent study by \cite{choe2024combining} for the cross-filtration problem similar to our discussion here.
}

\begin{landscape}
\section{Comparison of Results}

\subsection{Theoretical Comparison}

We have presented three t-confidence sequences (i.e., confidence sequences for $\mu$ over $\normals$) so far, based on universal inference and scale invariance, and we summarize them in \cref{tab:t-cs}. Two additional results that can be obtained from (1) the two-step plug-in method of \citet[Appendix E]{jourdan2023dealing}, (2) the confidence sequence for median by \citet[Theorem 1]{quantile} are also included, both described in \cref{sec:more-tcs}. In terms of shrinkage rates, these two results enjoy the optimal iterated logarithmic rate faster than the other three. However, we point out that this is achieved by the technique of ``stitching" which \emph{universally} turns a $n^{-1/2}$ CI into $n^{-1/2}\log\log(n)$ CS (by paying a significant price of constants), thus applicable to the other three methods as well, see e.g.\ \citet[Section 3.1]{howard2021time} and \citet[Section 4]{duchi2024information}. Growth-wise, the original flat-mixed CS by \cite{lai1976confidence} and the result of \cite{jourdan2023dealing} are subject to a parameter-dependent worse rate, and the result of \cite{quantile} becomes degenerate when $\alpha$ is small enough, thus they are less favorable when $\alpha$ is very small (an issue that Theorem~\ref{thm:lai-e} can avoid by tuning $c^2$.).


\begin{table}[!h] 
    \centering \small
    \begin{tabular}{c||c|c|c|c|c} \hline
        Result & Theorem~\ref{thm:ui-ttest} & Theorem~\ref{thm:lai-cs} \citep{lai1976confidence} & Theorem~\ref{thm:lai-e} & \makecell{Theorem~\ref{thm:plug}  \\ (JDK23)} & \makecell{Proposition~\ref{prop:med} \\ (HR22)}
    \\ \hline \hline
        Method & Universal inference & \multicolumn{2}{c|}{Scale invariant likelihood mixture} & \makecell{Likelihood mixture (mean) \\ $\chi^2$ Chernoff bound  (variance)} & Chernoff bound for median \\ \hline
        Mixture & N/A & Flat & \multicolumn{2}{c|}{Gaussian} & N/A \\ \hline
        Free parameters & \makecell{Point estimators \\ $\tilde{\mu}_i$ and $\tilde{\sigma}_i^2$} & Starting time $m \ge 2$ & Prior precision $c^2$ & \makecell{Stitching parameters \\ $\eta>0, s>1$} & N/A \\ \hline
        \makecell{Growth rate  \\ ($\alpha \to  0$, or $\alpha \to  \alpha_{\star}$\\ $=\e^{-\widetilde{\Theta}(n)}$  while $n$ fixed) } & $\alpha^{-1/n}$ & $\alpha^{-m/n(m-1)}$ & \makecell{$\alpha^{-1/(n-1)}$ (CI) \\ $(\alpha-\alpha_{\star})^{-1/n}$ (CS)  } & $\alpha^{-(1+\eta)/(n-1)}$ & $\sqrt{\log(1/(\alpha -\alpha_{\star}))}$ \\ \hline
         \makecell{Shrinkage rate \\  ($n \to  \infty$ while $\alpha$ fixed)} & \multicolumn{3}{c|}{$ n^{-1/2} \polylog(n) $} &  \multicolumn{2}{c}{$n^{-1/2} \log\log (n)$}   \\ \hline
    \end{tabular}
    \caption{Comparison of t-confidence sequences in this paper. JDK23 refers to the paper by \cite{jourdan2023dealing}, and HR22 refers to the paper by \cite{quantile}.}
    \label{tab:t-cs}
\end{table}

We now zoom out and cross-compare sequential t-tests with sequential Z-tests in terms of their test processes. As we mentioned earlier, both universal inference and likelihood mixture can be used on Z-test and t-test to construct e-processes or extended e-processes, and confidence sequences.
Additional results on universal inference Z-tests may be found in \cref{sec:uiz}. 
We summarize in \cref{tab:big-comp-ui,tab:big-comp}  the results from universal inference and likelihood mixture respectively. Note that the plug-in method of \cite{jourdan2023dealing} and the median bound of \cite{quantile} yield confidence sequences without any test process, so are not included in these tables.

\begin{table}[]
\renewcommand{\spadesuit}{K}
    \vspace{-3em}
    \centering \small
    \begin{tabular}{c||c|c|c|c}  \hline
        Result & Corollary~\ref{cor:plugin-lr} & Proposition~\ref{prop:1s-z-ui} & Theorem~\ref{thm:ui-ttest} & Theorem~\ref{thm:ui-ttest-onesided}  \\ \hline \hline
        Problem &  \multicolumn{2}{c|}{Sequential Z-test} & \multicolumn{2}{c}{Sequential t-test} \\ \hline
        Null & $\normal{0}{\sigma^2}$ & $\normals_{\mu \le 0, \sigma^2}$ & $\normals_{\mu = 0}$ & $\normals_{\mu \le 0}$ \\ \hline
        Alternative & $\normals_{\mu \neq 0, \sigma^2 }$ & $\normals_{\mu > 0, \sigma^2 }$ & $\normals_{\mu \neq 0}$ & $\normals_{\mu > 0}$ \\ \hline
        \makecell{Test process; \\ class; \\ filtration} & \makecell{$\sigma^n \e^{ \frac{V_n}{2\sigma^2}  } \spadesuit_n$ \\ NM  \\ $\{\cF_n\}$ } & \makecell{ $\sigma^n \e^{ \frac{ V_n - n(\avgX{n} \wedge 0)^2 }{2\sigma} }\spadesuit_n$ \\ e-process \\ $\{\cF_n\}$ } & \makecell{ $\left(  \avgXsq{n} \right)^{n/2}  \e^{n/2} \spadesuit_n $ \\ e-process \\ $\{\cF_n\}$ } & \makecell{ $ \left( \avgXsq{n} - (\avgX{n}\wedge 0)^2 \right)^{n/2}   \e^{n/2} \spadesuit_n$ \\ e-process \\ $\{\cF_n\}$ } \\ \hline 
        Behavior under null & \multicolumn{4}{c}{\makecell{\revise{with probability $1-\alpha$: $1\to [0, 
 1/\alpha ]$} } } \\ \hline
        Behavior under alternative &  \multicolumn{4}{c}{$1\to\infty$ a.s. } \\ \hline
        e-power & $\mu^2/2\sigma^2$ & $(\mu 
\vee 0) ^2/2\sigma^2$ &  $\frac{1}{2}\log(1+\mu^2/\sigma^2)$   & $\frac{1}{2}\log(1+(\mu 
\vee 0)^2/\sigma^2)$
        \\ \hline
    \end{tabular}
    \caption{Comparison of universal inference-based Z-tests and t-tests, where $\spadesuit_n := \prod_{i=1}^n \frac{  \exp \left\{ -\frac{1}{2}\left( \frac{X_i - \hmu_{i-1}}{\hsig_{i-1}} \right)^2   \right\} }{\hsig_{i-1}}$. \revise{The arrow notation $x \to y$ in ``Behavior under null" and ``Behavior under alternative" means that the test process $M_n$ starts off from $M_0 = x$ and reaches $\lim_{n \to \infty} M_n = y$ (where $y$ is a number) or stays $M_n \in y$ (where $y$ is a set).} The ``e-power" of a process $M_n$ refers to the almost sure limit of $\frac{\log M_n}{n}$ under the true distribution $\normal{\mu}{\sigma^2}$.}
    \label{tab:big-comp-ui}
\end{table}

\begin{table}[!h]
    \centering  \small
    \begin{tabular}{c||c|c|c|c|c|c}  \hline  
       Result  &  WR23 Prop.\ 5.6 & WR23 Prop.\ 5.7 & WR23 Prop.\ 6.3 &  Theorem~\ref{thm:lai-e} &  Theorem~\ref{thm:lai-ensm} & Theorem~\ref{thm:si-onesided} \\
       \hline \hline
       Problem & \multicolumn{3}{c|}{Sequential Z-test} &  \multicolumn{3}{c}{Sequential t-test} \\ \hline
       Null &
       \multicolumn{2}{c|}{$\normal{0}{\sigma^2}$} &  $\normals_{\mu \le 0, \sigma^2 }$ & \multicolumn{3}{c}{$\normals_{\mu = 0}$} \\ \hline
       Alternative &
       \multicolumn{2}{c|}{$\normals_{\mu \neq 0, \sigma^2 }$} & $\normals_{\mu > 0, \sigma^2 }$ &  \multicolumn{2}{c|}{$\normals_{\mu \neq 0}$} & $\normals_{\mu > 0}$ \\ \hline
       \makecell{Base martingales; \\ parametrized by} &  \multicolumn{3}{c|}{ \makecell{Gaussian likelihood ratio; \\ $\mu$, the alternative mean }}& \multicolumn{3}{c}{ \makecell{Scale invariant likelihood ratio; 
 \\ $\theta = \mu/\sigma$, the alternative standardized mean }} \\ \hline
        Mixture & $\mu \sim \normal{0}{c^{-2}}$ & $\mu \sim F$ & $\mu \sim F^{>0}$ & $\theta \sim \normal{0}{c^{-2}}$ & $\theta \sim F$ & $\theta \sim \normal{0}{c^{-2}}^{>0}$  \\ \hline
        \makecell{Test process; \\ class; \\ filtration} & \makecell { $\sqrt{\frac{c^2 }{ n + c^2 }}\e^{ \frac{n^2  \avgX{n} ^2 }{2(n + c^2)\sigma^2} }$ \\ NM \\ $\{\cF_n\}$ } & \makecell { $\frac{1}{\sqrt{n}} \e ^{ \frac{n  \avgX{n}^2}{2\sigma^2} }$ \\ ENSM \\ $\{\cF_n\}$  } &   \makecell { $\frac{1}{\sqrt{4n}} V(\sqrt{n} \avgX{n}/\sigma ) $ \\ ENSM \\ $\{\cF_n\}$ } & \makecell {\tiny $\sqrt{\frac{c^2}{n+c^2}} \left( \frac{(n+c^2) V_n }{(n+c^2) V_n  - S_n^2}  \right)^{\frac{n}{2}}$  \\ NM \\ $\{\cF_n^*\}$ } & \makecell { $\sqrt{\frac{2 \pi }{n}} \left( \frac{n V_n }{n V_n - S_n^2}  \right)^{\frac{n}{2}}$ \\ ENSM \\ $\{\cF_n^*\}$ } & \makecell{$2 G_n^{(c)} - 2 G_n^{(c)} |_{(S_n \leftarrow S_n \wedge 0) }$ \\ e-process \\ $\{\cF_n^*\}$}  \\ \hline
        Behavior under null & $1\to0$ a.s. & \multicolumn{2}{c|}{$\infty \to 0$  a.s.} & $1\to0$  a.s. & $\infty \to 0$  a.s. & $1\to0$ a.s. \\ \hline
         \makecell{Behavior under \\ alternative} & $1\to \infty$ a.s. & \multicolumn{2}{c|}{$\infty \to \infty$  a.s.} & $1\to \infty$  a.s. & $\infty \to \infty$  a.s. & $1\to \infty$ a.s. \\ \hline
         e-power & \multicolumn{2}{c|}{$\mu^2/2\sigma^2$} & $(\mu 
\vee 0) ^2/2\sigma^2$ &  \multicolumn{2}{c|}{$\frac{1}{2}\log(1+\mu^2/\sigma^2)$}   & $\frac{1}{2}\log(1+(\mu 
\vee 0)^2/\sigma^2)$
          \\ \hline
    \end{tabular}
    \caption{Comparison of mixture-based Z-tests and t-tests. WR23 refers to the paper by \cite{ensm}, which also covers the \emph{sub}Gaussian case, but we omit it for a more direct comparison. The function $V$ is $V(x) = \e^{x^2/2} (1 + \erf(x/\sqrt{2}))$, and $G_n^{(c)}$ refers to the test process listed above under Theorem~\ref{thm:lai-e}.}
    \label{tab:big-comp}
\end{table}

\end{landscape}

\revise{

\subsection{Simulations on e-Processes}\label{sec:sim-ep}
We empirically demonstrate the pros and cons of the two major two-sided e-processes for $\normals_{\mu = 0}$ we present in this paper, Theorem~\ref{thm:ui-ttest} and~\ref{thm:lai-e}, under various underlying distributions and choices of hyperparameters that reflect prior knowledge. Our simulations contain two null data-generating distributions $\normal{0}{1}$ and $\normal{0}{4}$, as well as two alternative $\normal{0.1}{1}$ and $\normal{1}{4}$. For the method of Gaussian mixture of scale invariant likelihood ratios (Theorem~\ref{thm:lai-e}), the mixture precision parameters range from $c^2 \in \{ 1, 0.1, 50 \}$, corresponding to the prior distributions $\mu/\sigma\sim \normal{0}{1}$, $\normal{0}{10}$, and $\normal{0}{0.02}$. For universal inference method (Theorem~\ref{thm:ui-ttest}), the plug-in estimators for $\mu$ and $\sigma^2$ are chosen as the maximum a posterior (MAP) estimators via using a normal-inverse-gamma prior, the conjugate prior for Gaussian models with unknown $\mu$ and $\sigma^2$. The prior
\begin{equation}
    (\mu, \sigma^2) \sim N\Gamma^{-1}(\mu_0, \nu_0,\alpha_0, \beta_0),
\end{equation}
corresponds to estimating the mean through additional $\nu_0$ prior observations with mean $\mu_0$, and variance through additional $2\alpha_0$ observations with mean $\mu_0$ and sum of squared deviations $2\beta_0$. We choose $N\Gamma^{-1}(0, 20,10, 10)$ and $N\Gamma^{-1}(0, 20,10, 40)$ in our simulations, and also employ the idea of using the first 40 observations to fit the prior (which we call ``burning in") and start the e-process afterward.
The computed log-e-processes are plotted in \cref{fig:logeprs}, averaged over 100 independent repeats. While none exhibits any growth under nulls, they differ qualitatively under alternatives. 

Under the $\normal{0.1}{1}$ alternative, the ``confident" Gaussian prior $\normal{0}{0.02}$ leads in power when the true effect size $\mu/\sigma = 0.1$ is indeed small and falls around the unit standard deviation range of the prior, beating the ``underconfident" priors $\normal{0}{1}$ and $\normal{0}{10}$, and universal inference. Among universal inference instances, the one with a $N\Gamma^{-1}(0, 20,10, 10)$ prior which correctly guesses the true parameters, and the one that burns in to learn them are much more powerful than the other one with a ``wrong'' prior.

Under the $\normal{1}{4}$ alternative, the ``modest'' Gaussian prior  $\normal{0}{1}$ is advantageous, and the ``overconfident" $\normal{0}{0.02}$ is penalized for not adequately capturing the true effect size $\mu/\sigma = 0.5$. Among universal inference, this time the $N\Gamma^{-1}(0, 20,10, 40)$ prior is closer to the underlying true distribution so it leads in power, followed by the burned-in one, and followed in turn by the one with the ``wrong'' prior.

We thus conclude that in general universal inference is more sensitive to the quality of prior information, and a weak or biased prior likely degrades the power of the test. The technique of burning-in, as we show, partially helps the method obtain all-case adaptivity to any underlying distribution via the construction of a data-dependent prior, paying the price that the evidence does not accumulate during the burn-in phase. This is a wisdom that resonates with the ``split-sample" idea of the fixed-time universal inference \citep{wasserman2020universal}.

\begin{figure}[!t]
    \centering
    \includegraphics[width=\textwidth]{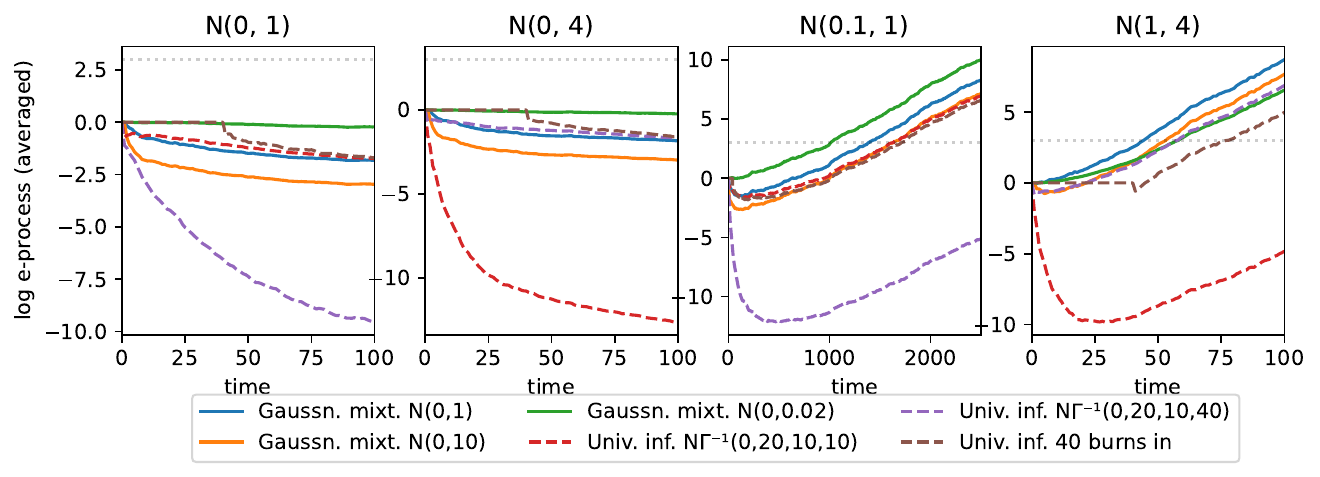}
    \caption{Logarithm of e-processes for the null $\normals_{\mu = 0}$, averaged over 100 independent repeats. Dotted grey horizontal lines represent the rejection threshold $\log 20$ for $\alpha = 0.05$. \revise{The Gaussian mixture method is less sensitive to the prior choice, whereas universal inference exhibits a higher sensitivity to the plug-in estimators. The burn-in technique helps universal inference improve worst-case performance.}}
    \label{fig:logeprs}
\end{figure}

\subsection{E-Processes on a Real Dataset}
We next apply our methods to a real public dataset by \citet[Data set 285]{hand1993handbook}, from a study on the effect of two treatments on anorexia. The dataset contains the weights in lbs before and after the treatment of 72 young female anorexia patients, divided into Control (26 subjects), Treatment I (``family treatment", 17 subjects), and Treatment II (``Cognitive Behavioural treatment", 29 subjects). This dataset is curated in the R package ``MASS'' under the identifier ``anorexia''.

For within-subject studies like this, the paired t-test (i.e.\ classical one-sample t-test on the differences of weights pre- and post-treatment) for $\normals_{\mu = 0}$ is commonly applied to each group testing the null that there was no effect on weight. We obtain classical t-test p-values of 0.776, 0.0007, and 0.035 respectively from the three groups. 

We then simulate observing these weight changes (in the original order of the dataset) sequentially and test the $\normals_{\mu = 0}$ null via our two e-process methods. For the Gaussian mixture method, we use the prior $\normal{0}{1}$ and the prior that minimizes the radius of the CS according to \eqref{eqn:optim-c}; and for universal inference, we use 2/3 of each dataset for burn-in due to the relatively small sample sizes. 

The evolution of these e-processes, and the final p-values are shown in \cref{fig:anorexia}. These p-values are computed using $p = 1 \wedge (\max_n M_n)^{-1}$ where $\max_n M_n$ is the maximum of the e-process, and it is clear that $P(p \le \alpha) \le \alpha$ under any null $P$. (This is an instance of the ``anytime-valid p-values'' due to \cite{johari2015always}.)
We observe that no false discoveries are made by either of the methods in the Control group. For Treatment I, the hypothetical sequential researcher can always make the rejection, sometimes earlier. In particular, we see that the width-minimizing prior for the Gaussian mixture method does not produce a larger final e-value. For Treatment II, however, neither of the methods makes any rejection even though the classical t-test provides a (borderline) significant p-value of 0.035. 
This experiment highlights that if one is unsure what the effect strength is and hence unsure if the sample size suffices, a sequential test is a much safer bet in that if the intended fixed sample size is $n$, one can hope to get away with less than $n$ observations if the effect is strong (Treatment I), while potentially collecting more than $n$ observations if the signal is weaker than expected (Treatment II).

\begin{figure}[!b]
    \centering
    \includegraphics[width=\linewidth]{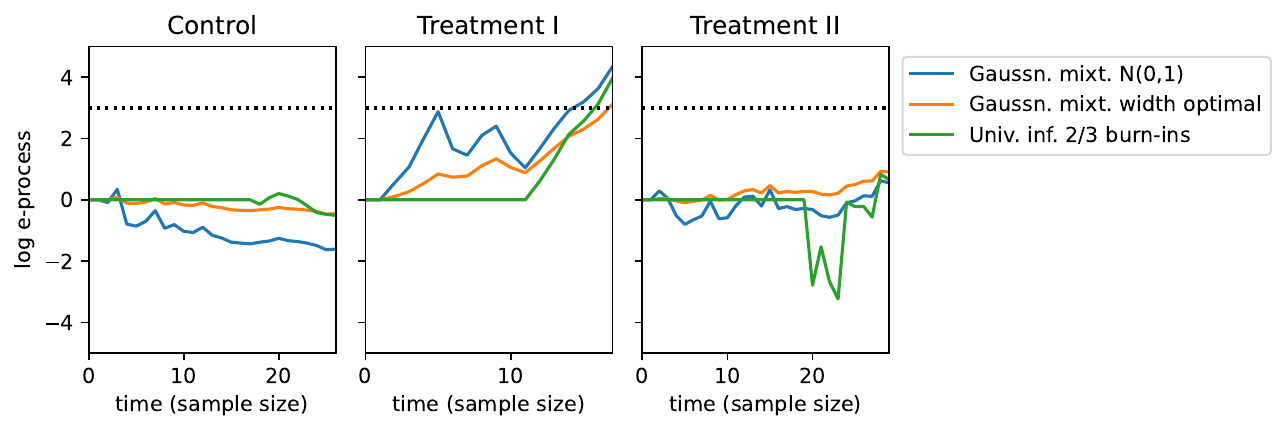}
    \vskip 1em
    \begin{tabular}{|c|c|c|c|}\hline
       Method  &  \makecell{Control \\ ($n=26$)}  & \makecell{Treatment I \\ ($n=17$)} & \makecell{Treatment II \\ ($n=29$)} \\
         \hline
      Classical t-test   &   0.776 & 0.0007* & 0.035* \\ \hline
      Gaussian mixture $\normal{0}{1}$ & 0.711 & \makecell{0.013* \\ ($\tau_{\text{rej}}=15$) } & 0.539 \\ \hline
      Gaussian mixture width optimal & 0.923 & \makecell{0.044* \\ ($\tau_{\text{rej}} = 17$)} & 0.397 \\ \hline
      Universal inference 2/3 burn-ins & 0.816 & \makecell{0.019* \\ ($\tau_{\text{rej} } = 16$)} & 0.436 \\
      \hline
    \end{tabular}
    \caption{E-processes testing the null of no effect over three groups. Dotted grey horizontal lines represent the rejection threshold $\log 20$ for $\alpha = 0.05$. In the table, we compare p-values obtained via different methods. Asterisks denote p-values smaller than 0.05, with the associated $\tau_{\text{rej}}$ denoting the rejection time when the e-process first reaches 20 (or p-value first reaches 0.05).}
    \label{fig:anorexia}
\end{figure}

}

\subsection{Simulations on Confidence Sequences}\label{sec:sim-epr}

\begin{figure}[!t]
    \centering
    \includegraphics[width=0.5\textwidth]{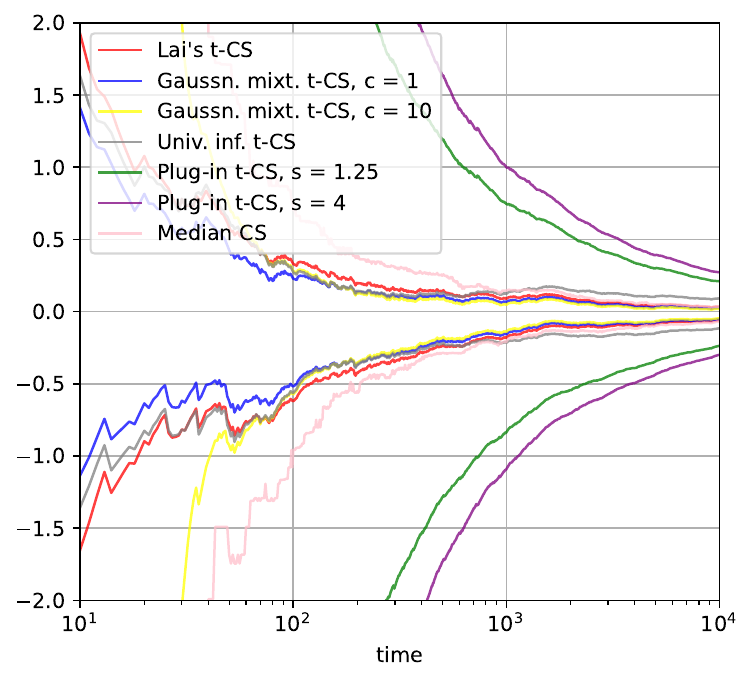}
    \caption{Five classes of confidence sequences for t-test under $\normal{0}{1}$ observations.}
    \label{fig:csplots}
\end{figure}

\begin{figure}[!t]
    \centering
    \includegraphics[width=0.5\textwidth]{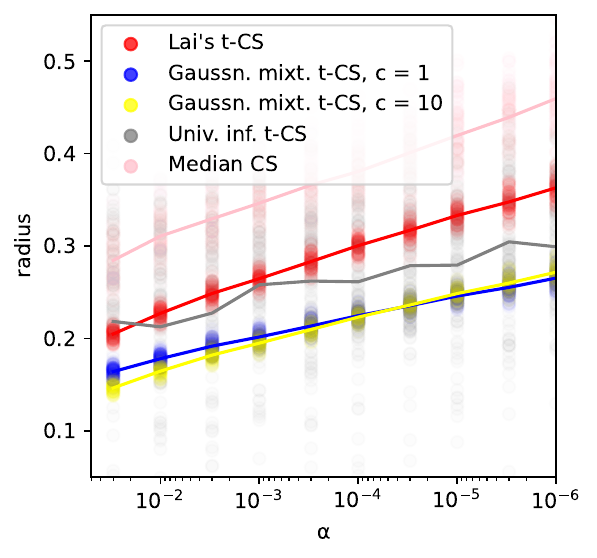}
     \includegraphics[width=0.49\textwidth]{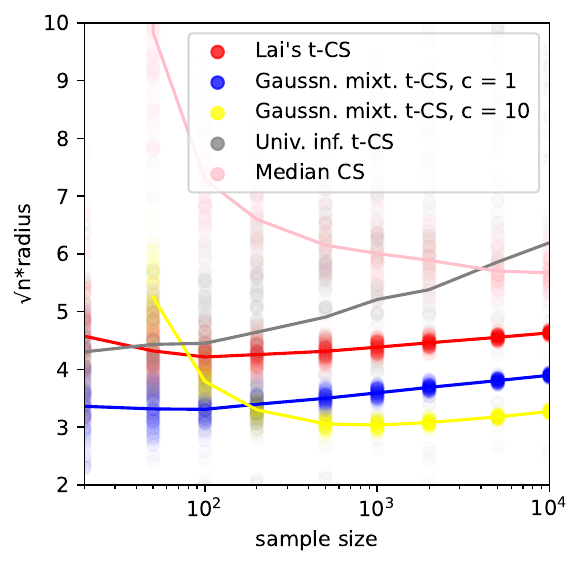}
    \caption{
    This pair of plots studies the behavior of width as one of $\alpha$ and $n$ vary, holding the other fixed.
    In the left plot, we plot widths of 3 CSs against $\alpha$ at $n=500$; whereas in the right plot, we plot widths multiplied by $\sqrt{n}$ against sample size $n$ at $\alpha=0.05$. Both are under $\normal{0}{1}$ observations, repeated \revise{100} times. Lines indicate averages over \revise{100} repeats.}
    \label{fig:rates}
\end{figure}

We demonstrate the three confidence sequences in the paper, as well as the additional ``plug-in" one, Theorem~\ref{thm:plug}, and the median CS, Proposition~\ref{prop:med}, by some experiments \revise{with the data-generating distribution set to $\normal{0}{1}$}. For the universal inference CS, we calculate the point estimators by putting a prior where we ``imagine" having seen pre-observations $-1$ and $1$ \revise{(i.e.\ a $N\Gamma^{-1}(0, 2, 1, 1)$ prior)}, and defining $\tilde \mu_i$ and $\tilde \sigma_i^2$ as the empirical means and variances posterior. For the Gaussian mixture of scale invariant likelihood ratios, we take $c^2$ to be 1 and 100.

First, we present them visually in the plot of \cref{fig:csplots} in a single run of i.i.d.\ standard normal observations. For the plug-in CS, we fix $\eta=0.5$ and let $s \in\{ 1.25, 4\}$ since $s$ is the parameter that controls the shrinkage rate.
We see that the Gaussian mixture CS we derived in \cref{sec:N-mix} with $c=1$ performs better than Lai's improper, flat mixture one; increasing $c$ results in interval explosion at earlier times but is slightly tighter at later times. The universal inference CS seems unfavorable at later times, but we shall soon see in repeated experiments that this is not necessarily the case. The plug-in CS by \cite{jourdan2023dealing}, however, is much looser than the rest even at $n=10000$ despite its asymptotically optimal iterated logarithmic rate, thus is excluded from the next simulation. A similar issue of large constant is observed in the iterated-logarithmic-rate median CS by 
\cite{quantile} as well.

We next compare their rates of growth and shrinkage. Still drawing observations from i.i.d.\ $\normal{0}{1}$, we first fix $n=500$ and let $\alpha$ vary, then fix $\alpha = 0.05$ and let $n$ vary, plotting the widths of the CSs in \revise{100} independent runs, since all of these CSs have random widths. The results are shown in \cref{fig:rates}. We see with greater clarity that Lai's CS is looser than the Gaussian mixture CS especially when $\alpha$ is small, even though the extended Ville's inequality seems a tighter and more advanced technique; larger precision $c^2$ in Gaussian mixture gradually gains an advantage as $n$ increases. Even after a ``smoothing" prior, the universal inference CS is still the most volatile. Its width varies significantly among runs. \revise{We also see that for the Gaussian mixture CS, larger values of the prior precision $c^2$ lead to tightness at later times, agreeing with the expression of the fixed-time near-optimal choice of $c^2$ in \eqref{eqn:optim-c}.}

\section{Optimality}

In this section, we show that the $\mathcal{O}(\alpha^{-1/n})$ or $\mathcal{O}(\alpha^{-1/(n-1)})$ growth rate (when $n$ is fixed and $\alpha \to 0$) in our t-confidence sequences shown in \cref{tab:t-cs}, and that the $\frac{1}{2}\log(1+\mu^2/\sigma^2)$ e-power in our t-test processes shown in \cref{tab:big-comp-ui,tab:big-comp}, are both optimal. These two kinds of optimality require different techniques, but the Kullback-Leibler divergence
\begin{equation}
    \kl( N_{\mu,\sigma^2} \| N_{0,\mu^2+\sigma^2} ) = \frac{1}{2}\log(1 + \mu^2/\sigma^2)
\end{equation}
is used in both cases.

\subsection{Information-Theoretic Lower Bound of t-Confidence Intervals}\label{sec:opt-CI}

Recall from \cref{tab:t-cs} that our best CSs have width scaling with $\alpha$ like the nonstandard rate $\alpha^{-1/n}$, which is very different from the rate of $\sqrt{\log(1/\alpha)}$ that one typically sees for confidence intervals and sequences when the variance is known. Indeed, we will see now that for t-CIs (and hence t-CSs), this $\alpha^{-1/n}$ rate is actually minimax optimal.



We define $\mathcal{L}_{\alpha, n}$ to be the set of all \emph{lower} $(1-\alpha)$-confidence bounds
for $\mu$ over $\normals$ with fixed sample size $n$, i.e., functions $L:\mathbb R^n \to \mathbb R$ such that
\begin{equation}
 \text{for all }P \in \normals,\quad   P(  \mu(P) \ge L(X_1,\dots, X_n)  ) \ge 1-\alpha.
\end{equation}

Fix a particular $\varepsilon > 0$ and $P \in \normals$. For any $L \in \mathcal{L}_{\alpha, n}$, denote by $W_{\varepsilon}(L, P)$ the $\varepsilon$-upper quantile of the random variable $\mu(P) - L(X_1,\dots, X_n)$ under $P$, i.e., 
\begin{equation}
    P(\mu(P) - L(X_1,\dots, X_n) \ge W_{\varepsilon}(L, P) ) = \varepsilon.
\end{equation}
Let us study the minimax rate
\begin{equation}
  M^{-}_{\alpha, n} =  \inf_{ L\in  \mathcal{L}_{\alpha, n}} \sup_{P \in \normals} \sigma^{-1}(P) \cdot W_{2\alpha}(L, P). 
\end{equation}
The definition above characterizes the following: the event that a lower CI covers $\mu$ with an \emph{excess} of $M^{-}_{\alpha, n}$ has probability at least $2\alpha$, which is an event disjoint from
the event that it fails to cover $\mu$ which has probability at most $\alpha$. As we shall soon see, this one-sided lower bound immediately implies a lower bound for two-sided CIs.
The two layers of $2\alpha$-low probability and $(1-\alpha)$-high probability are introduced to include confidence intervals with data-dependent radii, which, though uncommon in Z-tests, recur in t-tests throughout this paper. 

We establish the following lower bound for $ M^{-}_{\alpha, n}$:
\begin{theorem}[Minimax Lower Bound of Lower t-CIs]\label{thm:it-lb} If $\alpha < 1/3$,
    $ M^{-}_{\alpha, n}  \ge \sqrt{ ( 6\alpha - 9\alpha^2 )^{-2/n}  - 1 } $.
\end{theorem}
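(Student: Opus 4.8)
The plan is to run a two-point, Le Cam-style change-of-measure argument driven entirely by the Gaussian identity $\kl(\normal{\mu}{\sigma^2}\|\normal{0}{\mu^2+\sigma^2}) = \tfrac12\log(1+\mu^2/\sigma^2)$ recorded just above the theorem. The essential point — and the reason for the nonstandard polynomial-in-$\alpha$ rate — is to pass from Kullback--Leibler divergence to a tail/overlap probability using the \emph{Bretagnolle--Huber} inequality $\tv \le \sqrt{1-\e^{-\kl}}$ rather than Pinsker's inequality, since the latter would only produce the $\sqrt{\log(1/\alpha)}$-type bound familiar from the known-variance case.

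First I would fix an arbitrary $L \in \cL_{\alpha,n}$; because $M^{-}_{\alpha,n}$ is normalized by $\sigma^{-1}$, I may take $\sigma = 1$ without loss of generality. Setting the target value $r := \sqrt{(6\alpha-9\alpha^2)^{-2/n}-1}$ and $\mu := r$, I introduce the two laws $P_1 = \normal{\mu}{1}$ and $P_0 = \normal{0}{\mu^2+1}$, both members of $\normals$. This pair is chosen so that the two laws share the common second moment $\mu^2+1$ (which is what makes them hard to separate in a scale-invariant fashion), and so that, by the boxed identity, $\kl(P_1^{\otimes n}\|P_0^{\otimes n}) = \tfrac n2\log(1+\mu^2)$, whence $\exp(-\kl(P_1^{\otimes n}\|P_0^{\otimes n})) = (1+\mu^2)^{-n/2} = 6\alpha - 9\alpha^2$ exactly, by the defining choice of $r$.

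Next I examine the event $A = \{L(X_1,\dots,X_n) \le 0\}$. Since $P_0$ has mean $0$ and $L$ is a valid lower bound, $P_0(A^c) = P_0(L > 0) \le \alpha$. On the other hand, on $A$ one has $\mu(P_1) - L = \mu - L \ge \mu$, so once I establish $P_1(A) \ge 2\alpha$, monotonicity of the tail function $t \mapsto P_1(\mu - L \ge t)$ forces $W_{2\alpha}(L,P_1) \ge \mu$, and hence $\sigma^{-1}(P_1)\,W_{2\alpha}(L,P_1) \ge \mu/\sigma = r$. To get $P_1(A) \ge 2\alpha$ I would combine the elementary overlap bound $P_1(A) + P_0(A^c) \ge 1 - \tv(P_1^{\otimes n},P_0^{\otimes n})$ with Bretagnolle--Huber, giving $\tv(P_1^{\otimes n},P_0^{\otimes n}) \le \sqrt{1 - (6\alpha-9\alpha^2)} = \sqrt{(1-3\alpha)^2} = 1-3\alpha$, where the last equality uses $\alpha < 1/3$. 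This yields $P_1(A) \ge 1 - (1-3\alpha) - \alpha = 2\alpha$. Taking the supremum over $P \in \normals$ and then the infimum over $L \in \cL_{\alpha,n}$ produces $M^{-}_{\alpha,n} \ge r$, as claimed.

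The hard part is recognizing that the clean, exponentially sharp inequality is needed: Pinsker loses the $\e^{-\kl}$ scaling and cannot produce a polynomial-in-$\alpha$ rate, whereas the $\tv \le \sqrt{1-\e^{-\kl}}$ form matches $(1-3\alpha)^2 = 1 - (6\alpha - 9\alpha^2)$ perfectly — and one must use the clean direction $\kl(P_1\|P_0)$, noting that $\kl(P_0\|P_1)$ does not simplify. The remaining care is bookkeeping: I would verify $r \ge 0$, i.e. $6\alpha - 9\alpha^2 \le 1$, which holds for $\alpha < 1/3$, and I would treat the quantile step rigorously by interpreting $W_{2\alpha}(L,P_1)$ as the generalized upper inverse, so that $P_1(\mu - L \ge \mu) \ge 2\alpha$ indeed implies $W_{2\alpha}(L,P_1) \ge \mu$ even when $\mu - L$ has atoms.
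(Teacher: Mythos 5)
Your proof is correct and takes essentially the same route as the paper's: the same two-point pair $\normal{\mu}{\sigma^2}$ versus $\normal{0}{\mu^2+\sigma^2}$ with $\mu$ calibrated so that $(1+\mu^2/\sigma^2)^{-n/2} = 6\alpha-9\alpha^2$, the same Bretagnolle--Huber bound giving total variation at most $1-3\alpha$, and the same quantile-monotonicity step yielding $W_{2\alpha}(L,P) \ge \mu$. Your explicit handling of the quantile as a generalized inverse (to cover atoms of $\mu - L$) is a minor refinement of a step the paper states without comment, but the substance is identical.
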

This lower bound is proved by employing a standard two-point information-theoretic argument, via a pair of $N_{\mu,\sigma^2}$ and $ N_{0,\mu^2+\sigma^2}$ such that the total variation distance between $N_{\mu,\sigma^2}^{\otimes n}$ and $ N_{0,\mu^2+\sigma^2}^{\otimes n}$ is at most $1-3\alpha$. The details are in \cref{sec:pf-opt}. 

Immediately following the one-sided lower bound, we can establish the following two-sided lower bound. 
Let $\mathcal{I}_{\alpha, n}$ be the set of all two-sided $(1-\alpha)$-confidence intervals
for $\mu$ over $\normals$ with fixed sample size $n$.
For any $\CI \in \mathcal{I}_{\alpha, n}$, denote by $W_{\varepsilon}(\CI, P)$ the $\varepsilon$-upper quantile of its width $|\CI|$ under $P$, and let
\begin{equation}
    M_{\alpha, n} = \inf_{ \CI \in  \mathcal{I}_{\alpha, n}} \sup_{P \in \normals} \sigma^{-1}(P) \cdot W_{\alpha}(\CI, P).
\end{equation}
Then, we have the following minimax lower bound, also proved in \cref{sec:pf-opt}:
\begin{corollary}[Minimax Lower Bound of t-CIs]\label{cor:it-lb2}If $\alpha < 1/3$, $M_{\alpha, n}    \ge \sqrt{ ( 6\alpha - 9\alpha^2 )^{-2/n}  - 1 }$.
\end{corollary}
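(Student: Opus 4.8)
The plan is to deduce the two-sided bound from the one-sided Theorem~\ref{thm:it-lb} by establishing the inequality $M_{\alpha, n} \ge M^-_{\alpha, n}$; the claimed bound then follows at once. The natural reduction is to extract from any two-sided confidence interval its lower endpoint and treat it as a one-sided lower confidence bound. Concretely, I would fix an arbitrary $\CI \in \mathcal{I}_{\alpha, n}$, write $\CI = [L, U]$ with $L = L(X_1,\dots,X_n)$ its (measurable) lower endpoint, and first verify that $L \in \mathcal{L}_{\alpha, n}$: since the coverage event satisfies $\{\mu(P) \in \CI\} \subseteq \{\mu(P) \ge L\}$, we get $P(\mu(P) \ge L) \ge P(\mu(P) \in \CI) \ge 1 - \alpha$ for every $P \in \normals$, so $L$ is indeed a valid lower $(1-\alpha)$-confidence bound.

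The crux is then to compare the two relevant upper quantiles. Fix $P \in \normals$ and set $w = W_{2\alpha}(L, P)$, so that $P(\mu(P) - L \ge w) = 2\alpha$. On the coverage event $\{\mu(P) \in \CI\}$ one has $\mu(P) \le U$, hence $\mu(P) - L \le U - L = |\CI|$; intersecting with $\{\mu(P) - L \ge w\}$ gives the event inclusion $\{\mu(P)-L \ge w\} \cap \{\mu(P) \in \CI\} \subseteq \{|\CI| \ge w\}$, and a union bound against the miscoverage event (of probability at most $\alpha$) yields
\[
P(|\CI| \ge w) \ge P(\mu(P) - L \ge w) - P(\mu(P) \notin \CI) \ge 2\alpha - \alpha = \alpha.
\]
Because $q \mapsto P(|\CI| \ge q)$ is non-increasing, this forces $W_\alpha(\CI, P) \ge w = W_{2\alpha}(L, P)$. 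Multiplying by $\sigma^{-1}(P)$ and taking the supremum over $P$ gives $\sup_{P} \sigma^{-1}(P)\, W_\alpha(\CI, P) \ge \sup_{P} \sigma^{-1}(P)\, W_{2\alpha}(L, P) \ge M^-_{\alpha, n}$, and taking the infimum over $\CI \in \mathcal{I}_{\alpha, n}$ delivers $M_{\alpha, n} \ge M^-_{\alpha, n}$. Invoking Theorem~\ref{thm:it-lb} finishes the proof.

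The main obstacle is not analytic but a matter of measure-theoretic bookkeeping around the quantile definitions. The tightest point is ensuring the step $W_\alpha(\CI, P) \ge w$ survives atoms in the width distribution: to avoid relying on the exact-level convention $P(\,\cdot \ge W_\varepsilon) = \varepsilon$, I would define the upper quantiles robustly as $W_\varepsilon = \sup\{q : P(\,\cdot \ge q) \ge \varepsilon\}$ and run the bound for every $w' < W_{2\alpha}(L, P)$ before letting $w' \uparrow W_{2\alpha}(L, P)$, which sidesteps continuity assumptions entirely. The remaining care points are genuinely degenerate cases: if $\CI$ is unbounded (width $+\infty$) or $L \equiv -\infty$ on a set of positive probability, then the corresponding quantile is $+\infty$ and the inequality holds trivially, while measurability of the endpoint $L$ is the standard fact that the lower endpoint of a measurable interval-valued map is measurable. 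All genuine content has already been absorbed into the one-sided Theorem~\ref{thm:it-lb}, so beyond these formalities the argument is the elementary union-bound-plus-monotonicity computation above.
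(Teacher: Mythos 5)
Your proposal is correct and follows essentially the same route as the paper's own proof: extract the lower endpoint $L$ of the interval, apply the union bound between the event $\{\mu(P)-L \ge W_{2\alpha}(L,P)\}$ and the coverage event to conclude $P(|\CI| \ge W_{2\alpha}(L,P)) \ge \alpha$, deduce $W_\alpha(\CI,P) \ge W_{2\alpha}(L,P)$, and take the minimax to get $M_{\alpha,n} \ge M^-_{\alpha,n}$. Your extra care about quantile conventions and atoms is a harmless refinement of details the paper leaves implicit.
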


If $n$ is fixed, the lower bound above is asymptotically $\mathcal{O}(\alpha^{-1/n})$, the growth rate attained by universal inference method (Theorem~\ref{thm:ui-ttest}), and almost attained by Gaussian mixture of scale invariant likelihood ratios method (Theorem~\ref{thm:lai-e}) in our paper.

\subsection{The Surprising $\alpha$-Suboptimality of the Classical t-Test}

While the lower bound $\mathcal{O}(\alpha^{-1/n})$ proved above is matched by one of the t-CSs we derive, we now come to the surprising fact that the confidence interval obtained by inverting the classical one-sample Student's t-test with a fixed sample size $n$ fails to attain the same optimality as a function of $\alpha$.
Under $\normal{\mu}{\sigma^2}$, the classical t-test relies on the following ``pivotal" quantity that follows the t-distribution of $n-1$ degrees of freedom:
\begin{equation}
T_{n-1}=\sqrt{n-1} \frac{S_n - n\mu}{ \sqrt {n V_n - S_n^2} },
\end{equation}
where $S_n$ and $V_n$ denote the sum of samples and the sum of sample squares respectively. The following, therefore, is a $(1-\alpha)$-confidence interval for $\mu$:
\begin{equation}
    \CI_n^{\mathsf{t}} = \left[ \avgX{n} \pm \sqrt{\frac{nV_n - S_n^2}{n^2 (n-1)}} \cdot t_{1-\alpha/2}(n-1)  \right],
\end{equation}
where $t_{1-\alpha/2}(n-1)$ denotes the $1-\alpha/2$ quantile of the student t-distribution with $n-1$ degrees of freedom. Since the survival function of the student t-distribution with $n-1$ degrees of freedom scales as $\mathcal{O}(x^{-(n-1)})$, $t_{1-\alpha/2}(n-1)$, therefore the width of  $ \CI_n^{\mathsf{t}}$ as well, scales as $\mathcal{O}(\alpha^{-1/(n-1)})$ when the sample is fixed and $\alpha \to 0$.

The $\alpha^{-1/(n-1)}$ growth rate of the classical t-CI is
the same as the Gaussian mixture CS, Theorem~\ref{thm:lai-e}, but 
worse than the $\alpha^{-1/n}$ optimal rate attained by the other of our t-CSs, i.e., Theorem~\ref{thm:ui-ttest} by universal inference. That is, if we fix a sample size $n$ and decrease $\alpha$, the CI reported by Theorem~\ref{thm:ui-ttest} will eventually be tighter than $ \CI_n^{\mathsf{t}}$ when $\alpha$ is sufficiently small, as is shown in \cref{fig:classical}.

\begin{figure}
    \centering
    \includegraphics[width=0.5\linewidth]{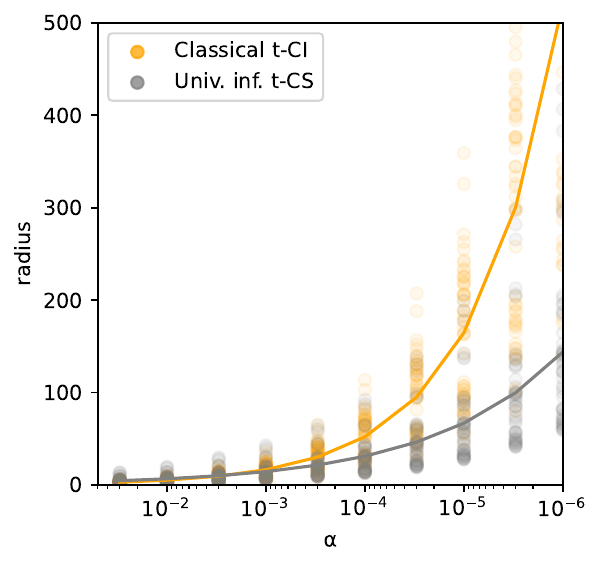}
    \caption{Growth rates of the t-CS by universal inference (Theorem~\ref{thm:ui-ttest}) and the classical t-test CI with $n=3$ observations.}
    \label{fig:classical}
\end{figure}

We explain this phenomenon from the following 
perspective.
    Consider the case with only $n=1$ observation $X_1$. The classical t-test is undefined; however, Theorem~\ref{thm:ui-ttest} implies the following valid e-value (taking $\hmu_{0} =0$ and $\hsig_{0} = 1$) for $\normal{\mu}{\sigma^2}$,
    \begin{equation}\label{eqn:1obevalue}
         | X_1 - \mu 
| \exp \left( \frac{1 -X_1^2}{2} \right).  
    \end{equation}
    To see that this is an e-value without invoking the full Theorem~\ref{thm:ui-ttest}, use the fact that $|x-\mu| \leq \sigma \exp((x-\mu)^2/2\sigma^2 - 1/2)$ for all $x,\sigma$ to conclude that
    \begin{align}
       & \EE_{\normal{\mu}{\sigma^2}} \left( | X_1 - \mu 
| \cdot \exp \left( \frac{1 -X_1^2}{2} \right)  \right) \le \\ &   \EE_{\normal{\mu}{\sigma^2}} \left( \sigma \exp\left(   \frac{(X_1-\mu)^2}{2\sigma^2} - \frac 1 2 \right) \cdot \exp \left( \frac{1 -X_1^2}{2} \right)  \right) 
 =  \EE_{\normal{\mu}{\sigma^2}} \left( \frac{\d \normal{0}{1}}{\d \normal{\mu}{\sigma^2}}(X_1) \right) = 1.
    \end{align}

   Applying Markov's inequality to the e-value above gives rise to a non-trivial CI for $\mu$:
    \begin{equation}\label{eqn:1obCI}
        \left[ X_1 \pm \alpha^{-1} \exp\left( \frac{X_1^2 - 1}{2} \right) \right]
    \end{equation}
    with  $\mathcal{O}(\alpha^{-1})$ growth rate. 
    It takes a second observation for the classical t-test to be non-trivial, thresholding the t-statistic $T_1 = \frac{(X_1 - \mu)+(X_2 - \mu)}{|X_1 - X_2|}$, which has a Cauchy distribution, resulting in a $\mathcal{O}(\alpha^{-1})$-CI that corresponds to the $\mathcal{O}(x^{-1})$ tail of Cauchy distribution.  While with $n=2$, Theorem~\ref{thm:ui-ttest} produces a better CI in terms of growth rate:
     taking just $\hmu_{0} =0,  \hmu_{1} = X_1$ and $\hsig_{0} = \hsig_{1} = 1$ for simplicity, the e-value reads,
        \begin{equation}
           \EE_{\normal{\mu}{\sigma^2}} \left[ \frac{(X_1-\mu)^2 + (X_2-\mu)^2}{2} \exp\left( 1 - \frac{X_1^2 + (X_2-X_1)^2}{2}   \right)  \right] \le 1.
        \end{equation}
        Applying Markov's inequality to the e-value above gives rise to a CI with $\mathcal{O}(\alpha^{-1/2})$ growth rate due to the quadratic dependence on $\mu$.

 {        \begin{remark}[On one-observation e-values] \normalfont
       E-values for $\normals_{\mu=\mu_0}$ via one single observation $X_1$ can be constructed by methods other than \eqref{eqn:1obevalue}. For example,
           in anticipation of the \emph{reverse information projection} to be introduced in the next subsection, and fully formulated in \cref{sec:pf-opt} by Proposition~\ref{prop:ripr}, the following likelihood ratio,
    \begin{equation}\label{eqn:num-mu-eval}
        B := \frac{\d \normal{0}{1}}{\d \normal{\mu_0}{\mu^2_0 + 1}}(X_1) =  \sqrt{\mu^2_0 + 1} \exp\left( \frac{(X_1 - \mu_0)^2}{2(\mu_0^2+1)}  -\frac{X_1^2}{2} \right),
    \end{equation}
    is an e-value for the entire set $\normals_{\mu=\mu_0}$, not only for $ \normal{\mu_0}{\mu^2_0 + 1}$, meaning that $\EE_P[B] \leq 1$ for any $P \in \normals_{\mu_0}$. \eqref{eqn:num-mu-eval}, however, does not lead to a closed-form CI via inversion. 
        \end{remark}}

        \begin{remark}[Related work on one-observation CIs] \normalfont
            \eqref{eqn:1obCI} is an instance of ``one-observation confidence intervals" for $\mu$ over $\normals$, a classical topic studied by numerous authors including Charles M.\ Stein, \cite{abbott1962two}, and \cite{portnoy2019invariance} who studies the lower bound in this problem\footnote{The paper by \cite{portnoy2019invariance} contains an important typo: the two $\Phi(\lambda(1+\tfrac{1}{c_1}))$ terms should be $\Phi(\lambda(1-\tfrac{1}{c_1}))$ instead, in the main result Theorem 1.1.}. These constructions typically look like $[X_1 \pm c_{\alpha} |X_1|]$, where, for example, $c_{0.05}\approx 10$, and are often tighter than \eqref{eqn:1obCI}. 
            Our CI \eqref{eqn:1obCI}, however, enjoys a simple closed-form expression that requires no numerical inversion, and is proved directly by the e-value \eqref{eqn:1obevalue} to provide intuition. All these CIs including our    \eqref{eqn:1obCI}, we note, contain 0  (or another fixed constant by shifting) regardless of the value of $X_1$.       
            These prior results, together with our \eqref{eqn:1obCI}, showcase the lesser-known suboptimality of the classical t-test CI under small samples; in particular, its unnecessary vacuity when $n=1$.
              {On a separate note, our one-observation CI \eqref{eqn:1obCI} can be \emph{randomly} and losslessly tightened due to a recent \emph{randomized} Markov's inequality by \citet[Theorem 1.2]{ramdas2023randomized}. This randomization halves the expected width of the CI. To elaborate, the randomized CI has the $1/\alpha$ term in \eqref{eqn:1obCI} replaced by a $U/\alpha$, where $U$ is an independent uniform random variable on $[0,1]$, which halves the expected width without changing its validity.}
        \end{remark} 
    

\subsection{Log-Optimality by Reverse Information Projection} \label{sec:ripr}

We now show that the recurring e-power $\frac{1}{2}\log(1+\mu^2/\sigma^2)$ in various test processes presented thus far is not a coincidence, as it is optimal in the problem of t-test.  {We study the e-power upper bound by focusing, again, on the case with a single observation $X_1$, via the \emph{expected} log-value under a fixed alternative of an e-value (called ``e-power" by \cite{vovk2022efficiency}, and ``growth rate"\footnote{To be disambiguated with the ``growth rate" of a confidence interval as $\alpha \to 0$ discussed in our paper.} by 
\cite{perez2022statistics}). If an e-value $M = M(X_1)$ for $\cP$ necessarily satisfies the e-power upper bound $\EE_Q (\log M) \le \delta(Q)$ for $Q\notin \cP$, then the e-power of any supermartingale for $\cP$ can at most be $\delta(Q)$ under $Q$ as well.

It is known that when testing the point null $P$, if the real distribution $Q \neq P$, the e-power of
an e-value $M$ is at most
\begin{equation}
     \EE_Q(\log M) \le \kl(Q\| P),
\end{equation}
with equality if and only if $M$ is the likelihood ratio $\frac{\d Q}{\d P}(X_1)$, i.e.\ the $L_1$ in Lemma~\ref{lem:lrm}. See e.g.\ \citet[Lemma 1]{vovk2022efficiency}. For example, in the Z-test, $P = \normal{0}{\sigma^2}$ and $Q = \normal{\mu}{\sigma^2}$, then the upper bound reads
\begin{equation}
     \kl( \normal{\mu}{\sigma^2} \| \normal{0}{\sigma^2}) = \frac{\mu^2}{2\sigma^2},
\end{equation}
matching those Z-test grow rates in \cref{tab:big-comp-ui,tab:big-comp}.

As a direct consequence, if we are to test a composite null $\cP$ and the real distribution $Q\notin \cP$, an e-value $M$ has e-power at most
\begin{equation}\label{eqn:ripr-kl}
     \EE_Q(\log M) \le \inf_{P \in \cP} \kl(Q\| P).
\end{equation}
With the t-test where $\cP = \normals_{\mu = 0}$ and $Q = \normal{\mu}{\sigma^2}$, a simple calculation shows the following.

\begin{proposition} \label{prop:ripr-simple} The probability measure $\normal{0}{\sigma^2+
\mu^2}$ attains the following minimum
\begin{equation}\label{eqn:riprmin}
   \kl( \normal{\mu}{\sigma^2} \| \normal{0}{\sigma^2+
\mu^2}) =  \min_{P \in  \normals_{\mu = 0}} \kl( \normal{\mu}{\sigma^2} \| P) = \frac{1}{2}\log(1+\mu^2/\sigma^2).
\end{equation}
Consequently, the e-power of a t-test e-value is at most $ \frac{1}{2}\log(1+\mu^2/\sigma^2)$ under $\normal{\mu}{\sigma^2}$.
\end{proposition}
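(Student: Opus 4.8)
The plan is to reduce the claimed identity to a one-dimensional minimization over the variance of the candidate null distribution. Every element of $\normals_{\mu = 0}$ is of the form $\normal{0}{s^2}$ for some $s^2 > 0$, so the infimum on the right-hand side of \eqref{eqn:riprmin} is simply
\begin{equation}
  \min_{s^2 > 0} g(s^2), \qquad g(s^2) := \kl\!\left( \normal{\mu}{\sigma^2} \,\middle\|\, \normal{0}{s^2} \right).
\end{equation}
First I would recall (or quickly rederive from the definition $\kl(P\|Q) = \EE_P \log \frac{\d P}{\d Q}$) the standard closed form for the Kullback--Leibler divergence between two univariate Gaussians, which specializes here to
\begin{equation}
  g(s^2) = \frac{1}{2}\log\frac{s^2}{\sigma^2} + \frac{\sigma^2 + \mu^2}{2 s^2} - \frac{1}{2}.
\end{equation}

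Next I would carry out the minimization. Rather than differentiating in $s^2$ directly, I find it cleanest to reparametrize by the precision $u = 1/s^2 > 0$, under which $g$ becomes $-\tfrac{1}{2}\log u - \tfrac{1}{2}\log\sigma^2 + \tfrac{\sigma^2+\mu^2}{2}\,u - \tfrac{1}{2}$; this is strictly convex in $u$ (sum of the convex $-\tfrac12\log u$ and a linear term), so the stationary point is the unique global minimizer. Setting the derivative to zero gives $u = (\sigma^2 + \mu^2)^{-1}$, i.e.\ $s^2 = \sigma^2 + \mu^2$, which is exactly the variance of $\normal{0}{\sigma^2 + \mu^2}$. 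Substituting this value back, the two non-logarithmic terms cancel and I obtain
\begin{equation}
  g(\sigma^2 + \mu^2) = \frac{1}{2}\log\frac{\sigma^2 + \mu^2}{\sigma^2} + \frac{1}{2} - \frac{1}{2} = \frac{1}{2}\log\!\left(1 + \frac{\mu^2}{\sigma^2}\right),
\end{equation}
establishing both the identity \eqref{eqn:riprmin} and that the minimum is attained at $\normal{0}{\sigma^2 + \mu^2}$.

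Finally, the ``Consequently'' clause requires no new work: it is an immediate instance of the general e-power upper bound \eqref{eqn:ripr-kl}, applied with the composite null $\cP = \normals_{\mu = 0}$ and the alternative $Q = \normal{\mu}{\sigma^2}$, since $\inf_{P \in \normals_{\mu = 0}} \kl(Q \| P)$ is precisely the minimum just computed. I do not anticipate any genuine obstacle in this proof; it is entirely elementary calculus. The only points deserving a moment's care are correctly recalling the Gaussian KL formula (a sign or factor error there would propagate) and justifying that the interior critical point is a global rather than merely local minimum, for which the convex reparametrization in $u = 1/s^2$ is the tidiest argument.
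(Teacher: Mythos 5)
Your proof is correct and follows essentially the same route as the paper's: both write out the closed-form Gaussian KL divergence $\kl(\normal{\mu}{\sigma^2}\|\normal{0}{\sigma_0^2}) = \tfrac{1}{2}\log\tfrac{\sigma_0^2}{\sigma^2} + \tfrac{\sigma^2+\mu^2}{2\sigma_0^2} - \tfrac{1}{2}$, minimize over the null variance to find $\sigma_0^2 = \sigma^2 + \mu^2$, and deduce the e-power bound from the general inequality \eqref{eqn:ripr-kl}. Your convex reparametrization in the precision $u = 1/s^2$ is a slightly more careful justification of the global minimum than the paper's, which simply asserts the minimizer.
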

This upper bound is matched by the e-powers of all two-sided t-test processes we derive throughout the paper, as displayed in \cref{tab:big-comp-ui,tab:big-comp}. We shall prove an extended version of Proposition~\ref{prop:ripr-simple} in \cref{sec:pf-opt}, as Proposition~\ref{prop:ripr}, where we discuss the minimization \eqref{eqn:riprmin} as an instance of ``reverse information projection", a topic studied recently by \cite{larsson2024numeraire} regarding the design of log-optimal e-values.}

\section{Concluding Remarks}

In this paper, we derive several sequential t-tests via universal inference and scale invariant likelihood ratio mixtures. In particular, we focus on the construction of test processes (nonnegative supermartingales, e-processes, and extended nonnegative supermartingales), and the confidence sequences they imply.

The two methods we explore yield similar results in numerous regards. They asymptotically share the same optimalities in terms of test processes and confidence sequences alike; they both extend to the one-sided test; and the computation of the test processes only requires linear time in both cases (unlike the quadratic cost that universal inference incurs worst-case, as mentioned by \citet[Section 7]{wasserman2020universal}).

One of the key differences between the two methods is the filtrations to which the processes are adapted, as the test processes scale invariant likelihood ratio mixtures produce are only adapted to a reduced filtration, allowing for a smaller set of stopping times that optionally stop the experiment. On the other hand, the universal inference test processes, while robust under a larger set of stopping times, sometimes suffer from lower stability depending on the quality of the estimators plugged in. We remark that the issue of filtration disappears once we convert tests into confidence sequences.

While we only focus on the class of one-dimensional normal distributions $\normals$ in this paper, we expect similar techniques may lead to sequential Hotelling's T-tests (with test processes and confidence \emph{sphere} sequences \citep{chugg2023time}) for the class of multivariate normal distributions. A slightly different way to generalize in dimensionality is the sequential F-tests for linear models, on which the most recent and comprehensive work by \cite{lindon2022anytime} only covers one of the numerous methods in our paper (namely, Gaussian mixture of scale invariant likelihood ratios). 
We leave these topics for future authors to study in depth.





\subsubsection*{Acknowledgements}
The authors acknowledge support from NSF grants IIS-2229881 and DMS-2310718. AR thanks Ashwin Pananjady for useful conversations.

\bibliography{ttest}

\newpage
\appendix

\section{Additional Sequential t-Tests in the Literature}\label{sec:more-tcs}

\subsection{A Plug-in t-Confidence Sequence by \cite{jourdan2023dealing}}\label{sec:plugin}

In this section, we review the method by \citet[Appendix E]{jourdan2023dealing}, who first derive a confidence sequence for the variance $\sigma^2$ based on the sample variance $s_n^2$ over $\normals$ (using an example by \cite{howard2021time}), and then use a standard Z-confidence sequence via the method of likelihood mixture (e.g.\ Equation 10 of \cite{howard2021time}) that requires knowing the true variance $\sigma^2$. They then use a union bound to combine the two to obtain a rectangular confidence sequence for the pair $(\mu, \sigma^2)$ over $\normals$. A tighter confidence sequence can be obtained, we remark, if only the estimation of $\mu$ is of interest. 

Throughout the section, we follow the same notation as \cite{jourdan2023dealing} of $\overline{W}_i(x) = -W_i(-\exp(-x))$ where $W_0$, $W_{-1}$ are the Lambert $W$ functions, the inverses of $x\mapsto x \e^x$. It is not hard to see that
\begin{equation}
    \lim_{x\to\infty} \frac{\overline{W}_{-1}(x)}{x} = 1, \quad  \lim_{x\to\infty} \frac{ 1/\overline{W}_{0}(x) }{\exp(x)} = 1.
\end{equation}
We also let $\zeta$ be the Riemann Zeta function, which naturally arises in \emph{stitching} \citep{howard2021time}, a technique for obtaining law-of-the-iterated-logarithm rate confidence sequences. Recall that we denote by $s_n^2$ the sample variance without Bessel correction,
\begin{equation}
    s_n^2 = \frac{1}{n}\sum_{i=1}^n ( X_i - \avgX{n} )^2.
\end{equation}
Corollary 26 of \cite{jourdan2023dealing} states the following bound:

\begin{proposition}[Stitched Upper Confidence Sequence for Gaussian Variance]\label{prop:upper-cs-var} Let $s>1, \eta > 0$. The intervals
\begin{equation}\label{eqn:upper-cs-var}
    \left[ 0, \frac{s_n^2}{\overline{W_{0}}\left( 1 + \frac{2(1+\eta)}{n-1}\left( \log \frac{\zeta(s)}{\alpha} + s \log\left( 1 + \frac{\log(n-1)}{\log(1+\eta)} \right)  \right) \right) - \frac{1}{n-1}  } \right]
\end{equation}
with $n > n_{\min}$ where $n_{\min} = \polylog(1/\alpha)$ is the largest $n$ such that the denominator above is nonpositive,
form an $(1-\alpha)$-CS for $\sigma^2$ over $\normals$.
\end{proposition}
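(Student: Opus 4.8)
The plan is to reduce the claim to a time-uniform control of the \emph{lower} tail of a chi-squared process, and then obtain the stated boundary by \emph{stitching}. First I would remove the dependence on the unknown mean: under any $\normal{\mu}{\sigma^2}$, the recursive residuals $W_i := \sqrt{(i-1)/i}\,(X_i - \avgX{i-1})$, $i \ge 2$, are i.i.d.\ $\normal{0}{\sigma^2}$, are $\{\cF_n\}$-adapted, and satisfy $\sum_{i=1}^n (X_i - \avgX{n})^2 = \sum_{i=2}^n W_i^2$. Consequently $T_n := \sigma^{-2}\sum_{i=2}^n W_i^2 = n s_n^2/\sigma^2 \sim \chi^2_{n-1}$ for each $n$, and $\{T_n\}$ has \emph{independent increments}, which is exactly what makes a time-uniform treatment possible. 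The upper confidence bound in \eqref{eqn:upper-cs-var} fails at time $n$ precisely when $\sigma^2$ exceeds the reported endpoint, which rearranges to the lower-tail event $\{ T_n \le \theta_n \}$ for a deterministic boundary $\theta_n$; so it suffices to bound $\Pr(\exists n > n_{\min} : T_n \le \theta_n) \le \alpha$.

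For the tail control I would use an exact exponential martingale. Since $\EE_{\normal{\mu}{\sigma^2}}[\exp(-\lambda W_i^2/\sigma^2)] = (1+2\lambda)^{-1/2}$ for every $\lambda > 0$, the process $M_n^\lambda := (1+2\lambda)^{(n-1)/2}\exp(-\lambda T_n)$ is a nonnegative martingale for $\normal{\mu}{\sigma^2}$ with $M_1^\lambda = 1$, so Ville's inequality (Lemma~\ref{lem:ville}) gives $\Pr(\exists n : M_n^\lambda \ge 1/\delta) \le \delta$, i.e.\ a linear-in-$n$ lower boundary on $T_n$ for each fixed $\lambda$. A single $\lambda$ is tight only near one time scale, so I would stitch: partition the times into geometric epochs $n \in [(1+\eta)^k, (1+\eta)^{k+1})$, tune a separate $\lambda_k$ to the chi-squared lower-tail optimum on epoch $k$, and assign error budget $\delta_k = \alpha\, k^{-s}/\zeta(s)$, which sums to $\alpha$ because $\zeta(s) = \sum_k k^{-s}$. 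A union bound over epochs keeps the total failure probability at most $\alpha$. The Chernoff computation $\min_{\lambda>0}(x e^{1-x})^{(n-1)/2}$ for $\chi^2_{n-1}$ introduces the relation $x - \log x = c$, whose lower-branch solution is $x = \overline{W_{0}}(c)$; here $c = 1 + \tfrac{2(1+\eta)}{n-1}\log(1/\delta_k)$, where the factor $(1+\eta)$ is the slack from using one linear boundary across a whole epoch, and $\log(1/\delta_k) = \log(\zeta(s)/\alpha) + s\log k$ with epoch index $k \asymp \log(n-1)/\log(1+\eta)$ produces the $s\log\!\big(1 + \tfrac{\log(n-1)}{\log(1+\eta)}\big)$ term. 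Inverting this boundary back to an upper bound on $\sigma^2$ yields \eqref{eqn:upper-cs-var}.

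The conceptual engine above --- an exact chi-squared martingale plus stitching --- is routine; the main obstacle is the constant bookkeeping needed to land on the precise displayed expression, especially the $-1/(n-1)$ correction in the denominator. This term arises because $s_n^2$ normalizes the sum of squares by $1/n$ whereas the natural scale of $\chi^2_{n-1}$ is $1/(n-1)$: converting the boundary $T_n \le (n-1)\overline{W_{0}}(c)$ into the $s_n^2/\sigma^2 = T_n/n$ scale multiplies by $(n-1)/n = 1 - 1/n$, and since $\overline{W_{0}}(c) \approx 1$ for the relevant (nearly unit) arguments, this reproduces the $-1/(n-1)$ shift. One must also verify that the per-epoch optimization controls the boundary uniformly over the whole epoch rather than only at its optimal time, and that the restriction to $n > n_{\min} = \polylog(1/\alpha)$ is exactly the range on which the denominator $\overline{W_{0}}(c_n) - 1/(n-1)$ stays positive (below which the bound is vacuous). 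For these routine but delicate verifications I would follow Corollary 26 of \cite{jourdan2023dealing}.
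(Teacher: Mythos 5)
You should first know that the paper contains no proof of this proposition at all: it is quoted verbatim as Corollary 26 of \cite{jourdan2023dealing}, so there is no internal argument to compare against. Judged on its own merits, your reconstruction is the right proof strategy, and indeed it is the same lineage as the cited result (a chi-squared lower-tail boundary obtained by the stitching machinery of \cite{howard2021time}, which is exactly what \cite{jourdan2023dealing} invoke). The reduction via recursive residuals $W_i = \sqrt{(i-1)/i}\,(X_i - \avgX{i-1})$, the exact mixture-free exponential process $(1+2\lambda)^{(n-1)/2}\e^{-\lambda T_n}$, Ville's inequality per epoch with budget $\delta_k = \alpha k^{-s}/\zeta(s)$, and the Lambert-$W$ inversion of the Chernoff relation $x - \log x = c$ are all correct and assemble into a valid proof of the stated coverage guarantee.

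Two points deserve tightening. First, a filtration subtlety that this paper in particular cares about: your process $M_n^\lambda$ is \emph{not} a martingale on the data filtration $\{\cF_n\}$, because conditionally on $\cF_{n}$ the next residual $W_{n+1}$ has mean $\sqrt{n/(n+1)}\,(\mu - \avgX{n}) \neq 0$, so $\EE\bigl[\e^{-\lambda W_{n+1}^2/\sigma^2} \mid \cF_n\bigr] \neq (1+2\lambda)^{-1/2}$. It is a martingale only on the coarser filtration $\sigma(W_2,\dots,W_n)$ generated by the residuals. This does not damage the conclusion --- Ville's inequality on that filtration still bounds the probability of the path event $\{\exists n: T_n \le \theta_n\}$, and CS coverage is a filtration-free statement --- but you should say so explicitly rather than leave the filtration implicit. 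Second, your justification of the $-1/(n-1)$ correction via ``$\overline{W}_0(c) \approx 1$'' is a heuristic where an exact inequality is available: the stitched bound gives $\sigma^2 \le n s_n^2 / \bigl((n-1)\overline{W}_0(c_n)\bigr)$, and since $\overline{W}_0(c_n) \le 1 < n/(n-1)$ one has $\tfrac{n-1}{n}\overline{W}_0(c_n) \ge \overline{W}_0(c_n) - \tfrac{1}{n-1}$, so the displayed (slightly looser) endpoint is an upper bound for yours whenever its denominator is positive. Finally, deferring the per-epoch constant bookkeeping to ``Corollary 26 of \cite{jourdan2023dealing}'' is circular, since that corollary \emph{is} the statement being proved; the correct reference for those verifications is the stitching theorem of \cite{howard2021time}, or you must carry out the epoch-wise optimization yourself.
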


It is very worth noting that, since $1/\overline{W}_0 (x) \approx \exp(x)$ for large $x$, the confidence sequence \eqref{eqn:upper-cs-var} has polynomial, as opposed to the seemingly logarithmic dependence on $1/\alpha$.

For the mean $\mu$, Lemma 28 of \cite{jourdan2023dealing} quotes the following well-known bound:

\begin{proposition}[Stitched Normal Mixture Confidence Sequence for Gaussian Mean]\label{prop:stitched_normal}
Let $s> 1$. The intervals
\begin{equation}
    \left[ \avgX{n} \pm {\sigma}\sqrt{\frac{\overline{W}_{-1}\left( 1+ 2\log(1/\alpha) + 2 \log \zeta(s) + 2s(1-\log(2s)) + 2s \log(2s + \log n) \right)}{n }} \right]
\end{equation}
form an $(1-\alpha)$-CS for $\mu$ over $\normals_{\sigma^2}$.
\end{proposition}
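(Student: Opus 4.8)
The plan is to recognize Proposition~\ref{prop:stitched_normal} as the classical known-variance (Gaussian Z-test) stitched confidence sequence of \citet[Equation 10 and Section 3.1]{howard2021time}, specialized to $\normals_{\sigma^2}$, and to derive it from a stitched normal-mixture nonnegative martingale via Ville's inequality. Because $\sigma^2$ is known, under any $P \in \normals_{\sigma^2}$ with mean $\mu$ the standardized increments $Y_i := (X_i - \mu)/\sigma$ are i.i.d.\ $\normal{0}{1}$, so $\EE_P(\e^{\lambda Y_i}) = \e^{\lambda^2/2}$ exactly. First I would fix $\lambda \in \mathbb{R}$ and form
\[ M_n^{\lambda} = \exp\!\Big( \lambda \sum_{i=1}^n Y_i - \tfrac{\lambda^2 n}{2} \Big), \]
which is a test martingale for $\normal{\mu}{\sigma^2}$ on $\{\cF_n\}$; indeed it equals the likelihood-ratio product $\prod_{i=1}^n \frac{\d \normal{\mu + \lambda\sigma}{\sigma^2}}{\d \normal{\mu}{\sigma^2}}(X_i)$, so Lemma~\ref{lem:lrm} (or Lemma~\ref{lem:lrm-general}) applies directly.

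Next I would upgrade the single-$\lambda$ boundary, which is tuned to one time scale and yields only a $\sqrt{\log(n)/n}$ width, to the iterated-logarithm rate by \emph{stitching} as in \citet[Section 3.1]{howard2021time}. Concretely I would partition the time axis into geometric epochs $n \in [\e^{j}, \e^{j+1})$ --- a fixed geometric ratio, which is why no spacing parameter $\eta$ appears here, in contrast to Proposition~\ref{prop:upper-cs-var} --- and on epoch $j$ use a normal-mixture martingale $\bar M^{(j)}_n = \int M^{\lambda}_n\, \normal{0}{\rho_j^2}(\d\lambda)$, whose mixing variance $\rho_j^2 \asymp \e^{-j}$ is tuned to that epoch's scale and whose Gaussian integral is available in closed form. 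Assigning error budget $\alpha_j = \alpha\, w_j$ with weights $w_j \propto j^{-s}$ normalized so that $\sum_{j\ge 1} w_j = 1$ --- the normalizer being exactly $\zeta(s) = \sum_{j\ge 1} j^{-s}$ --- the countable mixture $\bar M_n = \sum_{j} w_j \bar M^{(j)}_n$ is again a test martingale started at $1$, to which Ville's inequality (Lemma~\ref{lem:ville}) gives $P(\exists n:\ \bar M_n \ge 1/\alpha) \le \alpha$ for every $P \in \normals_{\sigma^2}$. Lower-bounding $\bar M_n$ on epoch $j$ by its $j$-th summand produces a time-uniform upper bound on $\sum_{i\le n} Y_i = n(\avgX{n}-\mu)/\sigma$, hence a two-sided CS for $\mu$. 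Up to time $n$ there are $\Theta(\log n)$ epochs, so the union-bound penalty on the active epoch is $\log(1/w_{j(n)}) = s\log j(n) + \log\zeta(s) = \Theta(s\log\log n)$, which is the origin of the $\log\zeta(s)$ and $2s\log(2s+\log n)$ terms (the argument $2s+\log n$, in place of a bare $\log n$, being a uniform-in-$n$ safeguard for small $n$, and $2s(1-\log(2s))$ a bookkeeping constant).

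Finally I would invert the mixture-martingale crossing condition \emph{exactly}. The normalizing $(1+n\rho^2)^{-1/2}$ prefactor of the normal mixture contributes a logarithmic correction that turns the otherwise quadratic crossing equation into a transcendental one of the shape $y - \log y = A_n$, where $y$ is proportional to $n(\avgX{n}-\mu)^2/\sigma^2$ and $A_n = 1 + 2\log(1/\alpha) + 2\log\zeta(s) + 2s(1-\log(2s)) + 2s\log(2s+\log n)$ is exactly the argument in the statement. Since, by the definition $\overline{W}_{-1}(x) = -W_{-1}(-\e^{-x})$, the branch $y \ge 1$ solving $y - \log y = x$ is precisely $y = \overline{W}_{-1}(x)$, the non-crossing region becomes $n(\avgX{n}-\mu)^2/\sigma^2 \le \overline{W}_{-1}(A_n)$, i.e.\ the stated interval $[\avgX{n} \pm \sigma\sqrt{\overline{W}_{-1}(A_n)/n}]$.

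I expect the genuine obstacle to be constant bookkeeping rather than any conceptual difficulty: one must verify that the per-epoch mixing variances, the weights $w_j$, and the exact Lambert-$W$ inversion combine to reproduce the displayed $A_n$ with these exact constants, not merely up to the $\sqrt{\log\log(n)/n}$ rate. Because the statement is quoted verbatim from \citet[Lemma 28]{jourdan2023dealing}, who in turn invoke \citet[Equation 10]{howard2021time}, I would either cite that computation directly or reproduce it, using $\lim_{x\to\infty}\overline{W}_{-1}(x)/x = 1$ (recorded earlier in this section) as a sanity check that the asymptotic width $\sigma\sqrt{2s\log\log(n)/n}$ is recovered.
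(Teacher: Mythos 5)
The paper offers no proof of \cref{prop:stitched_normal} at all: the statement is quoted verbatim from Lemma 28 of \citet{jourdan2023dealing} (the surrounding text reads ``Lemma 28 \ldots quotes the following well-known bound''), and no derivation appears in any appendix. So your fallback option --- citing the computation --- is exactly the paper's route, and your proposal is at least as complete as the paper on this point. What you add is a reconstruction sketch, and its mechanics are sound: a countable convex combination $\sum_j w_j \bar M_n^{(j)}$ of per-scale normal-mixture test martingales with weights $w_j = 1/(\zeta(s)\, j^s)$ is again a test martingale, Ville's inequality (Lemma~\ref{lem:ville}) gives the time-uniform guarantee, and you correctly locate where $\overline{W}_{-1}$ enters. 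To make that last point concrete: for a single normal mixture with prior variance $\rho^2$, writing $v = 1 + n\rho^2$ and $y = n(\avgX{n}-\mu)^2/\sigma^2$, the crossing condition $\bar M_n \ge 1/\delta$ reads $y \ge \tfrac{v}{v-1}\left(2\log(1/\delta) + \log v\right)$; minimizing the right-hand side over $v>1$ gives the stationarity condition $v - \log v = 1 + 2\log(1/\delta)$, at which the threshold collapses to exactly $y \ge v^{\ast} = \overline{W}_{-1}\left(1+2\log(1/\delta)\right)$ --- precisely the claimed functional form, with the stitching weights inflating $\log(1/\delta)$ to the stated argument $A_n$. The one caveat is the one you flag yourself: the proposition asserts \emph{exact} constants ($2s(1-\log(2s))$ and $2s\log(2s+\log n)$, which encode the epoch-mismatch and small-$n$ safeguards), and a rate-level stitching argument does not establish them; a self-contained proof would have to carry out that bookkeeping for a specific grid, which is done in the discrete mixture-martingale literature the statement descends from (Kaufmann and Koolen's bounds, which Lemma 28 of \citet{jourdan2023dealing} invokes) rather than in Equation 10 of \citet{howard2021time}, the latter being the single continuous normal mixture that by itself only yields a $\sqrt{\log(n)/n}$-width boundary without the $\zeta(s)$ and iterated-logarithm terms. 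Since you end by deferring to the citation when the bookkeeping is not reproduced, there is no gap relative to the paper.
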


A union bound between Propositions~\ref{prop:upper-cs-var} and~\ref{prop:stitched_normal} immediately implies the following t-confidence sequence, by ``plugging in" the variance upper confidence bound into the fixed-variance confidence sequence.

\begin{theorem}[Plug-In t-Confidence Sequence]\label{thm:plug}
    Let $s>1, \eta > 0$. The intervals
\begin{equation}\label{eqn:upper-cs-var}
    \left[ \avgX{n} \pm \sqrt{ \frac{s_n^2}{n} \cdot \frac{\overline{W}_{-1}\left( 1+ 2\log(2/\alpha) + 2 \log \zeta(s) + 2s(1-\log(2s)) + 2s \log(2s + \log n) \right)}{\overline{W_{0}}\left( 1 + \frac{2(1+\eta)}{n-1}\left( \log \frac{2\zeta(s)}{\alpha} + s \log\left( 1 + \frac{\log(n-1)}{\log(1+\eta)} \right)  \right) \right) - \frac{1}{n-1}  }} \right]
\end{equation}
with $n > n_{\min}$ where $n_{\min} = \polylog(1/\alpha)$ is the largest $n$ such that the denominator above is nonpositive,
form an $(1-\alpha)$-CS for $\mu$ over $\normals$.
\end{theorem}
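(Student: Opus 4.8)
The plan is to combine the one-sided variance confidence sequence of Proposition~\ref{prop:upper-cs-var} with the known-variance mean confidence sequence of Proposition~\ref{prop:stitched_normal} by a union bound, running each component at level $\alpha/2$, and to exploit the fact that the mean-CS radius is increasing in $\sigma$ so that a computable \emph{upper} bound on the variance can be substituted for the true (unknown) $\sigma$.

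First I would instantiate Proposition~\ref{prop:upper-cs-var} at level $\alpha/2$, which replaces every occurrence of $\alpha$ by $\alpha/2$ and so produces the $\log\frac{2\zeta(s)}{\alpha}$ term appearing in the denominator of the theorem's radius. Writing $U_n := s_n^2 / (\overline{W}_{0}(\cdots) - (n-1)^{-1})$ for the resulting upper endpoint, this guarantees that under any $\normal{\mu}{\sigma^2}$ the event $A = \{\sigma^2 \le U_n \text{ for all } n > n_{\min}\}$ has probability at least $1-\alpha/2$; note that the threshold $n_{\min}$ is exactly the set of $n$ for which this denominator is positive, matching the $n_{\min}$ in the theorem. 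Next I would instantiate Proposition~\ref{prop:stitched_normal} at level $\alpha/2$ with the true variance $\sigma^2$; replacing $\alpha$ by $\alpha/2$ yields the $2\log(2/\alpha)$ term in the $\overline{W}_{-1}$ argument of the numerator. This produces the event $B = \{ |\avgX{n}-\mu| \le \sigma\sqrt{\overline{W}_{-1}(\cdots)/n} \text{ for all } n \}$ with probability at least $1-\alpha/2$. The key point is that this known-variance CS is never computed: only its existence and coverage guarantee for the true $\sigma$ are used.

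The decisive observation is that the radius $\sigma\sqrt{\overline{W}_{-1}(\cdots)/n}$ is increasing in $\sigma$, since the factor $\overline{W}_{-1}(\cdots)$ depends only on $\alpha, s, n$ and not on $\sigma$. Hence on $A \cap B$, for every $n > n_{\min}$,
\[
|\avgX{n} - \mu| \le \sigma\sqrt{\frac{\overline{W}_{-1}(\cdots)}{n}} \le \sqrt{U_n}\,\sqrt{\frac{\overline{W}_{-1}(\cdots)}{n}},
\]
and the right-hand side is precisely the plug-in radius in the statement of Theorem~\ref{thm:plug}, because $\sqrt{U_n}\sqrt{\overline{W}_{-1}(\cdots)/n} = \sqrt{(s_n^2/n)\cdot \overline{W}_{-1}(\cdots)/(\overline{W}_{0}(\cdots)-(n-1)^{-1})}$. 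By the union bound $P(A\cap B) \ge 1 - P(A^c) - P(B^c) \ge 1-\alpha$ (no independence needed), so the plug-in intervals cover $\mu$ for all $n > n_{\min}$ with probability at least $1-\alpha$ under each $P \in \normals$.

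The argument is essentially bookkeeping, so there is no serious obstacle; the two points requiring care are the monotonicity of the radius in $\sigma$ — which is exactly what lets the one-sided \emph{upper} variance CS suffice, making a lower variance bound unnecessary — and the correct $\alpha/2$ splitting in both components so that the stated $2/\alpha$ and $2\zeta(s)/\alpha$ arguments emerge. The only mild conceptual subtlety is that Proposition~\ref{prop:stitched_normal} is a statement for a \emph{fixed} known $\sigma$, so one must take the event $B$ to be defined under the true data-generating $\normal{\mu}{\sigma^2}$ and invoke its coverage without ever evaluating the uncomputable $\sigma$-dependent radius.
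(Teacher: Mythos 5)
Your proposal is correct and is essentially the paper's argument: the paper proves Theorem~\ref{thm:plug} in one line by taking a union bound between Proposition~\ref{prop:upper-cs-var} and Proposition~\ref{prop:stitched_normal} (each implicitly at level $\alpha/2$, which is where the $2/\alpha$ and $2\zeta(s)/\alpha$ terms come from) and plugging the variance upper confidence bound into the fixed-variance CS, exactly as you do. Your write-up merely makes explicit the two details the paper leaves implicit — the monotonicity of the mean-CS radius in $\sigma$ and the fact that the known-variance CS is only invoked for coverage, never computed — so there is nothing to correct.
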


When $n$ is fixed, the growth rate of the CS as $\alpha \to 0$ is $\Tilde{\mathcal{O}}\left(\sqrt{\exp(\frac{2(1+\eta)}{n-1}  \log \frac{1}{\alpha}) }\right)=\Tilde{\mathcal{O}}( \alpha^{-\frac{1+\eta}{n-1}})$, depending on the stitching parameter $\eta > 0$ chosen. This is less desirable compared to two of the $\mathcal{O}(\alpha^{-1/n})$-rate t-confidence sequences we derived in this paper; see \cref{tab:t-cs}.
When $\alpha$ is fixed, the shrinkage rate of the CS as $n\to\infty$ is the optimal iterated logarithm rate $\sqrt{n^{-1} \log\log n }$; however, such asymptotic benefit, overshadowed by the nonasymptotic price of large constants, does not manifest even with very large values of $n$, as is shown in the simulation \cref{fig:csplots}. Furthermore, the plug-in approach does not give a test process: though both of the two steps are based on some supermartingales, no union bound argument conjoins them, unlike the case of confidence sequences.
In practice, therefore, the plug-in approach by \cite{jourdan2023dealing} is less appealing than our universal inference and scale-invariant likelihood ratio mixture.

\subsection{A Sequential Test for Median by \cite{quantile}}\label{sec:med}

Since Gaussian distributions are symmetric, any confidence sequence for the median is a confidence sequence for the mean as well. We quote the following recent result from \citet[Theorem 1]{quantile}:

\begin{proposition}[Confidence Sequence for Median]\label{prop:med}
    Let
    \begin{equation}
      f_n = 0.75 \sqrt{\ell_n} + 0.8 \ell_n \quad   \ell_n = \frac{1.4 \log \log 2.1 n + \log(10/\alpha)}{n}.
    \end{equation}
    For any continuous distribution $P$ on $\mathbb R$,
    \begin{equation}
     \CI_n^{\mathsf{md}}  = [ \widehat{Q}_n(0.5 - f_n ), \widehat{Q}_n^-(0.5 + f_n )  ]
    \end{equation}
    forms a $(1-\alpha)$ confidence sequence for the median of $P$; therefore also a $(1-\alpha)$ confidence sequence for $\mu$ over $\normals$.
\end{proposition}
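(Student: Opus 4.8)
The plan is to treat the proposition in two stages, since its first assertion---that $\CI_n^{\mathsf{md}}$ is a $(1-\alpha)$-CS for the median of any continuous $P$---is precisely Theorem 1 of \cite{quantile}, which I would invoke verbatim rather than reprove. The only genuinely new content is the final clause, which reduces a median CS to a mean CS over the Gaussian class $\normals$, and this follows from symmetry. I would therefore structure the argument as: (i) quote the quantile CS; (ii) verify that its hypotheses hold for every $P \in \normals$; and (iii) identify the median with the mean for symmetric laws.

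For completeness I would recall at a high level why the quoted median CS holds, even though I would not reprove it. Fixing a threshold $q$, the indicators $\id_{\{X_i \le q\}}$ are i.i.d.\ Bernoulli with parameter $F(q)$, where $F$ is the CDF of $P$. The empirical quantile $\widehat{Q}_n(p)$ miscovers the true $p$-quantile exactly when the partial sum $\sum_{i=1}^n \id_{\{X_i \le q\}}$ deviates from $n F(q)$ by more than the allowed slack encoded in $f_n$. A time-uniform (stitched) Bernoulli concentration bound of iterated-logarithm type then controls this deviation simultaneously over all $n$, and the explicit constants $0.75$, $0.8$, $1.4$, $2.1$ appearing in $f_n$ and $\ell_n$ arise from optimizing the stitched boundary. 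This is the technical heart of \cite{quantile} and the step I would simply cite.

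The reduction to the mean is then immediate. Every $P = \normal{\mu}{\sigma^2} \in \normals$ has a continuous CDF, so the continuity hypothesis of the quoted theorem is met, and $\CI_n^{\mathsf{md}}$ covers the median of $P$ for all $n$ simultaneously with probability at least $1-\alpha$. Because the Gaussian density is symmetric about $\mu$, the median of $\normal{\mu}{\sigma^2}$ equals its mean $\mu$. Substituting this identity into the median-coverage statement yields
\begin{equation}
    \normal{\mu}{\sigma^2}\!\left( \forall n \ge 1,\ \mu \in \CI_n^{\mathsf{md}} \right) \ge 1-\alpha
\end{equation}
for every $\mu$ and $\sigma^2$, which is exactly the claim that $\{\CI_n^{\mathsf{md}}\}$ is a $(1-\alpha)$-CS for $\mu$ over $\normals$. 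The main---indeed the only---obstacle, were one to prove the statement from scratch rather than quote it, would be establishing the iterated-logarithm-rate time-uniform Bernoulli bound underlying the quantile CS with the stated sharp constants; once that is granted, both the hypothesis check and the symmetry reduction are routine.
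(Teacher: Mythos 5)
Your proposal is correct and takes essentially the same route as the paper: the paper likewise quotes Theorem~1 of \cite{quantile} verbatim for the median coverage and disposes of the Gaussian-mean claim with the one-line symmetry observation that the median of $\normal{\mu}{\sigma^2}$ equals $\mu$. Your added sketch of the underlying stitched Bernoulli bound is accurate but not something the paper reproves either, so nothing further is needed.
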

Here, the empirical quantiles $\widehat{Q}_n$ and $\widehat{Q}_n^-$ are defined as as
\begin{equation}
    \widehat{Q}_n(p) =  \sup \{ x : \widehat F_n(x) \le p \}, \quad  \widehat{Q}_n^-(p) =  \sup \{ x : \widehat F_n(x) < p \}.
\end{equation}
When $n$ is fixed and $\alpha \to 0$, we note, the CS eventually becomes the entire $\mathbb R$ as long as 
$f_n > 0.5$, which is true when
$\log(10/\alpha) > n/10$. If, on the other hand, letting $\alpha_{\star}(n)$ be the value of $\alpha$ such that $f_n$ is exactly $0.5$, then, as $\alpha \downarrow \alpha_{\star}(n) = \exp(-\widetilde{\Theta}(n))$,
\begin{equation}
    |\CI_n^{\mathsf{md}}| \asymp 2\sigma \Phi^{-1}(0.5 + f_n) = \widetilde{\Theta}(\sqrt{\log(1/(0.5 - f_n))}) = \widetilde{\Theta}\left(\sqrt{\log(1/(\alpha - \alpha_{\star}(n)))} \right)
\end{equation}
under $\normal{\mu}{\sigma^2}$ and large $n$ (where $\widetilde{\Theta}$ hides $\log \log(1/(\alpha-\alpha_{\star}(n)))$-order terms).

When $\alpha$ is fixed, the shrinkage rate of the CS as $n\to\infty$ also enjoys the optimal iterated logarithm rate $\sqrt{n^{-1} \log\log n }$ due to the stitching technique it employs; to be precise,
\begin{equation}
    \lim_{n\to\infty} \frac{ n \sigma^{-2}| \CI_n^{\mathsf{md}}|^2 - 4.5\pi \log(10/\alpha)}{\log \log n}  = 6.3 \pi , \quad \text{almost surely under $\normal{\mu}{\sigma^2}$},
\end{equation}
indicating that when $n$ and $1/\alpha$ grows \emph{simultaneously},
in the rate of $n =\widetilde{\Theta}(\log(1/\alpha))$ (where $\widetilde{\Theta}$ hides $\log \log(1/\alpha)$-order terms),
the width of the CS stays bounded, a property satisfied by all of those $\mathcal{O}(\alpha^{-1/n})$-width CSs mentioned so far as well.

Again, as is shown in \cref{fig:csplots,fig:rates}, this asymptotically optimal rate does not translate into practical advantages for sample size $< 10000$.


Proposition~\ref{prop:med} is based on the following test supermartingale, a Chernoff bound on i.i.d.\ Bernoulli indicator random variables due to \citet[Fact 1 (b)]{howard2020time}.
\begin{proposition}[One-Sided Test Supermartingale for Median]
    For any constant $\lambda\in\mathbb R$, the process
    \begin{equation}
        B_n^{\lambda} = \exp \left( \lambda \sum_{i=1}^n \id_{\{ X_i > 0 \}} - \frac{n \lambda}{2} - n\log\left(\frac{\exp(\lambda/2) + \exp(-\lambda/2)}{2} \right)  \right)
    \end{equation}
    is a supermartingale on $\{ \cF_n \}_{n \ge 0}$ for the set of all continuous distributions with median 0; in particular, for $\normals_{\mu = 0}$.
\end{proposition}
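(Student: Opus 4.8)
The plan is to recognize the process as a product of independent, bounded factors and reduce the supermartingale property to a single Bernoulli moment-generating-function computation. Write $Y_i = \id_{\{X_i > 0\}}$ and $c(\lambda) = \tfrac12(\e^{\lambda/2} + \e^{-\lambda/2})$, so that
\begin{equation*}
    B_n^\lambda = \prod_{i=1}^n \xi_i, \qquad \xi_i = \frac{\e^{\lambda Y_i - \lambda/2}}{c(\lambda)}.
\end{equation*}
Each $Y_i$ takes values in $\{0,1\}$, so every $\xi_i$ is a bounded, $\cF_i$-measurable random variable; hence $B_n^\lambda$ is $\cF_n$-measurable and integrable, and $B_0^\lambda = 1$. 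This disposes of the measurability and integrability requirements, leaving only the one-step inequality $\EE_P(B_{n+1}^\lambda \mid \cF_n) \le B_n^\lambda$ to verify.

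The key probabilistic input, and the one step that genuinely uses the hypotheses, is the identity $p := P(X_1 > 0) = \tfrac12$. Because $P$ is continuous, $P(X_1 = 0) = 0$, so $P(X_1 < 0) + P(X_1 > 0) = 1$; and the defining property of a median at $0$ gives both $P(X_1 \le 0) \ge \tfrac12$ and $P(X_1 \ge 0) \ge \tfrac12$, i.e.\ $P(X_1 < 0) \ge \tfrac12$ and $P(X_1 > 0) \ge \tfrac12$. Together these force $P(X_1 > 0) = P(X_1 < 0) = \tfrac12$.

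With this in hand the verification is immediate. Since the data are i.i.d., $X_{n+1}$ is independent of $\cF_n$, so $\EE_P(\xi_{n+1} \mid \cF_n) = \EE_P(\xi_{n+1})$, and using the Bernoulli moment generating function $\EE_P(\e^{\lambda Y_{n+1}}) = 1 - p + p\e^\lambda$ with $p = \tfrac12$,
\begin{equation*}
    \EE_P(\xi_{n+1}) = \frac{\e^{-\lambda/2}(1 - p + p\e^\lambda)}{c(\lambda)} = \frac{\e^{-\lambda/2} \cdot \tfrac12(1 + \e^\lambda)}{\tfrac12(\e^{\lambda/2} + \e^{-\lambda/2})} = \frac{\e^{-\lambda/2} + \e^{\lambda/2}}{\e^{\lambda/2} + \e^{-\lambda/2}} = 1.
\end{equation*}
Hence $\EE_P(B_{n+1}^\lambda \mid \cF_n) = B_n^\lambda \, \EE_P(\xi_{n+1} \mid \cF_n) = B_n^\lambda$, so $\{B_n^\lambda\}$ is in fact a martingale for every continuous median-$0$ distribution, and in particular a supermartingale; the specialization to $\normals_{\mu = 0}$ follows because every $\normal{0}{\sigma^2}$ is continuous with median $0$.

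I do not expect a real obstacle here: the argument is elementary once the factorization is noted. The only point demanding care is the measure-theoretic deduction $p = \tfrac12$ from continuity together with the two-sided median inequalities (the atom at $0$ must be shown to vanish), plus a presentational point worth flagging. Keeping $p$ general, the same computation yields $\EE_P(\xi_{n+1}) - 1 = \frac{(1-2p)(\e^{-\lambda/2} - \e^{\lambda/2})}{2c(\lambda)}$, which is $\le 0$ exactly when $(1-2p)(1 - \e^\lambda) \le 0$; for the one-sided null of median $\le 0$ (so $p \le \tfrac12$) this holds for every $\lambda \ge 0$, which explains why the statement is phrased as a one-sided \emph{supermartingale} even though it collapses to a martingale under the exact median-$0$ null considered here.
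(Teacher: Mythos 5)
Your proof is correct and takes essentially the same route as the paper: the paper cites this result to Howard et al.\ (Fact 1(b)) rather than proving it, but its own proof of the beta-binomial mixture proposition rewrites $B_n^\lambda$ as the Bernoulli likelihood ratio $p^{n/2+T_n}(1-p)^{n/2-T_n}/(1/2)^n$ with $p = \e^{\lambda/2}/(\e^{\lambda/2}+\e^{-\lambda/2})$, which is exactly your product of mean-one factors $\xi_i$ in disguise. Your explicit derivation of $P(X_1>0)=1/2$ from continuity plus the median condition, and the closing remark explaining the one-sided supermartingale phrasing, are both correct and consistent with the paper's usage.
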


 It is straightforward to see that the growth of $\{  B_n^{\lambda}  \}$ under some $\normal{\mu}{\sigma^2}$ has the following e-power:
\begin{equation}\label{eqn:med-gi}
    \lim_{n\to \infty} \frac{\log(B_n^\lambda)}{n} =\lambda (\Phi(\mu/\sigma) - 1/2) -  \log\left(\frac{\exp(\lambda/2) + \exp(-\lambda/2)}{2} \right)  \quad  \text{almost surely}. 
\end{equation}
Here, $\Phi$ is the cumulative distribution function of $\normal{0}{1}$.
As is shown in \cref{fig:median-gi}, this is a very small e-power compared to the optimal $\frac{1}{2}\log(1+\mu^2/\sigma^2)$, and is powerful only against the alternative $\normals_{\mu > 0}$ when $\lambda > 0$. Therefore, \cite{quantile} consider different choices of $\lambda$ for different epochs of time (``stitching") to obtain Proposition~\ref{prop:med}, which is still much looser than many of the other CSs we derive.  Another way to gain universal power against all $\normals_{\mu\neq 0}$ is, again, via the method of mixture, and \cite{quantile} consider a beta-binomial mixture distribution on $\lambda$. Below, we let $\operatorname{B}(a,b)$ denote the beta function.

\begin{proposition}[Beta-Binomial Mixture Test Supermartingale for Median]\label{prop:betabinom} Let  $T_n = \sum_{i=1}^n \id_{\{X_i>0\}} - n/2$.
    For any $a,b>0$, the process 
    \begin{equation}
        B_n^{(a, b)} = \frac{\operatorname{B}(a + n/2 + T_n,b + n/2 - T_n)}{ (1/2)^{n}  \operatorname{B}(a,b)}
    \end{equation}
    is a supermartingale on $\{ \cF_n \}_{n \ge 0}$ for the set of all continuous distributions with median 0; in particular, for $\normals_{\mu = 0}$. Under $\normal{\mu}{\sigma^2}$, it has the following e-power:
\begin{equation}\label{eqn:med-gi-mix}
    \lim_{n\to \infty} \frac{\log(B_n^{(a,b)})}{n} = \log \left( \Phi(\mu/\sigma)^{\Phi(\mu/\sigma)} \Phi(-\mu/\sigma)^{\Phi(-\mu/\sigma)} \right)  - \log(1/2).
\end{equation}
\end{proposition}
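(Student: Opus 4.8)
The plan is to treat the binary indicators $Y_i = \id_{\{X_i > 0\}}$ as the only relevant data, since $B_n^{(a,b)}$ depends on the sample only through $K_n := \sum_{i=1}^n \id_{\{X_i>0\}} = T_n + n/2$, and to recognize $\{B_n^{(a,b)}\}$ as a beta-binomial mixture of Bernoulli likelihood-ratio martingales.

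First, for the supermartingale claim, I would fix any $p \in (0,1)$ and consider the process $L_n(p) = (2p)^{K_n}(2(1-p))^{n-K_n}$. Under any continuous distribution $P$ with median $0$ we have $\Pr(X_i = 0) = 0$ and hence $\Pr(X_i > 0) = 1/2$, so each $Y_i$ is $\mathrm{Bernoulli}(1/2)$ and $L_n(p)$ is exactly the likelihood ratio of $\mathrm{Bernoulli}(p)^{\otimes n}$ against $\mathrm{Bernoulli}(1/2)^{\otimes n}$; by Lemma~\ref{lem:lrm} it is a nonnegative martingale for $P$ on $\{\cF_n\}$ with $\EE_P(L_n(p)) = 1$. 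Integrating $L_n(p)$ against the $\mathrm{Beta}(a,b)$ density and using $\int_0^1 p^{a+k-1}(1-p)^{b+n-k-1}\,\d p = \operatorname{B}(a+k, b+n-k)$ identifies the mixture $\int_0^1 L_n(p)\,\frac{p^{a-1}(1-p)^{b-1}}{\operatorname{B}(a,b)}\,\d p$ with $B_n^{(a,b)}$. Since a probability mixture of nonnegative martingales is again a nonnegative martingale (cf.\ the mixture principle recalled in \cref{sec:testproc}), $\{B_n^{(a,b)}\}$ is a (super)martingale for every such $P$, in particular for $\normals_{\mu=0}$. Alternatively, one can verify this directly from the beta-function recursion $\operatorname{B}(\alpha+1,\beta)/\operatorname{B}(\alpha,\beta) = \alpha/(\alpha+\beta)$: writing $\alpha_n = a + K_n$ and $\beta_n = b + n - K_n$, the increment ratio $B_{n+1}^{(a,b)}/B_n^{(a,b)}$ equals $2\alpha_n/(\alpha_n+\beta_n)$ on $\{Y_{n+1}=1\}$ and $2\beta_n/(\alpha_n+\beta_n)$ on $\{Y_{n+1}=0\}$, so taking the conditional expectation with $\Pr(Y_{n+1}=1 \mid \cF_n) = 1/2$ yields exactly $1$.

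For the e-power claim \eqref{eqn:med-gi-mix}, I would compute the $\normal{\mu}{\sigma^2}$-almost-sure limit of $\frac1n\log B_n^{(a,b)}$ directly. Under $\normal{\mu}{\sigma^2}$ one has $\Pr(X_i > 0) = \Phi(\mu/\sigma) =: q$, so by the strong law of large numbers $K_n/n \to q$ almost surely. Writing $\operatorname{B}(a+K_n, b+n-K_n) = \Gamma(a+K_n)\Gamma(b+n-K_n)/\Gamma(a+b+n)$ and applying Stirling's formula, what survives at order $n$ is the binary entropy, i.e.\ $\frac1n\log\operatorname{B}(a+K_n, b+n-K_n) \to q\log q + (1-q)\log(1-q)$ almost surely; equivalently, one may note that $\frac1n\log\operatorname{B}(a+K_n, b+n-K_n) = -\frac1n\log\binom{n}{K_n} + o(1)$ and invoke the standard asymptotics $\frac1n\log\binom{n}{K_n}\to -[q\log q + (1-q)\log(1-q)]$. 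Since $\frac1n\log\big((1/2)^n\operatorname{B}(a,b)\big) = -\log 2 + O(1/n)$, combining gives
\begin{equation*}
    \lim_{n\to\infty}\frac{\log B_n^{(a,b)}}{n} = q\log q + (1-q)\log(1-q) - \log(1/2),
\end{equation*}
and substituting $q = \Phi(\mu/\sigma)$ and $1-q = \Phi(-\mu/\sigma)$ produces \eqref{eqn:med-gi-mix}.

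The main obstacle is the bookkeeping in the Stirling step: each $\log\Gamma$ term carries an $n\log n$-order contribution, so one must verify that these cancel in the combination $\log\Gamma(a+K_n)+\log\Gamma(b+n-K_n)-\log\Gamma(a+b+n)$ — which they do because the three arguments sum to $a+b+n$, and the linear terms cancel for the same reason — leaving only the entropy at order $n$. One must also confirm that the random argument $K_n = qn + o(n)$ (a.s., by the SLLN) together with the fixed shifts $a,b$ perturbs this entropy by $o(n)$, which holds since the shifts affect $\frac1n\log\Gamma$ only at order $O(\log n / n)$ and $z\mapsto z\log z + (1-z)\log(1-z)$ is continuous. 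The boundary cases $q\in\{0,1\}$ are handled by the convention $0\log 0 = 0$; for the null $\mu=0$ one has $q=1/2$ and the limit is $0$, consistent with the martingale property established in the first part.
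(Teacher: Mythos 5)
Your proof is correct and follows essentially the same route as the paper's: writing $B_n^{(a,b)}$ as a $\operatorname{Beta}(a,b)$ mixture of the Bernoulli$(p)$-versus-Bernoulli$(1/2)$ likelihood-ratio martingales in the indicators $\id_{\{X_i>0\}}$ (the paper obtains the same base process by reparametrizing $B_n^\lambda$ via $p = \e^{\lambda/2}/(\e^{\lambda/2}+\e^{-\lambda/2})$), and then deriving the e-power from $K_n/n \to \Phi(\mu/\sigma)$ a.s.\ together with the beta-function asymptotics $\frac{1}{n}\log\operatorname{B}(\alpha n, \beta n) \to \log\frac{\alpha^\alpha\beta^\beta}{(\alpha+\beta)^{\alpha+\beta}}$, which is exactly the Stirling computation you carry out. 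Your explicit handling of the random argument $K_n$ and the constant shifts $a,b$ fills in details the paper leaves implicit, but it is the same argument.
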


\begin{figure}[!t]
    \centering
    \includegraphics[width=0.9\linewidth]{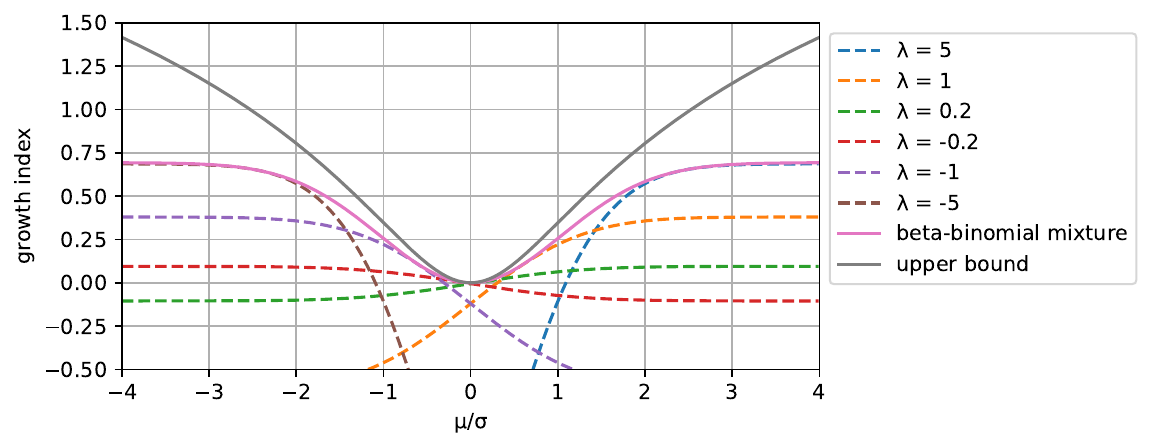}
    \caption{The e-powers of the one-sided median test supermartingale \eqref{eqn:med-gi} and mixture test supermartingale \eqref{eqn:med-gi-mix} under different choices of $\lambda$ and alternative effect sizes $\mu/\sigma$. The ``upper bound" refers to the optimal e-power $\frac{1}{2}\log(1+\mu^2/\sigma^2)$.}
    \label{fig:median-gi}
\end{figure}

This statement, proved in \cref{sec:pf-additional}, shows that the mixture does gain power over all $\mu \neq 0$ as is depicted in \cref{fig:median-gi}. However, compared to the optimal e-power $\frac{1}{2}\log(1+\mu^2/\sigma^2)$, it falls short in power especially when the effect size $\mu/\sigma$ is large.

\begin{remark} \normalfont
    The convergence \eqref{eqn:med-gi-mix} also holds in expectation (because the sequence is uniformly integrable). Following \citet[Section 5]{vovk2022efficiency}, we may define a concept of \emph{asymptotic relative efficiency} (ARE) of the test process $\{B_n^{(a,b)}\}$ here, as the ratio between the second derivatives at 0 of the e-power curves of the upper bound and $\{B_n^{(a,b)}\}$ in \cref{fig:median-gi}:
    \begin{align}
       & \operatorname{ARE}(B_n^{(a,b)}) = \lim_{ \theta \to 0 } \frac{\frac{1}{2}\log(1+\theta^2)}{\log \left( \Phi(\theta)^{\Phi(\theta)} \Phi(-\theta)^{\Phi(-\theta)} \right)  - \log(1/2)} 
        \\ =& \frac{\frac{\d^2}{\d \theta^2} \frac{1}{2}\log(1+\theta^2) |_{\theta=0} }{ \frac{\d^2}{\d \theta^2} \log \left( \Phi(\theta)^{\Phi(\theta)} \Phi(-\theta)^{\Phi(-\theta)} \right) |_{\theta=0} }
       = \sqrt{\pi/8} \approx 0.627.
    \end{align}
    This quantity bears the following meaning: fix any $\beta > 0$ and define by $n(\mu, \sigma^2)$ the sample size when $\EE_{\normal{\mu}{\sigma^2}} (\log B_n^{(a,b)})$ first reaches $\beta$, $n^*(\mu, \sigma^2)$ the sample size  when the upper bound of expected log value of e-processes, $ 
\frac{n}{2}\log(1+\mu^2/\sigma^2)$, first reaches $\beta$ under $\normal{\mu}{\sigma^2}$. Then, $   \operatorname{ARE}(B_n^{(a,b)}) = \lim_{\mu/\sigma \to 0} \frac{n(\mu, \sigma^2)}{n^*(\mu, \sigma^2)}$. 
In comparison, we note that in the fixed-time setting the classical Pitman ARE of the sign test (which Proposition~\ref{prop:betabinom} sequentializes) relative to the optimal t-test is $2/\pi \approx 0.637$ in the Gaussian case \citep[p.74]{sprent2007applied}. Note that the fixed-time sign test is exact, whereas the sequential Proposition~\ref{prop:betabinom} is conservative.

\end{remark}
%


\section{Universal Inference Z-Tests}\label{sec:uiz}

\subsection{Two-Sided Z-Test}

Recall from Corollary~\ref{cor:plugin-lr} that, for any $\mu$, $\sigma$ and their point estimators $\{ \hmu_n \}$, $\{ \hsig_n \}$ adapted to the canonical filtration $\{\mathcal{F}_n\}_{n \ge 0}$, the process
\begin{equation}
     \ell_n^{\mu, \sigma^2} = \frac{\prod_{i=1}^n  p_{\hmu_{i-1}, \hsig^2_{i-1}}(X_i)}{\prod_{i=1}^n  p_{\mu, \sigma^2}(X_i)} = \frac{\sigma^n}{\prod_{i=1}^n\hsig_{i-1}} \exp \left\{ \sum_{i=1}^n \left( \frac{(X_i - \mu)^2}{2\sigma^2} - \frac{(X_i - \hmu_{i-1})^2}{2\hsig^2_{i-1}} \right)  \right\}
\end{equation}
    is a martingale for $\normal{\mu}{\sigma^2}$ on $\{\mathcal{F}_n\}_{n \ge 0}$.

For the two-sided Z-test, we consider the point null $\normal{0}{\sigma^2}$ and the alternative $\normals_{\mu \neq 0, \sigma^2 }$. In this case, the variance $\sigma^2$ being a known quantity, the plug-in estimates $\{ \hsig_n^2 \}$ can be taken simply as $\sigma^2$. We have the following exponential growth asymptotic which is slightly faster than the $\frac{1}{2}\log(1 + \mu^2/\sigma^2)$ rate we see so far for t-tests. Both rates are optimal in their respective regimes due to our calculation in \cref{sec:ripr}.

\begin{proposition}[Asymptotic behavior of the universal inference Z-test e-process]\label{prop:div-ui-eproc-z}
    Under any $P = \normal{\mu}{\sigma^2}$, suppose there is a $\gamma > 0$ such that $\{ \hmu_n \}$ converges to $\mu$ in $L^2$ with rate $\EE_P (|\hmu_n  - \mu|^2) \lesssim n^{-\gamma} $, and $\hsig_n^{2} = \sigma^2$ for all $n$. Then,
    \begin{equation}
        \lim_{n \to \infty} \frac{ \log \ell_n^{0, \sigma^2}}{n} = \frac{\mu^2}{2\sigma^2}  \quad \text{almost surely}. 
    \end{equation}
    Consequently, $\{ \ell_n^{0, \sigma^2} \}$ diverges almost surely to $\ell^{0, \sigma^2}_\infty = \infty$ exponentially fast under $\normals_{\mu \neq 0}$.
\end{proposition}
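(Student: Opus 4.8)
The plan is to take logarithms, divide by $n$, and show that the resulting Cesàro-type average converges almost surely to $\mu^2/2\sigma^2$. First I would substitute $\hsig_{i-1}=\sigma$ and the null mean $0$ into the expression for $\ell_n^{\mu,\sigma^2}$ from Corollary~\ref{cor:plugin-lr}: the prefactor $\sigma^n/\prod_i \hsig_{i-1}$ collapses to $1$, and expanding $X_i^2 - (X_i-\hmu_{i-1})^2 = 2X_i\hmu_{i-1} - \hmu_{i-1}^2$ yields the clean identity
\begin{equation}
    \frac{\log \ell_n^{0,\sigma^2}}{n} = \frac{1}{n}\sum_{i=1}^n \left( \frac{X_i \hmu_{i-1}}{\sigma^2} - \frac{\hmu_{i-1}^2}{2\sigma^2} \right).
\end{equation}
Since the target value $\mu^2/2\sigma^2$ equals $\frac{\mu\cdot\mu}{\sigma^2} - \frac{\mu^2}{2\sigma^2}$, it suffices to prove the two almost-sure limits $\frac1n\sum_{i=1}^n X_i\hmu_{i-1}\to\mu^2$ and $\frac1n\sum_{i=1}^n\hmu_{i-1}^2\to\mu^2$.

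The key step is to recenter every summand around the estimation error $\hmu_{i-1}-\mu$. Writing $X_i\hmu_{i-1} = (X_i-\mu)\hmu_{i-1}+\mu\hmu_{i-1}$ isolates a martingale-difference part from a Cesàro part, while $\hmu_{i-1}^2 = (\hmu_{i-1}-\mu)^2 + 2\mu(\hmu_{i-1}-\mu)+\mu^2$ reduces the second average to $\mu^2$ plus error terms. Thus both limits follow from three almost-sure statements: (a) $\frac1n\sum_{i=1}^n(X_i-\mu)\hmu_{i-1}\to 0$; (b) $\frac1n\sum_{i=1}^n(\hmu_{i-1}-\mu)^2\to 0$; and (c) $\frac1n\sum_{i=1}^n(\hmu_{i-1}-\mu)\to 0$, where (c) is immediate from (b) by Cauchy--Schwarz.

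I would establish (a) and (b) through a common device: show that a suitably weighted series converges almost surely and then apply Kronecker's lemma. For (b), the terms $(\hmu_{i-1}-\mu)^2$ are nonnegative, and by Tonelli the series $\sum_i i^{-1}(\hmu_{i-1}-\mu)^2$ has expectation $\lesssim \sum_i i^{-1-\gamma}<\infty$ under the hypothesis $\EE_P(\hmu_{i-1}-\mu)^2\lesssim i^{-\gamma}$; hence it converges almost surely, and Kronecker's lemma gives the Cesàro limit $0$. For (a), the sequence $D_i:=(X_i-\mu)\hmu_{i-1}$ is a martingale-difference sequence for $\{\cF_{i-1}\}$ (using $X_i\perp\cF_{i-1}$ and $\EE_P(X_i-\mu)=0$) with conditional second moment $\sigma^2\hmu_{i-1}^2$; since $\EE_P(\hmu_{i-1}^2)$ is bounded (it converges to $\mu^2$), the partial sums of $D_i/i$ form an $L^2$-bounded martingale ($\sum_i i^{-2}\EE_P D_i^2<\infty$), which converges almost surely, and Kronecker again yields $\frac1n\sum_{i=1}^n D_i\to0$. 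Assembling (a)--(c) gives $\frac1n\log\ell_n^{0,\sigma^2}\to\mu^2/2\sigma^2$, and the exponential divergence of $\{\ell_n^{0,\sigma^2}\}$ under $\normals_{\mu\neq0}$ follows because this limit is strictly positive.

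The main obstacle is precisely item (b): the hypotheses provide only an $L^2$ rate $\EE_P(\hmu_{i-1}-\mu)^2\lesssim i^{-\gamma}$ rather than pointwise almost-sure convergence of the estimators $\hmu_n$, so I deliberately avoid assuming $\hmu_n\to\mu$ almost surely. Converting the given rate into summability of a weighted series and then invoking Kronecker's lemma is the maneuver that makes the almost-sure conclusion hold under the stated assumptions alone, and it is the step that warrants the most care; the rest is bookkeeping on the expansions above.
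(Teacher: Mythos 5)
Your proof is correct, but it follows a genuinely different route from the paper's. The paper decomposes $\frac{1}{n}\log \ell_n^{0,\sigma^2}=\frac{1}{2\sigma^2}\avgXsq{n}-\frac{1}{2\sigma^2 n}\sum_{i=1}^n (X_i-\hmu_{i-1})^2$, sends the first term to $(\mu^2+\sigma^2)/2\sigma^2$ by the strong law of large numbers, and sends the second term to $1/2$ by repeating the comparison machinery from the proof of \cref{prop:div-ui-eproc}: bound the expectation of the weighted discrepancies $\frac{1}{i}\bigl|(X_i-\hmu_{i-1})^2-(X_i-\mu)^2\bigr|$, then apply Markov's inequality, Fatou's lemma, and Kronecker's lemma. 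You instead expand the square, reducing everything to the two Ces\`aro limits $\frac1n\sum_{i=1}^n X_i\hmu_{i-1}\to\mu^2$ and $\frac1n\sum_{i=1}^n\hmu_{i-1}^2\to\mu^2$, and the key structural difference is your treatment of the cross term: $D_i=(X_i-\mu)\hmu_{i-1}$ is a martingale-difference sequence, and you obtain $\frac1n\sum_{i=1}^n D_i\to 0$ a.s.\ from Doob's convergence theorem for the $L^2$-bounded martingale $\sum_{i\le n} D_i/i$ followed by Kronecker's lemma --- a tool the paper's proof never invokes. In exchange, your argument nowhere uses the SLLN (all the randomness of the $X_i$ is absorbed into the martingale term), it is self-contained rather than leaning on the earlier proof, and it makes transparent why an $L^2$ rate on $\hmu_n$ suffices in the Z-test case, whereas \cref{prop:div-ui-eproc} needs an $L^3$ rate precisely because $\hsig_n$ is also estimated there. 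Each of your steps checks out: in (b), $\sum_i i^{-1}\EE_P(\hmu_{i-1}-\mu)^2\lesssim \sum_i i^{-1-\gamma}<\infty$ gives a.s.\ convergence of the nonnegative weighted series by Tonelli; (c) follows from (b) by Cauchy--Schwarz; and in (a) the needed bound $\sup_i \EE_P(\hmu_{i-1}^2)<\infty$ does follow from the $L^2$ hypothesis, so the increments are square-integrable and orthogonal. The paper's proof is shorter because it recycles existing machinery; yours is longer but standalone and slightly more informative about which hypotheses drive the result.
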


\begin{proof}
    \begin{align}
        \frac{1}{n}\log \ell_n^{0, \sigma^2} =  \frac{1}{n} \sum_{i=1}^n \left( \frac{X_i ^2}{2\sigma^2} - \frac{(X_i - \hmu_{i-1})^2}{2\sigma^2} \right)  = \frac{1}{2\sigma^2} \overline{X_n^2}  - \frac{(X_i - \hmu_{i-1})^2}{2\sigma^2}.
    \end{align}
    Repeating the steps in the Proof of Proposition~\ref{prop:div-ui-eproc} in \cref{sec:pf}, we see that the first of these terms converges a.s.\ to $(\mu^2 + \sigma^2)/2\sigma^2$, and the last to $1/2$.
\end{proof}

\subsection{One-Sided Z-Test}

An infimum over $\mu \le 0$ yields the following one-sided Z-test.

\begin{proposition}[Universal inference one-sided Z-test e-process]\label{prop:1s-z-ui} For any $\mu$, $\sigma$ and their point estimators $\{ \hmu_n \}$, $\{ \hsig_n \}$ adapted to the canonical filtration $\{\mathcal{F}_n\}_{n \ge 0}$, the process
\begin{equation}
     \ell_n^{\sigma^2-} = \inf_{\mu \le 0}  \ell_n^{\mu, \sigma^2}  =   \frac{\sigma^n}{\prod_{i=1}^n\hsig_{i-1}} \exp \left\{ \sum_{i=1}^n \left( \frac{V_n - n(\avgX{n} \wedge 0)^2}{2\sigma^2} - \frac{(X_i - \hmu_{i-1})^2}{2\hsig^2_{i-1}} \right)  \right\} 
\end{equation}
    is an e-process for $\normal{\mu\le 0}{\sigma^2}$ on $\{\mathcal{F}_n\}_{n \ge 0}$.
\end{proposition}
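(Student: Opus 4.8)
The plan is to realize $\{\ell_n^{\sigma^2-}\}$ as the pointwise infimum over the null parameter of the family of fixed-mean test martingales furnished by Corollary~\ref{cor:plugin-lr}, and to conclude using the domination characterization of e-processes recalled in \cref{sec:testproc}. Fix the known variance $\sigma^2$. For each $\mu \le 0$, Corollary~\ref{cor:plugin-lr} tells us that $\{\ell_n^{\mu,\sigma^2}\}$ is a nonnegative martingale for $\normal{\mu}{\sigma^2}$ on $\{\cF_n\}$, and since $\ell_0^{\mu,\sigma^2}=1$ it is a \emph{test} martingale for $\normal{\mu}{\sigma^2}$. Set $M_n := \inf_{\mu \le 0} \ell_n^{\mu,\sigma^2}$. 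For each null distribution $P = \normal{\mu}{\sigma^2}$ with $\mu\le 0$, the single process $\{\ell_n^{\mu,\sigma^2}\}$ is, by construction, a test supermartingale for $P$ that dominates $\{M_n\}$ pointwise (indeed $M_n \le \ell_n^{\mu,\sigma^2}$ for every $n$). By the characterization of e-processes due to \citet[Lemma 6]{ramdas2020admissible} quoted in \cref{sec:testproc}, this is exactly what is needed for $\{M_n\}$ to be an e-process for $\normals_{\mu \le 0, \sigma^2}$. Equivalently and more concretely, for any stopping time $\tau$ and any $\mu \le 0$, optional stopping for the nonnegative martingale $\{\ell_n^{\mu,\sigma^2}\}$ gives $\EE_{\normal{\mu}{\sigma^2}}(M_\tau) \le \EE_{\normal{\mu}{\sigma^2}}(\ell_\tau^{\mu,\sigma^2}) \le 1$, which is the defining property of an e-process.

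It then remains only to evaluate the infimum in closed form and match the stated expression. In $\ell_n^{\mu,\sigma^2}$ the dependence on $\mu$ enters solely through the factor $\exp\{\tfrac{1}{2\sigma^2}\sum_{i=1}^n (X_i-\mu)^2\}$, which is strictly increasing in $\sum_{i=1}^n (X_i-\mu)^2 = V_n - 2\mu S_n + n\mu^2$. Minimizing $\ell_n^{\mu,\sigma^2}$ over $\mu \le 0$ is hence the same as minimizing this quadratic over $\mu\le 0$. Its unconstrained minimizer is $\mu = \avgX{n}$: when $\avgX{n}\le 0$ this point is feasible and the minimum is $V_n - n\avgX{n}^2$, while when $\avgX{n}>0$ the constrained minimum is attained at the boundary $\mu=0$ with value $V_n$. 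Both cases are written uniformly as $V_n - n(\avgX{n}\wedge 0)^2$, which substituted back into $\ell_n^{\mu,\sigma^2}$ produces precisely the claimed formula for $\ell_n^{\sigma^2-}$.

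Adaptedness is immediate, since the computed infimum is an explicit $\cF_n$-measurable function of $(S_n,V_n)$ and the plug-in estimates. The only point meriting care --- and the reason one cannot short-circuit the argument --- is that an infimum of (super)martingales is in general neither a martingale nor a supermartingale, so one may not simply invoke Ville's inequality for NSMs; the correct and essential device is the e-process domination characterization, which exploits that each null member supplies its own dominating test martingale. Everything else (the quadratic minimization and the monotonicity of the exponential) is routine. The whole argument is the known-variance analogue of the one-sided t-test e-process of Theorem~\ref{thm:ui-ttest-onesided}, the only structural difference being that here $\sigma^2$ is a fixed constant rather than profiled out through the $(\avgXsq{n})^{n/2}\e^{n/2}$ term.
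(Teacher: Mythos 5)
Your proof is correct and follows exactly the route the paper intends: the paper itself omits a proof of Proposition~\ref{prop:1s-z-ui}, presenting it as the known-variance analogue of Theorem~\ref{thm:ui-ttest-onesided}, whose proof is precisely your two ingredients --- the quadratic minimization $\inf_{\mu\le 0}\sum_{i=1}^n(X_i-\mu)^2 = V_n - n(\avgX{n}\wedge 0)^2$ attained at $\mu=\avgX{n}\wedge 0$, and the fact that a pointwise infimum of per-null test martingales is an e-process via the domination characterization of \citet[Lemma 6]{ramdas2020admissible}. Your explicit justification of the e-process step (which the paper leaves implicit) is a welcome addition, and your closed form is the intended one --- note only that in the paper's displayed equation the $\mu$-free term $\frac{V_n - n(\avgX{n}\wedge 0)^2}{2\sigma^2}$ is typeset inside $\sum_{i=1}^n$, a typo your derivation implicitly corrects.
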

The corresponding asymptotic can be similarly derived so we omit its proof.

\begin{proposition}[Asymptotic behavior of universal inference one-sided Z-test e-process]\label{prop:div-ui-eproc-z-1s}
    Under any $P = \normal{\mu}{\sigma^2}$, suppose there is a $\gamma > 0$ such that $\{ \hmu_n \}$ converges to $\mu$ in $L^2$ with rate $\EE_P (|\hmu_n  - \mu|^2) \lesssim n^{-\gamma} $, and $\hsig_n^{2} = \sigma^2$ for all $n$. Then,
    \begin{equation}
        \lim_{n \to \infty} \frac{ \log \ell_n^{\sigma^2-}}{n} = \frac{(\mu \vee 0) ^2}{2\sigma^2}  \quad \text{almost surely}. 
    \end{equation}
     Consequently, $\{ \ell_n^{\sigma^2 -} \}$ diverges almost surely to $\ell^{ \sigma^2 -}_\infty = \infty$ exponentially fast under $\normals_{\mu > 0}$.
\end{proposition}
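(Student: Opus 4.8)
The plan is to reduce everything to the almost-sure limits of a few Cesàro-type sample averages, exactly along the lines of the proof of Proposition~\ref{prop:div-ui-eproc-z}. Starting from the definition of $\ell_n^{\sigma^2-}$ in Proposition~\ref{prop:1s-z-ui} and plugging in $\hsig_{i-1}^2 = \sigma^2$, the factors $\sigma^n$ and $\prod_{i=1}^n \hsig_{i-1}^{-1}$ cancel, so that
\begin{equation}
    \frac{1}{n}\log \ell_n^{\sigma^2-} = \frac{1}{2\sigma^2}\left( \avgXsq{n} - (\avgX{n}\wedge 0)^2 - \frac{1}{n}\sum_{i=1}^n (X_i - \hmu_{i-1})^2 \right).
\end{equation}
First I would dispatch the two leading terms by the strong law of large numbers: $\avgXsq{n} = V_n/n \to \mu^2 + \sigma^2$ and $\avgX{n} \to \mu$ almost surely under $P$, whence $(\avgX{n}\wedge 0)^2 \to (\mu\wedge 0)^2$ by continuity. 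The claimed limit would then follow from the elementary identity $(\mu\vee 0)^2 = \mu^2 - (\mu\wedge 0)^2$, provided I can show the remaining average satisfies $\frac{1}{n}\sum_{i=1}^n (X_i - \hmu_{i-1})^2 \to \sigma^2$ almost surely.

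This last convergence is the only real work, and it is where the $L^2$-rate hypothesis on $\{\hmu_n\}$ enters. I would expand $(X_i - \hmu_{i-1})^2 = (X_i - \mu)^2 - 2(X_i - \mu)(\hmu_{i-1} - \mu) + (\hmu_{i-1} - \mu)^2$ and treat the three resulting averages separately. The first, $\frac{1}{n}\sum_{i=1}^n (X_i-\mu)^2$, tends to $\sigma^2$ almost surely by the strong law. For the cross term, set $D_i = (X_i-\mu)(\hmu_{i-1}-\mu)$; since $\hmu_{i-1}$ is $\cF_{i-1}$-measurable and $X_i - \mu$ is a mean-zero increment independent of $\cF_{i-1}$, the $D_i$ form a martingale-difference sequence, and independence gives $\EE_P[D_i^2] = \sigma^2\, \EE_P[(\hmu_{i-1}-\mu)^2] \lesssim \sigma^2 (i-1)^{-\gamma}$. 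Hence $\sum_i \EE_P[(D_i/i)^2] < \infty$, so $\sum_i D_i/i$ is an $L^2$-bounded martingale and converges almost surely; Kronecker's lemma then yields $\frac{1}{n}\sum_{i=1}^n D_i \to 0$ almost surely. For the quadratic term $Q_i = (\hmu_{i-1}-\mu)^2 \ge 0$, the rate assumption gives $\sum_i \EE_P[Q_i]/i \lesssim \sum_i i^{-1-\gamma} < \infty$, so $\sum_i Q_i/i < \infty$ almost surely (by Tonelli, the summands being nonnegative), and Kronecker's lemma again gives $\frac{1}{n}\sum_{i=1}^n Q_i \to 0$ almost surely.

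Combining the three pieces yields $\frac{1}{n}\sum_{i=1}^n (X_i-\hmu_{i-1})^2 \to \sigma^2$ almost surely, and substituting back produces the claimed limit $\frac{1}{2\sigma^2}\big(\mu^2 + \sigma^2 - (\mu\wedge 0)^2 - \sigma^2\big) = (\mu\vee 0)^2/(2\sigma^2)$. The divergence consequence is then immediate: under any $\normal{\mu}{\sigma^2}$ with $\mu > 0$ the limit equals $\mu^2/(2\sigma^2) > 0$, so $\log \ell_n^{\sigma^2-}$ grows linearly in $n$ and $\ell_n^{\sigma^2-} \to \infty$ almost surely under $\normals_{\mu > 0}$. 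The main obstacle, such as it is, is purely the martingale/Kronecker argument controlling the sequentially plugged-in $\hmu_{i-1}$; everything else is the strong law plus the one-line algebraic identity for $(\mu\vee 0)^2$. I would emphasize that, unlike the $t$-test analogue (Proposition~\ref{prop:div-ui-eproc}), no separate almost-sure convergence or higher-moment control of the estimators is needed here, since $\hsig^2$ is held fixed at the known $\sigma^2$ and Kronecker's lemma suffices to upgrade the $L^2$ rate to the required almost-sure Cesàro convergence.
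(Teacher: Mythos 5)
Your proof is correct. Note that the paper never spells out a proof of this statement: it says the asymptotic ``can be similarly derived'' from Proposition~\ref{prop:div-ui-eproc-z}, whose own proof just repeats the steps of the t-test argument (Proposition~\ref{prop:div-ui-eproc}). Your write-up follows that same skeleton --- split $\tfrac1n \log\ell_n^{\sigma^2-}$ into SLLN terms plus the plug-in average $\tfrac1n\sum_i (X_i-\hmu_{i-1})^2$, then upgrade the $L^2$ rate on $\hmu_n$ to almost-sure Ces\`aro convergence via Kronecker's lemma --- but your treatment of the plug-in error takes a mildly different route. The paper's technique controls $\tfrac1i\,\bigl|(X_i-\hmu_{i-1})^2-(X_i-\mu)^2\bigr| = \tfrac1i\,|2X_i-\hmu_{i-1}-\mu|\,|\hmu_{i-1}-\mu|$ in expectation (Cauchy--Schwarz suffices here since $\hsig$ is fixed) and then uses Markov plus Fatou to get almost-sure finiteness of the weighted sum before Kronecker. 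You instead expand the square exactly, observe that the cross term $(X_i-\mu)(\hmu_{i-1}-\mu)$ is a martingale-difference sequence (using that $X_i$ is independent of $\cF_{i-1}$ and $\hmu_{i-1}$ is $\cF_{i-1}$-measurable), and invoke almost-sure convergence of the $L^2$-bounded martingale $\sum_i D_i/i$; the nonnegative quadratic term is then handled by Tonelli, which is equivalent to the paper's Markov--Fatou step. Both routes are valid under the stated $L^2$ hypothesis; yours is arguably cleaner in this fixed-variance setting, while the paper's absolute-value bound is the version that survives in the t-test case, where $\hsig_{i-1}$ is random, the cross term is no longer a clean martingale difference, and the stronger $L^3$ and $\hsig^{-6}$ moment assumptions become necessary. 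One cosmetic point: the displayed formula for $\ell_n^{\sigma^2-}$ in Proposition~\ref{prop:1s-z-ui} misplaces the $V_n - n(\avgX{n}\wedge 0)^2$ term inside the sum over $i$; your starting identity silently uses the correct reading, which is the one consistent with taking $\inf_{\mu\le 0}$ in Corollary~\ref{cor:plugin-lr}.
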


\section{Omitted Proofs}
\label{sec:pf}

\subsection{Proofs for Likelihood Ratio Martingales}\label{sec:pf-lr}

\begin{proof}[Proof of Lemma~\ref{lem:lrm-general}] It suffices to prove that
\begin{equation}
    \EE_P\left( \frac{\d Q_n}{\d P}(X_n) \middle \vert X_1, \dots, X_{n-1} \right) = 1.
\end{equation}
For any $B \in \cB(\mathbb R)$, by the Doob–Dynkin factorization lemma (e.g.\ \citet[Corollary 1.97]{klenke2013probability}), since $\omega \mapsto Q_n(\omega, B)$ is $\cF_{n-1}$ measurable, there is a measurable $q(\cdot, B): \mathbb R^{n-1} \to \mathbb R$ such that  $q(X_1,\dots, X_{n-1},  B) = Q_n(\omega, B)$.

Let $A \subseteq \mathbb R^{n-1}$ be measurable. Then,
\begin{align}
    & \EE_{P}\left( \id_{\{ (X_1,\dots, X_{n-1}) \in A  \}} \cdot  \frac{\d Q_n}{\d P}(X_n) \right) \\
    = & \int_{A\times \mathbb R}  \frac{\d q_{n} (x_1,\dots, x_{n-1}, \cdot)}{\d P}(x_n)  \, \d P^{\otimes n}(x_1,\dots, x_n)
    \\
     = &  \int_A \left\{ \int_{ \mathbb R}  \frac{\d q_{n} (x_1,\dots, x_{n-1}, \cdot)}{\d P}(x_n) \, \d P(x_n)  \right\} \, \d P^{\otimes (n-1)}(x_1,\dots, x_{n-1})
    \\ 
    = &  \int_A  q_{n} (x_1,\dots, x_{n-1}, \mathbb R)  \, \d P^{\otimes (n-1)}(x_1,\dots, x_{n-1})
    \\
    = &  \int_A   \d P^{\otimes (n-1)}(x_1,\dots, x_{n-1}) =  P^{\otimes (n-1)}(A).
\end{align}
Since $A$ is arbitrary, we conclude that
\begin{equation}
    \EE_P\left( \frac{\d Q_n}{\d P}(X_n) \middle \vert X_1, \dots, X_{n-1} \right) = 1,
\end{equation}
which yields the desired result.
\end{proof}

\begin{proof}[Proof of Lemma~\ref{lem:lrm-joint}]
Let $A \subseteq \mathbb R^{n-1}$ be measurable. Then,
\begin{align}
    & \EE_{\mathbbmsl P}\left( \id_{\{ (X_1,\dots, X_{n-1}) \in A  \}} \cdot  \frac{\d \mathbbmsl Q_{(n-1)}}{\d\mathbbmsl P_{(n-1)}} (X_1,\dots, X_{n-1}) \right) \\
    = & \int_A  \frac{\d \mathbbmsl Q_{(n-1)}}{\d\mathbbmsl P_{(n-1)}} (x_1,\dots, x_{n-1}) \, \d \mathbbmsl P_{(n-1)}(x_1,\dots, x_{n-1}) = \mathbbmsl Q_{(n-1)}(A).
\end{align}
And
\begin{align}
    & \EE_{\mathbbmsl P}\left( \id_{\{ (X_1,\dots, X_{n-1}) \in A  \}} \cdot  \frac{\d \mathbbmsl Q_{(n)}}{\d \mathbbmsl P_{(n)}} (X_1,\dots, X_{n}) \right) \\
    = & \int_{A\times \mathbb R} \frac{\d \mathbbmsl Q_{(n)}}{\d \mathbbmsl P_{(n)}} (x_1,\dots, x_{n}) \, \d \mathbbmsl P_{(n)}(x_1,\dots, x_{n}) =  \mathbbmsl Q_{(n)}(A\times \mathbb R) =  \mathbbmsl Q_{(n-1)}(A).
\end{align}
Since $A$ is arbitrary, we conclude that
\begin{equation}
    \EE_{\mathbbmsl P}\left(  \frac{\d \mathbbmsl Q_{(n)}}{\d \mathbbmsl P_{(n)}} (X_1,\dots, X_{n}) \middle \vert X_1, \dots, X_{n-1} \right) = \frac{\d \mathbbmsl Q_{(n-1)}}{\d \mathbbmsl P_{(n-1)}}  (X_1,\dots, X_{n-1}).
\end{equation}
Further, this holds for $n=1$ as well in the degenerate sense that $\mathbb R^0 = \{ \cdot \}$ and both $\mathbbmsl P_{(0)}$ 
 and $\mathbbmsl Q_{(0)}$ charge the singleton with 1. This concludes the full proof.  
\end{proof}

\subsection{Proofs for Universal Inference t-Tests}\label{sec:pf-ui}

\begin{proof}[Proof of Theorem~\ref{thm:ui-ttest}]
For each $\sigma > 0$ consider the likelihood ratio martingale under $\mathcal{N}(\mu, \sigma^2)$
\begin{equation}
     M_n^{\mu, \sigma^2} = \frac{\prod_{i=1}^n  p_{\hmu_{i-1}, \hsig^2_{i-1}}(X_i)}{\prod_{i=1}^n  p_{\mu, \sigma^2}(X_i)}.
\end{equation}
The following process is an e-process under any $ \mathcal{N}(\mu, \cdot ) $
\begin{align}
    R_n^\mu = \frac{\prod_{i=1}^n  p_{\hmu_{i-1}, \hsig^2_{i-1}}(X_i)}{\sup_{s>0} \prod_{i=1}^n  p_{\mu, s^2}(X_i)} = \inf_{s > 0}  M_n^{\mu,s^2}.
\end{align}
Note that
\begin{equation}\label{eqn:ville-type}
    \sup_{\sigma>0} \Prw_{\mathcal{N}(\mu, \sigma^2)}[ \exists n, R_n^\mu \ge 1/\alpha  ] \le \Prw_{\mathcal{N}(\mu, \sigma^2)}[ \exists n, M_n^{\mu,\sigma^2} \ge 1/\alpha  ] \le \alpha.
\end{equation}
Since $s= \sqrt{\frac{\sum (X_i - \mu)^2}{n}}$ maximizes the denominator of $R_n^\mu$, it has a closed-form expression:
\begin{align}
     & R_n^\mu 
     = \frac{ \prod \frac{1}{\hsig_{i-1}} \exp \left\{ -\frac{1}{2}\left( \frac{X_i - \hmu_{i-1}}{\hsig_{i-1}} \right)^2  \right\} }{ \sup_{s > 0} \prod \frac{1}{s} \exp \left\{ -\frac{1}{2}\left( \frac{X_i - \mu}{s} \right)^2  \right\}  }
     =  \frac{ \prod \frac{1}{\hsig_{i-1}} \exp \left\{ -\frac{1}{2}\left( \frac{X_i - \hmu_{i-1}}{\hsig_{i-1}} \right)^2  \right\} }{ \left(  \frac{\sum (X_i - \mu)^2}{n} \right)^{-n/2}  \exp\left\{-\frac{n}{2}\right\} } .
\end{align}
Hence due to \eqref{eqn:ville-type}, with probability at least $1-\alpha$, for all $n$,
\begin{align}
     \frac{\sum (X_i - \mu)^2}{n} \le & \left\{ \ \frac{1}{\alpha} \exp\left\{-\frac{n}{2}\right\}  \prod {\hsig_{i-1}} \exp \left\{ \frac{1}{2}\left( \frac{X_i - \hmu_{i-1}}{\hsig_{i-1}} \right)^2  \right\}  \ \right\}^{2/n}  
     \\
     = &  \frac{1}{\alpha^{2/n} \e} \prod_{i=1}^n  {\hsig_{i-1}^{2/n}} \exp \left\{ \frac{1}{n}\left( \frac{X_i - \hmu_{i-1}}{\hsig_{i-1}} \right)^2  \right\} 
     \\
     = & \underbrace{ \frac{1}{\alpha^{2/n} \e}  \exp \left\{ \frac{\sum_{i=1}^n \log \hsig_{i-1}^2 + \left( \frac{X_i - \hmu_{i-1}}{\hsig_{i-1}} \right)^2 }{n} \right\} }_{T_n}
\end{align}
Recall that the random variable on the RHS is $T_n$. The UI-CS then reads
\begin{equation}\label{eqn:ui-cs}
    \CI_n = \left[  \avgX{n}  \pm  \sqrt{ \avgX{n}^2 - \avgXsq{n} + T_n } \right].
\end{equation}
\end{proof}

\begin{proof}[Proof of Proposition~\ref{prop:div-ui-eproc}] 
\revise{Let $P$ be a distribution with mean $\mu$ and variance $\sigma^2$.}
Note that
\begin{equation}
    \frac{1}{n}\log R_n = \frac{1}{2}\log {\avgXsq{n}} + \frac{1}{2} - \frac{ \sum_{i=1}^n \log \hsig_{i-1}  }{n} - \frac{\sum_{i=1}^n \left( \frac{X_i - \hmu_{i-1}}{\hsig_{i-1}} \right)^2 }{2n}.
\end{equation}
The first three terms converge a.s.\ to $\frac{1}{2}\log (\mu^2 + \sigma^2)$, $1/2$, and $\log \sigma$. It remains to show that the last term above converges a.s.\ to $1/2$.


Define $A_n = \sum_{i=1}^n \frac{1}{i} \left|  \left( \frac{X_i - \mu}{\hsig_{i-1}} \right)^2 - \left( \frac{X_i - \mu}{\sigma} \right)^2 \right|$. Then,

\begin{align}
   & \EE_P(A_n) =   \sum_{i=1}^n \frac{1}{i} \EE_P \left( |X_i - \mu|^2 \cdot| \hsig_{i-1}^{-2} - \sigma^{-2}  | \right)
\\
\le &  \sum_{i=1}^n \frac{1}{i} \EE_P\left( \frac{ (X_i - \mu)^4 i^{-\gamma/2} + (\hsig_{i-1}^{-2} - \sigma^{-2}  )^2 i^{\gamma/2} }{2} \right)
\\
= &  \frac{1}{2} \sum_{i=1}^n \frac{1}{i} \left( \EE_P(  X_i -  \mu )^4 \cdot i^{-\gamma/2} + \EE_P( \hsig_{i-1}^{-2} - \sigma^{-2} )^2 \cdot i^{\gamma/2} \right)
\\
= &  \frac{1}{2} \sum_{i=1}^n \frac{1}{i} ( \mathcal{O}(1) \cdot i^{-\gamma/2} + \mathcal{O}(i^{-\gamma}) \cdot i^{\gamma/2} ) = \frac{1}{2}\sum_{i=1}^n \mathcal{O}(i^{-1-\gamma/2}) = \mathcal{O}(1).
\end{align}
So there exists a constant $K > 0$ such that $\EE_P(A_n) \le K$ for all $n$. By Markov's inequality and Fatou's lemma, 
\begin{equation}
    P( \lim_{n \to \infty} A_n \ge a ) \le \frac{\EE_P( \lim_{n \to \infty} A_n )}{a} \le \frac{\liminf_{n \to \infty}  \EE_P( A_n )}{a} \le \frac{K}{a},
\end{equation}
which implies $P( \lim_{n \to \infty} A_n < \infty ) = 1$. By Kronecker's lemma, this implies that
\begin{equation}\label{eqn:as1}
  \lim_{n \to \infty}  \frac{ \sum_{i=1}^n \left\{ \left( \frac{X_i - \mu}{\hsig_{i-1}} \right)^2 - \left( \frac{X_i - \mu}{\sigma} \right)^2  \right\} }{n} = 0 \quad \text{almost surely.}
\end{equation}

We now define $B_n =  \sum_{i=1}^n \frac{1}{i} \left|  \left( \frac{X_i - \hmu_{i-1}}{\hsig_{i-1}} \right)^2 - \left( \frac{X_i - \mu}{\hsig_{i-1}} \right)^2 \right|$. Without loss of generality assume $\sup_{n \ge 0} \EE_P (\hsig_n^{-6}) < \infty $ (otherwise, if only $\sup_{n \ge n_0} \EE_P (\hsig_n^{-6}) < \infty $ holds, use summation $\sum_{i=n_0+1}^n$ instead). Then,
\begin{align}
   & \EE_P(B_n) =  
 \sum_{i=1}^n \frac{1}{i} \EE_P \left( \hsig^{-2}_{i-1} \cdot  | 2X_i - \hmu_{i-1} - \mu  | \cdot | \hmu_{i-1} - \mu_1 |  \right)
 \\
 & \le \sum_{i=1}^n \frac{1}{i} \EE_P \left( \frac{ \hsig^{-6}_{i-1} i^{-\gamma/4} +  | 2X_i - \hmu_{i-1} - \mu  |^3 i^{-\gamma/4} + | \hmu_{i-1} - \mu |^3 i^{\gamma/2}  }{3}  \right)
 \\
 & = \frac{1}{3}  \sum_{i=1}^n \frac{1}{i} \left( \EE_P(\hsig^{-6}_{i-1} ) \cdot i^{-\gamma/4} + \EE_P (| 2X_i - \hmu_{i-1} - \mu  |^3) \cdot i^{-\gamma/4} + \EE_P (| \hmu_{i-1} - \mu |^3 ) \cdot i^{\gamma/2}  \right)
 \\
 & = \frac{1}{3}  \sum_{i=1}^n \frac{1}{i} \left( \mathcal{O}(1) \cdot i^{-\gamma/4} + \mathcal{O}(1) \cdot i^{-\gamma/4} + \mathcal{O}(i^{-\gamma}) \cdot i^{\gamma/2}  \right) = \frac{1}{3}\sum_{i=1}^n \mathcal{O}(i^{-1-\gamma/4}) = \mathcal{O}(1).
\end{align}
Similarly by Markov's inequality, Fatou's and Kronecker's lemma,
\begin{equation}\label{eqn:as2}
   \lim_{n \to \infty} \frac{\sum_{i=1}^n \left\{ \left( \frac{X_i - \hmu_{i-1}}{\hsig_{i-1}} \right)^2 - \left( \frac{X_i - \mu}{\hsig_{i-1}} \right)^2 \right\} }{n} = 0  \quad \text{almost surely.}
\end{equation}
Finally, strong law of large numbers implies that
\begin{equation}\label{eqn:as3}
    \lim_{n \to \infty} \frac{ \sum_{i=1}^n \left(\frac{X_i - \mu}{\sigma}\right)^2 }{n} = 1 \quad \text{almost surely.}
\end{equation}
Combining \eqref{eqn:as1}, \eqref{eqn:as2}, and \eqref{eqn:as3}, we see that
\begin{equation}
   \lim_{n \to \infty} \frac{\sum_{i=1}^n \left( \frac{X_i - \hmu_{i-1}}{\hsig_{i-1}} \right)^2 }{n} = 1 \quad \text{almost surely.}
\end{equation}
This concludes the proof.
\end{proof}

\begin{proof}[Proof of Theorem~\ref{thm:ui-ttest-onesided}]
The following process is an e-process under any $ \mathcal{N}(\mu, \sigma^2) $, ($\mu \le 0$ and $\sigma > 0$).
\begin{align}
    \frac{\prod_{i=1}^n  p_{\hmu_{i-1}, \hsig^2_{i-1}}(X_i)}{\sup_{\substack{ m \le 0 \\  s > 0 }} \prod_{i=1}^n  p_{m, s^2}(X_i)} = \inf_{\substack{ m \le 0 \\  s > 0 }}  M_n^{m,s^2}.
\end{align}
We have
\begin{align}
     & \frac{ \prod \frac{1}{\hsig_{i-1}} \exp \left\{ -\frac{1}{2}\left( \frac{X_i - \hmu_{i-1}}{\hsig_{i-1}} \right)^2  \right\} }{ \sup_{m \le 0}\sup_{s > 0} \prod \frac{1}{s} \exp \left\{ -\frac{1}{2}\left( \frac{X_i - m}{s} \right)^2  \right\}  }
     =  \frac{ \prod \frac{1}{\hsig_{i-1}} \exp \left\{ -\frac{1}{2}\left( \frac{X_i - \hmu_{i-1}}{\hsig_{i-1}} \right)^2  \right\} }{ \sup_{m \le 0}\left(  \frac{\sum (X_i - m)^2}{n} \right)^{-n/2}  \exp\left\{-\frac{n}{2}\right\} }
     \\
     & =  \frac{ \prod \frac{1}{\hsig_{i-1}} \exp \left\{ -\frac{1}{2}\left( \frac{X_i - \hmu_{i-1}}{\hsig_{i-1}} \right)^2  \right\} }{ \sup_{m \le 0}\left( m^2 - 2 \avgX{n} m + \avgXsq{n} \right)^{-n/2}  \exp\left\{-\frac{n}{2}\right\} }
\end{align}
The denominator is maximized by $m =  \avgX{n} \wedge 0$. Therefore
\begin{align}
     \dots  =& \frac{ \prod \frac{1}{\hsig_{i-1}} \exp \left\{ -\frac{1}{2}\left( \frac{X_i - \hmu_{i-1}}{\hsig_{i-1}} \right)^2  \right\} }{ \left( ( \avgX{n} \wedge 0 - \avgX{n})^2 + \avgXsq{n} - \avgX{n}^2 \right)^{-n/2}  \exp\left\{-\frac{n}{2}\right\} } = \frac{ \prod \frac{1}{\hsig_{i-1}} \exp \left\{ -\frac{1}{2}\left( \frac{X_i - \hmu_{i-1}}{\hsig_{i-1}} \right)^2  \right\} }{ \left( (0\vee \avgX{n})^2 + \avgXsq{n} - \avgX{n}^2 \right)^{-n/2}  \exp\left\{-\frac{n}{2}\right\} }
     \\
     =& \frac{ \prod \frac{1}{\hsig_{i-1}} \exp \left\{ -\frac{1}{2}\left( \frac{X_i - \hmu_{i-1}}{\hsig_{i-1}} \right)^2  \right\} }{ \left( \avgXsq{n} - (\avgX{n}\wedge 0)^2 \right)^{-n/2}  \exp\left\{-\frac{n}{2}\right\} } = R_n^-.
\end{align}
\end{proof}

\begin{proof}[Proof of Proposition~\ref{prop:div-ui-eproc-1s}] The four terms of
\begin{align}
    \frac{1}{n}\log R_n^- = \frac{1}{2}\log\left(\avgXsq{n} - (\avgX{n}\wedge 0)^2 \right) + \frac{1}{2} - \frac{ \sum_{i=1}^n \log \hsig_{i-1}  }{n} - \frac{\sum_{i=1}^n \left( \frac{X_i - \hmu_{i-1}}{\hsig_{i-1}} \right)^2 }{2n}
\end{align}    
converges a.s.\ to $\frac{1}{2}\log( \mu^2 + \sigma^2 -   (\mu\wedge 0)^2  )$, $1/2$, $\log \sigma$, and $1/2$ respectively, concluding the proof.
\end{proof}

\subsection{Proofs for Scale Invariant t-Tests}\label{sec:pf-si}

\begin{proof}[Proof of Lemma~\ref{lem:jeffreys}]
Let $Q^+_n$ be the distribution of $(X_1, X_2, \dots, X_n)$ conditioned on $\{ X_1 > 0 \}$.
Define the constant $p_{> 0} = \Pr[X_1 > 0] = P_{\mu, \sigma}\{(0,\infty) \}$.
Then,
\begin{equation}
    \frac{\d Q^+_n}{\d \eta^{\otimes n}}(x_1,x_2,\dots, x_n) = p_{> 0}^{-1} \cdot  \id(x_1 > 0)  \cdot \prod_{i=1}^n \left\{ \sigma^{-1} g(\sigma^{-1}(x_i-\mu))  \right\},
\end{equation}
meaning that for any measurable $A\subseteq \mathbb{R^+} \times \mathbb R^{n-1}$,
\begin{equation}
    Q_n^+ \{ A \} =  \int_A p_{> 0}^{-1} \cdot \prod_{i=1}^n \left\{ \sigma^{-1} g(\sigma^{-1}(x_i-\mu))  \right\} \eta(\d x_1) \eta(\d x_2) \dots \eta (\d x_n).
\end{equation}
Now recall $\varphi_n:(x_1, x_2, \dots, x_n) \mapsto ( x_1/|x_1|, x_2/|x_1|, \dots, x_n/|x_1| )$. Let $Q_n^{*+}$ be the push-forward measure of $Q_n^+$ by $\varphi_n$, a measure on $\{1\}\times \mathbb R^{n-1}$ (which is the conditional distribution of $(X_1/|X_1|,\dots, X_n/|X_1|)$ given $\{ X_1 > 0 \}$).  Suppose the set $V_n(A) = \{1\} \times B$. We have the following change of variables (letting $(x_1^*, \dots, x_n^*) = V_n(x_1, \dots, x_n)$, so $x_1^* = 1$ below),
\begin{align}
   & Q_n^{*+} \{ V_n(A) \} =  Q_n^+\{A\}= \int_A p_{> 0}^{-1} \cdot \prod_{i=1}^n \left\{ \sigma^{-1} g(\sigma^{-1}(x_i-\mu))  \right\} \eta(\d x_1) \eta(\d x_2) \dots \eta (\d x_n)
   \\
      & = p_{> 0}^{-1} \cdot \int_{x_1 > 0} \eta(\d x_1) \int_{B}  \prod_{i=1}^n \left\{ \sigma^{-1} g(\sigma^{-1}(x_i^* |x_1| -\mu))  \right\}  \eta(\d x_2^*) \dots \eta (\d x_n^*) \left|\frac{ \d(x_2, \dots, x_n)}{ \d(x_2^*, \dots, x_n^*)}\right| 
    \\
    &= p_{> 0}^{-1} \cdot\int_{x_1 > 0} \eta(\d x_1) \int_{B}  \prod_{i=1}^n \left\{ \sigma^{-1} g(\sigma^{-1}(x_i^* |x_1| -\mu))  \right\}  \eta(\d x_2^*) \dots \eta (\d x_n^*) \begin{vmatrix} 
\frac{1}{|x_1|} & 0 & \dots & 0 \\
0 & \frac{1}{|x_1|} &  & 0\\
\vdots &  & \ddots & 0 \\
0 & \dots & 0  & \frac{1}{|x_1|} 
\end{vmatrix}^{-1}
\\
&= p_{> 0}^{-1} \cdot\int_{x_1 > 0} \eta(\d x_1) \int_{B}  \prod_{i=1}^n \left\{ \sigma^{-1} g(\sigma^{-1}(x_i^* x_1 -\mu))  \right\}  x_1^{n-1} \eta(\d x_2^*)  \dots \eta (\d x_n^*).
\end{align}
We thus conclude that (letting $\delta_1$ be the Dirac point mass on 1),
\begin{equation}
    \frac{\d Q_n^{*+}}{\delta_1 \otimes \eta^{\otimes (n-1)}}(x_1^*, x_2^*, \dots, x_n^*) =  p_{> 0}^{-1} \cdot\int_{x_1 > 0} \prod_{i=1}^n \left\{ \sigma^{-1} g(\sigma^{-1}(x_i^* x_1 -\mu))  \right\}  x_1^{n-1} \eta(\d x_1).
\end{equation}
We can simplify the integral above by substituting $\tau = \sigma/x_1$, obtaining,
\begin{align}
     & \frac{\d Q_n^{*+}}{\delta_1 \otimes \eta^{\otimes (n-1)}}(x_1^*, x_2^*, \dots, x_n^*)
     \\
     = &  p_{> 0}^{-1}\cdot \int_{\tau > 0}  \left\{ \sigma^{-1} g(1/\tau-\sigma^{-1}\mu) \right\} \left\{\prod_{i=2}^n \sigma^{-1} g(x_i^*/\tau-\sigma^{-1}\mu)) \right\}   (\sigma/\tau)^{n-1} \sigma (1/\tau^2) \eta(\d \tau)
     \\
     = & p_{> 0}^{-1}\cdot \int_{\tau > 0}  \left\{\prod_{i=1}^n (1/\tau) g(x_i^* /\tau - \mu/\sigma)) \right\} (1/\tau)  \d \tau.
\end{align}

Similarly, let $Q_n^{*-}$ be the conditional distribution of $(X_1/|X_1|,\dots, X_n/|X_1|)$ given $\{ X_1 < 0 \}$). We have
\begin{equation}
      \frac{\d Q_n^{*-}}{\delta_{-1} \otimes \eta^{\otimes (n-1)}}(x_1^*, x_2^*, \dots, x_n^*)
     = (1-p_{> 0})^{-1}\cdot \int_{\tau > 0}  \left\{\prod_{i=1}^n (1/\tau) g(x_i^* /\tau - \mu/\sigma)) \right\} (1/\tau)  \d \tau.
\end{equation}
Finally, the measure $Q_n^* = \id_{ \{ x_1 > 0 \} } \cdot p_{> 0 } \cdot Q_n^{*+} +  \id_{ \{ x_1 < 0 \} } \cdot (1-p_{> 0 }) \cdot Q_n^{*-}$. So we see that
\begin{equation}
    \frac{\d Q_n}{\d \eta^{\langle n-1 \rangle}} (x_1^*, \dots, x_n^*) =  \int_{\tau > 0}  \left\{\prod_{i=1}^n (1/\tau) g(x_i^* /\tau - \mu/\sigma)) \right\} \frac{   \d \tau}{\tau}
\end{equation}
holds for both $x_1^* = \pm 1$. This concludes the proof.
\end{proof}

From now, let us denote
\begin{equation}\label{eqn:pstartheta}
     p^*_{\theta}(x_1^*, \dots, x_n^*) = \int_{\tau > 0}  \left\{\prod_{i=1}^n (1/\tau) g(x_i^* /\tau - \theta)) \right\} (1/\tau)  \d \tau,
\end{equation}
i.e.\ the density $ \frac{\d Q_n}{\d \eta^{\langle n-1 \rangle}}$ when $P_{\mu, \sigma^2}$ satisfies $\mu/\sigma =\theta$. Here is a property of $p_{\theta}^*$.

\begin{lemma}\label{lem:scaling} Let $\lambda > 0$ be any constant. Then, $ p^*_{\theta}(\lambda x_1^*, \dots, \lambda x_n^*) = \lambda^{-n} p^*_{\theta}(x_1^*, \dots, x_n^*)$. In particular, taking $\lambda = |x_1|$, we have $p^*_{\theta}(x_1, \dots, x_n) = |x_1|^{-n} p^*_{\theta}(x_1^*, \dots, x_n^*)$.
\end{lemma}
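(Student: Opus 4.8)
The plan is to prove the identity by a single change of variables in the defining integral \eqref{eqn:pstartheta}, exploiting the fact that scaling each argument $x_i^*$ by $\lambda$ can be undone by a compensating rescaling of the integration variable $\tau$. First I would write the left-hand side out directly from \eqref{eqn:pstartheta}:
\begin{equation}
 p^*_{\theta}(\lambda x_1^*, \dots, \lambda x_n^*) = \int_{\tau > 0}  \left\{\prod_{i=1}^n \frac{1}{\tau} g\!\left(\frac{\lambda x_i^*}{\tau} - \theta\right) \right\} \frac{\d \tau}{\tau}.
\end{equation}
The key observation is that each argument $\lambda x_i^*/\tau$ depends on $\tau$ and $\lambda$ only through the ratio $\tau/\lambda$, so the $\lambda$'s can be absorbed by reparametrizing $\tau$.

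Next I would substitute $u = \tau/\lambda$, equivalently $\tau = \lambda u$ with $\d\tau = \lambda\,\d u$. Since $\lambda > 0$, this is an orientation-preserving bijection of $(0,\infty)$ onto itself, so the domain $\{\tau > 0\}$ is carried exactly onto $\{u > 0\}$ and no boundary terms or sign issues arise; moreover, as the integrand is nonnegative, no additional integrability justification is required. Under this substitution each argument simplifies, $\lambda x_i^*/\tau = x_i^*/u$, and each factor $1/\tau$ becomes $1/(\lambda u)$.

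The final step is bookkeeping of the powers of $\lambda$. There are exactly $n+1$ factors of $1/\tau$ present — the $n$ inside the product together with the single factor from the measure $\d\tau/\tau$ — contributing $\lambda^{-(n+1)}$, while the Jacobian $\d\tau = \lambda\,\d u$ contributes one factor of $\lambda$, for a net factor of $\lambda^{-n}$. The remaining $u$-integral is precisely $p^*_\theta(x_1^*, \dots, x_n^*)$ by \eqref{eqn:pstartheta}, which establishes the homogeneity of degree $-n$. Specializing to $\lambda = |x_1|$ and recalling $x_i^* = x_i/|x_1|$ (so that $\lambda x_i^* = x_i$) then yields the second assertion immediately. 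I do not anticipate any genuine obstacle here: the only point requiring care is correctly counting that the invariant measure $\d\tau/\tau$ supplies one extra factor of $1/\tau$ beyond the $n$ appearing in the product, and confirming that $\lambda > 0$ makes the substitution a legitimate change of variable on $(0,\infty)$.
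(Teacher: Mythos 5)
Your proof is correct and is essentially identical to the paper's: the paper performs the same change of variables $\gamma = \lambda^{-1}\tau$ in the defining integral, yielding the net factor $\lambda^{-n}$ from the $n+1$ powers of $1/\tau$ against the single Jacobian factor $\lambda$. The specialization $\lambda = |x_1|$ is handled the same way in both.
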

\begin{proof}[Proof of Lemma~\ref{lem:scaling}]
\begin{align}
    & p^*_{\theta}(\lambda x_1^*, \dots, \lambda x_n^*)
    \\
    = &  \int_{\tau > 0}  \left\{\prod_{i=1}^n (1/\tau) g(\lambda x_i^* /\tau - \theta)) \right\} (1/\tau)  \d \tau
    \\
    = & \int_{\gamma = \lambda ^{-1} \tau > 0}  \left\{\prod_{i=1}^n (1/\lambda \gamma) g( x_i^* /\gamma - \theta)) \right\} (1/\lambda \gamma) \lambda \d \gamma 
    \\
    = & \lambda^{-n} p^*_{\theta}(x_1^*, \dots, x_n^*).
\end{align}
\end{proof}

\begin{proof}[Proof of Lemma~\ref{lem:si-lr}]
    Denote $X_i/|X_1|$ by $X_i^*$.
    \begin{equation}
        h_n(\theta; \theta_0) = \underbrace{ \frac{p^*_{\theta} (X_1, \dots, X_n) }{p_{\theta_0}^*(X_1, \dots, X_n)} = \frac{p^*_{\theta} (X_1^*, \dots, X_n^*) }{p_{\theta_0}^*(X_1^*, \dots, X_n^*)} }_{\text{equality due to Lemma~\ref{lem:scaling}}} = \frac{\d Q^n_{\mu, \sigma^2} }{\d Q^n_{\mu_0, \sigma^2_0}}(X_1^*, \dots, X_n^*).
    \end{equation}
    for any $\mu/\sigma = \theta$ and $\mu_0/\sigma_0 = \theta_0$. Now it follows from Lemma~\ref{lem:lrm-joint} that $\{  h_n(\theta; \theta_0) \}$ is an NM for $P_{\mu_0, \sigma^2_0}$ on the canonical filtration generated by $\{X_n^*\}$, which according to Proposition~\ref{prop:si-filt-rule} is just $\{ \cF_n^* \}$. 
\end{proof}

\begin{proof}[Proof of Corollary~\ref{cor:t-lr}]
Using $g(x) = \frac{1}{\sqrt{2\pi}} \exp(-x^2/2)$ in \eqref{eqn:pstartheta},
\begin{align}
     & p^*_{\theta}(x_1, \dots, x_n) = \int_{\tau > 0}  \left\{\prod_{i=1}^n (1/\tau) g(x_i /\tau - \theta)) \right\} (1/\tau)  \d \tau
     \\
     =& \frac{1}{(2\pi)^{n/2}} \int_{\tau > 0}  \left\{\prod_{i=1}^n  \e^{-(x_i /\tau - \theta)^2/2} \right\} \frac{1}{\tau^{n+1}}  \d \tau
     \\
     = & \frac{\e^{- n \theta^2 /2 }}{(2\pi)^{n/2}}  \int_{\tau > 0}  \exp\left\{ -\frac{\sum_{i=1}^n {x_i }^2}{2\tau^2} + \frac{\theta \sum_{i=1}^n x_i}{\tau} \right\} \frac{1}{\tau^{n+1}}  \d \tau.
\end{align}
Therefore, applying Lemma~\ref{lem:si-lr} with $\theta_0 = 0$, we have
\begin{align}
    & h_n(\theta; 0) = \e^{- n \theta^2 /2 } \frac{\int_{\tau > 0}  \exp\left\{ -\frac{\sum_{i=1}^n {X_i }^2}{2\tau^2} + \frac{\theta \sum_{i=1}^n X_i}{\tau} \right\} \frac{1}{\tau^{n+1}}  \d \tau}{\int_{\tau > 0}  \exp\left\{ -\frac{\sum_{i=1}^n {X_i }^2}{2\tau^2}  \right\} \frac{1}{\tau^{n+1}}  \d \tau}
    \\
    & \text{(the denominator being a generalized Gaussian integral)}
    \\
    = & \frac{\e^{- n \theta^2 /2 }}{\Gamma(n/2)} \int_{y > 0} y^{n/2-1}  \exp\left\{ -y + \theta \left(\sum_{i=1}^n X_i \right) \sqrt{\frac{2y}{\sum_{i=1}^n {X_i}^2}} \right\}  \d y,
\end{align}
coinciding with $h_{\theta, n}$.
\end{proof}

\begin{proof}[Proof of Theorem~\ref{thm:lai-ensm} (as well as Theorem~\ref{thm:lai-cs})]
Putting a flat, {improper} prior $\d \pi(\theta) = \d \theta$ over $\mathbb R$, we have the following improperly mixed, extended nonnegative supermartingale,
\begin{align}
    & H_n = \int h_{\theta,n} \pi(\d\theta)
    \\
    = & \int_{\theta} \frac{\e^{- n \theta^2 /2 }}{\Gamma(n/2)} \left( \int_{y > 0} y^{n/2-1}  \exp\left\{ -y + \theta \left(\sum_{i=1}^n X_i \right) \sqrt{\frac{2y}{\sum_{i=1}^n {X_i}^2}} \right\}  \d y  \right)\cdot \d \theta
    \\
     = &  \frac{1}{  \Gamma(n/2)} \int_{y>0} y^{n/2-1} \e^{-y} \left(\int_{\theta} \exp\left\{ -\frac{n}{2}\theta^2 + \theta \left(\sum_{i=1}^n X_i \right) \sqrt{\frac{2y}{\sum_{i=1}^n {X_i}^2}}\right\} \d \theta \right) \d y
     \\
     = & \frac{1}{ \Gamma(n/2)} \int_{y>0} y^{n/2-1} \e^{-y} \left( \sqrt{\frac{2\pi}{n}} \exp\left\{ \frac{\left(\sum_{i=1}^n X_i \right)^2  \frac{2y}{\sum_{i=1}^n {X_i}^2} }{2n} \right\} \right) \d y
     \\
     = & \frac{1}{  \Gamma(n/2)}  \sqrt{\frac{2\pi}{n}} \int_{y>0} y^{n/2-1} \exp\left\{ - \left(1- \frac{S_n^2}{n V_n }  \right) y \right\} \d y
     \\
     = & \frac{1}{  \Gamma(n/2)}  \sqrt{\frac{2\pi}{n}}  \Gamma(n/2) \left(1- \frac{S_n^2}{n V_n }  \right)^{-n/2} =  \sqrt{\frac{2 \pi }{n}} \left(1- \frac{S_n^2}{n V_n }  \right)^{-n/2}
     \\
     = & \sqrt{\frac{2 \pi }{n}} \left( \frac{n V_n }{n V_n - S_n^2}  \right)^{n/2}
\end{align}
Note that when $n = 1$, $n V_n - S_n^2   = 0$, and $H_0$ is understood to be $+\infty$. Note that
\begin{equation}
  t_{n-1} \sim \frac{\frac{1}{n}S_n}{\frac{1}{\sqrt{n}}\sqrt{\frac{1}{n-1}(V_n - \frac{1}{n}S_n^2)}} = \sqrt{n-1} \sqrt{ \frac{S_n^2}{n V_n - S_n^2} } =: T_{n-1}
\end{equation}
Then $H_n = \sqrt{\frac{2\pi}{n}}  \left(\frac{T_{n-1}^2}{n-1} + 1\right)^{n/2} $.

In terms of CS, every $X_i$ should be seen as $X_i - 0 = X_i - \mu$, and hence
\begin{equation}
    H_n^{\mu} = \sqrt{\frac{2\pi}{n}}\left( \frac{n \sum (X_i - \mu)^2 }{n \sum (X_i - \mu)^2 - (\sum (X_i - \mu))^2}  \right)^{n/2}
\end{equation}
is also an extended nonnegative supermartingale.

The extended Ville's inequality, applied on the process $\sqrt{m/2\pi} H_n^\mu$, reads,
\begin{center}
   With probability at most
    \begin{equation}
        P =  \Pr\left[\frac{T_{m-1}^2}{m-1} + 1 \ge \eta\right] + \eta^{-m/2} \Exp\left[\id_{\left\{ \frac{T_{m-1}^2}{m-1} + 1 < \eta \right\}} \left(\frac{T_{m-1}^2}{m-1} + 1\right)^{m/2}\right],
    \end{equation}
    there exists $n \ge m$, 
    \begin{equation}
       \sqrt{\frac{m}{n}} \left( \frac{n \sum (X_i - \mu)^2 }{n \sum (X_i - \mu)^2 - (\sum (X_i - \mu))^2}  \right)^{n/2} \ge \eta^{m/2}.
    \end{equation}
\end{center}
Define $a =\sqrt{(m-1)(\eta - 1)}$. Then $\eta^{m/2} = (1 + a^2/(m-1))^{m/2} = h(a)$ and
\begin{align}
    P &= \Pr[ |T_{m-1}| \ge a ] + h(a)^{-1} \Exp[ \id_{\{|T_{m-1}| < a\}} h(T_{m-1}) ]
    \\
    & = 2 \Pr[ T_{m-1} > a ] + 2 \frac{ \Exp[ \id_{\{0 \le T_{m-1} < a\}} h(T_{m-1}) ] }{h(a)}
\end{align}
Let $f_{m-1}$, $F_{m-1}$ be the density and CDF of $T_{m-1}$. The expectation $\Exp[ \id_{\{0 \le T_{m-1} < a\}} h(T_{m-1}) ]$ must be smaller than $\frac{a}{a-a'}\Exp[ \id_{\{a' \le T_{m-1} < a\}} h(T_{m-1}) ]$, for all $a' \in (0, a)$. Note that
\begin{equation}
    \lim_{a' \to a} \frac{a}{a-a'}\Exp[ \id_{\{a' \le T_{m-1} < a\}} h(T_{m-1}) ] = a h(a) f_{m-1}(a).
\end{equation}
Hence
\begin{equation}
    P \le 2(1-F_{m-1}(a)) + 2a f_{m-1}(a).
\end{equation}
This concludes the missing proof of Lai's t-CS.
\end{proof}

\begin{proof}[Proof of Theorem~\ref{thm:lai-e}]
Let us put a Gaussian prior on $\theta$,
\begin{equation}
    \frac{\d \pi(\theta)}{\d \theta} = \frac{c}{ \sqrt{2\pi}} \e^{-c^2\theta^2/2},
\end{equation}
and define the martingale
\begin{align}
    & G_n^{(c)} = \int h_{\theta,n} \pi(\d\theta)
    \\
    = & \int_{\theta} \frac{\e^{- n \theta^2 /2 }}{\Gamma(n/2)} \left( \int_{y > 0} y^{n/2-1}  \exp\left\{ -y + \theta \left(\sum_{i=1}^n X_i \right) \sqrt{\frac{2y}{\sum_{i=1}^n {X_i}^2}} \right\}  \d y  \right)\cdot \pi(\d \theta)
    \\
     = &  \frac{c}{ \sqrt{2\pi} \Gamma(n/2)} \int_{y>0} y^{n/2-1} \e^{-y} \left(\int_{\theta} \exp\left\{ -\frac{n+c^2}{2}\theta^2 + \theta \left(\sum_{i=1}^n X_i \right) \sqrt{\frac{2y}{\sum_{i=1}^n {X_i}^2}}\right\} \d \theta \right) \d y
     \\
     = & \frac{c}{ \sqrt{2\pi} \Gamma(n/2)} \int_{y>0} y^{n/2-1} \e^{-y} \left( \sqrt{\frac{\pi}{\frac{n+c^2}{2}}} \exp\left\{ \frac{\left(\sum_{i=1}^n X_i \right)^2  \frac{2y}{\sum_{i=1}^n {X_i}^2} }{2(n+c^2)} \right\} \right) \d y
     \\
     = & \frac{c}{ \sqrt{2\pi} \Gamma(n/2)}  \sqrt{\frac{\pi}{\frac{n+c^2}{2}}} \int_{y>0} y^{n/2-1} \exp\left\{ - \left(1- \frac{S_n^2}{(n+c^2) V_n }  \right) y \right\} \d y
     \\
     = & \frac{c}{  \Gamma(n/2)}  \sqrt{\frac{1}{n+c^2}}  \Gamma(n/2) \left(1- \frac{S_n^2}{(n+c^2) V_n }  \right)^{-n/2} =  \sqrt{\frac{c^2}{n+c^2}} \left(1- \frac{S_n^2}{(n+c^2) V_n }  \right)^{-n/2}.
\end{align}
This is a valid test martingale.

While this is a test martingale for $\normals_{\mu = 0}$, a shifting argument ($X_i \to X_i - \mu_0$) yields that
\begin{equation}
    G_n^{(c, \mu_0)} =  \sqrt{\frac{c^2}{n+c^2}} \left(1- \frac{(S_n - n\mu_0)^2}{(n+c^2) (V_n - 2 S_n \mu_0 + n \mu_0^2) }  \right)^{-n/2}
\end{equation}
is a test martingale for $\normals_{\mu = \mu_0}$. Applying Ville's inequality, with probability at least $1-\alpha$, for all $n$,
\begin{align}
    \frac{nc^2 \mu_0^2 - 2c^2 S_n \mu_0 + (n+c^2)V_n - S_n^2}{ (n+c^2)(n \mu_0^2 - 2S_n\mu_0 +V_n) } &\ge \left( \frac{\alpha^2 c^2}{n+c^2} \right)^{1/n},
    \\
    \frac{ nc^2 (\mu_0 - \avgX{n})^2 +(n+c^2)(V_n - S_n^2/n)}{ (n+c^2)( n (\mu_0 - \avgX{n})^2 + V_n - S_n^2/n  ) } &\ge \left( \frac{\alpha^2 c^2}{n+c^2} \right)^{1/n},
    \\
    \left\{ \left( \frac{\alpha^2 c^2}{n+c^2} \right)^{1/n} (n+c^2)n - nc^2   \right\}  (\mu_0 - \avgX{n})^2 &\le \left\{ (n+c^2)\left(1- \left( \frac{\alpha^2 c^2}{n+c^2} \right)^{1/n}\right) \right\} ( V_n - S_n^2/n  ),
    \\
     (\mu_0 - \avgX{n})^2 &\le \frac{ (n+c^2)\left(1- \left( \frac{\alpha^2 c^2}{n+c^2} \right)^{1/n}\right) }{ \left( \frac{\alpha^2 c^2}{n+c^2} \right)^{1/n} (n+c^2) - c^2   } \left(\avgXsq{n} - \avgX{n}^2 \right)
\end{align}
This concludes the proof.
\end{proof}

\begin{proof}[Proof of Proposition~\ref{prop:conv-t-ensm}]
    First, under \revise{any distribution with mean $\mu$ and variance $\sigma^2$}, note that $T_{n-1}=\sqrt{n-1} \frac{S_n - n\mu}{ \sqrt {n V_n - S_n^2} }$ and $s_n^2 = V_n/n - S_n^2/n^2$. We replace $\frac{T_{n-1}}{\sqrt{n-1}}$ by $\frac{T_{n-1}}{\sqrt{n-1}} + \frac{\mu}{{s_{n}}}$ in \eqref{eqn:lai-ensm-T-expr}  to have
    \begin{gather}
         H_n = \sqrt{\frac{2 \pi }{n}} \left( 1 + \left(\frac{T_{n-1}}{\sqrt{n-1}} + \frac{\mu}{{s_{n}}}\right)^2 \right)^{n/2},
         \\
         \frac{\log H_n}{n} = \frac{\log(2\pi/n)}{2n} + \frac{1}{2} \log  \left( 1 + \left(\frac{T_{n-1}}{\sqrt{n-1}} + \frac{\mu}{{s_{n}}}\right)^2 \right).
    \end{gather}
    It is well known $s^2_{n}$ converges to $\sigma^2$ almost surely. Therefore $\frac{T_{n-1}}{\sqrt{n-1}} = \frac{\avgX{n} - \mu}{{s_{n}}}$ converges almost surely to 0. These imply that $\frac{\log H_n}{n}$ converges almost surely to $\frac{1}{2}\log(1 + \mu^2/\sigma^2 )$.

    Second, under any $P \in \normals_{\mu = 0}$, let $\delta > 0$.
    By \eqref{eqn:lai-ensm-T-expr} we have
    \begin{align}
        P( H_n > \delta ) = P\Bigg(  T_{n-1}^2 > \underbrace{ \left(\left( \frac{\delta^2 n}{2\pi} \right)^{1/n} - 1 \right)(n-1) }_{\gtrsim\log n} \Bigg) \lesssim P( T_1^2 > \log n  ).
    \end{align}
    The underbraced term grows logarithmically because $n^{1/n} - 1= \frac{\log n}{n} + \mathcal{O}(n^{-2})$. The ``$\lesssim$" holds because the tail of Student's t-distribution strictly decreases when the DOF increases. So $ P( H_n > \delta ) $ goes to 0 as $n\to\infty$, concluding that $\{ H_n \}$ converges to 0 in probability. By \citet[Proposition A.14]{ensm}, an ENSM converges almost surely, so  $\{ H_n \}$ converges to 0 almost surely. The proof is complete.
\end{proof}

\begin{proof}[Proof of Proposition~\ref{prop:conv-t-nm}]
  First, under \revise{any distribution with mean $\mu$ and variance $\sigma^2$}, we replace in \eqref{eqn:lai-nm-tstat} $\frac{T_{n-1}}{\sqrt{n-1}}$ by $\frac{T_{n-1}}{\sqrt{n-1}} + \frac{\mu}{s_{n}}$,
    \begin{gather}
         G_n^{(c)} = \sqrt{\frac{c^2}{n+c^2}} \left( 1 + \frac{ n}{ \frac{n+c^2}{ \left(\frac{T_{n-1}}{\sqrt{n-1}} + \frac{\mu}{s_{n}}\right)^2 } + {c^2}  }\right)^{n/2},
         \\
         \frac{\log  G_n^{(c)} }{n} = \frac{\log(\frac{c^2}{n+c^2})}{2n} + \frac{1}{2}\log \left( 1 + \frac{ n}{ \frac{n+c^2}{ \left(\frac{T_{n-1}}{\sqrt{n-1}} + \frac{\mu}{s_{n}}\right)^2 } + {c^2}  }\right).
    \end{gather}
    As in the proof of Proposition~\ref{prop:conv-t-ensm}, $\frac{T_{n-1}}{\sqrt{n-1}}$ converges to 0 and $s_n^2$ to $\sigma^2$ almost surely. Therefore $\frac{\log  G_n^{(c)} }{n}$ converges a.s.\ to $\frac{1}{2}\log(1 + \mu^2/\sigma^2)$.
  
  Second, under any $P \in \normals_{\mu = 0}$, let $\delta > 0$.
    By \eqref{eqn:lai-nm-tstat} we have
    \begin{align}
        P(  G_n^{(c)} > \delta ) = P\Bigg( \frac{ n}{ \frac{(n+c^2)(n-1) }{T_{n-1}^2 } + {c^2}  } > \underbrace{ \left( \frac{(n+c^2)\delta^2}{c^2} \right)^{1/n} -1 }_{\gtrsim \frac{\log n}{n}} \Bigg) \lesssim P( T_1^2 > \log n  ),
    \end{align}
    which converges to 0. So $\{ G_n^{(c)} \}$ converges to 0 in probability. It thus also converges to 0 almost surely due to Doob's martingale convergence theorem, concluding the proof.
\end{proof}

\begin{proof}[Proof of Theorem~\ref{thm:si-onesided}]
Let us put a half Gaussian prior on $\theta$,
\begin{equation}
    \frac{\d \pi^+(\theta)}{\d \theta} = \id_{\theta \ge 0}\frac{c}{ \sqrt{2\pi}} \e^{-c^2\theta^2/2},
\end{equation}
and define the martingale
\begin{align}
    & \int h_{\theta,n} \pi^+(\d\theta)
    \\
    = & \int_{\theta > 0} \frac{\e^{- n \theta^2 /2 }}{\Gamma(n/2)} \left( \int_{y > 0} y^{n/2-1}  \exp\left\{ -y + \theta \left(\sum_{i=1}^n X_i \right) \sqrt{\frac{2y}{\sum_{i=1}^n {X_i}^2}} \right\}  \d y  \right)\cdot \pi(\d \theta)
    \\
     = &  \frac{c}{ \sqrt{2\pi} \Gamma(n/2)} \int_{y>0} y^{n/2-1} \e^{-y} \left(\int_{\theta > 0} \exp\left\{ -\frac{n+c^2}{2}\theta^2 + \theta \left(\sum_{i=1}^n X_i \right) \sqrt{\frac{2y}{\sum_{i=1}^n {X_i}^2}}\right\} \d \theta \right) \d y
     \\ 
     = & \frac{c}{ \sqrt{2\pi} \Gamma(n/2)} \int_{y>0} y^{n/2-1} \e^{-y} \left( \sqrt{\frac{\pi}{\frac{n+c^2}{2}}} \exp\left\{ \frac{\left(\sum_{i=1}^n X_i \right)^2  \frac{2y}{\sum_{i=1}^n {X_i}^2} }{2(n+c^2)} \right\} \Phi\left( \frac{ \left(\sum_{i=1}^n X_i \right) \sqrt{\frac{2y}{\sum_{i=1}^n {X_i}^2}}}{\sqrt{n+c^2}}  \right) \right) \d y 
     \\
     = & \frac{c}{ \sqrt{2\pi} \Gamma(n/2)}  \sqrt{\frac{\pi}{\frac{n+c^2}{2}}} \int_{y>0} y^{n/2-1} \exp\left\{ - \left(1- \frac{S_n^2}{(n+c^2) V_n }  \right) y \right\} \Phi\left( \frac{ S_n \sqrt{2y}}{\sqrt{(n+c^2)V_n}}  \right) \d y
     \\
     & \text{(Chernoff approximation } 1-\Phi(x)\le \e^{-(x\vee 0)^2/2} 
     \\
     \ge &  \frac{c}{ \sqrt{2\pi} \Gamma(n/2)}  \sqrt{\frac{\pi}{\frac{n+c^2}{2}}} \int_{y>0} y^{n/2-1} \exp\left\{ - \left(1- \frac{S_n^2}{(n+c^2) V_n }  \right) y \right\} 
\left( 1 - \exp\left( - \frac{(S_n \vee 0)^2 y}{(n+c^2) V_n }  \right) \right)  \d y
\\
= &    \frac{c}{ \sqrt{2\pi} \Gamma(n/2)}  \sqrt{\frac{\pi}{\frac{n+c^2}{2}}}  \bigg \{ \int_{y>0} y^{n/2-1} \exp\left\{ - \left(1- \frac{S_n^2}{(n+c^2) V_n }  \right) y \right\} 
  \d y  \\
  &  -  \int_{y>0} y^{n/2-1}  \exp\left(\frac{(S_n^2 - (S_n \vee 0)^2) y}{(n+c^2) V_n } - 1 \right)
  \d y \bigg\} 
  \\
  = &   \frac{c}{  \Gamma(n/2)}  \sqrt{\frac{1}{n+c^2}}  \left\{ 
 \Gamma(n/2) \left(1- \frac{S_n^2}{(n+c^2) V_n }  \right)^{-n/2} - \Gamma(n/2) \left(1- \frac{(S_n \wedge0) ^2}{(n+c^2) V_n }  \right)^{-n/2}  \right\} 
 \\ 
 = & \sqrt{\frac{c^2}{n+c^2}} \left( \left(1- \frac{S_n^2}{(n+c^2) V_n }  \right)^{-n/2} - \left(1- \frac{(S_n \wedge0) ^2}{(n+c^2) V_n }  \right)^{-n/2} \right) =: G^{(c-)}_n/2.
\end{align}
Since $\pi^+$ is of total measure $1/2$, $G^{c+}_n$ is an e-process for $\normals_{\mu = 0}$.
\end{proof}

\revise{
\begin{proof}[Proof of Theorem~\ref{thm:asympcs}]
    
Recall that Theorem~\ref{thm:lai-e} states that, letting
\begin{equation}
    \operatorname{radius}_n = \sqrt{\underbrace{\frac{ (n+c^2)\left(1- \left( \frac{\alpha^2 c^2}{n+c^2} \right)^{1/n}\right) }{ \left\{ \left( \frac{\alpha^2 c^2}{n+c^2} \right)^{1/n} (n+c^2) - c^2 \right\} \vee 0   }}_{\rho^2(n, c, \alpha)} \ \cdot \ \underbrace{\left(\avgXsq{n} - \avgX{n}^2 \right)}_{s_n^2(X)}},
\end{equation}
then the intervals
\begin{equation}\label{eqn:lai-e-cs-reprise}
   \left[  \avgX{n}(X) \pm \operatorname{radius}_n   \right]
\end{equation}
form a $(1-\alpha)$-CS for $\mu$ over $\normals$.


Now, suppose $\{X_n\} \iid P $ with mean 
$\mu$ and variance $\sigma^2$. By \citet[Lemma A.1]{waudby2021time}, there exist $\{G_n\} \iid \normal{0}{1}$ such that
\begin{equation}
    \avgX{n}(X) - \mu = {s_n(X)}\avgX{n}(G) + \varepsilon_n, \quad \varepsilon_n = o(\sqrt{\log \log n / n}).
\end{equation}

Applying Theorem~\ref{thm:lai-e} to $\{G_n\}$, we have, with probability at least $1-\alpha$,
\begin{equation}
    | \avgX{n}(G)  | \le \rho(n,c,\alpha) s_n(G).
\end{equation}
Therefore, with probability at least $1-\alpha$,
\begin{align}
    | \avgX{n}(X) -  \mu| &\le   | \avgX{n}(X) -  \mu - {s_n(X)} \avgX{n}(G) |  + {s_n(X)} |\avgX{n}(G)| 
    \\
    &\le  |\varepsilon_n| + {s_n(X) \rho(n,c,\alpha) s_n(G) }.
\end{align}
Finally, we recall from our previous analysis that $s_n(X) \rho(n,c,\alpha) = \mathcal{O}(\sqrt{\log n / n}) = \Omega(\varepsilon_n)$, therefore
\begin{equation}
    \frac{ |\varepsilon_n| + {s_n(X) \rho(n,c,\alpha) s_n(G) }  }{s_n(X)   \rho(n,c,\alpha) } =  \frac{ |\varepsilon_n|  }{s_n(X)   \rho(n,c,\alpha) } +{s_n(G)} \stackrel{a.s.}{\longrightarrow}0+1=1,
\end{equation}
concluding that \eqref{eqn:lai-e-cs-reprise} is an $(1-\alpha)$-asymptotic CS over $\mathcal{L}^2$.
\end{proof}
}

\subsection{Proofs for Optimality}\label{sec:pf-opt}

\begin{proof}[Proof of Theorem~\ref{thm:it-lb}]
    Fix an $L \in \cL$, $\sigma > 0$, and $\alpha > 0$. Let 
\begin{equation}
    \mu =\sigma\sqrt{ (1-(1- 3\alpha)^2)^{-2/n}  - 1 }.
\end{equation}
First, consider the distribution $N_{0,\mu^2+\sigma^2}$. We have
\begin{equation}
    N_{0,\mu^2+\sigma^2}(  0 < L(X_1,\dots,X_n) ) \le \alpha.
\end{equation}
Next, consider the distribution $N_{\mu,\sigma^2}$. Since
$\kl( N_{\mu,\sigma^2}^{\otimes n} \| N_{0,\mu^2+\sigma^2}^{\otimes n} ) = \frac{n}{2}\log(1 + \mu^2/\sigma^2)$, 
it follows from Bretagnolle-Huber inequality that
\begin{equation}
     \operatorname{TV} ( N_{\mu,\sigma^2}^{\otimes n} , N_{0,\mu^2+\sigma^2}^{\otimes n}   ) \le \sqrt{1 - \exp(-\kl( N_{\mu,\sigma^2}^{\otimes n} \| N_{0,\mu^2+\sigma^2}^{\otimes n} ))} = \sqrt{1 - (1+\mu^2/\sigma^2)^{-n/2}}.
\end{equation}
Therefore, 
\begin{equation}
    N_{\mu,\sigma^2}(  0 < L(X_1,\dots,X_n) ) \le \alpha + \sqrt{1 - (1+\mu^2/\sigma^2)^{-n/2}} = \alpha + 1-3\alpha = 1-2\alpha.
\end{equation}
By the definition of $W_{2\alpha}(L, P)$,
\begin{equation}
     N_{\mu,\sigma^2}(   \mu - L(X_1,\dots,X_n) < W_{2\alpha}(L, P) )=  1-2\alpha,
\end{equation}
indicating that $W_{2\alpha}(L, N_{\mu,\sigma^2}) \ge \mu = \sigma\sqrt{ (1-(1-3\alpha)^2)^{-2/n}  - 1 }$. Since this holds for any lower confidence bound $L \in \mathcal{L}_{\alpha, n}$, we see that
\begin{align}
  M^{-}_{\alpha, n} =  \inf_{ L\in  \mathcal{L}_{\alpha, n}} \sup_{P \in \normals} \sigma^{-1}(P) \cdot W_{2\alpha}(L, P) \ge&  \sqrt{ (1-(1- 3\alpha)^2)^{-2/n}  - 1 } \\
  =& \sqrt{ ( 6\alpha - 9\alpha^2 )^{-2/n}  - 1 } .
\end{align}
\end{proof}

\begin{proof}[Proof of Corollary~\ref{cor:it-lb2}] Fix a $\CI = [L(X_1,\dots, X_n), U (X_1,\dots, X_n)] \in \mathcal{I}_{\alpha, n}$ and consider any $P \in \normals$. First, since $L \in \mathcal{L}_{\alpha, n}$, we have
\begin{equation}
    P(  \mu(P) -  L(X_1,\dots, X_n) \ge W_{2\alpha}(L, P)  ) = 2\alpha.
\end{equation}
Next, since $[L(X_1,\dots, X_n), U (X_1,\dots, X_n)]$ is a $(1-\alpha)$-CI,
\begin{equation}
    P(  \mu(P) \le U (X_1,\dots, X_n)  ) \ge 1-\alpha.
\end{equation}
By a union bound, 
\begin{equation}
    P(  U (X_1,\dots, X_n) -  L(X_1,\dots, X_n) \ge W_{2\alpha}(L, P)  ) \ge \alpha,
\end{equation}
implying that $W_{\alpha}(\CI, P ) \ge W_{2\alpha}(L, P)$. Therefore $ M_{\alpha, n}    \ge M_{\alpha, n}^{-} $ by taking a minimax.
\end{proof}

 {\paragraph*{Discussion on Proposition~\ref{prop:ripr-simple}.} Before we present and prove an extended version of Proposition~\ref{prop:ripr-simple}, let us briefly review some key definitions and results of \cite{larsson2024numeraire}. Let $\cP$ and $Q$ be mutually equivalent probability measures.
Let $\cP^{\circ \circ}$ be the ``bipolar" of the measure set $\cP$, which contains all sub-probability measures for which any e-value for $\cP$ is an e-value as well. $\cP^{\circ \circ}$ is thus a superset of $\cP$, understood as the ``effective null".
If the infimum $ \inf_{P \in \cP^{\circ \circ}} \kl(Q\| P)$ is attained as minimum by some $P^* \in \cP^{\circ \circ}$, we say that  $P^*$ is the \emph{reverse information projection} of $Q$ onto the set $\cP$. 
Proposition 2.4, Theorem 3.4, and Theorem 4.1 of \cite{larsson2024numeraire} jointly state the following.
\begin{lemma}\label{lem:num}
 Let $P^*$ be a sub-probability measure such that $M^* = \frac{\d Q}{\d P^*}(X_1)$ is an e-value for $\cP$. Then, the following are equivalent:
 \begin{enumerate}
     \item $P^* \in \cP^{\circ \circ}$;
     \item $P^*$ is the reverse information projection of $Q$ onto $\cP$;
     \item $M^*$ is the optimal e-value for $\cP$: any e-value $M$ for $\cP$ satisfies $\EE_Q(\log M) \le \EE_Q(\log M^*)$.
 \end{enumerate}
   When this happens, the optimal e-value $M^*$ is called the \emph{numeraire e-value}.
\end{lemma}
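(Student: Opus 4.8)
The plan is to treat Lemma~\ref{lem:num} as a consolidation of Proposition 2.4, Theorem 3.4, and Theorem 4.1 of \cite{larsson2024numeraire}, transcribed into our notation. Accordingly I would first check that the standing hypotheses of those results hold here (that $\cP$ and $Q$ are mutually equivalent and that $M^* = \frac{\d Q}{\d P^*}(X_1)$ is a bona fide e-value for $\cP$), and then verify that each cited statement delivers one of the implications in the three-way equivalence. Within this plan it is worth isolating which implications are ``soft'' --- provable here by a single information-theoretic inequality --- and which genuinely require the convex-analytic machinery of that paper.

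For the soft directions I would record the one bound from which everything follows. Fix an arbitrary e-value $M$ for $\cP$ and any sub-probability measure $P \in \cP^{\circ\circ}$; by definition of the bipolar, $M$ is then an e-value for $P$, so $\EE_P(M) \le 1$, and Jensen's inequality for the concave logarithm gives
\[
\EE_Q(\log M) - \kl(Q\|P) = \EE_Q\!\left(\log \frac{M\,\d P}{\d Q}\right) \le \log \EE_Q\!\left(\frac{M\,\d P}{\d Q}\right) = \log \EE_{P}(M) \le 0 .
\]
Taking $M = M^*$ and $P = P^*$ turns both inequalities into the identity $\EE_Q(\log M^*) = \kl(Q\|P^*)$; combined with the same bound applied to $M^*$ at an arbitrary $P \in \cP^{\circ\circ}$, this shows at once that (1)$\Rightarrow$(3) (any e-value $M$ obeys $\EE_Q(\log M) \le \kl(Q\|P^*) = \EE_Q(\log M^*)$) and that (1)$\Rightarrow$(2) (for every $P \in \cP^{\circ\circ}$ one has $\kl(Q\|P^*) = \EE_Q(\log M^*) \le \kl(Q\|P)$, so $P^*$ minimizes the divergence over $\cP^{\circ\circ}$). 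The reverse (2)$\Rightarrow$(1) is immediate, since the reverse information projection lies in $\cP^{\circ\circ}$ by construction.

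The hard part will be the single remaining implication (3)$\Rightarrow$(1): deducing from log-optimality of $M^*$ that $P^* \in \cP^{\circ\circ}$. Here I would invoke \cite{larsson2024numeraire} rather than reprove anything, since the natural argument --- if some e-value $M_0$ had $\EE_{P^*}(M_0) > 1$, mix it infinitesimally with $M^*$ to manufacture a strictly log-better e-value, contradicting optimality --- must be underpinned by the existence theory for the numeraire (lower semicontinuity and compactness of the relevant set of e-values and measures), which is precisely the technical core of their Theorem 3.4, while the identification of $\cP^{\circ\circ}$ with the divergence minimizer is their bipolar statement (Proposition 2.4). I would therefore present the Jensen bound above for self-containedness --- it is all that the downstream application in Proposition~\ref{prop:ripr} actually needs --- and cite the three results of \cite{larsson2024numeraire} for the remaining direction, having first confirmed that the mutual-equivalence hypothesis is satisfied in our Gaussian setting.
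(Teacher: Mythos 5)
Your proposal is correct, and its backbone is the same as the paper's: the paper offers no proof of Lemma~\ref{lem:num} at all, simply stating that Proposition 2.4, Theorem 3.4, and Theorem 4.1 of \cite{larsson2024numeraire} jointly give the equivalence. Where you differ is that you supplement the citation with a self-contained argument for three of the four implications, and this argument checks out: the Jensen bound $\EE_Q(\log M) \le \kl(Q\|P)$ for any e-value $M$ with $\EE_P(M)\le 1$, together with the identity $\EE_Q(\log M^*) = \kl(Q\|P^*)$ (which holds because $M^*\,\d P^*/\d Q = 1$ $Q$-a.s.), immediately yields (1)$\Rightarrow$(3) and (1)$\Rightarrow$(2), and (2)$\Rightarrow$(1) is definitional since the paper's notion of reverse information projection requires membership in $\cP^{\circ\circ}$. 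You are also right that the only genuinely hard direction is (3)$\Rightarrow$(1), that it is where the existence/duality machinery of \cite{larsson2024numeraire} is actually needed, and that the downstream use in Proposition~\ref{prop:ripr} never invokes it: there $P^* = \normal{0}{\sigma^2+\mu^2}$ lies in $\cP = \normals_{\mu=0}$ itself, so your Jensen bound with $P = P^*$ already gives the e-power upper bound $\frac{1}{2}\log(1+\mu^2/\sigma^2)$ (only the ``equality iff numeraire'' clause still leans on the cited uniqueness theory). Net effect: your version buys partial self-containedness and makes transparent which parts of the lemma are soft, at the cost of a slightly longer exposition; the paper's version is a pure black-box citation.
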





We have the following statement regarding the reverse information projection and the numeraire in t-tests where $\cP = \normals_{\mu = 0}$ and $Q = \normal{\mu}{\sigma^2}$.


\begin{proposition}[Reverse information projection and numeraire in the t-test] \label{prop:ripr} The probability measure $\normal{0}{\sigma^2+
\mu^2}$ attains the following minimum
\begin{equation}
   \kl( \normal{\mu}{\sigma^2} \| \normal{0}{\sigma^2+
\mu^2}) =  \min_{P \in  \normals_{\mu = 0}} \kl( \normal{\mu}{\sigma^2} \| P) = \frac{1}{2}\log(1+\mu^2/\sigma^2).
\end{equation}
Further, $M^* =  \frac{\d \normal{\mu}{\sigma^2}}{\d \normal{0}{\sigma^2+
\mu^2}}(X_1)$ is an e-value for $\normals_{\mu = 0}$, thus it is the numeraire e-value and $\normal{0}{\sigma^2+
\mu^2}$ the reverse information projection of $\normal{\mu}{\sigma^2}$ onto $\normals_{\mu = 0}$. Consequently, any e-value $ M $ for $\normals_{\mu = 0}$ has e-power at most
\begin{equation}
    \EE_{\normal{\mu}{\sigma^2}}(\log M_n) \le \frac{1}{2}\log(1+\mu^2/\sigma^2),
\end{equation}
equality if and only if $M$ is the numeraire $M^*$.
\end{proposition}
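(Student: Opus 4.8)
The plan is to prove the three assertions in order: the KL identity by elementary calculus, the e-value property of $M^*$ by a single Gaussian integral, and the reverse-information-projection/numeraire statements by invoking Lemma~\ref{lem:num}; the e-power bound with its equality case then drops out. Throughout, I would use that all nondegenerate Gaussians are mutually equivalent, so $M^*$ is well-defined and $\normals_{\mu=0},\normal{\mu}{\sigma^2}$ satisfy the hypotheses of Lemma~\ref{lem:num}. For the minimization, a candidate $P=\normal{0}{s^2}\in\normals_{\mu=0}$ gives
\[
  \kl(\normal{\mu}{\sigma^2}\,\|\,\normal{0}{s^2}) = \tfrac12\log\tfrac{s^2}{\sigma^2} + \tfrac{\sigma^2+\mu^2}{2s^2} - \tfrac12 .
\]
Differentiating in $u=s^2$ yields $\tfrac{1}{2u}-\tfrac{\sigma^2+\mu^2}{2u^2}$, whose unique zero is $u=\sigma^2+\mu^2$; substituting back gives the minimum value $\tfrac12\log(1+\mu^2/\sigma^2)$, attained at $P^*=\normal{0}{\sigma^2+\mu^2}$. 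This step is routine.

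The crux is showing $M^*=\frac{\d\normal{\mu}{\sigma^2}}{\d\normal{0}{\sigma^2+\mu^2}}(X_1)$ is an e-value, i.e.\ $\EE_{\normal{0}{s^2}}(M^*)\le 1$ for every $s>0$. Writing $\tau^2=\sigma^2+\mu^2$ and cancelling the two mean-zero densities, the expectation collapses to
\[
  \EE_{\normal{0}{s^2}}(M^*) = \tfrac{\tau}{s}\,\EE_{\normal{\mu}{\sigma^2}}\!\left[\exp\!\left(-\tfrac{\beta X_1^2}{2}\right)\right], \qquad \beta=\tfrac{1}{s^2}-\tfrac{1}{\tau^2}.
\]
Since $\tau^2\ge\sigma^2$ one checks $1+\beta\sigma^2>0$ for all $s>0$, so the Gaussian identity for $\EE[e^{-\beta X_1^2/2}]$ applies and, after simplification, gives $\tfrac{\tau^2}{\sqrt{N}}\exp\!\big(\tfrac{\mu^2(s^2-\tau^2)}{2N}\big)$ with $N=\mu^2 s^2+\sigma^2\tau^2$. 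Viewing this as a function of $v=s^2$, its logarithmic derivative simplifies to $\tfrac{\mu^4(\tau^2-v)}{2N^2}$, positive for $v<\tau^2$ and negative for $v>\tau^2$; hence the global maximum is at $v=\tau^2$, where the value equals exactly $1$. This proves both that $M^*$ is an e-value and that $\EE_{P^*}(M^*)=1$. I expect this maximization — keeping the algebra of $N$ and the sign of the derivative correct — to be the main obstacle.

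The remaining claims then follow quickly. As $P^*=\normal{0}{\sigma^2+\mu^2}\in\normals_{\mu=0}\subseteq\normals_{\mu=0}^{\circ\circ}$ and $M^*=\d\normal{\mu}{\sigma^2}/\d P^*$ is an e-value, condition (1) of Lemma~\ref{lem:num} holds, so by the equivalence $P^*$ is the reverse information projection and $M^*$ the numeraire. The e-power bound is then the optimality in condition (3), with value $\EE_{\normal{\mu}{\sigma^2}}(\log M^*)=\kl(\normal{\mu}{\sigma^2}\,\|\,P^*)=\tfrac12\log(1+\mu^2/\sigma^2)$.

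For the ``if and only if'' clause I would record a self-contained Jensen argument, which also reproves the bound: for any e-value $M$,
\[
  \EE_{\normal{\mu}{\sigma^2}}(\log M) - \kl(\normal{\mu}{\sigma^2}\,\|\,P^*) = \EE_{\normal{\mu}{\sigma^2}}\!\left(\log\tfrac{M\,\d P^*}{\d\normal{\mu}{\sigma^2}}\right) \le \log\EE_{P^*}(M) \le 0,
\]
using concavity of $\log$ and $\EE_{P^*}(M)\le1$ (valid because $P^*\in\normals_{\mu=0}$). Strict concavity forces $M\propto M^*$ at equality, and $\EE_{P^*}(M^*)=1$ pins the proportionality constant to $1$, giving $M=M^*$.
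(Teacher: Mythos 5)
Your proposal is correct, and its skeleton matches the paper's proof: the same calculus minimization of $\kl(\normal{\mu}{\sigma^2}\,\|\,\normal{0}{s^2})$ over $s^2$, then a direct verification that $\EE_{\normal{0}{s^2}}(M^*)\le 1$ for every $s>0$ via a Gaussian integral, and finally an appeal to Lemma~\ref{lem:num}. The two local differences are worth recording. First, in the e-value step the paper completes the square in the integral and arrives at the same closed form $\frac{\tau^2}{\sqrt{N}}\exp\bigl(\frac{\mu^2(s^2-\tau^2)}{2N}\bigr)$, but then finishes with the one-line algebraic identity: writing $y=\tau^2$, $z=\sqrt{N}$, the expression equals $\sqrt{(y^2/z^2)\,\e^{1-y^2/z^2}}\le 1$ since $t\e^{1-t}\le 1$; you instead maximize over $v=s^2$ by checking the sign of the logarithmic derivative $\frac{\mu^4(\tau^2-v)}{2N^2}$ (which I verified simplifies as you claim). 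Both are valid; the paper's is slicker, yours additionally makes explicit that the maximum value $1$ is attained exactly at $v=\tau^2$, i.e.\ $\EE_{P^*}(M^*)=1$, which you then reuse. Second, your closing Jensen argument is a genuine addition: the paper disposes of the final claim with ``the rest follows from Lemma~\ref{lem:num}'', but that lemma as stated yields only the inequality $\EE_Q(\log M)\le\EE_Q(\log M^*)$, not the ``equality if and only if $M=M^*$'' clause; your chain $\EE_Q(\log M)-\kl(Q\|P^*)=\EE_Q\bigl(\log\frac{M\,\d P^*}{\d Q}\bigr)\le\log\EE_{P^*}(M)\le 0$, with strict concavity pinning down $M=cM^*$ and $\EE_{P^*}(M^*)=1$ forcing $c=1$, supplies a self-contained proof of that uniqueness, making your treatment of the last assertion more complete than the paper's.
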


\begin{proof}[Proof of Proposition~\ref{prop:ripr}]
    Let $P = \normal{0}{\sigma^2_0}$. Then
    \begin{align}
        \kl( \normal{\mu}{\sigma^2} \| P) = \frac{1}{2} (\log \sigma_0^2 - \log \sigma^2) + \frac{\sigma^2 + \mu^2}{2\sigma_0^2} - \frac{1}{2}.
    \end{align}
    As a function of $\sigma_0^2$, the above takes minimum when $\sigma_0^2 = \sigma^2 + \mu^2$, in which case the minimum is $\kl( \normal{\mu}{\sigma^2} \| \normal{0}{\sigma^2+\mu^2}) = \frac{1}{2}\log(1+\mu^2/\sigma^2)$.

Under any $P_0 = \normal{0}{\sigma_0^2} \in \cP$:
    \begin{align}
       &  \EE_{P_0} \left( \frac{\d Q}{\d P^*} (X_1) \right) = \int \frac{\sqrt{\sigma^2 + \mu^2}}{\sigma \sigma_0 \sqrt{2\pi}} \exp \left( -  \frac{(x-\mu)^2}{2\sigma^2} +  \frac{x^2}{2(\sigma^2 + \mu^2)} -  \frac{x^2}{2\sigma_0^2} \right) \d x
    \\
    = & \frac{\sqrt{\sigma^2 + \mu^2}}{\sigma \sigma_0 \sqrt{2\pi}}  \int \exp \left( - \frac{\mu^2 \sigma_0^2 + \sigma^2(\sigma^2 + \mu^2)}{2 \sigma^2 \sigma_0^2 (\sigma^2 + \mu^2)} x^2 + \frac{\mu}{\sigma^2} x - \frac{\mu^2}{2\sigma^2}  \right) \d x
    \\
    = & \frac{\sqrt{\sigma^2 + \mu^2}}{\sigma \sigma_0 \sqrt{2\pi}}   \cdot \sqrt{\frac{\pi}{ \frac{\mu^2 \sigma_0^2 + \sigma^2(\sigma^2 + \mu^2)}{2 \sigma^2 \sigma_0^2 (\sigma^2 + \mu^2)} 
 } } \exp\left(  \frac{ \mu^2/\sigma^4 }{4 \frac{\mu^2 \sigma_0^2 + \sigma^2(\sigma^2 + \mu^2)}{2 \sigma^2 \sigma_0^2 (\sigma^2 + \mu^2)}  } - \frac{\mu^2}{2\sigma^2}  \right)
\\
= & \frac{\sigma^2 + \mu^2}{\sqrt{\mu^2 \sigma_0^2 + \sigma^2(\sigma^2 + \mu^2)}}  \exp\left(  \frac{ \mu^2  (\sigma_0^2 - (\sigma^2 + \mu^2))  }{2 (\mu^2 \sigma_0^2 + \sigma^2(\sigma^2 + \mu^2))} \right)
\\
& \quad \text{letting $z = \sqrt{\mu^2 \sigma_0^2 + \sigma^2(\sigma^2 + \mu^2)}$ and $y = \sigma^2 + \mu^2$}
\\
= & \frac{y}{z} \exp\left( \frac{z^2 - y^2}{2z^2} \right) =  \sqrt{\frac{y^2}{z^2} \exp\left(1 -  \frac{y^2}{z^2}  \right)} \le 1.
    \end{align}
    This concludes that the likelihood ratio is an e-value for $\normals_{\mu = 0}$. The rest of the statement follows from Lemma~\ref{lem:num}.
\end{proof}}

\subsection{Proofs for Additional t-Tests}\label{sec:pf-additional}
\begin{proof}[Proof of Proposition~\ref{prop:betabinom}]
    Following \citet[Proposition 7]{howard2021time}, let $p = \frac{\exp(\lambda/2)}{\exp(\lambda/2) + \exp(-\lambda/2)}$. We can now write the process $B_n^\lambda$ as
    \begin{equation}
        B_n^{\lambda} = \frac{p^{n/2 + T_n} (1-p)^{n/2-T_n}}{(1/2)^{n}}.
    \end{equation}
    Now, integrating the supermartingale above with $p \sim \operatorname{Beta}(a, b)$,
    \begin{align}
        B_n^{(a, b)} = \int_{0}^1 \frac{p^{n/2 + T_n} (1-p)^{n/2-T_n}}{(1/2)^{n/2 + T_n} (1/2)^{n/2-T_n}} \cdot \frac{p^{a-1}(1-p)^{b-1}}{\operatorname{B}(a,b)} \d p = \frac{\operatorname{B}(a + n/2 + T_n,b + n/2 - T_n)}{ (1/2)^{n} \operatorname{B}(a,b)}
    \end{align}
    forms a supermartingale as well.

    Next, due to the facts
    \begin{enumerate}
        \item $\lim_{n\to\infty} \frac{\log \operatorname{B}(\alpha n, \beta n)}{n} = \log \frac{\alpha^\alpha \beta^\beta}{(\alpha + \beta)^{\alpha + \beta}}$ for any $\alpha, \beta > 0$; and
        \item $T_n/n \to \Phi(\mu/\sigma) - 1/2$ almost surely,
    \end{enumerate}
we can see that the test supermartingale $\{  B_n^{(a,b)}  \}$ under some $\normal{\mu}{\sigma^2}$ has the following e-power:
\begin{equation}
    \lim_{n\to \infty} \frac{\log(B_n^{(a,b)})}{n} = \log \left( \Phi(\mu/\sigma)^{\Phi(\mu/\sigma)} \Phi(-\mu/\sigma)^{\Phi(-\mu/\sigma)} \right)  - \log(1/2).
\end{equation}
\end{proof}

\end{document}